\newcommand{\Y}[1]{{\tiny\yng(#1)}}
\newcommand{\w}[1][]{\omega_{#1}}
\newcommand{\s}{\sigma}
\newcommand{\p}[1][]{\pi_{#1}}
\newcommand{\y}{\lambda}
\newcommand{\g}{\gamma}
\renewcommand{\a}{\alpha}
\renewcommand{\b}{\beta}
\renewcommand{\d}{\delta}
\renewcommand{\p}{\rho}
\renewcommand{\t}{\tau}
\newcommand{\Ind}{\text{Ind}}
\newcommand{\Res}{\text{Res}}
\newcommand{\Hom}{\text{Hom}}
\newcommand{\Aut}{\text{Aut}}
\newcommand{\End}{\text{End}}
\newcommand{\Inn}{\text{Inn}}
\newcommand{\Span}{\text{Span}}
\renewcommand{\dim}{\text{dim}}
\newcommand{\codim}{\text{codim}}
\newcommand{\FIW}{\text{FI}_{\mathcal{W}}}
\newcommand{\oFIW}{\text{FI}_{\overline{\mathcal{W}}}}
\newcommand{\FI}{\text{FI}}
\newcommand{\PS}{P\Sigma}
\newcommand{\cA}{\mathscr{A}}
\newcommand{\cM}{\mathscr{M}}
\renewcommand{\S}{\Sigma}
\newcommand{\Gn}{{ G}^{*n}}
\newcommand{\bn}{{\bf n}}
\newcommand{\bm}{ {\bf m}}
\newcommand{\bno}{{\bf n_0}}
\newcommand{\bmo}{ {\bf m_0}}
\newcommand{\ba}{ {\bf a}}
\newcommand{\bp}{ {\bf p}}
\newcommand{\bpo}{ {\bf p_0}}
\newcommand{\bX}{ {\bf X}}
\newcommand{\bY}{ {\bf Y}}
\newcommand{\W}{\mathcal{W}}
\newcommand{\oW}{\overline{\mathcal{W}}}
\newcommand{\sh}{\sharp}
\newcommand{\be}{{\bf \textbf{e}}}
\newcommand{\C}{\mathbb{C}}
\newcommand{\N}{\mathbb{N}}
\newcommand{\Q}{\mathbb{Q}}
\newcommand{\R}{\mathbb{R}}
\newcommand{\Z}{\mathbb{Z}}
\newtheorem{thm}{Theorem}[section]
\newtheorem{prop}[thm]{Proposition}
\newtheorem{lem}[thm]{Lemma}
\newtheorem{cor}[thm]{Corollary}
\theoremstyle{definition}
\newtheorem{defn}[thm]{Definition}
\newtheorem{rem}[thm]{Remark}
\newtheorem{ex}[thm]{Example}
\newtheorem{prob}[thm]{Problem}
\theoremstyle{plain}
\date{\today}
\title{$\FIW$--modules and constraints on classical Weyl group characters}
\author{Jennifer C. H. Wilson}
\begin{document}
\maketitle

\begin{abstract}

In this paper we study the characters of sequences of representations of any of the three families of classical Weyl groups $\W_n$: the symmetric groups, the signed permutation groups (hyperoctahedral groups), or the even-signed permutation groups. Our results extend work of Church, Ellenberg, Farb, and Nagpal \cite{CEF}, \cite{CEFN} on the symmetric groups. We use the concept of an \emph{$\FIW$--module}, an algebraic object that encodes the data of a sequence of $\W_n$--representations with maps between them, defined in the author's recent work \cite{FIW1}. 

We show that if a sequence $\{V_n\}$ of $\W_n$--representations has the structure of a \emph{finitely generated $\FIW$--module}, then there are substantial constraints on the growth of the sequence and the structure of the characters:  for $n$ large, the dimension of $V_n$ is equal to a polynomial in $n$, and the characters of $V_n$ are given by a \emph{character polynomial} in signed-cycle-counting class functions, independent of $n$. We determine bounds the degrees of these polynomials.

We continue to develop the theory of $\FIW$--modules, and we apply this theory to obtain new results about a number of sequences associated to the classical Weyl groups: the cohomology of complements of classical Coxeter hyperplane arrangements, and the cohomology of the pure string motion groups (the groups of symmetric automorphisms of the free group). 
\end{abstract}

\setcounter{tocdepth}{2}
{\small \tableofcontents }

\section{Introduction}

In recent work \cite{FIW1} we developed the theory of \emph{$\FIW$--modules} to study sequences of representations of any of the three families of classical Weyl groups: the symmetric groups $S_n$, the hyperoctahedral groups (signed permutation groups) $B_n \cong (\Z/2\Z)^n \rtimes S_n$, and their index-two subgroups of even-signed permutation groups $D_n$. The results generalize work of Church, Ellenberg, Farb, and Nagpal \cite{CEFN, CEF} on sequences of $S_n$--representations. 

 Let $\W_n$ denote one of these families. Many naturally occurring sequences of $\W_n$--representations $\{V_n\}_n$ with maps $V_n \to V_{n+1}$ carry the structure of an \emph{finitely generated $\FIW$--module}, an elementary and often easily established condition which we define in detail below. 
 
 We analyze two such families of sequences in Section \ref{SectionApplications}. The first is the cohomology of the pure string motion groups $\PS_n$, groups related to the pure braid groups. The group $\PS_n$ can also be identified with the group of pure symmetric automorphisms of the free group $F_n$. The second family is the cohomology of the (complexified) complements $\cM_{\W}(n)$ of the hyperplanes fixed by reflections in the Coxeter groups $\W_n$. The hyperplane complements $\cM_{\W}(n)$ are  objects of classical and current mathematical interest.
 
In this paper we develop general results on the characters of finitely generated $\FIW$--modules over characteristic zero. We prove that these characters admit very specific descriptions: they are (for $n$ large) given by \emph{character polynomials}, polynomials in signed-cycle-counting class functions, as described in Section \ref{SectionCharPolys}. 

\newtheorem*{charpolyintro}{Theorem \ref{CharPolySummary}}
\begin{charpolyintro}{\bf (Finitely generated $\FIW$--modules have character polynomials).} Let $V$ be an $\FIW$--module over characteristic zero finitely generated in degree $\leq d$. Let $\chi_{V_n}$ denote the character of the $\W_n$--representation $V_n$. Then there exists a unique character polynomial $F_V$ of degree at most $d$ such that $F_V(\sigma) = \chi_{V_n}(\sigma)$ for all $\sigma \in \W_n$, for all $n$ sufficiently large. 
\end{charpolyintro}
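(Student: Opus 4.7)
The plan is to adapt the strategy of Church--Ellenberg--Farb \cite{CEF} from $\FI$-modules to $\FIW$-modules. The argument has two parts: first establish the character polynomial for the ``free'' $\FIW$-modules $M(W)$ by a direct Frobenius-reciprocity computation, then bootstrap to an arbitrary finitely generated $V$ using the Noetherian structure theory of $\FIW$-modules developed in \cite{FIW1}.

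For the first part, recall that the free $\FIW$-module on a representation $W$ of $\W_m$ satisfies $M(W)_n \cong \Ind_{\W_m \times \W_{n-m}}^{\W_n}(W \boxtimes \mathbf{1})$ for $n \geq m$. Applying the Frobenius formula to $\chi_{M(W)_n}(\sigma)$ yields a sum over (signed) $m$-sub-objects of $\{1,\dots,n\}$ fixed set-theoretically by $\sigma$, weighted by the value of $\chi_W$ on the induced action. A direct bookkeeping argument shows that this sum depends only on the signed-cycle-type of $\sigma$ and is a polynomial of total degree at most $m$ in the signed-cycle-counting class functions, provided $n$ is large enough (roughly $n \geq 2m$) that every signed-cycle pattern supporting $\chi_W$ can occur without degeneracy. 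Uniqueness of the polynomial then follows from the asymptotic algebraic independence of the signed-cycle-counting functions on $\W_n$.

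For the second part, by the definition of generation in degree $\leq d$, every such $V$ admits a surjection $M := \bigoplus_i M(W^i) \twoheadrightarrow V$ where each $W^i$ is a $\W_{m_i}$-representation with $m_i \leq d$. Working in characteristic zero, the pointwise short exact sequences split as $\W_n$-representations, giving $\chi_{V_n} = \chi_{M_n} - \chi_{K_n}$ where $K = \ker(M \twoheadrightarrow V)$. By the $\FIW$-module Noetherian property of \cite{FIW1}, $K$ is itself finitely generated, and one concludes by induction on a suitable complexity invariant that $K$, hence $V$, admits a character polynomial of the required degree.

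The principal obstacle is controlling the generation degree of $K$ so that the induction stays within the class of $\FIW$-modules generated in degree $\leq d$ and the claimed degree bound on $F_V$ is preserved. This requires the analogue for $\FIW$-modules of the ``stability degree'' or weight-filtration machinery of \cite{CEFN} -- specifically, a statement to the effect that the kernel of a map out of a free module generated in degree $\leq d$ is again generated in a controlled degree. Such a statement is presumably established in \cite{FIW1} or in earlier sections of this paper, and leveraging it correctly, together with the combinatorial character computation for $M(W)$, is the technical heart of the proof.
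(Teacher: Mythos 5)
Your proposal takes a genuinely different route from the paper, and the part you flag as the ``technical heart'' is in fact a real gap that your outline does not close.

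The paper's proof is much more direct and avoids free resolutions entirely. It uses two inputs: (i) Theorem~\ref{ThmWnIndependentCharacters}, which computes for every double partition $(\lambda,\nu)$ a character polynomial $P^{(\lambda,\nu)}$ of degree $|\lambda|+|\nu|$ equal to $\chi_{V(\lambda,\nu)_n}$ for all $n$ --- this is the signed analogue of Macdonald's formula and is derived in four steps from Lemma~\ref{LemPullbackCharacters} through Lemma~\ref{LemInducedRepFormula}; and (ii) uniform representation stability of finitely generated $\FIW$--modules (already established in \cite{FIW1}), which says that for $n\geq s+d$ the decomposition $V_n=\bigoplus_\lambda c_\lambda V(\lambda)_n$ has multiplicities $c_\lambda$ independent of $n$ and supported on $|\lambda|\leq d$. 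Then $F_V=\sum_\lambda c_\lambda P^\lambda$ and the theorem follows in one line. Type D is handled by pushing to $\FI_{BC}$ via $\Ind_D^{BC}$ and invoking Theorem~\ref{VisResIndVReview}. Your Frobenius-reciprocity computation for $M(W)_n$ does appear in the paper --- it is Lemma~\ref{LemInducedRepFormula} and Proposition~\ref{InducedCharacter} --- but there it is used as an ingredient of Theorem~\ref{ThmWnIndependentCharacters}, not as the base case of an induction by kernels.

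The gap in your second part is that the induction does not obviously terminate. You write $M\twoheadrightarrow V$ with $M$ free in degrees $\leq d$ and $K=\ker$, and then hope to apply the same argument to $K$. But the generation degree of $K$ need not be $\leq d$; the Noetherian theorem only gives finiteness, not a bound. If you pass to a new presentation $M'\twoheadrightarrow K$, then $M'$ is generated in degree $\leq r$ for some $r$ that may exceed $d$, and so $K'=\ker(M'\twoheadrightarrow K)$ is a submodule of $M'$ whose \emph{weight} may now exceed $d$. Thus neither generation degree nor weight is a monotone complexity invariant along the chain of kernels, and it is unclear what strictly decreases. (This is precisely the difficulty CEFN confront over positive characteristic, where they handle it with shift and torsion machinery rather than a naive iteration.) Over characteristic zero one should not need that machinery: the correct closure of your argument is to invoke representation stability directly, which is what the paper does --- once the decomposition into $V(\lambda)_n$ stabilizes, the only remaining input is the character polynomial of each irreducible, and at that point the free resolution is superfluous.
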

\noindent Church--Ellenberg--Farb proved this result in type A \cite[Theorem 2.67]{CEF}. Theorem \ref{WnPersinomial} gives a more detailed statement of the result in type B/C and D, including bounds on the stable range.

These character polynomials provide a description of the characters of $V_n$ that is independent of $n$, and moreover the character polynomials' restrictive structure reflects the strong constraints on the $\W_n$--subrepresentations that can appear in a finitely generated $\FIW$--module. 

One immediate consequence of the existence of character polynomials is that the dimensions dim$(V_n)$ are (for $n$ large) equal to a polynomial in $n$. We prove in Theorem \ref{PolyDimCharP} that this same phenomenon holds for sequences of representations $\{V_n\}_n$ over arbitrary fields when $V_n$ admits a finitely generated $\FIW$--module structure. Since $\W_n$--representations over positive characteristic need not be completely reducible, the study of these sequences is more subtle -- but Theorem \ref{PolyDimCharP} nonetheless offers some control over their growth.

\newtheorem*{polygrowthcharp}{Theorem \ref{PolyDimCharP}}
\begin{polygrowthcharp}{\bf (Polynomial growth of dimension over arbitrary fields).} Let $k$ be any field, and let $V$ be a finitely generated $\FIW$--module over $k$. Then there exists an integer-valued polynomial $P(T) \in \Q[T]$ such that $$ \dim_k (V_n) = P(n) \qquad \text{for all $n$ sufficiently large.}$$
\end{polygrowthcharp}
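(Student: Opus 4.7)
The plan splits by characteristic of $k$. Over a field of characteristic zero, the theorem is an immediate corollary of Theorem \ref{CharPolySummary}: evaluating the character polynomial $F_V$ at the identity element of $\W_n$ substitutes $n$ for the count of fixed points and $0$ for every other signed-cycle-counting class function, producing an integer-valued polynomial in $n$ that agrees with $\chi_{V_n}(e) = \dim_k V_n$ for all $n$ sufficiently large. The substantive content of Theorem \ref{PolyDimCharP} is therefore in positive characteristic, where $\W_n$-representations need not be semisimple and character-theoretic arguments are unavailable.

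For positive characteristic my plan is to reduce to the free (induced) $\FIW$-modules $M(W)$ generated in a single degree. First I would verify by a direct count that for any finite-dimensional $\W_m$-representation $W$ over $k$, $\dim_k M(W)_n$ is already an integer-valued polynomial in $n$ of degree $m$: it equals $\dim_k W$ times the number of $\FIW$-morphisms $\bm \to \bn$ modulo the action of $\W_m$, and this count is polynomial in $n$. Since $V$ is finitely generated, it admits a surjection $\bigoplus_{i=1}^N M(W_i) \twoheadrightarrow V$ from a finite direct sum of such free modules, which immediately bounds $\dim_k V_n$ above by a polynomial in $n$.

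Upgrading this upper bound to an equality is where the difficulty lies. By the Noetherian property of $\FIW$-modules established in \cite{FIW1}, the kernel of the presenting surjection is itself finitely generated. In characteristic zero one could close the argument by induction on the length of a finite free resolution, but in positive characteristic such resolutions need not be finite, and this is the main obstacle. My plan to bypass it would follow the shift-functor approach of Church--Ellenberg--Farb--Nagpal \cite{CEFN}: define a shift functor $S_+$ on $\FIW$-modules, show that $S_+$ preserves finite generation and does not increase weight (in the sense of \cite{FIW1}), and induct on the weight of $V$ via a short exact sequence relating $V_n$, $(S_+V)_{n-1}$, and a defect term built from the torsion and the cokernel of the natural map $V \to S_+V$. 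The base case is $V = M(W)$, handled by the first step. The technically delicate piece, which I expect to be the principal obstacle, is establishing in the Weyl-group setting that the defect term is itself a finitely generated $\FIW$-module of strictly smaller weight, so that the induction terminates and exhibits $\dim_k V_n$ as an alternating sum of polynomials in $n$.
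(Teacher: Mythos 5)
Your plan would eventually work, but it reproves the Church--Ellenberg--Farb--Nagpal theorem from scratch in the Weyl-group setting and, as you yourself flag, leaves the hardest step (control of the ``defect term'' in the shift exact sequence) unresolved. The paper's proof is a two-line reduction: restrict $V$ along the inclusion $\FI_A \subseteq \FIW$. Restriction does not change $\dim_k V_n$ at all --- you are merely forgetting part of the $\W_n$-action on the same underlying vector space --- and Proposition \ref{RestrictionPreservesFinGenReview} says restriction preserves finite generation. So $\Res_A^{\W} V$ is a finitely generated $\FI_A$-module over $k$, and \cite[Theorem 1.2]{CEFN} applies directly, in every characteristic at once. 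There is no need to split by characteristic, no need for the character-polynomial argument in the characteristic-zero case, and no need to build a shift functor or a weight induction in the signed categories.

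The technically delicate piece you identified --- showing the defect term is finitely generated of strictly smaller weight so the induction terminates --- is not a small lemma; it is essentially the content of the CEFN proof, and re-establishing it over $\FI_{BC}$ or $\FI_D$ would require importing a significant amount of their machinery (torsion, the shift/derivative exact sequence, the degree filtration) into the signed setting. That is not mathematically wrong, and such an internal theory could have other uses, but for this statement it is vastly more work than the restriction argument, which transports the type-A result to types B/C and D for free precisely because $\dim_k$ is blind to the extra group structure.
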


\noindent Theorem \ref{PolyDimCharP} is proved in type A by Church--Ellenberg--Farb--Nagpal \cite[Theorem 1.2]{CEFN}.

These stability results for $\FIW$--modules extends our earlier work \cite{FIW1}. We proved that if an $\FIW$--module over characteristic zero is finitely generated,  the decompositions of the representations $V_n$ into irreducible subrepresentations are in some sense eventually constant in $n$: these sequences are \emph{uniformly representation stable} as defined by Church--Farb \cite{RepStability}. See Section \ref{SectionRepStabilityReview} for a complete definition.

In this paper we introduce $\FIW\sh$--modules, a subclass of $\FIW$--modules with additional symmetries that allow us to deduce even stronger restrictions on the structure of the underlying sequence of representations; see Section \ref{SectionFISharp}.

Examples of sequences of $\W_n$--representations with finitely generated $\FIW$--module and $\FIW\sh$--module structures are prevalent throughout the fields of geometry, topology, algebra, and combinatorics. We establish this structure in two examples: the cohomology of the pure string motion groups $\PS_n$ (Section \ref{SectionPureStringMotion}) and the cohomology of the hyperplane complements $\cM_{\W}(n)$ (Section \ref{SectionHyperplaneComplements}).  Theorem \ref{ApplicationsSummaryIntro} summarizes our results.

\begin{thm} \label{ApplicationsSummaryIntro} Let $\W_n$ denote $S_n$, $B_n$, or $D_n$. Let $X_n$ denote either the pure string motion groups $\PS_n$ or the (complexified) complements $\cM_{\W}(n)$ of the reflecting hyperplanes of $\W_n$. Then in each degree $m$ the sequence $\{ H^{m}(X_n; \Q) \}_n$ is an $\FIW$--module finitely generated  in degree $\leq 2m$. We show
\begin{itemize}
 \item The sequence $H^{m}(X_n; \Q)$ is uniformly representation stable, stabilizing for $n \geq 4m$.
 \item For all values of $n$, the characters of $H^{m}(X_n; \Q)$ are given by a unique character polynomial of degree at most $2m$.
\end{itemize} 
\end{thm}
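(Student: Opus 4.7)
The plan is to reduce both conclusions to a single claim: that each $H^m(X_n;\Q)$ is an $\FIW$-module finitely generated in degree $\leq 2m$. Given this bound, the character-polynomial conclusion is immediate from Theorem \ref{CharPolySummary} applied with $d=2m$, and the uniform representation stability range $n\geq 4m$ should follow from the standard principle of \cite{FIW1} that a finitely generated $\FIW$-module in degree $\leq d$ becomes uniformly representation stable for $n\geq 2d$.

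To put an $\FIW$-module structure on the cohomology, I would apply the functoriality of $X_n$ in its indexing set $[n]$: the $\W_n$-equivariant maps $[n]\hookrightarrow[n+1]$ produce natural maps between the cohomology groups $H^m(X_n;\Q)$ in $\FIW$. For the hyperplane complements $\cM_\W(n)$ this is the $\FIW$-extension of the $\FI$-module structure on $H^*(\text{PConf}_n(\C);\Q)$ used in Church--Ellenberg--Farb's treatment of type A; for $\PS_n$ it arises from the inclusions $F_n\hookrightarrow F_{n+1}$ of free groups, which carry pure symmetric automorphisms to pure symmetric automorphisms.

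For the generation bound, my plan is the same in both cases: exhibit the rational cohomology ring as generated in degree $1$ by classes naturally indexed by unordered or ordered pairs (and, in type B/D, by singletons) of elements of $[n]$. Such an indexing realizes $H^1(X_n;\Q)$ as an $\FIW$-module finitely generated in degree $\leq 2$; since the cup product respects the $\FIW$-structure, an $m$-fold product of degree-$1$ generators involves at most $2m$ indices and so generates $H^m(X_n;\Q)$ as an $\FIW$-module in degree $\leq 2m$. For $\cM_\W(n)$ the required indexing is supplied by the Orlik--Solomon theorem: the logarithmic classes attached to the reflecting hyperplanes $x_i-x_j$, $x_i+x_j$, and $x_i$ are visibly indexed by pairs and singletons in $[n]$. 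For $\PS_n$ I would appeal to the known presentation of the rational cohomology ring, whose degree-$1$ generators are likewise indexed by ordered pairs $(i,j)$ with $i\neq j$.

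The main obstacle will be the $\PS_n$ case: making the combinatorial indexing of generators precise as an $\FIW$-module structure, and verifying that the $\W_n$-action on generators agrees with the action arising from permutations (and sign changes, in type B/D) of the free generators of $F_n$. The hyperplane complement case is more transparent since the Orlik--Solomon generators are manifestly permuted by $\W_n$ via their indexing hyperplanes. Once both finite-generation statements are in place, Theorem \ref{CharPolySummary} together with the uniform representation stability machinery of \cite{FIW1} deliver the two bullet points of the theorem.
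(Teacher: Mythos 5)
Your proposal captures roughly the right strategy for the finite‑generation bound in degree $\leq 2m$ (degree‑one generators indexed by pairs/singletons, cup products picking up at most $2m$ indices, Proposition~\ref{SlopeFinGenFromGeneratorsReview}), and this matches what the paper does for the hyperplane complements and for $\PS_n$ via the Jensen--McCammond--Meier presentation. But a genuine ingredient is missing, and it affects both bullet points.

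First, you cite ``the standard principle of \cite{FIW1} that a finitely generated $\FIW$--module in degree $\leq d$ becomes uniformly representation stable for $n \geq 2d$,'' but no such unconditional theorem exists there. Theorem~\ref{FinGenImpliesRepStabilityReview} gives stabilization at $n \geq \max(g,r)+d$, where $r$ is the \emph{relation degree}, and a bare generation bound does not control $r$. The $n \geq 2d$ bound is a consequence specifically of an $\FIW\sh$--module structure: Corollary~\ref{SharpStabilityDegd} gives stability degree $\leq d$, and Theorem~\ref{StabilityDegreeImpliesRepStabilityReview} (plus weight $\leq d$ from Theorem~\ref{WnDiagramSizesReview}) then gives $n \geq 2d$; this is Corollary~\ref{SharpRepStableDeg2d}. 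Without either establishing the $\FIW\sh$--structure or bounding the stability/relation degree by other means, your deduction of the $n\geq 4m$ range does not go through.

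Second, the theorem asserts the character polynomial equality holds \emph{for all} $n$, not merely for $n$ sufficiently large. Theorem~\ref{CharPolySummary} alone gives only the eventual statement; the all-$n$ strengthening is Corollary~\ref{CharPolyFISharp}, which again requires that these cohomology groups carry an $\FI_{BC}\sh$--module structure (so that $V \cong \bigoplus M_{\W}(U_m)$ by Theorem~\ref{ClassifyingFISharp}). The paper proves exactly this in Theorems~\ref{PureStingMotionisFI-Sharp} and~\ref{HyperplaneCohomologyIsFISharp}: for $\cM_\W(n)$ via the section of the forgetful projection, and for $\PS_n$ by directly defining the action of $\FI_{BC}\sh$--morphisms on the Jensen--McCammond--Meier generators. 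Note also that since there is no category $\FI_D\sh$ (Remark~\ref{RemarkFIDSharp}), the type D hyperplane complement case is handled by realizing $H^*(\cM_D(\bullet);\C)$ as a sub--$\FI_{BC}\sh$--module of $H^*(\cM_{BC}(\bullet);\C)$, a step your outline doesn't contemplate. To close these gaps you should upgrade your finite‑generation statement to an $\FI_{BC}\sh$--module statement (or, in the type D case, realize the module as a sub--$\FI_{BC}\sh$--module) before invoking the stability and character‑polynomial machinery.
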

\noindent These and other results for $\PS_n$ and $\cM_{\W}(n)$ are described in detail in Section \ref{SectionApplications}.

The theory of $\FIW$--modules derives much of its strength from the extra algebraic structure provided to these sequences of $\W_n$--representations. In this framework each sequence $\{ V_n\}_n$ is encoded as a single object, an $\FIW$--module, in a category that closely parallels the category of modules over a ring. In Section \ref{SectionFIWFoundationsReview} we review the structure of maps, presentations, direct sums, and tensor products of $\FIW$--modules. We proved that $\FIW$--modules over Noetherian rings are Noetherian \cite[Section \ref{FIW1-SectionNoetherian}]{FIW1}; see Section \ref{SectionNoetherianReview}.  We developed induction and restriction operations between the three families of groups \cite[Sections \ref{FIW1-Section:Restriction} and \ref{FIW1-Section:Induction}]{FIW1}. These operations are reviewed in Section \ref{SectionReviewRestrictionInduction}. 

In our previous work \cite[Section \ref{FIW1-SectionMurnaghan}]{FIW1} we gave an application analogous to Murnaghan's classical theorem for Kronecker coefficients \cite{MurnaghanKronecker}, a stability theorem concerning tensor products of $S_n$--representations. We showed that type B/C and D versions of Murnaghan's result follow readily from the $\FIW$--module theory \cite[Theorem \ref{FIW1-MurnaghanWn} and Corollary \ref{FIW1-MurnaghanDn}]{FIW1}. These stability results have simple interpretations in this $\FIW$--module context: tensor products of finitely generated $\FIW$--modules are themselves finitely generated. 

The theory of $\FIW$--modules gives a conceptual foundation and a language to describe stability phenomena in sequences of $\W_n$--representations such as these.

\subsection{$\FIW$--modules and finite generation}

\begin{defn}{\bf (The Category $\FIW$).} \label{DefnFIW} Here we will define the three categories $\FI_A \subseteq \FI_D \subseteq \FI_{BC}$, denoted generically by $\FIW$. Consider the category whose objects are $\varnothing$ and for each $n \in \N$ the finite set $\bn = \{1, -1, 2, -2, \ldots, n, -n \}$,  and whose morphisms are all injective maps. We define $\FIW$ to be the smallest subcategory containing the morphisms $\W_n \subseteq$ End$(\bn)$ and the canonical inclusion maps $I_n: \bn \hookrightarrow {\bf (n+1)}$.  
	
	In each case, the endomorphisms End$(\bn)$ of $\FIW$ are precisely the Weyl group $\W_n$. The category $\FI_A$ is equivalent to the category of all finite sets and injective maps, denoted FI by \cite{CEF}. It turns out that for $n \neq m$, the set of morphisms $\bm \to \bn$ in $\FI_D$ and $\FI_{BC}$ are the same; see \cite[Remark 3.1]{FIW1}.
	
	A description of each category is given in Table \ref{FICatTable}.

\quad \\
\begin{table}[h]
\noindent {\footnotesize  
\begin{tabular}{|p{1.2cm}|p{3.2cm}|p{9.7cm}|} 
\hline && \\  Category & Objects &  Morphisms \\ && \\ \hline && \\ 
 $\FI_{BC}$ & $\bn = \{ \pm 1, \pm 2, \ldots, \pm n \}$& $\{$ injections  $f:\bm \to \bn \text{  $\; | \; f(-a) = -f(a) \; \; \forall$ $a \in \bm$} \}$ \\ 
 & ${\bf 0} = \varnothing $ & \qquad $\End(\bn) \cong B_n$ \\ && \\
$\FI_D$ &$\bn = \{ \pm 1, \pm 2, \ldots, \pm n \}$ \newline ${\bf 0} = \varnothing$  &  $\{$ injections $f:\bm \to \bn \; |\; f(-a) = -f(a)$ $\; \; \forall$ $a \in \bm$;  \newline  isomorphisms must reverse an even number of signs $\}$ \\ 
&& \qquad $\End(\bn) \cong D_n$ \\  && \\
$\FI_A$  & $\bn = \{ \pm 1, \pm 2, \ldots, \pm n \}$ \newline ${\bf 0} = \varnothing $  &  $\{$ injections $f:\bm \to \bn \; | \;  f(-a) = -f(a)$ $\; \; \forall$ $a \in \bm$;   $f$ preserves signs$\}$ \indent $ \qquad \End(\bn) \cong S_n$ \\ && \\ \hline
\end{tabular} 
\caption{The Categories $\FIW$} \label{FICatTable} 
} \end{table}
\quad \\

\noindent For $m<n$ we denote by $I_{m,n}$ the canonical inclusion $\{\pm 1, \ldots, \pm m \} \hookrightarrow \{\pm 1, \ldots, \pm n \}$ and abbreviate $I_n := I_{n, (n+1)}$. 
\end{defn}

\begin{defn} {\bf ($\FIW$--module and $\FIW$--module maps).} \label{DefnFIWModuleFIModuleMap} An \emph{$\FIW$--module} $V$ over a commutative ring $k$ is a (covariant) functor $$V : \FIW \longrightarrow k\text{--Mod}$$ from $ \FIW$ to the category of $k$--modules. We denote $V_n := V(\bn)$ and $f_* := V(f)$. 
	
	$\FI_A$--modules are precisely the $\FI$--modules studied in \cite{CEF, CEFN}. 

A \emph{co--$\FIW$--module} over $k$ is a functor from the dual category $$\FIW^{\mathrm{op}} \longrightarrow k\text{--Mod}.$$

A \emph{map of $\FIW$--modules} $F: V \longrightarrow W$ is a natural transformation, that is, a sequence of maps $F_n: V_n \to W_n$ that commute with the action of the $\FIW$ morphisms. $\FIW$--module injections, quotients, kernels, cokernels, direct sums, etc, are defined pointwise. 
\end{defn}

\begin{defn} {\bf (Finite generation, Degree of generation).} \label{DefnFinGen}  Let $V$ be an $\FIW$--module. Given a subset $S \subseteq \coprod_{n=0}^{\infty} V_n$, the sub--$\FIW$--module \emph{generated by $S$} is the smallest sub--$\FIW$--module $U$ of $V$ containing $S$. $S$ is a \emph{generating set} for $U$: the images of these elements under the $\FIW$ morphisms span each $k[\W_n]$--module  $U_n$.

An $\FIW$--module $V$ is \emph{finitely generated} if it has a finite generating set, and $V$ is generated in \emph{degree $\leq d$} if it has a generating set contained in $\coprod_{n=0}^{d} V_n$. \end{defn} 

{\bf Examples of $\FIW$--modules. \qquad} To illustrate this concept we give some first examples of $\FIW$--modules over $\Q$. Fix the Weyl group family $\W_n$ to be $S_n$, $D_n$, or $B_n$. To specify the $\FIW$--module structure on a sequence $\{V_n\}$, it suffices to state the $\W_n$--actions and the maps $(I_n)_*: V_n \to V_{n+1}$. associated to the natural inclusions $I_n$. 

\begin{ex} \label{FIModuleExamples}The following are $\FIW$--modules.
\begin{enumerate}
\item \label{FIExample-TrivialReps} {\bf Example: Trivial representations. } For $n \geq 0$ let $V_n = \Q$ be the trivial $\W_n$--representation with isomorphisms $(I_n)_*: V_n \cong V_{n+1}$. These spaces form an $\FIW$--module with a single generator in degree $0$. 
\item \label{FIExample-SignedPermMatrices} {\bf Example: Signed permutation matrices.} The groups $D_n$ and $B_n$ are canonically represented by $n\times n$ \emph{signed permutation matrices}, that is, generalized permutation matrices with nonzero entries equal to $1$ or $-1$. Throughout Example \ref{FIModuleExamples} we let $\Q^n$ denote the representation of $S_n$, $D_n$, or $B_n$ by (signed) permutation matrices. The representations $V_0 = 0$ and $V_n=\Q^n$ with their natural inclusions form an $\FIW$--module finitely generated in degree $1$. 
\item \label{FIExample-Schur} {\bf Example: $j$-fold powers.} For any integer $j$, the $j$-fold tensor power, exterior power, and symmetric power on $\Q^n$ each form an $\FIW$--module finitely generated in degree $j$. More generally, composing any $\FIW$--module $V$ with another functor $k$--Mod $\to k$--Mod will yield a new $\FIW$--module.

\item \label{FIExample-Represented} {\bf Example: Represented functors $M_{\W}(\bm)$.} For fixed integer $m \geq 0$, the sequence of $k$-modules $$M_{\W}(\bm)_n := k \big[ \Hom_{\FIW}( \bm, \bn)\big]$$ form an $\FIW$-module, with $\FIW$ morphisms acting on basis elements $e_f, f \in \Hom_{\FIW}( \bm, \bn) $ by postcomposition. These are in a sense the ``free" $\FIW$--modules; see Definition \ref{Defn:M(m)Review}.
 
\item \label{FIExample-Induced} {\bf Example: The $\FIW$--modules $M_{\W}(U)$.} Fix an integer $d$ and a $\W_d$--representation $U$. Let $\Q$ denote the trivial $\W_{n-d}$--representation. Let $U \boxtimes \Q$ denote the $(\W_d \times \W_{n-d})$--representation given by the \emph{external tensor product} of $U$ and $\Q$. Define
\begin{align*}
M_{\W}(W)_n := &\left\{ \begin{array}{ll} 0,\; n<d & \\ 
					 \Ind_{\W_d \times \W_{n-d}}^{\W_n} U \boxtimes \Q,\; n \geq d &  \end{array} \right.
\end{align*}

Then there are induced maps $M_{\W}(U)_n \to M_{\W}(U)_{n+1}$ giving this sequence the structure of an $\FIW$--module finitely generated in degree $d$. Specifically, the natural inclusion of  $(\W_d \times \W_{n-d})$--representations
\begin{align*} U \boxtimes \Q &\hookrightarrow \Res^{\W_{n+1}}_{\W_d \times \W_{n-d}} \Ind_{\W_d \times \W_{n+1-d}}^{\W_{n+1}}  U \boxtimes \Q \\ &= \Res^{\W_{n}}_{\W_d \times \W_{n-d}} \Res^{\W_{n+1}}_{\W_n} \Ind_{\W_d \times \W_{n+1-d}}^{\W_{n+1}}  U \boxtimes \Q \end{align*} and the universal property of induction give $\W_n$-equivariant maps $$ \underbrace{\Ind_{\W_d \times \W_{n-d}}^{\W_n} U \boxtimes \Q}_{M_{\W}(U)_n} \longrightarrow \Res^{\W_{n+1}}_{\W_n} \underbrace{\Ind_{\W_d \times \W_{n+1-d}}^{\W_{n+1}}  U \boxtimes \Q}_{M_{\W}(U)_{n+1}}, $$ which define the $\FIW$--module structure on $M_{\W}(U)$. An alternate, equivalent description of $M_{\W}(U)$ is given in Definition \ref{Defn:M(m)Review}.
	
Taking $d=0$ and $U$ the trivial  representation $\Q$, we  recover Example \ref{FIModuleExamples}.\ref{FIExample-TrivialReps}. Taking $d=1$ and $U \cong \Q^1$ recovers Example \ref{FIModuleExamples}.\ref{FIExample-SignedPermMatrices}.  For any $d$, taking $U$ to be the regular representation $k[\W_d]$ recovers Example \ref{FIModuleExamples}.\ref{FIExample-Represented}. The $\FIW$--modules $M_{\W}(U)$ are discussed in Section \ref{SectionMW}.

\item \label{FIExample-ZeroMaps} {\bf Example: Zero maps. } Let $\{V_n\}$ be any sequence of non-zero rational $\W_n$--representations, and and let $(I_n)_*$ be the zero maps. These form an $\FIW$--module that is infinitely generated, with infinite degree of generation.  
\item \label{FIExample-Truncated} {\bf Example: Torsion and truncated $\FIW$--modules.} Define $\FIW$--modules $V$ and $U$ by
\begin{align*}
 V_n :=\left\{ \begin{array}{ll} \Q^n, \text{ with $(I_n)_*$ the natural inclusions},\; n<20 & \\ 
					 0, \; n \geq 20 &  \end{array} \right. 
\end{align*} \begin{align*}
&  U_n  :=\left\{ \begin{array}{ll}\; 0, \;   n<20 & \\ 
					  \Q^n, \text{ with $(I_n)_*$ the natural inclusions}, \; n \geq 20 &  \end{array} \right.
\end{align*}
Then the ``torsion" $\FIW$-module $V$ is finitely generated in degree $1$, and the ``truncated" $\FIW$-module $U$ is finitely generated in degree $20$. 
\end{enumerate}
\end{ex}

In contrast to Example \ref{FIModuleExamples}, the sequence of alternating representations and the sequence of regular representations $\Q[\W_n]$, each with their canonical inclusions, do \emph{not} form $\FIW$--modules; see  \cite[Examples \ref{FIW1-NonExampleRegularReps} and \ref{FIW1-NonExampleAltReps}]{FIW1}.


\subsection{Character polynomials in type B/C and D} Let $k$ be a field of characteristic zero. One of our main results is that the sequence of characters of a finitely generated $\FIW$--module over $k$ is, for $n$ large, equal to a \emph{character polynomial} which does not depend on $n$. This was proven for symmetric groups in \cite[Theorem 2.67]{CEF}, and here we extend these results to the groups $D_n$ and $B_n$.

Character polynomials for the symmetric groups date back to Murnaghan \cite{MurnaghanCharacterPolynomials} and Specht \cite{SpechtCharaktere}; they are described in Macdonald \cite[I.7.14]{MacdonaldSymmetric}. In Section \ref{SectionCharPolys} we introduce character polynomials for the groups $B_n$ and $D_n$, in two families of \emph{signed} variables. We use the classical results for $S_n$ to derive formulas for the character polynomials of irreducible $B_n$--representations (Theorem \ref{ThmWnIndependentCharacters}), and use these formulas to study the characters of $\FIW$--modules in type B/C and D. 

Conjugacy classes of the hyperoctahedral group are classified by \emph{signed cycle type}, see Section \ref{RepTheoryBnReview} for a description. We define the functions $X_r$, $Y_r$ on $\coprod_{n=0}^{\infty} B_n$ such that 
\begin{align*}
 X_r (\w) & \text{ is the number of positive $r$--cycles in $\w$,} \\
Y_r (\w) & \text{ is the number of negative $r$--cycles in $\w$.}
\end{align*}
The functions $X_r, Y_r$ are algebraically independent as class functions on  $\coprod_{n=0}^{\infty} B_n$, and so they form a polynomial ring $k[X_1, Y_1, X_2, Y_2, \ldots]$ whose elements span the class functions on $B_n$ for each $n \geq 0$.

We prove that the sequence of characters of $\{V_n\}$ associated to any finitely generated $\FI_{BC}$--module or $\FI_{D}$--module $V$ over a field of characteristic zero are equal to a unique element of $k[X_1, Y_1, X_2, Y_2, \ldots]$ for all $n$ sufficiently large.

\begin{ex} \label{Example:SignedPermMatrices} {\bf (Signed permutation matrices: A first example of a character polynomial).} As an elementary example of a sequence of $B_n$--representations described by a character polynomial, consider the canonical action of the hyperoctahedral groups $B_n$ on the vector space $\Q^n$ by signed permutation matrices. The trace of a signed permutation matrix $\s$ is 
\begin{align*}
 \mathrm{Tr}(\s) &=  \text{$\# $ \{$1$'s on the diagonal of $\s$\}} - \text{$\# $ \{$(-1)$'s on the diagonal of $\s$\}} \\ 
  &= \text{ $\# $ \{ positive one cycles of $\s$\} } - \text{ $\# $ \{ negative one cycles of $\s$\} } \\
  &= X_1(\s) - Y_1(\s)
\end{align*}
and so the characters $\chi_n$ of this sequence are given by the function $\chi_n = X_1 - Y_1$ for all values of $n$.

The group $D_n$ is canonically realized as the subgroup of this signed permutation matrix group comprising those matrices with an even number of entries equal to $(-1)$. The character of this representation is the restriction of the character $\chi_n$ to the subgroup $D_n \subseteq B_n$, and so again this sequence of characters is equal to the character polynomial $\chi_n = X_1 - Y_1$ for all values of $n$. \end{ex} 

Conjugacy classes of the groups $D_n \subseteq B_n$ are not fully classified by their signed cycle type, due to the existence of certain 'split' classes when $n$ is even; see Section \ref{RepTheoryDnReview} for details. The functions $\{X_r, Y_r\}$ therefore do not span the space of class functions on any group $D_n$ with $n$ even. We prove, however, that when a sequence of representations $\{ V_n \}$ of $D_n$ has the structure of a finitely generated $\FI_D$--module, for $n$ large the characters depend only on the signed cycle type of the classes. Remarkably, the characters associated to $\{V_n\}$ are, for $n$ large,  also equal to a character polynomial independent of $n$.

\newtheorem*{CharacterPolynomials}{Theorem \ref{WnPersinomial}}
\begin{CharacterPolynomials}
{\bf (Characters of finitely generated $\FI_{\W}$--modules are eventually polynomial).}
Let $k$ be a field of characteristic zero. Suppose that $V$ is a finitely generated $\FI_{BC}$--module with weight $\leq d$ and stability degree $\leq s$, or,  alternatively, suppose that $V$ is a finitely generated $\FI_D$--module with weight $\leq d$ such that $\Ind_{D}^{BC}\;V$ has stability degree $\leq s$. In either case, there is a unique polynomial $$F_V \in k[X_1, Y_1, X_2, Y_2, \ldots],$$ independent of $n$, such that the character of $\W_n$ on $V_n$ is given by $F_V$ for all $n \geq s +d$. The polynomial $F_V$ has degree $\leq d$, with deg$(X_i)=$deg$(Y_i)=i$.
\end{CharacterPolynomials}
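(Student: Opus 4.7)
The plan is to mirror the type A argument of Church--Ellenberg--Farb \cite[Theorem 2.67]{CEF}, reducing the statement to an explicit character computation on stable irreducible summands. By the representation stability results of \cite{FIW1}, a finitely generated $\FI_{BC}$--module $V$ of weight $\leq d$ and stability degree $\leq s$ decomposes, for each $n \geq s+d$, as $V_n \cong \bigoplus_{(\lambda,\mu)} V(\lambda,\mu)_n^{\oplus c_{\lambda,\mu}}$, where the sum is over bipartitions $(\lambda, \mu)$ with $|\lambda|+|\mu|\leq d$, the multiplicities $c_{\lambda,\mu}$ are independent of $n$, and $V(\lambda,\mu)_n$ is the irreducible $B_n$--representation indexed by the padded bipartition $(\lambda[n-|\mu|], \mu)$. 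Because both characters and $k[X_1, Y_1, X_2, Y_2, \ldots]$ are additive, it suffices to exhibit, for each such $(\lambda,\mu)$, a polynomial of weighted degree $\leq |\lambda|+|\mu|$ representing $\chi_{V(\lambda,\mu)_n}$ for all $n \geq s+d$.

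The core computation uses the realization
$$V(\lambda, \mu)_n \cong \Ind_{B_{n-|\mu|} \times B_{|\mu|}}^{B_n} \bigl( \widetilde{S^{\lambda[n-|\mu|]}} \boxtimes (\widetilde{S^{\mu}} \otimes \varepsilon)\bigr),$$
where $\widetilde{S^\alpha}$ denotes the Specht module inflated along $B_k \twoheadrightarrow S_k$ and $\varepsilon$ is the character of $B_{|\mu|}$ that detects negative entries. The induced character formula writes $\chi_{V(\lambda,\mu)_n}(\sigma)$ as a sum over $\bar\sigma$--invariant subsets $T \subset \{1,\ldots,n\}$ of size $n-|\mu|$, of the product $\chi^{\lambda[n-|\mu|]}(\bar\sigma|_T) \cdot (\chi^\mu \cdot \varepsilon)(\sigma|_{\{1,\ldots,n\} \setminus T})$. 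Reorganize this sum by the signed cycle type of $\sigma|_{\{1,\ldots,n\}\setminus T}$: the factor $\chi^\mu\cdot\varepsilon$ depends only on this type (and on $\mu$), the count of admissible $T$ realizing a fixed complement type is a product of binomial coefficients $\binom{X_r(\sigma)}{a_r}\binom{Y_r(\sigma)}{b_r}$ of weighted degree $\leq |\mu|$, and the inner factor is, by the type A theorem \cite[Theorem 2.67]{CEF} applied to the sequence $S^{\lambda[\cdot]}$, a character polynomial of weighted degree $\leq |\lambda|$ in the unsigned cycle-counts of $\bar\sigma|_T$; those counts rewrite as polynomial combinations of $X_r(\sigma), Y_r(\sigma)$ and the subset-choice parameters. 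Collapsing the sum yields a polynomial $F_{(\lambda,\mu)} \in k[X_1, Y_1, X_2, Y_2, \ldots]$ of weighted degree $\leq |\lambda|+|\mu| \leq d$, independent of $n$. Uniqueness follows from the algebraic independence of $\{X_r, Y_r\}$ as class functions on $\coprod_n B_n$.

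For the $\FI_D$ case, apply the type B/C result to $\Ind_D^{BC} V$ to obtain a character polynomial $F_{\Ind V}$. The induced character formula gives, for $g \in D_n$ and any $\tau \in B_n \setminus D_n$,
$$\chi_{(\Ind_D^{BC} V)_n}(g) = \chi_{V_n}(g) + \chi_{V_n}(\tau g \tau^{-1}).$$
For $n \geq s+d$ the stable bipartitions $(\lambda[n-|\mu|], \mu)$ appearing in $V$ all satisfy $\lambda[n-|\mu|] \neq \mu$, so the irreducibles $V(\lambda,\mu)_n$ remain irreducible on restriction to $D_n$ and their characters depend only on $B_n$--conjugacy class. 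Hence $\chi_{V_n}(g) = \chi_{V_n}(\tau g \tau^{-1})$, and $F_V := \tfrac12 F_{\Ind V}$ is the desired character polynomial for $V$, of weighted degree $\leq d$.

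The main obstacle is the bookkeeping of the middle step: one must verify that the type A character polynomial for the inner factor, combined with the binomial weights for the $\pm$--symmetric subset choice, collapses into a polynomial in $X_r, Y_r$ of the claimed weighted degree with no residual $n$--dependence. This is a direct type B/C analog of Macdonald's subset-sum manipulation \cite[I.7]{MacdonaldSymmetric}, doubled to track positive versus negative cycles, with the $\varepsilon$ factor propagated as a sign determined by the negative cycles of $\sigma$ outside $T$.
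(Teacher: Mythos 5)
Your type B/C argument is essentially the same as the paper's: use representation stability (Theorem \ref{StabilityDegreeImpliesRepStabilityReview}) to reduce, for $n \ge s + d$, to a sum of irreducible summands $V(\lambda,\mu)_n$ with $|\lambda| + |\mu| \le d$ and stable multiplicities, then verify that each such irreducible has a character polynomial of degree $\le |\lambda| + |\mu|$. The paper isolates that second step as Theorem \ref{ThmWnIndependentCharacters}, proved via Lemmas \ref{LemPullbackCharacters}--\ref{LemInducedRepFormula}, while you inline the induction-of-characters computation; the content and the bookkeeping are the same.

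Your type D argument has a genuine gap. You apply the group-theoretic induced-character formula
\[
\chi_{(\Ind_{D}^{BC}V)_n}(g) = \chi_{V_n}(g) + \chi_{V_n}(\tau g \tau^{-1}), \qquad g \in D_n, \ \tau \in B_n \setminus D_n,
\]
to the categorical induction $\Ind_{D}^{BC}$ between $\FIW$--module categories. But $\Ind_{D}^{BC}$ is a left Kan extension (Definition \ref{DefnIndReview}), not pointwise group induction $\Ind_{D_n}^{B_n}$, and this identity fails for it. The correct relation is the one the paper actually uses, Theorem \ref{VisResIndVReview}: for $n$ larger than the generation degree, $V_n \cong (\Res_{D}^{BC}\Ind_{D}^{BC}V)_n$ as $D_n$--representations, so the $D_n$--character of $(\Ind_{D}^{BC}V)_n$ equals $\chi_{V_n}$ on the nose, not twice it. A quick sanity check: $\Ind_{D}^{BC}M_{D}(\bm) \cong M_{BC}(\bm)$, and by Proposition \ref{CharacterM(m)} both $M_{D}(\bm)_n$ (for $n > m$) and $M_{BC}(\bm)_n$ have character $2^{m}\,m!\binom{X_1}{m}$, so the restriction to $D_n$ carries no factor of two. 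Consequently your $F_V = \tfrac{1}{2} F_{\Ind V}$ is off by that factor; the correct conclusion is simply $F_V = F_{\Ind V}$, read on signed cycle types in $D_n$. Your supporting observation that the stable bipartitions $(\lambda[n-|\mu|],\mu)$ satisfy $\lambda[n-|\mu|]\neq\mu$ --- so that restrictions to $D_n$ remain irreducible and their characters depend only on $B_n$--conjugacy class --- is in the right spirit, but it is neither needed nor sufficient once the starting formula for $(\Ind_{D}^{BC}V)_n$ is wrong; the tool you want is Theorem \ref{VisResIndVReview}.
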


\noindent Weight and stability degree are defined in Sections \ref{SectionWeightReview} and \ref{SectionCoinvariantsStabilityDegreeReview}; these quantities are always finite for finitely generated $\FIW$--modules and associated induced $\FIW$--modules. 

Theorem \ref{WnPersinomial} generalizes the result of Church--Ellenberg--Farb \cite[Theorem 2.67]{CEF} that the characters of finitely generated $\FI_A$--module are, for $n$ sufficiently large, given by a character polynomial in the class functions $X_r$ on $\coprod_{n=0}^{\infty} S_n$ that takes a permutation $\sigma$ and returns the number of $r$--cycles in its cycle type.

In our applications, it remains an open problem to compute the character polynomials in all but a few small degrees. Since we can often establish explicit upper bounds on the degrees and stable ranges of these polynomials, the problem is much more tractable: to find the character polynomials  -- and so determine the characters for all values of $n$ -- it is enough to compute the characters for finitely many specific values of $n$. \\

{ \noindent \bf Eventually polynomial dimensions. \quad} Suppose that $V$ is a finitely generated $\FIW$--module with character polynomial $F_V$. For each $n$ in the stable range, the dimension dim$(V_n)$ is given by $F_V(n,0,0,0, \ldots),$ the value of the character polynomial on the identity element in $\W_n$. This has the immediate consequence:

\newtheorem*{polygrowth}{Corollary \ref{PolyGrowth}}
\begin{polygrowth} {\bf (Polynomial growth of dimension).}
 Let $V$ be an $\FIW$--module over a field of characteristic zero, and suppose $V$ is finitely generated in degree $\leq d$. Then for large $n$, dim$(V_n)$ is equal to a polynomial in $n$ of degree at most $d$. Equality holds for $n$ in the stable range given in Theorem \ref{WnPersinomial}.
\end{polygrowth}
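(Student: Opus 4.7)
The plan is to deduce this corollary immediately from Theorem \ref{WnPersinomial} by evaluating the character polynomial at the identity element. Since $V$ is finitely generated in degree $\leq d$ over a characteristic zero field, Theorem \ref{WnPersinomial} guarantees the existence of a character polynomial $F_V \in k[X_1, Y_1, X_2, Y_2, \ldots]$ of degree $\leq d$ (where $\deg(X_i) = \deg(Y_i) = i$) agreeing with $\chi_{V_n}$ on all of $\W_n$ for $n$ in the stable range.

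The key observation is that the identity element $e \in \W_n$ has signed cycle type consisting of exactly $n$ positive $1$--cycles, so $X_1(e) = n$ and $X_r(e) = Y_r(e) = 0$ for all other $r$ and for $Y_1$. Thus
\[
\dim_k(V_n) \;=\; \chi_{V_n}(e) \;=\; F_V(n, 0, 0, 0, \ldots)
\]
for all $n$ in the stable range of Theorem \ref{WnPersinomial}. The right-hand side is obtained from $F_V$ by setting every variable except $X_1$ to zero, yielding a polynomial in the single variable $X_1 = n$. Because $F_V$ has weighted degree $\leq d$ and $X_1$ carries weight $1$, the resulting polynomial in $n$ has ordinary degree at most $d$, which is the desired conclusion.

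There is essentially no obstacle here: the entire content is packaged into Theorem \ref{WnPersinomial}, and the corollary is a one-line specialization. The only thing worth remarking on is the degree bound, which follows because evaluating a weighted-degree-$d$ polynomial by killing all variables of weight $\geq 2$ can only decrease the degree when one tracks degree in $X_1$ alone.
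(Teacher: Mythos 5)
Your argument is correct and is exactly the one the paper uses: Corollary \ref{PolyGrowth} is derived from Theorem \ref{WnPersinomial} by evaluating the character polynomial $F_V$ at the identity element (signed cycle type $X_1 = n$, all other variables zero), with the degree bound following since $X_1$ has weight $1$. No differences in approach to note.
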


Although our results on character polynomials in general hold only over fields of characteristic zero, this ``eventually polynomial'' growth of dimension holds even over positive characteristic, as we show in Theorem \ref{PolyDimCharP} (stated above). Our proof of Theorem \ref{PolyDimCharP} uses result in type A proven by Church--Ellenberg--Farb--Nagpal \cite[Theorem 1.2]{CEFN}.

\subsection{$\FIW\sh$--modules} 
In Section \ref{SectionFISharp} we study \emph{$\FI_{BC}\sh$--modules}, a class of $\FI_{BC}$--modules with additional structure: the $\FI_{BC}$ morphisms admit partial inverses. See Definition \ref{DefnFISharp} for a complete description. $\FI_{BC}\sh$--modules mirror the $\FI\sh$--modules (``FI sharp modules'') introduced by Church--Ellenberg--Farb \cite{CEF} in type A. 

In Example \ref{FIModuleExamples}, the $\FI_{BC}$--modules in number \ref{FIExample-TrivialReps}, \ref{FIExample-SignedPermMatrices}, \ref{FIExample-Schur}, and \ref{FIExample-Induced} all have $\FI_{BC}\sh$--module structures. Number \ref{FIExample-ZeroMaps} and \ref{FIExample-Truncated} cannot be promoted to $\FI_{BC}\sh$--modules. 

The structure of a finitely generated $\FI_{BC}\sh$--module is highly constrained. We prove in Theorem \ref{ClassifyingFISharp} that $\FI_{BC}\sh$--modules can be decomposed as direct sums of $\FIW$--modules of the form
$$ \left\{ \bigoplus_{m=0} \Ind_{B_{m}\times B_{n-m}}^{B_n} U_m \boxtimes k \right\}_n = \bigoplus_{m=0} M_{\W}(U_m). $$ \noindent As in Example \ref{FIModuleExamples}.\ref{FIExample-Induced},  $k$ denotes the trivial $B_{n-m}$--representation, and $U_m$ is a $B_m$--representation, possibly $0$. The external tensor product $(U_m \boxtimes k)$ is the $k$--module $(U_m \otimes_k k)$ as a $(B_{m}\times B_{n-m})$--representation. This classification result parallels a corresponding statement for $\FI\sh$--modules proven by Church--Ellenberg--Farb \cite[Theorem 2.24]{CEF}. 

Some consequences of this additional structure: an $\FI_{BC}\sh$--module finitely generated in degree $\leq d$ has characters equal to a unique character polynomial of degree at most $d$ for \emph{all} values of $n$, and dimensions given by a polynomial in $n$ of degree at most $d$ for all $n$. 
Additional consequences are given in Section \ref{CharacterPolynomialsFISharp}. 


\subsection{Some applications}

$\FIW$--modules and $\FIW\sh$--modules arise naturally throughout geometry and topology, and in Section \ref{SectionApplications} we use the theory developed here to give results for two such families: the cohomology groups of the pure string motion group $\PS_n$, and the cohomology groups of the hyperplane complements associated to the reflection groups $\W_n$. \\

 \noindent {\bf  Application: the pure string motion group.  \quad} Let $\PS_n$ be the group of  \emph{pure string motions}, motions of $n$ disjoint, unlinked, unknotted, smoothly embedded circles $S^1$ in $\R^3$. This motion group is a generalization of the pure braid group, and can be realized as the group of \emph{pure symmetric automorphisms} of the free group $F_n$; see Section \ref{SectionPureStringMotion} for a complete definition. 
 
 The pure string motion group also appears in the literature under the names the \emph{group of loops}, the \emph{pure untwisted ring group},  the group of \emph{basis-conjugating automorphisms} of the free group, the \emph{Fouxe-Rabinovitch automorphism group} of the free group, and the \emph{Whitehead automorphism group} of the free group. For more background on these groups and their cohomology, see for example Brendle--Hatcher \cite{BrendleHatcher}, Brownstein--Lee \cite{BrownsteinLee}, Dahm \cite{Dahm}, Goldsmith \cite{Goldsmith}, Jensen--McCammond--Meier \cite{JMM}, McCool \cite{McCool}, or Wilson \cite{PureStringMotion}.

\newtheorem*{PureStringMotionSharp}{Theorem \ref{PureStingMotionisFI-Sharp}} \begin{PureStringMotionSharp}
Let $k$ be $\Z$ or $\Q$. The cohomology rings $ H^*(\PS_{\bullet},  k)$ form an $\FI_{BC} \sh$--module, and a graded $\FI_{BC}$--algebra of finite type, with  $ H^m(\PS_{\bullet},  k)$ finitely generated in degree $\leq 2m$. In particular the $\FI_{BC}$--algebra $ H^*(\PS_{\bullet},  \Q)$ has slope $\leq 2$.
\end{PureStringMotionSharp}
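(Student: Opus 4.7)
My plan is to produce the $\FI_{BC}\sh$-structure at the group level by explicitly constructing projection/inclusion homomorphisms between the $\PS_n$, and then to bound the degree of generation by combining this with the known ring structure of $H^*(\PS_n;k)$.

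First I would build the $\FI_{BC}\sh$-structure at the group level.  Realize $\PS_n$ as the group of pure symmetric automorphisms of $F_n=\langle x_1,\dots,x_n\rangle$: automorphisms sending each $x_i$ to a conjugate of itself.  For each morphism of $\FI_{BC}\sh$, regarded as a partial injection $(A,B,\phi)\colon\bm\to\bn$ with $A\subseteq\bm$, $B\subseteq\bn$, and $\phi\colon A\to B$ a sign-respecting bijection, define a group homomorphism $\rho_{(A,B,\phi)}\colon\PS_n\to\PS_m$ by
\[
\rho_{(A,B,\phi)}(\psi)\;=\;p\circ\psi\circ\iota,
\]
where $\iota\colon F_m\to F_n$ sends $y_a$ (for $a\in A$) to $x_{\phi(a)}^{\pm1}$ with sign dictated by $\phi$ and every other generator to $1$, and $p\colon F_n\to F_m$ sends $x_b$ (for $b\in B$) to $y_{\phi^{-1}(b)}^{\pm1}$ and every other generator to $1$.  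A direct computation using $\psi(x_i)=g_ix_ig_i^{-1}$ shows that $\rho_{(A,B,\phi)}(\psi)$ is again pure symmetric, and checking the recipe on composites of partial injections yields functoriality.  Passing to cohomology produces the covariant functor $H^*(\PS_\bullet;k)\colon\FI_{BC}\sh\to k\text{--Mod}$; since each $\rho^*_{(A,B,\phi)}$ is a ring homomorphism, this upgrades to a graded $\FI_{BC}$-algebra, and specializing to $A=\bm$ recovers the $\FI_{BC}$-structure.

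Next I would establish finite generation of each $H^m(\PS_\bullet;k)$ in $\FI_{BC}$-degree $\leq 2m$.  The key input is the computation of $H^*(\PS_n;k)$ by Jensen--McCammond--Meier, which presents the cohomology ring as generated by degree-one classes $\alpha_{ij}^*$ dual to a standard basis of $\PS_n^{\mathrm{ab}}$ indexed by ordered pairs $(i,j)$ with $i\neq j$.  Each $\alpha_{ij}^*$ is supported on the two-string set $\{i,j\}\subseteq\bn$, i.e.\ lies in the image of the $\FI_{BC}$-structure map associated to a morphism $\mathbf{2}\to\bn$ applied to a class in $H^1(\PS_2;k)$.  Hence $H^1(\PS_\bullet;k)$ is finitely generated as an $\FI_{BC}$-module in degree $\leq 2$.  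Because cup product factors through the $\FI_{BC}$-module tensor product, the $m$-th graded piece is a quotient of $H^1(\PS_\bullet;k)^{\otimes m}$; using the fact that tensor products of finitely generated $\FIW$-modules are finitely generated with degrees of generation adding (reviewed in Section \ref{SectionFIWFoundationsReview}), we conclude that $H^m(\PS_\bullet;k)$ is finitely generated in degree $\leq 2m$, which in particular bounds the slope of the $\FI_{BC}$-algebra by $2$.

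The main obstacle is the first step: rigorously verifying the $\FI_{BC}\sh$-structure (as opposed to the weaker $\FI_{BC}$-structure).  This requires tracking both the "inclusion" and "projection" halves of each partial injection at the level of free-group homomorphisms, and in particular checking that sign-reversing bijections between $A$ and $B$, which correspond to inverting free-group generators, really do lift compatibly to the $\PS_n$ so that composition of partial injections realizes composition of group homomorphisms on the nose rather than merely up to conjugation.  Once this bookkeeping is in place, cup-product compatibility and the degree-$2m$ generation bound follow formally from the construction and the known ring presentation.
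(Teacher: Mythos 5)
Your strategy of constructing the $\FI_{BC}\sh$-structure at the group level is a genuinely different route from the paper, which simply writes down the maps on the cohomology generators $\alpha_{i,j}^*$ and checks directly that they are functorial algebra maps.  Your approach is attractive because, if it works, it yields the stronger statement that the sequence $\{\PS_n\}$ itself carries an $\FI_{BC}\sh^{\mathrm{op}}$-structure, from which the cohomology statement follows formally.  However, as written, the definition of $\rho_{(A,B,\phi)}$ has a genuine gap that is located exactly where the $\FI_{BC}\sh$-structure strictly exceeds the $\FI_{BC}$-structure.

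The problem is that when $A\subsetneq\bm$ (rank less than $m$), the endomorphism $p\circ\psi\circ\iota\colon F_m\to F_m$ is not an automorphism: for $a\notin A$ you have $\iota(y_a)=1$, hence $(p\circ\psi\circ\iota)(y_a)=1$, so the image of $\rho_{(A,B,\phi)}(\psi)$ lies in the proper free factor $F_A\subsetneq F_m$ and the map is not injective.  Consequently $\rho_{(A,B,\phi)}(\psi)$ does not belong to $\PS_m\subseteq\Aut(F_m)$, and there is no pullback map $\rho_{(A,B,\phi)}^*$ on group cohomology.  (For $A=\bm$, i.e.\ genuine $\FI_{BC}$-morphisms, your argument is fine: $\iota$ is injective, $\psi$ preserves $\ker p$ because $\psi(x_b)=g_bx_bg_b^{-1}$, so $\psi$ descends to an automorphism of $F_n/\ker p\cong F_m$, and this is precisely $p\circ\psi\circ\iota$.)  The fix is to replace $\rho$ by the genuinely pure symmetric automorphism $\tilde\rho_{(A,B,\phi)}(\psi)$ defined by $\tilde\rho(\psi)(y_a)=p(\psi(\iota(y_a)))$ for $a\in A$ and $\tilde\rho(\psi)(y_a)=y_a$ for $a\notin A$; equivalently, factor the partial injection through $\bko$ with $k=|A\cap[m]|$ and define $\tilde\rho$ as the composite $\PS_n\twoheadrightarrow\PS_k\hookrightarrow\PS_m$ of the forgetful map (restrict to the image copy of $F_k$ via the $\ker p$ argument above) and the subgroup inclusion.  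With this modification one checks, just as you outline, that $\tilde\rho(\psi_1\psi_2)=\tilde\rho(\psi_1)\tilde\rho(\psi_2)$ (using $\psi_i(\ker p)=\ker p$), that the recipe is functorial for composites of partial injections, and that $\tilde\rho^*_f$ reproduces the paper's formula $\alpha_{i,j}^*\mapsto\mathrm{sign}(f(j))\,\alpha_{|f(i)|,|f(j)|}^*$ (with the convention that this is zero when $f(i)=0$ or $f(j)=0$).  Your second step, bounding the generation degree using the Jensen--McCammond--Meier presentation, the tensor-product finite-generation lemma, and Proposition \ref{SlopeFinGenFromGeneratorsReview}, is correct and in fact coincides with the paper's argument.
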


We recover (with considerably less effort) the main result of our previous paper \cite{PureStringMotion}:

\newtheorem*{StringMotionRepStability}{Corollary \ref{PSnRepStability} }
\begin{StringMotionRepStability} For each $m$, the sequence $ \{ H^m(\PS_{n};\Q)\}_n$ of representations of $B_n$ (or $S_n$) is uniformly representation stable, stabilizing once $n \geq 4m$.
\end{StringMotionRepStability}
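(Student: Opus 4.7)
The plan is to deduce this corollary as a direct consequence of Theorem~\ref{PureStingMotionisFI-Sharp} together with the general representation stability machinery for finitely generated $\FIW$--modules developed in \cite{FIW1}. First, I would invoke Theorem~\ref{PureStingMotionisFI-Sharp} to regard $\{H^m(\PS_n; \Q)\}_n$ as an $\FI_{BC}\sh$--module (and in particular an $\FI_{BC}$--module) finitely generated in degree $\leq 2m$; in particular its weight is at most $2m$.

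Next, I would use the classification of $\FI_{BC}\sh$--modules (Theorem~\ref{ClassifyingFISharp}) to control the stability degree. An $\FI_{BC}\sh$--module finitely generated in degree $\leq d$ decomposes as a direct sum of modules $M_{\W}(U_j)$ with $j \leq d$, and each summand has stability degree bounded by $j$. Hence the stability degree of $\{H^m(\PS_n; \Q)\}$ (and of $\Ind_D^{BC}$ applied to it, for the type D bookkeeping) is also at most $2m$.

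Third, I would appeal to the uniform representation stability theorem for finitely generated $\FIW$--modules from \cite{FIW1} (the analogue in types B/C and D of the Church--Ellenberg--Farb result in type A): an $\FI_{BC}$--module with weight $\leq d$ and stability degree $\leq s$ is uniformly representation stable with stable range $n \geq d + s$. Substituting $d = s = 2m$ yields the claimed stable range $n \geq 4m$. For the $S_n$ version, I would restrict the $B_n$--action along $S_n \hookrightarrow B_n$ and invoke the compatibility of uniform representation stability with the restriction functor $\FI_{BC} \to \FI_A$ established in \cite{FIW1}, which preserves the same stable range $n \geq 4m$ (branching rules from $B_n$ to $S_n$ stabilize in the same range once the $B_n$ multiplicities do).

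The main obstacle is purely bookkeeping: identifying the correct upper bound on the stability degree that the $\FI_{BC}\sh$ hypothesis affords, and verifying that the stable range in the $\FIW$ representation stability theorem combines weight and stability degree as $d + s$ (rather than via some weaker formula). Once these two numerical bounds are pinned down against the definitions in Sections~\ref{SectionWeightReview} and \ref{SectionCoinvariantsStabilityDegreeReview}, the corollary follows from Theorem~\ref{PureStingMotionisFI-Sharp} essentially by quotation, with no further geometric or group-theoretic input beyond the $\FI_{BC}\sh$--module structure on $H^*(\PS_\bullet; \Q)$.
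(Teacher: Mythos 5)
Your proposal is correct and follows essentially the same route as the paper: the paper immediately quotes Corollary~\ref{SharpRepStableDeg2d} (an $\FI_{BC}\sh$--module generated in degree $\leq d$ is uniformly representation stable for $n \geq 2d$), whose proof is precisely the weight bound from Theorem~\ref{WnDiagramSizesReview}, the stability-degree bound from Corollary~\ref{SharpStabilityDegd} via the decomposition of Theorem~\ref{ClassifyingFISharp}, and the $n \geq d + s$ range from Theorem~\ref{StabilityDegreeImpliesRepStabilityReview} -- exactly the chain you recover by hand. (Your mention of $\Ind_D^{BC}$ is unnecessary here since the module lives natively in type B/C, but it does no harm.)
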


A consequence of uniform representation stability, which follows from stability for the trivial representation and a transfer argument, is rational homological stability for the string motion group $\S_n$. This recovers the rational case of a result of Hatcher and Wahl \cite[Corollary 1.2]{HatcherWahl}. More details are given in Section 7 of \cite{PureStringMotion}.

Another consequence of Theorem \ref{PureStingMotionisFI-Sharp} is the existence of character polynomials. Because these cohomology groups are $\FI_{BC} \sh$--modules, their characters are equal to the character polynomial for all values of $n$, and not just $n$ sufficiently large. 

\newtheorem*{StringMotionCharPoly}{Corollary \ref{PSnCharPoly}}
\begin{StringMotionCharPoly} Let $k$ be $\Z$ or $\Q$. Fix an integer $m \geq 0$. The characters of the sequence of $B_n$--representations $\{ H^m(\PS_{n}; k) \}_n$ are given, for all values of $n$, by a unique character polynomial of degree $\leq 2m$. 
\end{StringMotionCharPoly}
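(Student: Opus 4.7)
The plan is to deduce this corollary by combining Theorem \ref{PureStingMotionisFI-Sharp} with the $\FI_{BC}\sh$--module character theory developed in Section \ref{CharacterPolynomialsFISharp}. Theorem \ref{PureStingMotionisFI-Sharp} supplies the $\FI_{BC}\sh$--structure on $H^m(\PS_\bullet; k)$ together with the degree-of-generation bound $\leq 2m$. The general fact (noted in the introduction following the statement of Theorem \ref{ClassifyingFISharp}) that a finitely generated $\FI_{BC}\sh$--module of degree $\leq d$ has characters given by a unique character polynomial of degree $\leq d$ valid for \emph{every} $n \geq 0$, applied with $d = 2m$, then immediately yields the claim.

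The mechanism behind that general fact is what I would make explicit. By the classification Theorem \ref{ClassifyingFISharp}, the $\FI_{BC}\sh$--module decomposes as a finite direct sum $\bigoplus_{d \leq 2m} M_{\W}(U_d)$ for certain $B_d$--representations $U_d$, so it suffices to treat each summand $M_{\W}(U_d)_n = \Ind_{B_d \times B_{n-d}}^{B_n} U_d \boxtimes k$. Frobenius induction evaluates the character at $\sigma \in B_n$ as a sum over ways to pick $d$ signed cycles of $\sigma$ on which to evaluate $\chi_{U_d}$, the remaining cycles contributing the trivial character. Reorganizing this sum in terms of the signed-cycle-counting class functions $X_r, Y_r$ produces a polynomial of degree $\leq d$ (using $\deg X_r = \deg Y_r = r$ and that $U_d$ is supported on cycles summing to $d$). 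Crucially the resulting identity is exact for every $n \geq 0$: in the degenerate range $n < d$ both sides vanish automatically. Summing over $d \leq 2m$ gives the claimed degree bound, and uniqueness follows from the algebraic independence of $\{X_r, Y_r\}$ as class functions on $\coprod_n B_n$.

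The case $k = \Z$ reduces to $k = \Q$ by tensoring: the integer-valued trace of an integral representation agrees with the trace of its rationalization, so the same character polynomial works. The main obstacle, beyond bookkeeping, is verifying that the degree-of-generation bound $\leq 2m$ propagates cleanly through both the classification decomposition and the Frobenius computation without inflation. This is precisely the additional input the $\sh$--structure provides over generic $\FI_{BC}$--modules, where Theorem \ref{WnPersinomial} would only yield a character polynomial in some stable range $n \geq s + d$; the partial inverses built into $\FI_{BC}\sh$ are what remove the ``$n$ sufficiently large'' caveat here.
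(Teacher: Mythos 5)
Your argument is essentially the paper's: invoke Theorem \ref{PureStingMotionisFI-Sharp} to get the $\FI_{BC}\sh$--structure finitely generated in degree $\leq 2m$, then appeal to the $\FI_{BC}\sh$ character polynomial result (Corollary \ref{CharPolyFISharp}, proved via Theorem \ref{ClassifyingFISharp} and the induced-character formula of Proposition \ref{InducedCharacter}/Lemma \ref{LemInducedRepFormula}) to conclude the characters agree with a unique degree-$\leq 2m$ character polynomial for \emph{all} $n$. For $k = \Z$ the paper likewise reduces to $\Q$, with the small but necessary extra observation (which you leave implicit) that $H^m(\PS_n;\Z)$ is free abelian by Jensen--McCammond--Meier, so the integral trace agrees with that of the rationalization.
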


We compute these character polynomials explicitly in degree $1$ and $2$:
\begin{align*} 
\chi_{H^1(\PS_{\bullet}; \Z)}  & = X_1^2 -X_1 - Y_1^2 + Y_1 \\
\chi_{H^2(\PS_\bullet; \Z)}  & =  \; 2 X_2 + Y_1^2 + 2 Y_2^2 - X_1^2 Y_1^2 - \frac32 Y_1^3 + \frac12 Y_1^4 + X_1^2 - 2X_2^2 -\frac32 X_1^3  \\ & + \frac12 X_1^4 + \frac12 X_1 Y_1^2 - X_1  Y_2 -X_2 Y_1 - Y_1 Y_2 + \frac12 X_1^2 Y_1 - X_1 X_2 - 2Y_2  
\end{align*}

\noindent It is an open problem to compute these polynomials for larger values of $m$. \\

\noindent  {\bf  Application: hyperplane complements. \quad} Each family of groups $\W_n$ has a canonical action on $\R^n$ by signed permutation matrices; we denote by $\cA_{\W}(n)$ the set of complexified hyperplanes fixed by reflections in $\W_n$, and $$\cM_{\W}(n) := \C^n \bigg\backslash \bigcup_{H \in \cA(n)} H$$ the associated hyperplane complement. See Section \ref{SectionHyperplaneComplements} for explicit descriptions of these spaces, and a brief survey of results on the structure of their cohomology rings. In type A, the space $\cM_{A}(n)$ is precisely the ordered $n$-point configuration space of $\C$, and Church--Ellenberg--Farb show its cohomology groups are finitely generated $\FI_A\sh$--modules \cite[Theorem 4.7]{CEF}.  Using a presentation for $H^*(\cM_{\W}(n); \C)$ computed by Brieskorn \cite{Brieskorn} and Orlik--Solomon \cite{OrlikSolomon}, we generalize the results of \cite{CEF} to all three families of classical Weyl groups. 

\newtheorem*{HyperplaneComplementSharp}{Theorem \ref{HyperplaneCohomologyIsFISharp}} \begin{HyperplaneComplementSharp}
Let $\cM_{\W}$ be the complex hyperplane complement associated with the Weyl group $\W_n$ in type A$_{n-1}$, B$_n$/C$_n$, or D$_n$. In each degree $m$, the groups $H^m(\cM_{A}(\bullet), \C)$ form an $\FI_A\sh$--module finitely generated in degree $\leq 2m$, and both $H^m(\cM_{BC}(\bullet), \C)$ and $H^m(\cM_{D}(\bullet), \C)$ are $\FI_{BC}\sh$--modules finitely generated in degree $\leq 2m$. 
  \end{HyperplaneComplementSharp}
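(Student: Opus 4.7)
The plan is to extend Church--Ellenberg--Farb's argument for the type A configuration space \cite[Theorem 4.7]{CEF} to the remaining classical types using the Brieskorn--Orlik--Solomon presentation of $H^*(\cM_{\W}(n); \C)$.

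First I would set up the $\FIW$--module structure. An $\FIW$--morphism $f : \bm \to \bn$ induces a coordinate projection $\pi_f : \C^n \to \C^m$ by $(z_1, \ldots, z_n) \mapsto \bigl( \mathrm{sgn}(f(i))\, z_{|f(i)|} \bigr)_{i=1}^{m}$. The injectivity and sign-equivariance of $f$ guarantee that $\pi_f$ restricts to a continuous map $\cM_{\W}(n) \to \cM_{\W}(m)$: the preimage of each hyperplane in $\cA_{\W}(m)$ is either a hyperplane in $\cA_{\W}(n)$ or is disjoint from $\cM_{\W}(n)$. Applying $H^m(-;\C)$ yields a covariant functor $\FIW \to \C\sMod$. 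For the type D complement, the arrangement $\cA_D(n)$ is invariant under the full group $B_n$ of signed permutations (not only $D_n$), so the same formula yields an $\FI_{BC}$--module structure on $H^*(\cM_D(\bullet); \C)$, as the theorem claims.

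For the finite generation bound, I would invoke the Brieskorn--Orlik--Solomon presentation: $H^*(\cM_{\W}(n); \C)$ is generated as a graded $\C$--algebra in degree $1$ by classes $\omega_H$ indexed by $H \in \cA_{\W}(n)$. Each reflecting hyperplane is cut out by a linear equation in at most two coordinates, namely $z_i - z_j = 0$ in type A, $z_i \pm z_j = 0$ in types B/C and D, and additionally $z_i = 0$ in type B/C. Thus every $\omega_H$ is the image of a degree--$1$ class under a structure map from some $\bm$ with $m \leq 2$, and any product $\omega_{H_1} \cdots \omega_{H_m}$ in degree $m$ involves at most $2m$ distinct labels. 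It follows that $H^m(\cM_{\W}(\bullet); \C)$ is finitely generated as an $\FIW$--module in degree $\leq 2m$.

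To promote the $\FIW$--structure to an $\FIW\sh$--structure, I would apply the classification Theorem \ref{ClassifyingFISharp} and exhibit a direct sum decomposition of the form $\bigoplus_k M_{\W}(U_k)$. The natural candidate arises from the nbc (no broken circuit) basis of the Orlik--Solomon algebra: each nbc monomial is supported on a specific finite set of coordinate labels, and grouping monomials by this support should identify the corresponding piece with an induced representation $\Ind_{\W_k \times \W_{n-k}}^{\W_n} U_k \boxtimes \C$. The main obstacle will be verifying that this splitting is genuinely compatible with the $\FIW\sh$--morphism category in types B/C and D, where the signed-hyperplane combinatorics is richer than in type A and the partial sign-reversing morphisms must be shown to act consistently on the complement of the support of each nbc monomial. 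In the type D case one additionally checks that the splitting is stable under the full $B_n$--action and so delivers an $\FI_{BC}\sh$--module rather than merely an $\FI_D\sh$--module; this reduces to the $B_n$--invariance of $\cA_D(n)$ established in the first step.
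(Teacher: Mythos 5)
Your first two steps track the paper's argument. The $\FIW$--module structure via coordinate projections is the same, and the $2m$ bound on generation degree is exactly what the paper obtains by applying Proposition \ref{SlopeFinGenFromGeneratorsReview} to the degree--one piece of the Orlik--Solomon algebra, which is generated in $\FIW$--degree $\leq 2$ because each reflecting hyperplane involves at most two coordinate labels.

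For the promotion to an $\FIW\sh$--module you diverge from the paper and there is a genuine gap. The paper does \emph{not} route through the classification Theorem \ref{ClassifyingFISharp} or an nbc decomposition. Instead it exhibits a continuous section $S : \cM_{\W}(n) \to \cM_{\W}(n+1)$ of the coordinate projection (inserting a new coordinate far from the others), and from this writes down explicit formulas for the action of an $\FI_{BC}\sh$--morphism $f : \bmo \to \bno$ on the Orlik--Solomon generators: $e_H$ is carried to the generator of the hyperplane cut out by the transported equation when the relevant coordinates survive, and to $0$ otherwise. Functoriality and compatibility with the Orlik--Solomon relations are then a direct check on generators. Your plan to produce a decomposition $\bigoplus_k M_{\W}(U_k)$ from the support sets of nbc monomials has two unresolved difficulties that you yourself flag but do not close: the nbc basis depends on a fixed total ordering of $\cA_{\W}(n)$, and you must argue these orderings can be chosen coherently in $n$ so that sharp (sign-reversing, partially defined) morphisms respect the support grading; moreover a sharp morphism can carry an nbc monomial to a monomial that is \emph{not} nbc on its new support, so even a coherent ordering does not obviously make the naive splitting equivariant without further work. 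As written, your argument establishes the $\FIW$--module structure and the degree bound, but not the $\sh$--structure that the theorem asserts.

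One smaller point on type D. Your observation that $\cA_D(n)$ is $B_n$--invariant is correct and sufficient in principle, but the paper handles type D more economically by noting that the inclusion $\cM_{BC}(n) \hookrightarrow \cM_D(n)$ induces an injection $H^*(\cM_D(n);\C) \hookrightarrow H^*(\cM_{BC}(n);\C)$, identifying the type D cohomology with the sub--$\FI_{BC}\sh$--algebra generated by $e_{(\be_i \pm \be_j)^\perp}$; the $\sh$--structure is then inherited rather than reverified. Note also that, per Remark \ref{RemarkFIDSharp}, there is no satisfactory category $\FI_D\sh$, so the phrasing ``rather than merely an $\FI_D\sh$--module'' is not quite apt: the only coherent sharp structure available in type D is the $\FI_{BC}\sh$ one.
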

  
  \newtheorem*{hyperplaneRepStability}{Corollary \ref{HypRepStable}}
  \begin{hyperplaneRepStability}
  In each degree $m$, the sequence of cohomology groups $\{H^m(\cM_{\W}(n), \C)\}_n$ is uniformly representation stable in degree $\leq 4m$. 
\end{hyperplaneRepStability}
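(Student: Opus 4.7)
My plan is to derive Corollary \ref{HypRepStable} as an essentially formal consequence of Theorem \ref{HyperplaneCohomologyIsFISharp} together with the general $\FIW$--module representation stability machinery developed in \cite{FIW1} and reviewed in this paper.

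First, Theorem \ref{HyperplaneCohomologyIsFISharp} endows $\{H^m(\cM_{\W}(n), \C)\}_n$ with the structure of an $\FIW\sh$--module, hence in particular of an $\FIW$--module, finitely generated in degree $\leq 2m$. In particular the weight of this $\FIW$--module is at most $2m$.

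Second, I apply the theorem that a finitely generated $\FIW$--module over characteristic zero is uniformly representation stable, with a stable range controlled by the weight and stability degree. Because the module carries an $\FIW\sh$--structure, the classification result (Theorem \ref{ClassifyingFISharp}) decomposes it as a direct sum $\bigoplus_{j \leq 2m} M_{\W}(U_j)$ of induced $\FIW$--modules of the form described in Example \ref{FIModuleExamples}.\ref{FIExample-Induced}. The irreducible constituents of each summand $M_{\W}(U_j)_n$ stabilize once $n \geq 2j$, so uniform representation stability for the total $\FIW$--module holds for $n \geq 4m$.

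The substantive content has been packaged into Theorem \ref{HyperplaneCohomologyIsFISharp}; what remains is bookkeeping to verify that the stable range for each induced summand $M_{\W}(U_j)_n = \Ind_{\W_j \times \W_{n-j}}^{\W_n} U_j \boxtimes \Q$ is $n \geq 2j$. This is a direct analog of the type A statement established by Church--Ellenberg--Farb, and follows from the explicit branching rules for induction from $\W_j \times \W_{n-j}$ to $\W_n$. The main (minor) obstacle is assembling the bounds across summands and verifying that the worst case $j = 2m$ indeed produces the advertised uniform range $4m$.
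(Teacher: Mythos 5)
Your proposal is correct and follows essentially the same route the paper takes: the paper derives Corollary \ref{HypRepStable} by combining Theorem \ref{HyperplaneCohomologyIsFISharp} with Corollary \ref{SharpRepStableDeg2d}, whose proof is precisely the argument you unrolled (the $\FI_{BC}\sh$ classification Theorem \ref{ClassifyingFISharp} gives a decomposition $\bigoplus_{j\le 2m} M_{\W}(U_j)$, each summand has weight $\le j$ and stability degree $\le j$, hence stabilizes for $n\ge 2j$, and the worst case $j=2m$ gives $4m$). The only point to keep in view is that there is no $\FI_D\sh$ category (Remark \ref{RemarkFIDSharp}), so for type D the $\sh$--structure and decomposition live over $\FI_{BC}$, with $H^m(\cM_D(\bullet),\C)$ realized as a sub--$\FI_{BC}\sh$--module of $H^m(\cM_{BC}(\bullet),\C)$ — exactly as Theorem \ref{HyperplaneCohomologyIsFISharp} arranges.
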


\newtheorem*{hyperplaneCharPoly}{Corollary \ref{HyperplaneComplementCharPoly}}
\begin{hyperplaneCharPoly} 
  In each degree $m$, the sequence of characters of the $\W_n$--representations $H^m(\cM_{\W}(n), \C)$ are given by a unique character polynomial of degree $\leq 2m$ for all $n$. 
\end{hyperplaneCharPoly}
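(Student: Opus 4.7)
The plan is to derive Corollary \ref{HyperplaneComplementCharPoly} as an essentially immediate consequence of Theorem \ref{HyperplaneCohomologyIsFISharp}, once the character-theoretic consequences of $\FIW\sh$-structure developed in Section \ref{CharacterPolynomialsFISharp} are in hand. The key strengthening of Theorem \ref{WnPersinomial} in the $\FIW\sh$ setting, recorded in that section, is that a finitely generated $\FI_{BC}\sh$-module (respectively $\FI_A\sh$-module) generated in degree $\leq d$ admits a unique character polynomial of degree $\leq d$ that computes its character for \emph{every} value of $n$, not merely $n$ in a stable range. Since Theorem \ref{HyperplaneCohomologyIsFISharp} already produces this structure on $H^m(\cM_\W(\bullet), \C)$ with generation degree $\leq 2m$, the degree and range assertions will follow with no additional work.

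In types A and B/C the argument is immediate: Theorem \ref{HyperplaneCohomologyIsFISharp} supplies an $\FI_A\sh$- or $\FI_{BC}\sh$-module structure finitely generated in degree $\leq 2m$, and the consequences recalled above produce a unique character polynomial in the variables $X_r$ (and $Y_r$ in type B/C) of degree $\leq 2m$ computing the $\W_n$-character for all $n$. For type D, Theorem \ref{HyperplaneCohomologyIsFISharp} gives the stronger statement that $H^m(\cM_D(\bullet), \C)$ carries an $\FI_{BC}\sh$-module structure, so the groups $H^m(\cM_D(n),\C)$ are $B_n$-representations whose restrictions to $D_n \subseteq B_n$ recover the natural $D_n$-action. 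The $\FI_{BC}\sh$ consequence then yields a unique $F_V \in \Q[X_1, Y_1, X_2, Y_2, \ldots]$ computing the $B_n$-character for every $n$, and evaluating this same polynomial on elements of the subgroup $D_n$ gives the $D_n$-character. Uniqueness of $F_V$ as an element of $\Q[X_1, Y_1, X_2, Y_2, \ldots]$ is not disturbed by the subtlety that $X_r, Y_r$ fail to separate the split conjugacy classes of $D_n$ for $n$ even, because uniqueness is established upstream, at the level of the $\FI_{BC}\sh$-module, where $X_r, Y_r$ are algebraically independent class functions on $\coprod_{n \geq 0} B_n$.

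The main obstacle is therefore not in the corollary itself but in its two inputs: establishing the $\FIW\sh$-structure in Theorem \ref{HyperplaneCohomologyIsFISharp} (which rests on the Brieskorn--Orlik--Solomon presentation of $H^*(\cM_\W(n);\C)$ together with the partial-inverse structure of $\FI_{BC}\sh$), and proving the all-$n$ character polynomial statement for $\FIW\sh$-modules in Section \ref{CharacterPolynomialsFISharp} (via the classification Theorem \ref{ClassifyingFISharp}, which reduces the problem to computing character polynomials for the building blocks $M_\W(U_m)$, combined with the degree bound from Theorem \ref{WnPersinomial}). Given those two results, the present corollary requires no further computation.
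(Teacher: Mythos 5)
Your proposal is correct and follows essentially the same route as the paper: invoke Theorem \ref{HyperplaneCohomologyIsFISharp} for the $\FIW\sh$-structure finitely generated in degree $\leq 2m$, then apply the $\FIW\sh$-specific all-$n$ character polynomial statement (Corollary \ref{CharPolyFISharp}, or the final sentence of Theorem \ref{SnPersinomial} in type A), and in type D restrict the unique $B_n$-polynomial coming from the $\FI_{BC}\sh$-structure on $H^m(\cM_D(\bullet),\C)$. If anything you are slightly more careful than the paper's one-line proof, which cites Theorem \ref{WnPersinomial} for the B/C case even though that theorem alone only gives the stable range $n \geq s+d$; your explicit appeal to the $\FI_{BC}\sh$ consequence is the right citation for the all-$n$ claim, and your remark that uniqueness is inherited from the $B_n$-level (where $X_r, Y_r$ are algebraically independent class functions) correctly addresses the split-conjugacy-class subtlety in type D.
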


\noindent We emphasize that, because these sequences are $\FIW\sh$--modules, their characters are equal to the character polynomial for \emph{every} value of $n$. 

Corollary \ref{HypRepStable} recovers the work of Church--Farb \cite[Theorem 4.1 and 4.6]{RepStability} in types A and B/C. In type A, Theorem \ref{HyperplaneCohomologyIsFISharp} follows from the work of Church--Ellenberg--Farb \cite{CEF} on the cohomology of the ordered configuration space of the plane.

Character polynomials and stable decompositions for $H^m(\cM_{A}(\bullet), \C)$ are computed in \cite{CEF} for some small values of $m$. In Type B/C and D, we can also compute the character polynomials by hand in small degree:
\begin{align*} 
\chi_{H^1(\cM_{D}(\bullet), \C)}   = 2 { X_1 \choose 2 } + 2 { Y_1 \choose 2 } + 2 X_2 \qquad 
\chi_{H^1(\cM_{BC}(\bullet), \C)}   = 2 { X_1 \choose 2 } + 2 { Y_1 \choose 2 } + 2 X_2 + X_1 - Y_1 
\end{align*}
\noindent See Section \ref{SectionHyperplaneComplements} for the character polynomials and stable decompositions in degree $m$ is $1$ and $2$.

 \subsection{Relationship to the theory of FI--modules} \label{SectionEarlierWork}

Our theory of $\FIW$--modules has very close parallels to work of Church, Ellenberg, Farb, and Nagpal \cite{CEF, CEFN} on the symmetric groups, which we aim to highlight throughout this paper. Working with the other classical Weyl groups, however, we do encounter some new obstacles and some new phenomena. We enumerate some differences here:

\noindent {\bf Character polynomials in type B/C. \quad } The existence of character polynomials for finitely generated $\FI_A$--modules follows immediately from representation stability and classical results in algebraic combinatorics: the formula for the character polynomial of the irreducible $S_n$--representation $V(\y)_n$ appear in texts such as MacDonald \cite{MacdonaldSymmetric}. The achievement of \cite{CEF} here was uncovering this (regrettably little-known) formula and recognizing its implications for the study of $\FI_A$--modules. The analogous formulas for the irreducible $B_n$--representations are less readily available, however, and we compute these in Section \ref{SectionCharPolysBCD}. These signed character polynomials now involve two sets of variables $X_r$ and $Y_r$, corresponding to the positive and negative cycles for these groups. \\

\noindent {\bf Character polynomials in type D. \quad } Given the classification of conjugacy classes in type D (Section \ref{RepTheoryDnReview}), and the existence of 'split' classes that could not be characterized by signed cycle type, we had not expected an analogue of character polynomials to exist for sequences of $D_n$--representations, except in exceptional cases. A finitely generated $\FI_D$--module \emph{does} have characters equal, for large $n$, to a character polynomial. We establish this existence result by realizing the tail of a finitely generated $\FI_D$--module $V$ as the restriction of an $\FI_{BC}$--module, using properties of categorical induction $\Ind_{D}^{BC}$. \\
 
\noindent  {\bf A category $\FI_D\sh$? \quad } There does not appear to be a suitable analogue of $\FI\sh$ for the category $\FI_D$; see Remark \ref{RemarkFIDSharp}.  Fortunately, and perhaps not by coincidence, the applications in type D where we have expected this extra structure, such as the cohomology groups of the hyperplane complements $\cM_D(n)$, turned out to be restrictions of $\FI_{BC}\sh$--modules to $\FI_D \subseteq \FI_{BC}$.

\subsection*{Acknowledgments}

I am grateful to Benson Farb, Tom Church, Jordan Ellenberg, Rita Jimenez Rolland, and Peter May for numerous helpful discussions about this project. Thanks are especially due to Benson Farb, my PhD advisor, for extensive guidance and feedback throughout the project. 

This work was supported in part by a PGS D Scholarship from the Natural Sciences and Engineering Research Council of Canada.

\section{Background} \label{SectionBackgroundReview}
\subsection{The Weyl groups of classical type and their representation theory} \label{SectionRepTheoryReview}

We briefly summarize the rational representation theory of the three families of Weyl groups, and the associated notation used in this paper. A more detailed review, with additional references, is given in \cite[Section \ref{FIW1-SectionBackground}]{FIW1}.

\subsubsection{The symmetric group $S_n$} \label{RepTheorySnReview}

Given a partition $\y \vdash n$, $\y = ( \y_0, \y_1, \ldots, \y_r)$, we denote the \emph{parts} of the partition by $\y_i$ and index them in decreasing order $\y_0 \geq \y_1 \geq \cdots \geq \y_r$. We write $|\y|=n$ to indicate the size of the partition, and write $\ell(\y)$ to denote the \emph{length} of $\y$, the number of parts. 

The rational irreducible representations of the symmetric group $S_n$ are classified by partitions of $n$ (see for example Fulton--Harris \cite{FultonHarris}) and we write $V_{\y}$ to denote the $S_n$--representation associated to the partition $\y$.

 Given a partition $\y \vdash m$ and an integer $n \geq \y_1+m$, define the \emph{padded partition} $$\y[n]:= ( (n-m) , \y_1, \y_2, \ldots, \y_t).$$  We denote by $V(\y)_n$ the irreducible $S_n$--representation associated to $\y[n]$, that is,
\begin{align*}
 V(\y)_n &:= \left\{ \begin{array}{ll}
         V_{\y[n]} & \mbox{$(n-m)\geq \y_1$},\\
        0 & \mbox{otherwise}.\end{array} \right.
\end{align*}

\subsubsection{The hyperoctahedral group $B_n$} \label{RepTheoryBnReview}

Recall that the hyperoctahedral group (or signed permutation group) $B_n$ is the group of generalized permutation matrices with nonzero entries $\pm 1$; equivalently, $B_n=(\Z / 2\Z)^n \rtimes S_n$ is the symmetry group of the set $\{ \{-1,1\}, \{-2, 2\}, \ldots, \{-n, n\} \}.$ We frequently consider $B_n$ as  a subgroup of the symmetric group on the set $$\Omega = \{-1, 1, -2, 2, \ldots, -n, n \},$$ and write signed permutations in the corresponding cycle notation.\\

{ \noindent \bf  The rational representation theory of $B_n$.  \quad}  Recall that the irreducible rational $B_n$--representations are classified by \emph{double partitions} of $n$, ordered pairs of partitions $\y = (\y^+, \y^-) \text{ with}$ $|\y^+|+|\y^-|=n,$ as follows. 

 Let $V_{( \y^+, \varnothing)}$ denote the $B_n$--representation pulled back from $S_n$--representation $V_{\y^+}$, and denote $$V_{( \varnothing, \y^-)}  : = V_{( \y^-, \varnothing)} \otimes \Q^{\varepsilon},$$ where $\Q^{\varepsilon}$ is the one-dimensional representation given by the character $\varepsilon: B_n \to B_n / D_n = \{\pm 1 \}$. Then for $\y^+ \vdash m$ and $\y^- \vdash (n-m)$ we define 
$$ V_{(\y^+, \y^-)} := \Ind_{B_m \times B_{n-m}}^{B_n} V_{(\y^+, \varnothing)} \boxtimes V_{(\varnothing,\y^-)}, $$ 
where $\boxtimes$ again denotes the external tensor product of the $B_m$--representation $V_{(\y^+, \varnothing)}$ with the $B_{n-m}$--representation $V_{(\varnothing,\y^-)}$. The rational irreducible $B_n$--representations are precisely the set $$\left\{ V_{(\y^+, \y^-)} \; \mid \; (\y^+, \y^-) \text{ is a double partition of $n$} \right\}.$$

For a double partition $\y = ( \y^+, \y^-)$ with $\y^+ \vdash \ell $ and $\y^- \vdash m $, we define the \emph{padded double partition} $\y[n] : = ( \y^+[n-m], \y^-)$ associated to $\y=(\y^+, \y^-)$. We write $V( \y )_n$ or $V( \y^+, \y^-)_n$ to denote the irreducible $B_n$--representation 
\begin{align*}
 V(\y)_n &:= \left\{ \begin{array}{ll}
         V_{\y[n]} & \mbox{$(n-m)\geq \y^+_1$},\\
        0 & \mbox{otherwise}.\end{array} \right.
\end{align*}

{ \noindent \bf  The conjugacy classes of $B_n$.  \quad} The conjugacy classes of $B_n$ are classified by \emph{signed cycle type}. 
Each element of $B_n$ decomposes into a product of \emph{cycles}; a factor is called an \emph{$r$--cycle} if it maps to an $r$--cycle in $S_n$ under the natural surjection. An $r$--cycle is \emph{positive} if its $r^{th}$ power is the identity, equivalently, if it reverses the sign of an even number of digits 
$\{\pm 1, \ldots, \pm n\}$.  An $r$--cycle is \emph{negative} if its $r^{th}$ power is the product of $r$ involutions $(-i \; i )$, equivalently, if it reverses the sign of an odd number of digits. For example, $(1)(-1)$ is a positive $1$--cycle, and $(-1 \; 1)$ is a negative $1$--cycle. 

We designate the cycle type of a signed permutation in $B_n$ by a double partition $(\nu^+, \nu^-)$ of $n$, where the parts of $\nu^+$ are the lengths of the positive cycles and the parts of $\nu^-$ are the lengths of the negative cycles.  

\subsubsection{The even-signed permutation group $D_n$} \label{RepTheoryDnReview}

The even-signed permutation group $D_n$ is the subgroup of signed permutation matrices $B_n$ of matrices that have an even number of entries equal to $-1$. 

{ \noindent \bf  The rational representation theory of $D_n$.  \quad} 

Given an irreducible representation $V_{(\y^+, \y^-)}$ of $B_n$ with $\y^+ \neq \y^-$, its restriction to $D_n$ is also irreducible. We denote this irreducible $D_n$--representation by

$$ V_{ \{\y^+, \y^- \}} := \Res^{B_n}_{D_n} V_{(\y^+, \y^-)}  \cong   \Res^{B_n}_{D_n} V_{( \y^-, \y^+)}, \qquad \text{$\y^+ \neq \y^-$}.$$

These $D_n$--representations are nonisomorphic for each distinct set of partitions $\{\y^+, \y^- \}$. When $n$ is even, for any partition $\y \vdash \frac{n}{2}$, the irreducible $B_n$--representation $V_{(\y, \y)}$ restricts to a sum of two nonisomorphic irreducible $D_n$--representations of equal dimension.  The irreducible rational representations of $D_n$ are therefore classified by the set $$ \left\{ \{ \y^+, \y^- \} \; \mid \; \y^+ \neq \y^-, \; |\y^+|+|\y^-| = n \right\} \coprod   \left\{ (\y, \pm) \; \middle| \; |\y|=\frac{n}{2} \right\}, $$ The 'split' irreducible representations $V_{(\y, +)}$ and $V_{(\y, -)}$ only arise when $n$ is even. 

 Given an (ordered) double partition $\y = ( \y^+, \y^-)$ with $\y^+ \vdash \ell $ and $\y^- \vdash m $, we write $V( \y )_n$ to denote the $D_n$--representation $V(\y)_n := \Res_{D_n}^{B_n} V(\y)_n.$ Explicitly, $V(\y)_n$ is the $D_n$--representation
\begin{align*}
 V(\y)_n &= \left\{ \begin{array}{ll}
         V_{ \{ \y^+[n-m], \; \y^- \}} & \mbox{$(n-m)\geq \y^+_1$ and $\y^+[n-m] \neq \y^-$},\\
         V_{ \{  \y^-, \; + \}} \oplus V_{ \{  \y^-, \; - \}} & \mbox{$(n-m)\geq \y^+_1$ and $\y^+[n-m] = \y^-$}, \\
        0 & \mbox{otherwise}.\end{array} \right.
\end{align*}
 The $D_n$--representation $V(\y)_n$ is irreducible for all but at most one value of $n$. 
 
 We remark that, in contrast to the sequence of $S_n$ or $B_n$ representations $V(\y)_n$, knowing the $D_n$--representation $V(\y)_n$ for a single value of $n$ may not be enough to determine $V(\y)_{n+1}$, as we cannot distinguish the partitions $\y^+[n-m]$ and $\y^-$. 

{ \noindent \bf  The conjugacy classes of $D_n$. \quad}  As with $B_n$, each element of $D_n$ decomposes into a product of signed cycles; by definition each element must have an even number of negative cycles. Signed cycle type is a $D_n$ conjugacy class invariant, and it nearly classifies the conjugacy classes, with one qualification: when $n$ is even, the elements for which all cycles are positive and have even length are now split between two conjugacy classes.

\subsection{Representation stability} \label{SectionRepStabilityReview}

Representation stability was introduced by Church--Farb \cite{RepStability} for a variety of families of groups $G_n$ including $S_n$ and $B_n$. In \cite[Section \ref{FIW1-BackgroundRepStability}]{FIW1} we additionally defined representation stability for sequences of $D_n$--representations. We recall these definitions here.

\begin{defn} \label{DefnRepStability} {\bf (Representation stability).}
Let $\W_n$ be one of the families of classical Weyl groups, and suppose $\{ V_n \}$ is a sequence of finite-dimensional $\W_n$--representations over characteristic zero with maps $\phi_n :V_n \to V_{n+1}$. The sequence $\{V_n, \phi_n \}$ is \emph{consistent} if $\phi_n$ is equivariant with respect to the action of $\W_n$ on $V_n$ and of $\W_n \subseteq \W_{n+1}$ on $V_{n+1}$. 

A consistent sequence $\{V_n, \phi_n\}$ is \emph{representation stable} if it satisfies three properties: \\[-10pt]

\noindent I. \textbf{\emph{Injectivity.}} The maps $\phi_n :V_n \to V_{n+1}$ are injective for $n>>0$. \\[-10pt]

\noindent II. \textbf{\emph{Surjectivity.}} The image $\phi_n(V_n)$ generates $V_{n+1}$ as a $k[\W_{n+1}]$--module for $n>>0$. \\[-10pt]

\noindent III. \textbf{\emph{Multiplicities.}} Decompose $V_n$ into irreducible $\W_n$--representations: $$V_n = \bigoplus_{\y}c_{\y,n}V(\y)_n.$$ For each $\y$ there exists some $N_{\y}$ such that the multiplicity $c_{\y,n}$ of $V(\y)_n$ is constant for $n \geq N_{\y}$. \\[-10pt]

The sequence is \emph{uniformly} representation stable if $N_{\y}$ can be chosen independently of $\y$.

\end{defn}

A main result of \cite{FIW1} is that for rational $\FIW$--modules, finite generation is equivalent to uniform representation stability.

\begin{thm} \emph{\cite[Theorem \ref{FIW1-FinGenIffRepStable}]{FIW1}} 
 Let $k$ be a field of characteristic zero. An $\FIW$--module $V$ is finitely generated if and only if $\{V_n \}$ is uniformly representation stable with respect to the maps induced by the natural inclusions $I_n : \bn \hookrightarrow {\bf (n+1)}$. 
\end{thm}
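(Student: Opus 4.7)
The plan is to prove the two directions separately. The reverse direction, that uniform representation stability implies finite generation, is largely formal. The forward direction is the substantive one, and will proceed by analyzing the ``free'' $\FIW$--modules $M_{\W}(U)$ from Example~\ref{FIModuleExamples}.\ref{FIExample-Induced} and then reducing the general case to them via the Noetherian property.

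For the reverse direction, suppose $\{V_n\}$ is uniformly representation stable with stable range $N$. The surjectivity axiom says that for $n \geq N$ the image $(I_n)_*(V_n)$ generates $V_{n+1}$ as a $k[\W_{n+1}]$--module, so iterating shows that every $V_n$ with $n \geq N$ is generated over $\FIW$ by $V_N$. Each $V_i$ with $i \leq N$ is finite-dimensional by hypothesis, so a union of bases of $V_0, \ldots, V_N$ gives a finite generating set.

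The forward direction proceeds in four steps. First, a finitely generated $\FIW$--module $V$ admits a surjection $\bigoplus_{i=1}^k M_{\W}(\bm_i) \twoheadrightarrow V$ coming from a finite generating set, via the universal property of the represented functors. Second, each $M_{\W}(U)$ (for $U$ a finite-dimensional $\W_m$--representation) should be shown to be uniformly representation stable, with explicit bounds on the stable range and on the multiplicities $c_{\y, n}$ of the irreducibles $V(\y)_n$; this reduces to a direct computation of how $\Ind_{\W_m \times \W_{n-m}}^{\W_n} U \boxtimes k$ decomposes as $n$ grows, using Pieri-type branching rules, followed by checking that the stabilized decompositions align with the padded-partition notation $V(\y)_n$. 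Third, since finite direct sums of uniformly representation stable sequences are again uniformly representation stable with compatible bounds, $\bigoplus_i M_{\W}(\bm_i)$ has the desired property. Fourth, by the Noetherian property (reviewed in Section~\ref{SectionNoetherianReview}), the kernel of the surjection above is itself finitely generated; by induction on an appropriate invariant (such as weight or number of generators) it is uniformly representation stable, and the three properties for $V$ then follow from the short exact sequence together with the fact that $k[\W_n]$ is semisimple in characteristic zero.

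The main obstacle will be the branching computation in step two, particularly in types B/C and D. Unlike the single-partition indexing in type A, the double-partition indexing $(\y^+, \y^-)$ requires careful tracking of how signs are distributed under the induction $\Ind_{\W_m \times \W_{n-m}}^{\W_n}$; for $\FI_D$ there is the further complication of split representations $V(\y, \pm)_n$ when $n$ is even, and the most efficient route is likely to first settle types A and B/C and then handle D by restriction from $\FI_{BC}$, as elsewhere in this paper. Semisimplicity in characteristic zero is essential throughout: it converts multiplicity-stability into a statement about abstract isomorphism classes of summands, and it is what allows the extraction of uniform representation stability for $V$ from that of the surjecting module and its kernel via a splitting of the short exact sequence in each irreducible component.
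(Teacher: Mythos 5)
Your reverse direction is correct, and your step two (branching computation showing each $M_{\W}(U)$ is uniformly representation stable, with the reduction of type D to type B/C) matches what the paper does. The gap is in step four. You propose to establish uniform representation stability for the kernel $K$ of $\bigoplus_i M_{\W}(\bm_i) \twoheadrightarrow V$ ``by induction on an appropriate invariant (such as weight or number of generators),'' but neither of these strictly decreases when passing from $V$ to $K$: the weight of $K$ is bounded only by the degree of generation of the free module, which may equal the weight of $V$, and the minimal number of generators of $K$ can grow. With no terminating invariant, the induction has no base case, and the difference argument $c_{\y}^{V_n} = c_{\y}^{M_n} - c_{\y}^{K_n}$ stalls because you never obtain stability of $c_{\y}^{K_n}$. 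Identifying the branching computation as the main obstacle misses that this recursion is where the naive argument actually breaks.

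The paper's proof (reviewed in Section~\ref{SectionCoinvariantsStabilityDegreeReview} and Section~\ref{SectionRepStabilityforFIModules}, details in \cite{FIW1}) avoids the infinite regress by working with a directly computable invariant, the \emph{stability degree}, defined via $\W_a$--coinvariants of shifted modules. The structure is: $M_{\W}(U)$ generated in degree $m$ has injectivity degree $0$ and surjectivity degree $m$; by the Noetherian property a finitely generated $\FIW$--module is finitely \emph{presented}, and the generation and relation degrees give an explicit bound on stability degree; and bounded weight (Theorem~\ref{WnDiagramSizesReview}) together with bounded stability degree implies uniform representation stability with an explicit stable range (Theorem~\ref{StabilityDegreeImpliesRepStabilityReview}). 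So the Noetherian property enters exactly where you expect, but what it delivers is a bounded relation degree feeding a non-recursive estimate, not a hypothesis for an induction on the kernel. In type D the reduction to B/C is carried out through the induction functor $\Ind_D^{BC}$ and Theorem~\ref{VisResIndVReview}, so ``restriction from $\FI_{BC}$'' is not quite literal: one first induces up, applies the B/C result, and then restricts back, which your sketch glosses over.
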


Details (including bounds on the stable range) are given in Section \ref{SectionRepStabilityforFIModules}.

\subsection{Summary of terminology and foundations for $\FIW$--modules} \label{SectionFIWFoundationsReview}

In this section we summarize the main definitions and foundational results developed in \cite{FIW1} on $\FIW$--module theory.

\subsubsection{The $\FIW$--modules $M_{\W}(\bm)$ and $M_{\W}(U)$} \label{SectionMW}

\begin{defn} {\bf (The $\FIW$--modules $M_{\W}(\bm)$ and $M_{\W}(U)$).} \label{Defn:M(m)Review} Let $\W_n$ denote $S_n$, $B_n$, or $D_n$. For fixed integer $m \geq 0$, define $M_{\W}(\bm)$ to be the $\FIW$--module over $k$ such that $$M_{\W}(\bm)_n := k \big[ \Hom_{\FIW}( \bm, \bn)\big]$$ with an action of $\W_n$ by postcomposition. 

We can identify $M_{\W}(\bm)_n$ with the $k$--span of the set $$ \left\{ \Big(f(1), f(2), \ldots, f(m)\Big) \subseteq \bn \; \mid \; f: \bm \to \bn \text{ an  $\FIW$ morphism} \right\}. $$ For each $n$ we have an isomorphism of $\W_n$--represntations $$M_{\W}(\bm)_n \cong \Ind_{\W_{n-m}}^{\W_n} k;$$ here $k$ is the trivial $\W_m$--representation.

Given a $\W_m$--representation $U$, we define $M_{\W}(U)$ to be the $\FIW$--module $$M_{\W}(U)_n := M_{\W}(\bm)_n \otimes_{k[\W_m]} U.$$ In particular $M_{\W}(\bm) \cong M_{\W}(k[\W_m])$. 

Let FB$_\W$ denote the wide subcategory of $\FIW$ consisting of all objects and all endomorphisms. Denote by FB$_\W$--Mod the category of functors $$ \text{FB}_{\W} \longrightarrow k\text{--Mod};$$ the objects are sequences of $\W_m$--representations (with no additional maps) and the morphisms are sequences of $\W_m$--equivariant maps.

We extend $M_{\W}$ to a functor on the category FB$_\W$--Mod 

\begin{align*}
M_{\W}: \text{FB$_\W$--Mod} & \longrightarrow \FIW \text{--Mod} \\
  U_m & \longmapsto   \mu_m (U_m).
\end{align*}

\end{defn}

Given an $\FIW$--module $V$ and any subset $S = \{v_i\} \subseteq \coprod_{n \geq 0} V_n$, with $v_i \in V_{m_i}$, there is a unique map of $\FIW$--module 
\begin{align*}
\bigoplus_{i=1} M_{\W}(\bm_i) & \longrightarrow V \\
f  & \longmapsto f_*(v_i)  \qquad f \in \Hom_{\W}(\bm_i, \bn), \text{ the basis for } M_{\W}(\bm_i)_n
\end{align*}
from $\bigoplus_{i=1} M_{\W}(\bm_i)$ onto the $\FIW$--submodule \emph{generated by S}, the smallest submodule of $V$ that contains $S$. An $\FIW$--module $V$ is finitely generated if and only if it is the quotient of a finite sum of $\FIW$--modules $\bigoplus_{i=1}^{\ell} M_{\W}(\bm_i)$.

\begin{defn}{\bf (Finite Presentation; Relation degree).}
 Let $V$ be a finitely generated $\FIW$--module. Then $V$ is \emph{finitely presented} with \emph{relation degree} $r$ if there is a surjection $$ \bigoplus_{i=1}^{\ell} M_{\W}(\bm_i) \twoheadrightarrow V $$ with a kernel finitely generated in degree at most $r$. 
 
 The Noetherian property \cite[Theorem \ref{FIW1-Noetherian}]{FIW1} implies that all finitely generated $\FIW$--modules are finitely presented. 
\end{defn}

\subsubsection{The Noetherian property} \label{SectionNoetherianReview}

A critical property of $\FIW$--modules that underlies all our major results is that the category of $\FIW$--modules over a Noetherian ring is Noetherian.  Church--Ellenberg--Farb--Nagpal prove this result for $\FI_A$--modules \cite[Theorem 1.1]{CEFN}, and we use their work to prove it more generally:

\begin{thm} \label{NoetherianReview} \emph{\cite[Theorem \ref{FIW1-Noetherian}]{FIW1}} {\bf ($\FIW$--modules are Noetherian).}  Let $k$ be a Noetherian ring. Any sub--$\FIW$--module of a finitely generated $\FIW$--module over $k$ is itself finitely generated.
\end{thm}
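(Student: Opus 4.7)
The strategy is to reduce both the $\FI_{BC}$ and $\FI_D$ cases to the $\FI_A$ case, which is \cite[Theorem 1.1]{CEFN} of Church--Ellenberg--Farb--Nagpal. The two reductions proceed by different routes: for $\FI_{BC}$ we restrict along the subcategory inclusion $\FI_A \subseteq \FI_{BC}$, while for $\FI_D$ we induce up to $\FI_{BC}$ via the functor $\Ind_D^{BC}$ from Section \ref{SectionReviewRestrictionInduction}.

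For $\FIW = \FI_{BC}$, let $V$ be a finitely generated $\FI_{BC}$-module and $W \subseteq V$ a sub-$\FI_{BC}$-module. The key observation is that every morphism $f: \bm \to \bn$ in $\FI_{BC}$ factors as $f = \sigma \circ I_{m,n} \circ \tau$ with $\sigma \in S_n$ and $\tau \in B_m$: take $\tau$ to be the diagonal sign change sending $i > 0$ to $\text{sign}(f(i)) \cdot i$, and choose any $\sigma \in S_n$ with $\sigma(i) = |f(i)|$ for $i \in [1, m]$. Here $\sigma$ and $I_{m,n}$ are $\FI_A$-morphisms, while $\tau$ lives in the $\FI_{BC}$-endomorphism group of $\bm$. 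Consequently, if $\{v_1, \ldots, v_\ell\}$ generates $V$ as an $\FI_{BC}$-module, then the finite enlargement $\{\tau_*(v_i) : \tau \in B_{m_i}, \; 1 \leq i \leq \ell\}$ generates $V|_{\FI_A}$ as an $\FI_A$-module. The CEFN theorem then forces $W|_{\FI_A}$ to be finitely generated, and since $\FI_A \subseteq \FI_{BC}$, any $\FI_A$-generating set for $W|_{\FI_A}$ also generates $W$ as an $\FI_{BC}$-module.

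For $\FIW = \FI_D$, the factorization above is unavailable because the sign change $\tau \in B_m$ typically lies outside $D_m$, and therefore $\tau_*$ is not an operator on an $\FI_D$-module. I would instead lift $V$ to an $\FI_{BC}$-module via the induction $\Ind_D^{BC}$, defined pointwise by $(\Ind_D^{BC} V)_n = k[B_n] \otimes_{k[D_n]} V_n$. This functor preserves finite generation and is exact in each degree, since $k[B_n]$ is free of rank $2$ as a right $k[D_n]$-module; thus $\Ind_D^{BC} W$ embeds as a sub-$\FI_{BC}$-module of the finitely generated $\FI_{BC}$-module $\Ind_D^{BC} V$, and is therefore finitely generated by the $\FI_{BC}$ case already handled. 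Restricting back yields a Mackey-type decomposition $\Res_{BC}^D \Ind_D^{BC} W \cong W \oplus W^t$ of $\FI_D$-modules, where $W^t$ denotes the twist of $W$ by any $t \in B_n \setminus D_n$. A finite $\FI_{BC}$-generating set for $\Ind_D^{BC} W$ yields a finite $\FI_D$-generating set for $\Res_{BC}^D \Ind_D^{BC} W$ after enlargement by $B_m/D_m$-coset representatives, and projecting onto the $W$-summand completes the argument.

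The main obstacle is the $\FI_D$ case, where we must rely on the detailed induction/restriction machinery of \cite[Section \ref{FIW1-Section:Induction}]{FIW1}. The three properties needed -- preservation of finite generation by $\Ind_D^{BC}$, exactness in each degree, and the summand decomposition on restriction -- are each elementary consequences of the index-$2$ inclusion $D_n \subset B_n$, but must be assembled with care because finite generation as an $\FI_{BC}$-module is a priori weaker than finite generation as an $\FI_D$-module. For the $\FI_{BC}$ case, by contrast, the only nontrivial ingredient beyond the combinatorial factorization $f = \sigma \circ I_{m,n} \circ \tau$ is the CEFN theorem itself.
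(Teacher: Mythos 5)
Your argument for $\FI_{BC}$ is correct and is the route taken in \cite{FIW1} (restrict to $\FI_A$ via the factorization $f = \sigma \circ I_{m,n} \circ \tau$ and invoke \cite[Theorem 1.1]{CEFN}; cf.\ Proposition~\ref{RestrictionPreservesFinGenReview}\,(\ref{WntoSnReview})). The $\FI_D$ case, however, has a genuine gap: the identification $(\Ind_D^{BC} V)_n = k[B_n] \otimes_{k[D_n]} V_n$ is false. The functor $\Ind_D^{BC}$ of Definition~\ref{DefnIndReview} is a left Kan extension, and degreewise $D_n$-to-$B_n$ induction does not carry a compatible $\FI_{BC}$-module structure at all: factoring a morphism $f\colon \bm \to \bn$ with $m<n$ as $f = \sigma\circ I_{m,n}$ fixes $\sigma\in B_n$ only up to right multiplication by $B_{n-m}$, which is not contained in $D_n$, so the candidate structure map $b\otimes v \mapsto \sigma b\otimes (I_{m,n})_*(v)$ depends on the choice of $\sigma$ and fails functoriality under precomposition with a sign-reversing element of $B_n$. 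The dimensions already rule out your formula: Theorem~\ref{VisResIndVReview} gives $(\Res_D^{BC}\Ind_D^{BC}V)_n \cong V_n$ for $n$ past the degree of generation, whereas $k[B_n]\otimes_{k[D_n]}V_n$ has twice the $k$-rank of $V_n$. In particular the Mackey splitting $\Res_D^{BC}\Ind_D^{BC}W \cong W \oplus W^t$ you invoke does not hold at the $\FI_D$-module level, so the final projection onto the $W$-summand has nothing to project from.

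The obstacle you name for restricting to $\FI_A$ directly --- that the diagonal sign change $\tau$ may lie in $B_m\setminus D_m$ --- is handled in \cite{FIW1} by passing through degree $m+1$, not by induction. Since $\Hom_{\FI_D}(\bm,\bn) = \Hom_{\FI_{BC}}(\bm,\bn)$ whenever $m<n$ (\cite[Remark 3.1]{FIW1}), any $\FI_D$-morphism $f\colon\bm\to\bn$ with $n>m$ factors as $f = h\circ g$ with $g\colon\bm\to{\bf(m+1)}$ the $\FI_D$-morphism $j\mapsto \mathrm{sign}(f(j))\cdot j$ and $h\colon{\bf(m+1)}\to\bn$ sign-preserving, hence in $\FI_A$. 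Enlarging a generating set of $V$ by all $\Hom_{\FI_D}(\bm,{\bf(m+1)})$-translates produces a finite $\FI_A$-generating set for $\Res_A^D V$ in degree at most $m+1$ --- this is Proposition~\ref{RestrictionPreservesFinGenReview}\,(\ref{W'ntoSnReview}) --- and the rest of the argument is identical to the $\FI_{BC}$ case: any sub-$\FI_D$-module $W\subseteq V$ restricts to a sub-$\FI_A$-module of the finitely generated $\FI_A$-module $\Res_A^D V$, which is finitely generated by \cite[Theorem 1.1]{CEFN}, and that finite $\FI_A$-generating set also generates $W$ as an $\FI_D$-module.
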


\subsubsection{The functor $H_0$}

\begin{defn} \label{DefnH0Review} {\bf (The functor $H_0$).} As in \cite[Definition 2.18]{CEF}, we define the functor $H_0$ by
\begin{align*}
H_0 & : \FIW \text{--Mod}   \longrightarrow \text{FB$_\W$--Mod}\\
& (H_0(V))_n =  V_n \bigg/ \bigg(\text{span}_V \big(  \coprod_{k<n} V_k \big) \bigg)_n
\end{align*}
$H_0(V)$ is a minimal set of $\W_m$--representations to generate the $\FIW$--module $V$. We can put an $\FIW$--module structure on the $\W_n$--representations $(H_0(V))_n$ by letting $I_n$ act by $0$ for all $n$; it is the largest quotient of $V$ where all $\FIW$ morphisms $f$ between distinct objects act by zero. We denote this $\FIW$--module by $H_0(V)^{\FIW}$.

$H_0$ is a left inverse to $M_{\W}$, that is, given $U = \{ U_m \}_m$ we have $H_0(M_\W(U))_m = U_m$ for all $m$. We will see in Section \ref{SectionFISharp} that additionally $M_{\W}(H_0(V)) = V$ when $V$ has the additional structure of an \emph{$\FIW \sh$--module}. 

There are surjections $$M_{\W} (H_0(V)) \twoheadrightarrow V \qquad \text{ and } \qquad V \twoheadrightarrow H_0(V)^{\FIW}.$$

\end{defn}

\subsubsection{Restriction and induction of $\FIW$--modules} \label{SectionReviewRestrictionInduction}

The inclusions of categories $\FI_A \subseteq \FI_D \subseteq \FI_{BC}$ enable us to define restriction and induction operations on the corresponding categories of $\FIW$--modules. Both restriction and induction preserve finite generation of $\FIW$--modules, a fact that we use to recover results in type B/C and D from work of Church--Ellenberg--Farb--Nagpal \cite{CEFN} in type A. We use additional properties of these operations to develop theory for $\FI_D$--modules using results in type B/C. 

\begin{defn}{\bf (Restriction).}
 Consider Weyl group families $\W_n$ and $\oW_n$ with $\W_n \subseteq \oW_n$. Then there is an inclusion $\FIW \hookrightarrow \oFIW$, and  given any $\oFIW$--module $V$ we denote by $\Res_{\W}^{\oW} V$ the $\FIW$--module obtained by restricting the functor $V$ to the subcategory $\FIW$.
\end{defn}

\begin{prop} \label{RestrictionPreservesFinGenReview} \emph{\cite[Proposition \ref{FIW1-RestrictionPreservesFinGen}]{FIW1}} {\bf (Restriction preserves finite generation).}
For each family of Weyl groups $\W_n \subseteq \oW_n$, the restriction $\Res_{\W}^{\oW} V$ of a finitely generated $\oFIW$--module $V$ is finitely generated as an $\FIW$--module. Specifically, 
\begin{enumerate}
 \item \label{WntoSnReview} Given an $\FI_{BC}$--module $V$ finitely generated in degree $\leq m$, $\Res^{BC}_{A} V$ is finitely generated as an $\FI_A$--module in degree $\leq m$.
 \item \label{WntoW'nReview} Given an $\FI_{BC}$--module $V$ finitely generated in degree $\leq m$, $\Res^{BC}_{D} V$ is finitely generated as an $\FI_D$--module in degree $\leq m$.
\item \label{W'ntoSnReview} Given an $\FI_D$--module $V$ finitely generated in degree $\leq m$, $\Res^{D}_{A} V$ is finitely generated as an $\FI_A$--module in degree $\leq (m+1)$.
\end{enumerate}
\end{prop}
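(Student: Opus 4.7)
The plan is to prove each case by augmenting a given finite generating set $S \subseteq \coprod_{m' \leq m} V_{m'}$ of $V$ (as an $\oFIW$--module) to a finite set $S'$ that generates $\Res^{\oW}_{\W} V$ over $\FIW$. The central step is a factorization: every $\oFIW$ morphism $f: \bm' \to \bn$ should be written as $f = g \circ \sigma$, where $g$ is an $\FIW$ morphism (possibly from one degree higher) and $\sigma$ is a morphism I will absorb into the generating set. This allows me to rewrite $f_*(v) = g_*(\sigma_*(v))$ using only morphisms from the smaller category.

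For cases (\ref{WntoSnReview}) and (\ref{WntoW'nReview}), any $\FI_{BC}$ morphism $f(i) = \epsilon_i k_i$ with $\epsilon_i \in \{\pm 1\}$ decomposes as $f = g \circ \sigma$, with $\sigma(i) = \epsilon_i \cdot i$ lying in $\End_{\FI_{BC}}(\bm') = B_{m'}$ and $g(i) = k_i$ a sign-preserving ($\FI_A$) morphism. I will take
$$ S' = \{\sigma_*(v) : v \in S \cap V_{m'},\; \sigma \in B_{m'},\; m' \leq m\}, $$
which is finite because each $B_{m'}$ is finite, giving a generating set in degree $\leq m$ for both $\Res^{BC}_A V$ and $\Res^{BC}_D V$. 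Case (\ref{WntoW'nReview}) is essentially automatic here, since $\FI_D$ and $\FI_{BC}$ morphisms already coincide when source and target differ, so the factorization is only substantive for the endomorphism case.

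Case (\ref{W'ntoSnReview}) is the subtle one, and I expect it to be the main obstacle. The analogous factorization with $\sigma \in D_{m'}$ fails when $\prod_i \epsilon_i = -1$, since the needed sign-flip operation then lies in $B_{m'} \setminus D_{m'}$. My plan is to observe that this forces $n > m'$ (because $\End_{\FI_D}(\bm') = D_{m'}$ allows only even numbers of sign flips), so I can factor through $\mathbf{(m'+1)}$: take $\sigma: \bm' \to \mathbf{(m'+1)}$ given by $\sigma(i) = \epsilon_i \cdot i$, a legitimate $\FI_D$ morphism because $\FI_D$ and $\FI_{BC}$ morphisms agree for distinct source and target, and take $g: \mathbf{(m'+1)} \to \bn$ to be the $\FI_A$ morphism with $g(i) = k_i$ for $i \leq m'$ and $g(m'+1)$ any unused positive element of $\{1,\ldots,n\}$, which exists since $n > m'$. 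Accordingly I enlarge $S$ to include the $D_{m'}$-orbit of each $v \in S$ together with the images $\sigma_*(v)$ for $\sigma: \bm' \to \mathbf{(m'+1)}$ ranging over $\FI_D$ morphisms, obtaining a finite generating set in degree $\leq m+1$. The essential content is recognizing that the $+1$ in the degree bound precisely measures the gap between $D_{m'}$ and $B_{m'}$: morphisms with an odd number of sign flips simply cannot be absorbed into $\FI_A$ endomorphisms of $\bm'$ and must be realized from $\mathbf{(m'+1)}$.
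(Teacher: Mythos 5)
Your factorization argument is correct, and it is the natural route to this statement. The key decomposition---writing each $f \in \Hom_{\oFIW}(\bm', \bn)$ as $f = g \circ \sigma$ with $g$ an $\FIW$ morphism and $\sigma$ a sign-change absorbed into an enlarged (still finite) generating set---works exactly as you describe in cases (1) and (2), and you correctly isolate the subtlety in case (3): a sign-change with odd parity lies in $B_{m'} \setminus D_{m'}$ and hence cannot be realized as an $\FI_D$ endomorphism of $\bm'$, but since $\FI_D$ and $\FI_{BC}$ morphisms coincide once the target is strictly larger, it \emph{can} be realized as an $\FI_D$ morphism $\bm' \to {\bf (m'{+}1)}$, which is precisely what produces the $+1$ in the degree bound. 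One small bookkeeping note: in case (3) you should route \emph{all} odd-parity sign changes through ${\bf (m'{+}1)}$ even when $\sigma$ would naively land in $D_{m'}$, since only the non-endomorphism morphisms are unconstrained; you handle this correctly by observing $n > m'$ whenever the parity is odd. The paper at hand only cites this result from the companion paper \cite{FIW1} rather than proving it, but your argument is the standard one and there is nothing to correct.
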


For categories $\FIW \hookrightarrow \oFIW$, the restriction functor $$\Res_{\W}^{\oW}: \oFIW \text{--Mod} \longrightarrow \FIW \text{--Mod}$$ has a left adjoint, the induction functor $$\Ind_{\W}^{\oW}: \FIW \text{--Mod} \longrightarrow \oFIW \text{--Mod}.$$

We recall that the adjunction comes with a \emph{unit} map, a natural transformation
\begin{align*}
  \eta:  id & \longrightarrow (\Res_{\W}^{\oW}  \Ind_{\W}^{\oW} ) \\ 
  \eta_{V}:  V & \longrightarrow \Res_{\W}^{\oW} ( \Ind_{\W}^{\oW} \; V )
\end{align*}
and this map defines a natural bijection
\begin{align*} \left\{ \begin{array}{c} \text{ $\FIW$\;--Module Maps } \\ V  \longrightarrow \Res_{\W}^{\oW} \; U \end{array} \right\} \quad \longleftrightarrow \quad \left\{ \begin{array}{c} \text{ $\oFIW$--Module Maps} \\ \Ind_{\W}^{\oW} \;V  \longrightarrow U  \end{array} \right\} 
\end{align*}

We can describe the induced functors $ \Ind_{\W}^{\oW} \; V$ explicitly using the theory of Kan extensions; see Mac Lane \cite[Chapter 10.4]{MacLaneWorking}. 

\begin{defn}{\bf (Induction).} \label{DefnIndReview} Given an $\FIW$--module $V$, and an inclusion of categories $\FIW \hookrightarrow \FI_{\oW}$, we define the \emph{induced $\FI_{\oW}$--module} $\Ind_{\W}^{\oW} V$ by 
$$ (\Ind_{\W}^{\oW} V)_n = \bigoplus_{r \leq n} M_{\oW}({\bf r})_n \otimes_k V_r \bigg/  \langle \quad f\otimes g_*(v) = (f\circ g) \otimes v \quad \vert \quad \text{ $g$ is an $\FIW$ morphism} \rangle .$$
with the action of $h \in \Hom_{\oW}(\bm, \bn)$ by $h_*: g\otimes v \longmapsto (h\circ g) \otimes v.$
\end{defn}

Given categories $\FIW \subseteq \oFIW$ and fixed $m \geq 0$, there is an isomorphism of $\oFIW$\,--modules $$\Ind_{\W}^{\oW} M_{\W}(\bm) \cong M_{\oW}(\bm).$$

A critical property of induction from $\FI_D$ to $\FI_{BC}$ was proven in  \cite[Theorem \ref{FIW1-VisResIndV}]{FIW1}. 

\begin{thm}\label{VisResIndVReview}\emph{\cite[Theorem \ref{FIW1-VisResIndV}]{FIW1}} {\bf($V_n \cong (\Res_{D}^{BC}\Ind_{D}^{BC} V)_n$ for $n$ large).} Suppose $V$ is an $\FI_D$--module finitely generated in degree $\leq m$. Then $$ V_n \xrightarrow{\cong} (\Res_{D}^{BC}\; \Ind_{D}^{BC} V)_n$$ is an isomorphism of $D_n$--representations for all $n>m$. In particular, every finitely generated $\FI_D$--module $V$ is, for $n$ greater than its degree of generation, the restriction of an $\FI_{BC}$--module. 
\end{thm}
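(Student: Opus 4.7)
The strategy is to reduce to the ``free'' $\FI_D$-modules $M_D(\bm)$ and then bootstrap to the general finitely generated case via right-exactness of the induction functor.

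First, for $V = M_D(\bm)$ the statement is essentially tautological. By the Kan extension formula of Definition \ref{DefnIndReview} one has $\Ind_D^{BC} M_D(\bm) \cong M_{BC}(\bm)$, and the paper already notes in Definition \ref{DefnFIW} that $\Hom_{\FI_D}({\bf r}, \bn) = \Hom_{\FI_{BC}}({\bf r}, \bn)$ whenever $r \neq n$. Hence for $n > m$,
\[
M_D(\bm)_n \;=\; k\bigl[\Hom_{\FI_D}(\bm,\bn)\bigr] \;=\; k\bigl[\Hom_{\FI_{BC}}(\bm,\bn)\bigr] \;=\; M_{BC}(\bm)_n \;=\; \bigl(\Res_D^{BC} \Ind_D^{BC} M_D(\bm)\bigr)_n,
\]
and one verifies that this identification is realized by the unit of the adjunction.

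For a general $\FI_D$-module $V$ finitely generated in degree $\leq m$, I would choose a surjection $\pi \colon P = \bigoplus_i M_D(\bm_i) \twoheadrightarrow V$ with each $m_i \leq m$ and set $K = \ker \pi$. Since $\Ind_D^{BC}$ is a left adjoint it is right-exact, and $\Res_D^{BC}$ is exact, producing the commutative diagram with exact rows
\[
\begin{array}{ccccccc}
K_n & \to & P_n & \twoheadrightarrow & V_n & \to & 0 \\
\downarrow & & \downarrow \eta_P & & \downarrow \eta_V & & \\
(\Res \Ind K)_n & \to & (\Res \Ind P)_n & \twoheadrightarrow & (\Res \Ind V)_n & \to & 0
\end{array}
\]
whose vertical arrows are the units of the adjunction. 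For $n > m$ the previous step makes $\eta_P$ an isomorphism, and a short diagram chase yields surjectivity of $\eta_V$ at level $n$.

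The main obstacle is then injectivity of $\eta_V$. Identifying $P_n$ with $(\Res \Ind P)_n$ through $\eta_P$, one must show $K_n = \widetilde{K}_n$, where $\widetilde{K}_n$ denotes the image of $(\Res \Ind K)_n$ inside $P_n$. Because $\FI_D$- and $\FI_{BC}$-morphisms ${\bf r} \to \bn$ agree for $r < n$, the submodule $\widetilde{K}_n$ is exactly the smallest $B_n$-stable subspace of $P_n$ containing $K_n$, so the problem reduces to establishing $B_n$-stability of $K_n$ for $n > m$. The key leverage is that $V$, and hence $K_n$, is built from data in degrees $\leq m$ via $\FI_D$-morphisms to level $n$: for any $\sigma \in B_n$ and any $h\colon {\bf r} \to \bn$ in $\FI_D$ with $r < n$, the composition $\sigma \circ h$ is again an $\FI_D$-morphism, so $B_n$-translates can in principle be expressed using only $\FI_D$-operations on lower-degree elements. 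Carefully verifying that this rewriting is well-defined and cannot escape $K_n$ is the heart of the proof, and I expect it requires the Noetherian property (Theorem \ref{NoetherianReview}) to control the relations of $K$ sufficiently to show that no ``new'' relations emerge when the $D_n$-action is extended to a $B_n$-action at level $n > m$.
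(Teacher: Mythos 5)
The reduction to the free modules $M_D(\bm)$ is correct and essentially tautological, as you say, and the surjectivity step (right-exactness of $\Ind_D^{BC}$, exactness of $\Res$, and a diagram chase using that $\eta_P$ is an isomorphism at level $n > m$) is fine. Your diagnosis that injectivity comes down to showing that $K_n$ is $B_n$-stable inside $P_n$ for $n > m$ is also accurate. But that step is not carried out, and the reasoning offered for why it should hold rests on a false premise: you write that ``$V$, and hence $K_n$, is built from data in degrees $\leq m$ via $\FI_D$-morphisms.'' The first half is the definition of $V$ being generated in degree $\leq m$; the second half does not follow. The kernel $K$ of a surjection from a free $\FI_D$-module generated in degree $\leq m$ need not itself be generated in degree $\leq m$ --- its generation degree is the \emph{relation} degree of $V$, which the Noetherian property makes finite but does not bound by $m$. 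In particular the Noetherian property cannot, by itself, supply the missing control you ask it to provide.

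To see concretely why your proposed rewriting argument cannot close the gap as stated, consider $V = M_D(\mathbf{1})/W$, where $W$ is the sub-$\FI_D$-module of $M_D(\mathbf{1})$ generated by the element $e_1 + e_2 - e_{-1} - e_{-2} \in M_D(\mathbf{1})_2$ (writing $e_j$ for the basis vector labelled by the morphism $1 \mapsto j$). Here $V$ is generated in degree $1$, but $K_2 = W_2$ is a $D_2$-stable line that is visibly not $B_2$-stable (apply the sign flip $(1\,{-1})$). So $K$ has a genuine degree-$2$ generator, $\widetilde{K}_2 = B_2 \cdot K_2$ strictly contains $K_2$, and the claim that ``$B_n$-translates can be expressed through $\FI_D$-operations on lower-degree elements without escaping $K_n$'' fails. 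Whatever mechanism the actual proof uses, it cannot be the one you sketch; you either need to control the relation degree rather than just the generation degree, or a genuinely different idea for why the $D_n$-action on $V_n$ extends to a $B_n$-action in the claimed range.
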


\subsubsection{The weight of an $\FIW$--module} \label{SectionWeightReview}

\begin{defn} \label{Defn:WeightReview} {\bf (Weight).} Let $k$ be a field of characteristic zero. An $\FI_A$--module $V$ over $k$ has \emph{weight} $\leq d$ if every irreducible constituent $V(\y)_n$ of $V_n$ has $|\y| \leq d$ for all $n \geq 0$. Similarly an $\FI_{BC}$--module $V$ over $k$ has \emph{weight} $\leq d$ if for all $n$ every irreducible constituent $V(\y)_n= V(\y^+, \y^-)_n$ of $V_n$ satisfies $|\y^+| + |\y^-| \leq d$. Finally we say an $\FI_D$--module $V$ has \emph{weight} $\leq d$ if the $\FI_{BC}$--module $\Ind_{D}^{BC} V$ does. 

An $\FIW$--module $V$ has \emph{finite weight} if it is of weight $\leq d$ for some integer $d \geq 0$. The minimum such $d$ is the \emph{weight} of $V$, denoted weight$(V)$. We say that the \emph{weight of a Young diagram $\y = (\y_0, \y_1, \ldots, \y_\ell)$} is $$\y_1 + \cdots + \y_{\ell} = |\y|-\y_0.$$ 
\end{defn}

For example, the $\FIW$--module $M_{\W}(\bm)$ has weight $m$. Since the weight of an $\FIW$--module $V$ is an upper bound on the weight of all of its quotients, this implies that the weight of an $\FIW$--module is bounded by its degree of generation. 

\begin{thm} \label{WnDiagramSizesReview} \cite[Theorem \ref{FIW1-WnDiagramSizes}]{FIW1} {\bf (Degree of generation bounds weight).}  Suppose that $V$ is an $\FIW$--module over a field of characteristic zero.
  If $V$ is finitely generated in degree $\leq m$, then weight($V$) $\leq m$. 
\end{thm}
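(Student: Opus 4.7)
The strategy is essentially laid out in the remark just preceding the theorem: weight behaves well with quotients (an immediate consequence of the definition, since the irreducibles appearing in a quotient of $V_n$ form a subset of those appearing in $V_n$) and with finite direct sums (weight of a sum is the maximum of the summand weights). Any $\FIW$-module generated in degree $\leq m$ admits, by the universal property of $M_\W(\bm)$ discussed in Section \ref{SectionMW}, a surjection $\bigoplus_{i=1}^{\ell} M_\W(\bm_i) \twoheadrightarrow V$ with each $m_i \leq m$. Thus the whole theorem reduces to the single claim $\operatorname{weight}(M_\W(\bm)) \leq m$ in each type.

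For type A, I would use Frobenius reciprocity on $M_A(\bm)_n \cong \Ind_{S_{n-m}}^{S_n} k$: the multiplicity of $V_\lambda$ equals $\dim V_\lambda^{S_{n-m}}$, and a standard consequence of Pieri's rule (or Young's rule for permutation characters of $S_n$) is that this is nonzero precisely when $\lambda_0 \geq n-m$, equivalently $|\lambda| - \lambda_0 \leq m$. This is the type A weight bound. For type BC, I would run the same Frobenius argument on $M_{BC}(\bm)_n \cong \Ind_{B_{n-m}}^{B_n} k$, determining which irreducibles $V_{(\lambda^+, \lambda^-)}$ possess nonzero $B_{n-m}$-fixed vectors. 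Exploiting the realization $V_{(\lambda^+, \lambda^-)} = \Ind_{B_{|\lambda^+|} \times B_{|\lambda^-|}}^{B_n} V_{(\lambda^+, \varnothing)} \boxtimes V_{(\varnothing, \lambda^-)}$ together with a Mackey-style analysis, the invariants vanish unless $\lambda^+_0 \geq n - m - |\lambda^-|$ and $|\lambda^-| \leq m$, i.e. unless $|\lambda^+| + |\lambda^-| - \lambda^+_0 \leq m$, which is exactly weight $\leq m$.

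For type D, I would not attempt a direct computation; instead, I would invoke the isomorphism $\Ind_D^{BC} M_D(\bm) \cong M_{BC}(\bm)$ from Section \ref{SectionReviewRestrictionInduction} together with right-exactness of induction. If $V$ is an $\FI_D$-module generated in degree $\leq m$, realized as a quotient of $\bigoplus M_D(\bm_i)$ with $m_i \leq m$, then $\Ind_D^{BC} V$ is a quotient of $\bigoplus M_{BC}(\bm_i)$ and hence has weight $\leq m$ in type BC. Since the weight of an $\FI_D$-module is defined as the weight of its induction to $\FI_{BC}$, we obtain $\operatorname{weight}(V) \leq m$, completing the type D case without ever needing an intrinsic Pieri rule for $D_n$.

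The main obstacle I anticipate is the type BC computation: the irreducibles of $B_n$ are parametrized by \emph{double} partitions, and identifying the $B_{n-m}$-fixed vectors requires combining ordinary $S_n$-Pieri with the $\varepsilon$-twist that distinguishes $\lambda^+$ from $\lambda^-$. Once that branching analysis is in hand, the type A argument goes through verbatim and the type D case is an essentially formal consequence via the induction functor.
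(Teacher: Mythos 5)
Your overall reduction is exactly the one the paper itself indicates in the remark just before this theorem: weight is inherited by quotients and respects direct sums, so everything comes down to checking $\operatorname{weight}(M_{\W}(\bm)) \leq m$, and your handling of type D --- push forward along $\Ind_D^{BC}$, use $\Ind_D^{BC} M_D(\bm) \cong M_{BC}(\bm)$ and right-exactness, then cite the type-BC bound because the weight of an $\FI_D$--module is \emph{defined} as the weight of its induction --- is the right move and matches how the paper's framework is set up. This theorem is being cited from \cite{FIW1}, so only a sketch appears here, but your strategy lines up with that sketch.

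There is, however, an arithmetic slip in your type B/C computation. The correct Pieri-type analysis of $\Ind_{B_{n-m}}^{B_n} k$ shows that $V_{(\lambda^+,\lambda^-)}$ occurs only if $\lambda^- = \mu^-$ for some double partition $(\mu^+,\mu^-)$ of $m$ and $\lambda^+/\mu^+$ is a horizontal strip of size $n-m$; since the rows of $\lambda^+$ below the first are each bounded by the corresponding row of $\mu^+$, this forces $\lambda^+_0 \geq |\lambda^+| - |\mu^+| = n - m$, together with $|\lambda^-| = |\mu^-| \leq m$. Your stated condition $\lambda^+_0 \geq n - m - |\lambda^-|$ is strictly weaker: combined with $|\lambda^-| \leq m$ it only yields $|\lambda^+| + |\lambda^-| - \lambda^+_0 \leq 2m$, not the claimed $\leq m$, so the ``i.e.'' in that sentence is not a valid equivalence. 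With the sharper inequality $\lambda^+_0 \geq n-m$ (which alone already gives $n - \lambda^+_0 \leq m$, i.e. weight $\leq m$), the proof goes through as you intend. This is a local fix and does not affect the type A or type D portions of your argument.
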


\subsubsection{Shifted $\FIW$--modules, coinvariants, and stability degree} \label{SectionCoinvariantsStabilityDegreeReview}

We define the shift and coinvariant functors on $\FIW$--modules, and use these operations to define the stability degree of an $\FIW$--module. The stability degree featured in the proof of representation stability for finitely generated $\FIW$--modules, and in Section \ref{SectionCharPolys} it will be used to bound the stable range of the associated character polynomials.

\begin{defn} {\bf (Shifts $\amalg_{[-a]}$; Shifted $\FIW$--modules)}
For each $a \geq 0$, define the functor
\begin{align*}
\amalg_{[-a]} : \FIW & \longrightarrow \FIW \\
\bn & \longmapsto {\bf (n+a) } \\ 
\{ f: \bm \to \bn \} & \longmapsto \{ \amalg_{[-a]}(f) :{\bf (m+a)} \to {\bf (n+a) } \}
\end{align*}
where
\begin{align*}
 \amalg_{[-a]}(f) \text{ maps } \left\{ \begin{array}{l}
         d \mapsto d  \qquad \mbox{if $d \leq a$},\\
         (d+a) \mapsto (f(d)+a).  \end{array} \right.
\end{align*}  
Then for an $\FIW$--module $V: \FIW \to k$--Mod, we define the \emph{shifted} $\FIW$--module $S_{+a}V$ by $$S_{+a}V = V \circ \amalg_{[-a]}.$$ As a $\W_n$--representation, $(S_{+a}V)_n$ is the restriction of $V_{n+a}$ from $\W_{n+a}$ to the copy of $\W_n$ acting on $\{\pm(1+a), \ldots \pm(n+a)\} \subseteq {\bf (n+a) }$. 
\end{defn}

\begin{defn} {\bf (Coinvariants functor $\t$; The graded $k[T]$--module $\Phi_a(V)$).} We define the functor $\t$, as follows:
\begin{align*}
\t: \FIW\text{--Mod} & \longrightarrow \FIW\text{--Mod} \\
V & \longmapsto (\t V)_n = (V_n)_{\W_n} := k \otimes_{k[\W_n]} V_n 
\end{align*}
 The spaces $(\t V)_n$ are the $\W_n$--coinvariants of $V_n$, the largest quotient of $V_n$ on which $\W_n$ acts trivially. Over characteristic zero it is isomorphic to its invariant subspace $(V_n)^{\W_n}$.
 
 Then, for fixed integer $a \geq 0$, we define
\begin{align*}
  \Phi_a : \FIW \text{--Mod} & \longrightarrow k[T]\text{--Mod} \\ 
        V & \longmapsto \bigoplus_{n \geq 0} (\t \circ S_{+a} V)_n  = \bigoplus_{n \geq 0} (V_{n+a})_{\W_n}     
\end{align*}
$\Phi_a(V)$ is a $k[T]$--module under the action of $T$ by the maps  $(V_{n+a})_{\W_n} \to (V_{n+1+a})_{\W_{n+1}}$ induced by the maps $(I_{n+a})_* : V_{n+a} \to V_{n+1+a}$.  
\end{defn}

\begin{defn} {\bf (Injectivity degree; Surjectivity degree; Stability degree).} An $\FIW$--module $V$ has \emph{injectivity degree $ \leq s$} (respectively, \emph{surjectivity degree $ \leq s$}) if for all $a \geq 0$ and $n \geq s$, the map $\Phi_a(V)_n \to \Phi_a(V)_{n+1}$ induced by $T$ is injective (respectively, surjective). We call the minimum such $s$ the \emph{injectivity degree} (respectively, the \emph{surjectivity degree}).

We say that $V$ has \emph{stability degree $\leq s$} if for every $a \geq 0$ and $n \geq s$, the map $\Phi_a(V)_n \to \Phi_a(V)_{n+1}$ induced by $T$ is an isomorphism of $k$--modules, equivalently, an isomorphism of $\W_a$--representations. Explicitly, $V$ has stability degree $\leq s$ if $$(V_{n+a})_{\W_{n}} \cong (V_{n+1+a})_{\W_{n+1}} \qquad \text{for every $a \geq 0$ and all $n \geq s.$}$$

The \emph{stability degree} is the minimum such $s$.
\end{defn}

\subsubsection{Representation stability of finitely generated $\FIW$--modules} \label{SectionRepStabilityforFIModules}

Using the concept of stability degree, we prove in \cite[Section \ref{FIW1-SectionFinGenRepStability}]{FIW1} that for $\FIW$--modules over characteristic zero, finite generation is equivalent to uniform representation stability. Our first result:

 \begin{thm} \label{StabilityDegreeImpliesRepStabilityReview} \emph{ \cite[Theorem \ref{FIW1-StabilityDegreeImpliesRepStability}]{FIW1}} {\bf (Finitely generated $\FI_{\W}$--modules are uniformly representation stable).} Suppose that $k$ is a characteristic zero field, and that $V$ is an $\FI_{BC}$--module with weight $\leq d$ and stability degree $N$. Then $\{V_n \}$ is uniformly representation stable with respect to the maps $\phi_n: V_n \to V_{n+1}$ induced by the natural inclusions $I_n: \bn \hookrightarrow {\bf (n+1)}$. The sequences stabilizes for $n \geq N+d$.
\end{thm}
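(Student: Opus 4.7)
My plan is to reduce each of the three conditions of uniform representation stability (Definition \ref{DefnRepStability}) to the stabilization of the $B_k$--representations $\Phi_k(V)_m$, which is what the stability degree hypothesis controls directly.

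By the bounded weight hypothesis and Theorem \ref{WnDiagramSizesReview}, every irreducible constituent $V(\y)_n$ of $V_n$ satisfies $k := |\y^+|+|\y^-| \leq d$, so only finitely many isomorphism types of irreducibles are ever relevant. For each such $\y$, I express the multiplicity $c_{\y,n} := \dim \Hom_{B_n}(V(\y)_n, V_n)$ in terms of $\Phi_k(V)_{n-k} = (V_n)_{B_{n-k}}$. The key identity is Frobenius reciprocity: for any $B_k$--representation $U$,
\[
\Hom_{B_n}\!\bigl(\Ind^{B_n}_{B_{n-k}\times B_k}(\mathbf{1}\boxtimes U),\; V_n\bigr) \;\cong\; \Hom_{B_k}\!\bigl(U,\; \Phi_k(V)_{n-k}\bigr).
\]
A Pieri--type branching rule in type B/C shows that, once $n$ is at least the largest part of $\y^+$ plus $k$, the induced representation $\Ind^{B_n}_{B_{n-k}\times B_k}(\mathbf{1}\boxtimes V_\y)$ decomposes as $V(\y)_n$ together with irreducibles $V(\mu)_n$ with $|\mu|<k$. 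Inducting on $|\y|$ and inverting this triangular system writes $c_{\y,n}$ as a fixed $\Z$--linear combination, independent of $n$, of the dimensions $\dim \Hom_{B_k}(V_\mu, \Phi_k(V)_{n-k})$.

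The stability-degree hypothesis now says exactly that, for every $k$, the map $\Phi_k(V)_m \to \Phi_k(V)_{m+1}$ is a $B_k$--equivariant isomorphism for all $m \geq N$. Combined with the formula above, each multiplicity $c_{\y,n}$ is constant once $n-k \geq N$; since $k \leq d$ uniformly, all the multiplicities stabilize at the single bound $n \geq N+d$, establishing condition III. For conditions I and II, I observe that the structure map $\phi_n: V_n \to V_{n+1}$ itself, after applying $H_0$--type coinvariant operations and matching up with the Frobenius identification of the previous paragraph, becomes exactly the stability-degree map $\Phi_k(V)_{n-k} \to \Phi_k(V)_{n+1-k}$ on each isotypic component. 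Since this map is an isomorphism for $n-k \geq N$, and summing over the finitely many relevant $\y$ accounts for all of $V_n$, injectivity of $\phi_n$ (condition I) and the fact that $\phi_n(V_n)$ generates $V_{n+1}$ as a $B_{n+1}$--module (condition II) both follow in the range $n \geq N+d$.

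The main obstacle I anticipate is making the type B/C Pieri branching sufficiently explicit: I need both the triangular form of the decomposition and the precise value of $n$ at which this form stabilizes, and type B/C branching is more delicate than its type A counterpart because of the split roles of the positive and negative partitions. Once that branching computation is pinned down, the remainder is a clean application of Frobenius reciprocity and unpacking of the definition of stability degree.
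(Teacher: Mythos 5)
This result is stated in the present paper only as a citation to \cite{FIW1}; no proof of it appears here. Your approach --- using bounded weight to restrict to finitely many irreducible types, applying Frobenius reciprocity to identify multiplicities inside the coinvariant spaces $\Phi_k(V)_{n-k}$, inverting a Pieri-rule triangular system, and then deriving conditions I--III from the stability-degree hypothesis on the maps $\Phi_k(V)_m \to \Phi_k(V)_{m+1}$ --- is exactly the Church--Ellenberg--Farb template for type A that \cite{FIW1} adapts to type B/C, so it is essentially the same argument (with the caveat that conditions I and II are detected through the coinvariant functors $\Phi_a$ rather than through $W_n$-isotypic components of $V_n$ directly, since those do not align with the $W_{n+1}$-isotypic components of $V_{n+1}$).
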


Our analysis of stability degree allows us to recasts the bounds in terms of the generation and relation degree. Using properties of induction and restriction of $\FIW$--modules, we can extend these representation stability results to $\FI_D$--modules. 

\begin{thm} \label{FinGenImpliesRepStabilityReview} \emph{ \cite[Theorem \ref{FIW1-FinGenImpliesRepStability}]{FIW1}} {\bf (Finitely generated $\FI_{\W}$--modules are uniformly representation stable).} Suppose that $k$ is a field of characteristic zero, and $\W_n$ is $S_n$, $D_n$ or $B_n$. Let $V$ be a finitely generated $\FI_{\W}$--module of weight $\leq d$, generation degree $\leq g$ and relation degree $\leq r$; when $\W_n$ is $D_n$, take $r$ instead to be an upper bound on the relation degree of $\Ind_{D}^{BC} \; V$.  Then, $\{V_n \}$ is uniformly representation stable with respect to the maps induced by the natural inclusions $I_n: \bn \to {\bf (n+1)}$, stabilizing once $n \geq \max(g,r)+d$; when $\W_n$ is $D_n$ and $d=0$ we need the additional condition that $n \geq g+1$.
\end{thm}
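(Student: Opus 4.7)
The plan is to reduce this statement to Theorem \ref{StabilityDegreeImpliesRepStabilityReview} by bounding the stability degree of $V$ in terms of its generation and relation degrees, and then to pass from the $\FI_{BC}$ case to the $\FI_D$ case via the induction functor and Theorem \ref{VisResIndVReview}.

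First I would compute the stability degree of the free $\FIW$--modules $M_{\W}(\bm)$, showing it is at most $m$. Since $M_{\W}(\bm)_{n+a} \cong \Ind_{\W_{n+a-m}}^{\W_{n+a}} k$, the coinvariant space $(S_{+a} M_{\W}(\bm))_n{}_{\W_n}$ admits a Mackey-type description in terms of the double cosets $\W_n \backslash \W_{n+a}/ \W_{n+a-m}$. A direct combinatorial analysis shows that once $n \geq m$ these double coset representatives and their stabilizers stabilize, producing a natural isomorphism $\Phi_a(M_{\W}(\bm))_n \xrightarrow{\cong} \Phi_a(M_{\W}(\bm))_{n+1}$ for every $a \geq 0$ and every $n \geq m$. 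Taking direct sums, any $W = \bigoplus_i M_{\W}(\bm_i)$ with $\bm_i \leq g$ has stability degree at most $g$.

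Next I would exploit the finite presentation of $V$. Write
\[
 K \longrightarrow \bigoplus_{i} M_{\W}(\bm_i) \longrightarrow V \longrightarrow 0
\]
with each $\bm_i \leq g$ and $K$ generated in degree $\leq r$. Applying the right-exact functor $\Phi_a$ gives a right-exact sequence in each $n$, compatible with the $T$--action. A short diagram chase then shows that for $n \geq \max(g,r)$ the map $\Phi_a(V)_n \to \Phi_a(V)_{n+1}$ is both surjective (since it is surjective on the middle term once $n \geq g$) and injective (since generation of $K$ in degree $\leq r$ guarantees that $\Phi_a(K)$ surjects onto the kernel in each degree $n \geq r$). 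This establishes that $V$ has stability degree $\leq \max(g,r)$. Combined with the weight bound $d$ and Theorem \ref{StabilityDegreeImpliesRepStabilityReview}, this proves the theorem for $\FI_{BC}$; the $\FI_A$ case is proved by the same argument, or deduced by induction $\Ind_{A}^{BC}$ which sends $M_A(\bm)$ to $M_{BC}(\bm)$ and so preserves all the relevant numerical invariants.

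For the $\FI_D$ case, set $W := \Ind_{D}^{BC} V$. Since $\Ind_{D}^{BC} M_D(\bm) \cong M_{BC}(\bm)$, the module $W$ has generation degree $\leq g$ and, by hypothesis, relation degree $\leq r$; by the definition of weight in type D, $W$ has weight $\leq d$. Apply the $\FI_{BC}$ case above to obtain uniform representation stability of $W$ starting at $n \geq \max(g,r)+d$. By Theorem \ref{VisResIndVReview}, for $n > g$ the natural map $V_n \to (\Res_{D}^{BC} W)_n$ is a $D_n$--isomorphism, and these isomorphisms intertwine the two sequences of inclusions, so representation stability transfers from $W$ to $V$ in the range $n > g$. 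When $d \geq 1$ we automatically have $\max(g,r)+d \geq g+1$; when $d=0$ one must impose the extra hypothesis $n \geq g+1$ to stay in the range where Theorem \ref{VisResIndVReview} applies. The main obstacle is the diagram chase of the second step, requiring some care to convert right-exactness of $\Phi_a$ together with a bound on generators of $K$ into an injectivity statement for the induced $T$--action on $\Phi_a(V)$.
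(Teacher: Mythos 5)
Your proposal is correct and, as far as the present paper discloses (the result is cited from the companion paper \cite{FIW1}), it follows exactly the intended route: bound the stability degree of $M_{\W}(\bm)$ by $m$ (the paper records precisely this, injectivity degree $0$ and surjectivity degree $m$, as \cite[Proposition~\ref{FIW1-StabilityDegreeM(U)}]{FIW1}), use a finite presentation $K \to \bigoplus_i M_{\W}(\bm_i) \to V \to 0$ and the exactness of $\Phi_a$ over characteristic zero to deduce stability degree $\leq \max(g,r)$ for $V$, then invoke Theorem~\ref{StabilityDegreeImpliesRepStabilityReview}, handling type D by inducing to $\FI_{BC}$ and applying Theorem~\ref{VisResIndVReview} for $n > g$. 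Your diagram chase for injectivity is sound (surjectivity of $\Phi_a(K)_n \to \Phi_a(K)_{n+1}$ for $n \geq r$, combined with injectivity degree $0$ of the middle term, kills any element of $\Phi_a(M)_n$ whose image under $T$ lands in the image of $\Phi_a(K)_{n+1}$), and your bookkeeping for the extra $n \geq g+1$ condition when $d = 0$ in type D is exactly right. The one caveat: your parenthetical alternative of deducing type A by inducing along $\FI_A \hookrightarrow \FI_{BC}$ would require an analogue of Theorem~\ref{VisResIndVReview} that does not hold there (the Hom-sets of $\FI_A$ and $\FI_{BC}$ genuinely differ, unlike $\FI_D \subseteq \FI_{BC}$ where only the automorphisms differ), so the direct argument via the $\FI_A$ version of Theorem~\ref{StabilityDegreeImpliesRepStabilityReview} (from \cite{CEF}) is the one to use.
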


The converse statement follows easily from ``surjectivity'' criterion for representation stability. 

\begin{thm} \label{RepStabilityImpliesFinGenReview} \emph{\cite[Theorem \ref{FIW1-RepStabilityImpliesFinGen}]{FIW1}} {\bf (Uniformly representation stable $\FI_{\W}$--modules are finitely generated).} Suppose conversely that $V$ is an $\FI_{\W}$--module, and that $\{V_n, (I_n)_* \}$ is uniformly representation stable for $n \geq N$. Then $V$ is finitely generated in degree $\leq N$. 
\end{thm}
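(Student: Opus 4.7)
The plan is to show that the sub-$\FIW$--module $U \subseteq V$ generated by the set $\coprod_{n=0}^{N} V_n$ coincides with $V$, and then invoke finite-dimensionality of each $V_n$ from representation stability to extract a finite generating set.

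First I would unpack what it means for $U$ to be the sub-$\FIW$--module generated by $\coprod_{n=0}^{N} V_n$. By definition, $U_n$ is the $k[\W_n]$--span of all images $f_*(v)$ where $v \in V_m$ for some $m \leq N$ and $f \in \Hom_{\FIW}(\bm,\bn)$. A key small observation is that in each of the categories $\FI_A, \FI_D, \FI_{BC}$, every morphism $f: \bm \to \bn$ factors as $\sigma \circ I_{m,n}$ for some $\sigma \in \W_n$, so equivalently $U_n = \sum_{m=0}^{N} k[\W_n]\cdot (I_{m,n})_*(V_m)$. In particular, $U$ is automatically closed under all $\FIW$ morphisms, and tautologically $U_n = V_n$ for $n \leq N$.

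Next I would prove $U_n = V_n$ for $n > N$ by induction on $n$. The inductive step uses property (II) of uniform representation stability at stage $n-1 \geq N$: the image of $\phi_{n-1} = (I_{n-1})_*$ generates $V_n$ as a $k[\W_n]$--module, so $V_n = k[\W_n] \cdot (I_{n-1})_*(V_{n-1})$. By the inductive hypothesis $V_{n-1} = U_{n-1}$, and since $U$ is an $\FIW$--submodule we have $(I_{n-1})_*(U_{n-1}) \subseteq U_n$. Therefore $V_n \subseteq k[\W_n] \cdot U_n = U_n \subseteq V_n$, giving $V_n = U_n$. This establishes that $V$ is generated by $\coprod_{n=0}^{N} V_n$ as an $\FIW$--module, i.e.\ $V$ is generated in degree $\leq N$.

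Finally, to upgrade this to a \emph{finite} generating set, I would note that by hypothesis each $V_n$ is finite-dimensional (representation stability is defined only for sequences of finite-dimensional representations), so $\bigoplus_{n=0}^{N} V_n$ is a finite-dimensional $k$--module; any $k$--basis for this finite direct sum serves as a finite generating set for $V$ concentrated in degrees $\leq N$. There is no real obstacle here: the only subtle point is the factorization of $\FIW$ morphisms through the canonical inclusion, which is what ensures property (II) of representation stability — which is a priori only a statement about the single map $(I_{n-1})_*$ — is strong enough to control the $\FIW$--submodule structure.
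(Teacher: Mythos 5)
Your proof is correct and follows exactly the route the paper indicates just before the theorem statement, namely that the result ``follows easily from the surjectivity criterion for representation stability'': you run the induction on $n$ using property~(II) of Definition~\ref{DefnRepStability}, once you have the standard factorization of every $\FIW$ morphism as $\sigma \circ I_{m,n}$ with $\sigma \in \W_n$. The final step of passing from ``generated in degree $\leq N$'' to ``\emph{finitely} generated in degree $\leq N$'' via finite-dimensionality of each $V_n$ is exactly right, since Definition~\ref{DefnRepStability} presupposes finite-dimensional representations.
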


\subsubsection{Tensor products of $\FIW$--modules and graded $\FIW$--algebras}

\begin{defn} {\bf (Tensor product of $\FIW$--modules).} Given $\FIW$--modules $V$ and $W$, the \emph{tensor product} $V \otimes W$ is the $\FIW$--module by $(V \otimes W)_n = V_n \otimes W_n$ with the diagonal action of the $\FIW$--morphisms. 
\end{defn}

\begin{prop}\emph{\cite[Proposition \ref{FIW1-TensorsPreserveFinGen}]{FIW1}}{\bf(Tensor products respect finite generation).} If $V$ and $W$ are finitely generated $\FIW$--modules, then $V \otimes W$ is finitely generated. If $V$ is generated in degree $\leq m$ and $W$ in degree $\leq m'$, then $V \otimes W$ is generated in degree $\leq m+m'$. Over a field of characteristic zero, the weight of ($V \otimes W$) is at most weight($V$) + weight($W$). 
\end{prop}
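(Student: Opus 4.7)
The plan is to reduce to the case of the free $\FIW$--modules $M_\W(\bm)$ and $M_\W(\bm')$, analyze pairs of $\FIW$--morphisms combinatorially to bound the degree of generation, and then handle the weight bound separately using the representation theory of $\W_n$.

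\emph{Reduction to the free case.} Since $V$ and $W$ are finitely generated in degrees $\leq m$ and $\leq m'$ respectively, there are surjections
$$\bigoplus_{i \in I} M_\W(\bm_i) \twoheadrightarrow V, \qquad \bigoplus_{j \in J} M_\W(\bm'_j) \twoheadrightarrow W,$$
with each $m_i \leq m$, $m'_j \leq m'$, and $I, J$ finite. Since the pointwise tensor product is right exact, tensoring and distributing gives a surjection $\bigoplus_{i,j} M_\W(\bm_i) \otimes M_\W(\bm'_j) \twoheadrightarrow V \otimes W$, so it suffices to show that each $M_\W(\bm) \otimes M_\W(\bm')$ is finitely generated in degree $\leq m + m'$.

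\emph{Factoring pairs of morphisms.} A $k$--basis of $(M_\W(\bm) \otimes M_\W(\bm'))_n$ is the set of pairs $(f, g)$ of $\FIW$--morphisms $f: \bm \to \bn$ and $g: \bm' \to \bn$. For each such pair, the union $f(\bm) \cup g(\bm')$ is a sign--closed subset of $\bn$ of cardinality at most $2(m + m')$, and therefore coincides with the image of some injective $\FIW$--morphism $\iota: \bp \hookrightarrow \bn$ with $p \leq m + m'$. This yields factorizations $f = \iota \circ \tilde f$ and $g = \iota \circ \tilde g$ with $\tilde f: \bm \to \bp$, $\tilde g: \bm' \to \bp$, so $(f,g) = \iota_*(\tilde f, \tilde g)$ lies in the $\FIW$--orbit of a basis element of $(M_\W(\bm) \otimes M_\W(\bm'))_p$. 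Since for each $p \leq m + m'$ there are only finitely many such pairs $(\tilde f, \tilde g)$, we obtain a finite generating set concentrated in degrees $\leq m + m'$, proving both the finite generation and the degree of generation claim.

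\emph{Weight bound.} Over characteristic zero, decomposing $V_n$ and $W_n$ into irreducibles reduces the claim to showing that every irreducible constituent $V(\nu)_n$ of $V(\y)_n \otimes V(\mu)_n$ satisfies $|\nu| \leq |\y| + |\mu|$ for every $n$. For $S_n$ this is the standard subadditivity of the ``stable weight'' under Kronecker products: constituents $V_\eta$ of $V_\alpha \otimes V_\beta$ (with $\alpha, \beta, \eta$ partitions of $n$) satisfy $\eta_0 \geq \alpha_0 + \beta_0 - n$, which can be verified via branching rules or a direct character computation. The $B_n$ statement then follows from the explicit description of $B_n$--irreducibles as $V_{(\y^+, \y^-)} = \Ind_{B_{|\y^+|} \times B_{|\y^-|}}^{B_n} V_{(\y^+, \varnothing)} \boxtimes V_{(\varnothing, \y^-)}$, combined with Mackey's formula to reduce the $B_n$ decomposition to Kronecker products on symmetric group factors. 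The $D_n$ case reduces to the $B_n$ case, since weight for $\FI_D$--modules is defined via $\Ind_D^{BC}$ and restriction from $B_n$ to $D_n$ preserves the sizes of partition labels on irreducible constituents.

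\emph{Main obstacle.} The combinatorial factoring in the second step is routine once the sign conditions defining morphisms in $\FI_A$, $\FI_D$, and $\FI_{BC}$ are unpacked. The principal technical difficulty lies in the weight bound: the required inequality must hold at \emph{every} $n$, not merely in the stable range, so the Murnaghan--type stable decompositions established in \cite{FIW1} do not suffice on their own and one must instead invoke the uniform bound on Kronecker coefficients directly.
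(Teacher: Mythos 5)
Your proposal is on the right track for types A and B/C, but it glosses over two genuine difficulties in type D, and the weight argument can be streamlined considerably.

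For the degree-of-generation bound your reduction to $M_\W(\bm) \otimes M_\W(\bm')$ and the factorization of a pair $(f,g)$ through the symmetric subset $f(\bm) \cup g(\bm')$ is the standard argument and is correct in types A and B/C. In type D, however, the factorization $f = \iota \tilde f$, $g = \iota \tilde g$ can fail to produce a pair of $\FI_D$-morphisms when the images of $f$ and $g$ coincide and have full cardinality $2m = 2m'$: then $\tilde f, \tilde g \in B_p$ are automorphisms, and one of them may reverse an odd number of signs while the other reverses an even number, so no choice of $\iota$ makes both $\FI_D$-morphisms simultaneously. You dismiss this as ``routine once the sign conditions are unpacked,'' but it requires an argument: one must observe that in this conflicting case $p = m = m'$, so padding $\iota$ to a map from $\bf{(p+1)}$ removes the parity constraints on $\tilde f, \tilde g$ while keeping the source degree $p+1 \leq m + m'$ (with a separate trivial check when $n \leq m + m'$). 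So the bound still holds, but the step is not quite routine.

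For the weight bound, you do not actually need Kronecker coefficient estimates or Mackey's theorem. Over characteristic zero, if $a = $ weight$(V)$ then by definition every constituent $V(\y)_n$ of $V_n$ has $|\y| \leq a$, and each such $V(\y)_n$ is a constituent of $M_\W(\ba)_n \cong \Ind_{\W_{n-a}}^{\W_n} k$ by the Pieri/branching rule. So $V_n$ embeds as a $\W_n$-subrepresentation of a sum of copies of $M_\W(\ba)_n$, and likewise $W_n$ into a sum of copies of $M_\W(\bb)_n$ with $b = $ weight$(W)$. Tensoring, every constituent of $(V \otimes W)_n$ is a constituent of $M_\W(\ba)_n \otimes M_\W(\bb)_n$, and the degree bound you just proved together with Theorem \ref{WnDiagramSizesReview} gives weight$(M_\W(\ba) \otimes M_\W(\bb)) \leq a + b$. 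This avoids the $S_n$ Kronecker inequality and the Mackey reduction entirely.

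The genuine gap is in type D. Weight of an $\FI_D$-module is defined through $\Ind_D^{BC}$, and your one-sentence reduction — that restriction from $B_n$ to $D_n$ preserves sizes of partition labels — addresses restriction, not induction. To bound weight$(\Ind_D^{BC}(V \otimes W))$ one would want to compare $\Ind_D^{BC}(V \otimes W)$ with $(\Ind_D^{BC} V) \otimes (\Ind_D^{BC} W)$, but $\Ind_D^{BC}$ is a left Kan extension and does not commute with the pointwise tensor product; there is a natural map from the former to the latter (coming from the unit of adjunction), but you would need to show it is injective, or argue via Theorem \ref{VisResIndVReview} that the failure only occurs in degrees $\leq$ the generation degree where a separate bound applies. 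As written, the type D weight claim is asserted, not proved.
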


\begin{defn}{\bf (Graded $\FIW$--modules; Finite type; Slope).} A \emph{graded $\FIW$--module} $V=\oplus_i V^i$ is a functor from $\FIW$ to the category of graded $k$--modules; each graded piece $V^i$ is an $\FIW$--module. We say $V$ has \emph{finite type} if $V^i$ is a finitely generated for all $i$. 

Suppose $k$ is a field of characteristic zero. Let $V$ be a graded $\FIW$--module over $k$ supported in nonnegative degrees. We define the \emph{slope} of $V$ to be $\leq m$ if for all $i$ the $\FIW$--module $V^i$ has weight $\leq m\cdot i$.

The tensor product of graded $\FIW$--modules $U=\oplus_i U^i$ and $W=\oplus_j W^j$ is the graded $\FIW$--module $$U\otimes W = \bigoplus_{\ell} (U \otimes W)^{\ell} := \bigoplus_{\ell} \bigg( \bigoplus_{i+j=\ell} (U^i \otimes W^j) \bigg). $$

Suppose $U$ and $W$ are supported in nonnegative degrees with $U_0 \cong W_0 \cong M_{\W}({\bf 0})$. If $U$ and $W$ have finite type, then $U \otimes W$ is a graded $\FIW$--module of finite type. Over characteristic zero, if $U$ and $W$ have slopes $\leq m$, then  $U\otimes W$ will have slope $\leq m$. 
\end{defn}

\begin{defn}{\bf($\FIW$--algebras; $\FIW$--ideals; The free associative algebra on $V$).} A \emph{(graded) $\FIW$--algebra} $A = \bigoplus A^i$ is a functor from $\FIW$ to the category of (graded) $k$--algebras. A sub--$\FIW$--module $V$ \emph{generates} $A$ as an $\FIW$--algebra if $V_n$ generates $A_n$ as a $k$--algebra for all $n$.  

 Let $A$ be a graded $\FIW$--algebra. An $\FIW$--ideal $I$ of $A$ is a graded sub--$\FIW$--algebra of $A$ such that $I_n$ is a homogeneous ideal in $A_n$ for each $n$.

We define the \emph{free associative algebra} on  an $\FIW$--module $V$ as the graded $\FIW$--algebra  $$ k \langle V \rangle := \bigoplus_{j=0}^{\infty} V^{\otimes j}.$$ Any $\FIW$--algebra $A$ generated by $V$ admits a surjection of $\FIW$--algebras $k\langle V \rangle \twoheadrightarrow A.$

 If $V$ is supported in nonnegative degrees and has finite type, then so does $A$. Over a field of characteristic zero, if $V$ has slope $\leq m$ then so does $A$. 
 
 \end{defn}
 
 \begin{prop} \label{SlopeFinGenFromGeneratorsReview} \emph{\cite[Proposition \ref{FIW1-SlopeFinGenFromGenerators}]{FIW1}}  Let $A$ be an $\FIW$--algebra generated by a graded $\FIW$--module $V$ concentrated in grade $d$. If $V$ is finitely generated in degree $\leq m$, then the $i^{th}$ graded piece $A^i$ is finitely generated in degree $ \leq \left( \frac{i}{d}\right)m$, and moreover if $k$ is a characteristic zero field then weight$ (A^i) \leq \left( \frac{i}{d}\right)$weight$(V)$. 
\end{prop}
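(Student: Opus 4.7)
The plan is to reduce the statement to the tensor-product proposition stated just before it, by comparing $A$ to the free $\FIW$--algebra $k\langle V\rangle$.

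First, since $V$ generates $A$ as an $\FIW$--algebra, the universal property of the free associative algebra yields a surjection of graded $\FIW$--algebras
\[
\phi: k\langle V \rangle = \bigoplus_{j=0}^{\infty} V^{\otimes j} \twoheadrightarrow A.
\]
Because $V$ is concentrated in grade $d$, the tensor power $V^{\otimes j}$ is concentrated in grade $jd$, so the $i^{th}$ graded piece of $k\langle V\rangle$ is $V^{\otimes(i/d)}$ when $d \mid i$ and is zero otherwise. Restricting $\phi$ to grade $i$ gives a surjection of $\FIW$--modules $V^{\otimes(i/d)} \twoheadrightarrow A^i$ (and $A^i = 0$ when $d \nmid i$, in which case the claimed bound holds trivially).

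Next I would apply the tensor product proposition (the one asserting that tensor products respect finite generation of $\FIW$--modules) inductively. Starting from the hypothesis that $V$ is finitely generated in degree $\leq m$, one gets that $V^{\otimes j}$ is finitely generated in degree $\leq jm$. Over a field of characteristic zero, the same inductive argument with the additive bound on weights gives $\text{weight}(V^{\otimes j}) \leq j \cdot \text{weight}(V)$.

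Finally, since the degree of generation and (in characteristic zero) the weight of an $\FIW$--module can only decrease under quotients, the surjection $V^{\otimes(i/d)} \twoheadrightarrow A^i$ yields that $A^i$ is finitely generated in degree $\leq \left(\tfrac{i}{d}\right)m$ and, over a characteristic zero field, that $\text{weight}(A^i) \leq \left(\tfrac{i}{d}\right)\text{weight}(V)$. There is no real obstacle here beyond bookkeeping on the grading; the substantive content has already been done in the tensor product proposition and in the general fact that quotients of finitely generated (resp.\ bounded-weight) $\FIW$--modules inherit the same bounds.
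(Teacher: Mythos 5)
Your argument is correct and is exactly the intended route: the paper sets up the surjection $k\langle V\rangle = \bigoplus_j V^{\otimes j} \twoheadrightarrow A$ and the bound that tensor products add degrees of generation and (in characteristic zero) weights precisely so that the proposition follows by restricting to grade $i$, noting $A^i$ is a quotient of $V^{\otimes(i/d)}$, and using that generation degree and weight can only decrease under quotients. This matches the argument from \cite{FIW1} that the proposition is citing.
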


\begin{defn} {\bf (Co--$\FIW$--modules, Co--$\FIW$--algebras, finite type).} A \emph{graded co--$\FIW$--module} is a functor from the dual category $\FIW^{op}$ to the category of graded $k$--modules. A \emph{graded co--$\FIW$--module} is a functor from $\FIW^{\mathrm{op}}$ to graded $k$--algebras. 

A graded co--$\FIW$--module $V$ over a field $k$ has \emph{finite type} if its \emph{dual}  $V^*$, $$V^*_n := \Hom_k(V_n, k),$$ has finite type. Similarly, $V$ has \emph{slope} $\leq m$ if $V^*$ does. 
\end{defn}

\begin{prop}\emph{\cite[Proposition \ref{FIW1-CoModulesGenerateFiniteType}]{FIW1}} {\bf (Finite type co--$\FIW$--modules generate co--$\FIW$--algebras of finite type).}
 Let $k$ be a Noetherian commutative ring. Suppose that $A$ is a graded co--$\FIW$--algebra containing a graded co--$\FIW$--module $V$ supported in positive grades. If $V$ has finite type, then the subalgebra $B$ of $A$ generated by $V$ is a graded co--$\FIW$--algebra of finite type.  When $k$ is a field of characteristic zero and $V$ is a graded co--$\FIW$--module  of slope $\leq m$, then $B$ has slope $\leq m$.
\end{prop}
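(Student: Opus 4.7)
The strategy is to dualize the entire setup and then invoke the already-established covariant $\FIW$--algebra statement together with the Noetherian property (Theorem \ref{NoetherianReview}). Since $V$ has finite type as a graded co--$\FIW$--module, its dual $V^*$ is by definition a finite-type graded $\FIW$--module, and over characteristic zero the condition slope$(V)\leq m$ is the same as slope$(V^*)\leq m$. The free associative $\FIW$--algebra $k\langle V^*\rangle = \bigoplus_{j\geq 0}(V^*)^{\otimes j}$ then has finite type, because each graded piece is a finite direct sum of tensor products of finitely generated $\FIW$--modules, which are themselves finitely generated. Over a characteristic zero field, weight-additivity of tensor products transports the slope bound to $k\langle V^*\rangle$.

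The hypothesis that $V$ generates $B$ as a graded co--$\FIW$--algebra yields, in each grade $i$, a surjection $T^i := \bigoplus_{j,\; i_1+\cdots+i_j=i} V^{i_1}\otimes\cdots\otimes V^{i_j} \twoheadrightarrow B^i$ of co--$\FIW$--modules, where $T=k\langle V\rangle$ is the free associative co--$\FIW$--algebra on $V$. Taking linear $k$--duals grade-by-grade, and using that the finite-type hypothesis forces each $V^i_n$ to be a finitely generated $k$--module so that linear duality converts the finite tensor products of $V$ into the corresponding tensor products of $V^*$, one obtains an injection of $\FIW$--modules $(B^i)^* \hookrightarrow k\langle V^*\rangle^i$. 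The Noetherian property now applies: the sub--$\FIW$--module $(B^i)^*$ of the finitely generated $\FIW$--module $k\langle V^*\rangle^i$ is itself finitely generated for every $i$. So $B^*$ has finite type, which by definition means $B$ is a graded co--$\FIW$--algebra of finite type. For the slope statement, in characteristic zero the weight of any $\FIW$--module bounds the weight of each of its sub--$\FIW$--modules (the irreducible constituents of a submodule are a subset of those of the ambient semisimple representation), and so slope$(B^*)\leq$ slope$(k\langle V^*\rangle)\leq m$.

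The principal subtlety is the grade-wise dualization identifying $(V^{i_1}\otimes\cdots\otimes V^{i_j})^*_n$ with $V^{i_1*}_n\otimes\cdots\otimes V^{i_j*}_n$ over a Noetherian commutative ring. This is precisely the setting the finite-type hypothesis is designed to handle: each graded piece $V^i_n$ is a finitely generated $k$--module, and in the intended applications (cohomology with coefficients in $\Q$, or projective $\Z$--modules in each bidegree) they are projective, so linear duality is an honest isomorphism of $\FIW$--modules. Once this identification is in place, the remaining steps --- algebra surjection downstairs, module injection upstairs, the Noetherian property, and weight monotonicity --- are formal.
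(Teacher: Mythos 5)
The statement's proof lives in the companion paper \cite{FIW1}, not in the text at hand, so there is no in-paper argument to compare against word for word. That said, your dualize-and-apply-Noetherian strategy is the natural route, and almost certainly what \cite{FIW1} does: the heavy lifting is entirely delegated to the finite-generation result for tensor products (so that $k\langle V^*\rangle$ has finite type) and to the Noetherian property of the $\FIW$--module category, which is exactly how the covariant version \cite[Proposition \ref{FIW1-SlopeFinGenFromGenerators}]{FIW1} and its surrounding machinery are set up. The slope argument at the end is also correct: over characteristic zero, weight is defined via irreducible constituents, and a sub--$\FIW$--module can only see a subset of those constituents, so weight is monotone under inclusion.

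Where I would push back is on the step you yourself flag as the ``principal subtlety,'' because as written it is a gap rather than a subtlety you have navigated. You need $(B^i)^*$ to embed into a \emph{finitely generated} $\FIW$--module; your route to this is the grade-wise identification $\bigl(V^{i_1}\otimes\cdots\otimes V^{i_j}\bigr)^*_n \cong (V^{i_1}_n)^*\otimes\cdots\otimes(V^{i_j}_n)^*$, which requires the relevant $k$--modules to be finitely generated \emph{projective}. Over a field this is automatic and the argument goes through cleanly. But the proposition is asserted for arbitrary Noetherian commutative $k$, and over such $k$ two things can fail: first, ``$V^*$ finitely generated'' does not by itself force each $V_n$ to be finitely generated over $k$ (e.g. $\Hom_\Z(\Q,\Z)=0$); second, even when $V_n$ is finitely generated, the natural map $(V_n)^*\otimes(W_n)^*\to(V_n\otimes W_n)^*$ need not be an isomorphism without projectivity. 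Saying that the intended applications happen to be projective describes a special case, not a proof of the statement as posed. Either the definition of ``finite type'' for co--$\FIW$--modules over a Noetherian ring must be spelled out so that it carries the needed projectivity (which is plausibly what \cite{FIW1} does), or the argument must be rerouted around the tensor--dual compatibility; neither is done here, so in the stated generality this is a genuine missing step.
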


This concludes the summary of the vocabulary and basic results from \cite{FIW1}.

\section{$\FIW \sh$--modules} \label{SectionFISharp}

Church--Ellenberg--Farb \cite[Definition 2.19]{CEF} define $\FI\sh$--modules, a class of sequences of $S_n$--representations which carry compatible $\FI_A$ and co--$\FI_A$--module structures. We will give several characterizations of the analogous constructions in type B/C.  An $\FI_{BC}\sh$--module structure imposes strong constraints on a sequence of $B_n$--representations; just as for $\FI_A\sh$--modules \cite[Theorem 2.24]{CEF}: the underlying $\FI_{BC}$--module structure on these sequences must be of the form $\bigoplus_{r} M_{BC}(U_r)$ for some set of $B_{r}$--representations $U_r$.

\begin{defn} \label{DefnFISharp} For $n \geq 0 \in \Z$, let $\bno := \{ 0 , \pm 1,\pm 2, \ldots, \pm n \}.$ We think of the digit $0$ as a basepoint. Define $\FI_{BC}\sh$ to be the category with objects $\bno$ for $n \geq 0 \in \Z$, and morphisms 
\begin{align*}
f: \bmo \to \bno \qquad \text{such that }  & \qquad   f(-a) = -f(a) \text{ for all $a \in \bno$} \qquad \\  \text{ and } & \qquad | f^{-1}(b)| \leq 1 \text{ for $ 1 \leq |b| \leq n$}.
\end{align*}
These morphisms are ``injective away from zero''. Note that the conditions imply $f(0)=0$.  

We define $\FI_A\sh$ to be the subcategory with the same objects and morphisms preserving signs. In both cases, an \emph{$\FIW\sh$--module} over a ring $k$ is a functor from $\FIW$ to the category of $k$--modules.

In both types A and B/C, the injective maps in $\FIW\sh$ are precisely the $\FIW$ morphisms. We call $f \in \Hom_{\FIW\sh}(\bmo, \bno)$ a \emph{projection} if $| f^{-1}(\pm b)| = 1 \text{ for $ 1 \leq b \leq n$}$; these projections are left inverses to the $\FIW$ morphisms. 

For a morphism $f:\bmo \to \bno$, we call $|f^{-1}(\{\pm 1 , \ldots, \pm n\})|$ the \emph{rank} of $f$. 
 \end{defn}

This description of $\FIW\sh$--modules was suggested to us by Peter May. The category $\FI_A\sh$ appears independently in work by May and Merling studying the Segal equivariant infinite loop space machine \cite{MayMerling}, where the category is denoted $\Pi$. 

\begin{rem}{\bf (A Category $\FI_D\sh$?).} \label{RemarkFIDSharp} Unlike with $\FI_A$ and $\FI_{BC}$, we cannot introduce partial inverses to the morphisms in the category $\FI_{D}$ without also introducing additional automorphisms -- and, in fact, generating the entire category $\FI_{BC}\sh$. It is not clear that we can create any satisfactory analogue of $\FI\sh$--module theory in type D, since critical properties fail: the $\FI_D$--module structure on $M_D(\bm)$ does not extend to an $\FI_{BC}\sh$--module structure.
\end{rem}

\subsubsection*{An alternate characterizations of the categories $\FI_{\W}\sh$}

In this section we relate the categories $\FIW\sh$ back to the definition of $\FI\sh$ given by Church--Ellenberg--Farb \cite[Definition 2.19]{CEF}. 

\begin{rem} \label{FISharpAsTriples} Church--Ellenberg--Farb \cite[Definition 2.19]{CEF} defined $\FI_{A} \sh$ to be the category whose objects are finite sets, in which $\Hom_{\FI_A \sh}(S, T )$ is the set of triples $(A, B, \phi)$ with $A$ a subset of $S $, $B$ a subset of $T$ and $\phi : A \to B$ an isomorphism. The composition of two morphisms is given by composition of functions, where the domain is the largest set on which the composition is defined, and the codomain is its bijective image.  

We can generalize this definition. We call a subset $A \subseteq \bn$ \emph{symmetric} if $a \in A \Longleftrightarrow -a \in A.$

Then we define $\FI_{BC}\sh$ to be the category whose objects are the finite sets $\bn = \{ \pm 1, \pm 2, \ldots, \pm n \},$ and whose morphisms $\Hom(\bm, \bn)$ are triples $(A, B, \phi)$ such that $A$ is a symmetric subset of $\bm$, $B$ is a symmetric subset of $\bn$, and $\phi:A \to B$ is an injective map satisfying $\phi(-a) = -\phi(a)$ for every $a \in A.$ 

The subcategory $\FI_A \sh$ comprises all morphisms that preserve signs; this coincides with the definition of $\FI_A \sh$ given in \cite[Definition 2.19]{CEF}. 

These definitions of the categories $\FI_{A}\sh$ and $\FI_{BC}\sh$ are equivalent to Definition \ref{DefnFISharp}. We identify the morphism $(A, B, \phi) \in \Hom_{\FIW \sh}(\bm, \bn )$ with the map $f: \bmo \to \bno$ defined by
\begin{align*}
f &: \bmo \to \bno \\
j & \mapsto \left\{ \begin{array}{ll}
         \phi(j) & \mbox{$ j \in A  $},\\
        0 & \mbox{$ j \notin A.$}.\end{array} \right.
\end{align*}

Conversely, we can identify $f: \bmo \to \bno$ with a triple $(A, B, \phi)$ by taking $A = f^{-1}(\{ \pm 1, \ldots, \pm n \})$, $B=f(A)$, and $\phi = f \vert _A$. One can check that these identifications of morphisms are consistent with the composition rules, and give an isomorphism of categories. 
\end{rem}

\subsubsection*{Examples of $\FI_{\W}\sh$--modules}

We prove in Proposition \ref{M(m)IsFISharp} and Corollary \ref{M(U)isFISharp} that $\FI_{BC}$--modules of the form $M_{BC}(\bm)$ or $M_{BC}(U)$ have $\FI_{BC}\sh$--module structures. 

\begin{prop} {\bf ($M_{\W}(\ba)$ is an $\FI_{\W}\sh$--module).}\label{M(m)IsFISharp} Let $\W_n$ be $S_n$ or $B_n$. For $a \geq 0$, the $\FIW$--module structure on $M_{\W}(\ba)$ extends to an $\FIW\sh$--structure. 
\end{prop}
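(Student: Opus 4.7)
The plan is to define a functor $M_\W(\ba) : \FIW\sh \longrightarrow k\text{--Mod}$ extending the given $\FIW$--module structure, and then verify that the resulting assignment is functorial and agrees with the original $\FIW$--action on the injective morphisms of $\FIW\sh$.

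On objects, set $M_\W(\ba)(\bno) := k\bigl[\Hom_{\FIW}(\ba,\bn)\bigr]$, which matches the existing definition after forgetting the basepoint $0$. For a morphism $f : \bmo \to \bno$ in $\FIW\sh$ and a basis element $e_g$ corresponding to an $\FIW$--morphism $g : \ba \to \bm$, I will define
\[
f_*(e_g) \;:=\; \begin{cases} e_{f\circ g}, & \text{if $f\circ g$ sends no element of $\ba$ to $0$,} \\ 0, & \text{otherwise,} \end{cases}
\]
where $g$ is regarded as an $\FIW\sh$--morphism $\bao \to \bmo$ by sending $0 \mapsto 0$. In the first case, $f\circ g$ is injective and sign-preserving/sign-equivariant, so it is genuinely an $\FIW$--morphism $\ba \to \bn$, and $e_{f\circ g}$ is a basis element of $M_\W(\ba)_n$.

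Next I would verify functoriality. The identity morphism in $\Hom_{\FIW\sh}(\bno,\bno)$ satisfies $\mathrm{id}\circ g = g$, which is always an $\FIW$--morphism, so $\mathrm{id}_*(e_g) = e_g$. For composition, given $\bmo \xrightarrow{f_2} \bno \xrightarrow{f_1} \bl$ and $g : \ba \to \bm$, I compare $(f_1)_*\bigl((f_2)_*(e_g)\bigr)$ to $(f_1 \circ f_2)_*(e_g)$. If $f_2 \circ g$ sends some element of $\ba$ to $0$, then $(f_2)_*(e_g)=0$, and also $(f_1\circ f_2)\circ g = f_1\circ (f_2\circ g)$ sends that same element to $0$ (since $f_1(0)=0$), so both sides vanish. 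Otherwise $(f_2)_*(e_g)=e_{f_2\circ g}$, and then the condition for $(f_1)_*(e_{f_2\circ g})$ to be nonzero is exactly the same as the condition for $(f_1\circ f_2)_*(e_g)$ to be nonzero, and when nonzero both equal $e_{f_1\circ f_2\circ g}$. So composition is preserved.

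Finally I would check compatibility with the original $\FIW$--module structure. The injective morphisms in $\FIW\sh$ are precisely the $\FIW$--morphisms (extended by $0\mapsto 0$), and for such $f$ the composition $f\circ g$ is automatically injective away from $0$, so $f_*(e_g) = e_{f\circ g}$, which is exactly postcomposition as in the original definition of $M_\W(\ba)$. I expect the only subtle point to be bookkeeping the role of the basepoint $0$ when checking composition in the case-split above; once one sees that $f_1(0)=0$ forces the ``send something to $0$'' condition to propagate through the composition, the verification is routine.
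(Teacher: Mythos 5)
Your proof is correct and takes essentially the same approach as the paper: you define the extended action by postcomposition when the composite hits no element of $\ba$ to $0$ (and by $0$ otherwise), and verify functoriality by observing that the vanishing condition propagates through composition via associativity and $f(0)=0$. The paper states this more tersely as a single equivalence $0 \in (g\circ f)(s(\ba)) \Leftrightarrow 0 \in g((f\circ s)(\ba))$; your case analysis simply makes explicit the same underlying facts.
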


\begin{proof}[Proof of Proposition \ref{M(m)IsFISharp}]
 By Definition \ref{Defn:M(m)Review}, $M_{\W}(\ba)_{m} = \Span_k \left\{e_s \mid s \in \Hom_{\FIW}(\ba, \bm)\right\}.$
 
Take any $\FIW\sh$--morphism $f:\bmo \to \bno$. We define
\begin{align*}
 f \cdot e_s =  &  \left\{ \begin{array}{ll}
         e_{f \circ s}  & \mbox{if $ 0 \notin f( s(\ba))$ },\\
         0 & \mbox{otherwise}.\end{array} \right.
\end{align*}

The condition $ 0 \notin f( s( \ba))$ is the statement that $f \circ s$ is an injective map $\ba \to \bn$. Given $g: \bno \to \bpo$, we note that $$ 0 \in \Big(g \circ f\Big)\Big( s(\ba)\Big) \quad \Longleftrightarrow \quad 0 \in g\Big((f \circ s)(\ba)\Big); $$ this implies that the action $(g \circ f) \cdot e_s = g \cdot ( f \cdot e_s)$ is functorial. 
\end{proof}

\begin{cor} {\bf ($M_{\W}(U)$ is an $\FIW\sh$--module).} \label{M(U)isFISharp} Let $\W_n$ be $S_n$ or $B_n$. Given a $\W_a$--representation $U$, the $\FIW$--module $$M_{\W}(U) := M_{\W}(\ba) \otimes_{k[\W_a]} U$$ has the structure of an $\FIW\sh$--module. \end{cor}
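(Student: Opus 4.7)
The plan is to promote the $\FIW\sh$--action on $M_{\W}(\ba)$ from Proposition \ref{M(m)IsFISharp} to an action on $M_{\W}(U) = M_{\W}(\ba) \otimes_{k[\W_a]} U$ by letting it act on the left factor. The only nontrivial point is to check that this action descends across the tensor product $\otimes_{k[\W_a]}$, that is, that the left $\FIW\sh$--action on $M_{\W}(\ba)$ commutes with the right $\W_a$--action.

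Concretely, recall that the right $\W_a$--action on $M_{\W}(\ba)_m = \Span_k\{ e_s \mid s \in \Hom_{\FIW}(\ba, \bm)\}$ is given by precomposition, $e_s \cdot \tau = e_{s \circ \tau}$ for $\tau \in \W_a$. Given any $\FIW\sh$--morphism $f : \bmo \to \bno$, the proof of Proposition \ref{M(m)IsFISharp} shows that $f \cdot e_s = e_{f \circ s}$ if $0 \notin f(s(\ba))$ and is $0$ otherwise. Since $\tau$ is a bijection of $\ba$, the image $(s \circ \tau)(\ba)$ equals $s(\ba)$, so the vanishing condition $0 \in f(s(\ba))$ is unaffected by $\tau$. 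One then reads off
\[
f \cdot (e_s \cdot \tau) = f \cdot e_{s \circ \tau} = e_{f \circ s \circ \tau} = (f \cdot e_s) \cdot \tau,
\]
with the convention that both sides vanish simultaneously. Thus the left $\FIW\sh$--action on $M_{\W}(\ba)$ commutes with the right $\W_a$--action and induces a well-defined $k$--linear action on $M_{\W}(U)_n = M_{\W}(\ba)_n \otimes_{k[\W_a]} U$ by
\[
f \cdot (e_s \otimes u) := (f \cdot e_s) \otimes u.
\]

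Functoriality in $\FIW\sh$ is inherited directly from Proposition \ref{M(m)IsFISharp}: for composable morphisms $f, g$ in $\FIW\sh$, we have $(g \circ f) \cdot e_s = g \cdot (f \cdot e_s)$ in $M_{\W}(\ba)$, and tensoring with $u \in U$ preserves this identity. The action extends $k$--linearly over $M_{\W}(U)$, and it restricts to the original $\FIW$--module structure on the subcategory of injective morphisms. Hence $M_{\W}(U)$ carries the structure of an $\FIW\sh$--module. The only substantive verification was the commutation of the $\FIW\sh$-- and $\W_a$--actions, and once that is in hand the rest is formal.
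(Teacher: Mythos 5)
Your proof is correct and takes essentially the same approach the paper intends: the paper leaves Corollary \ref{M(U)isFISharp} without a written proof as an immediate consequence of Proposition \ref{M(m)IsFISharp}, and you simply spell out the implicit verification that the $\FIW\sh$--action on $M_{\W}(\ba)$ by postcomposition commutes with the right $\W_a$--action by precomposition (noting that $\tau \in \W_a$ is a bijection so the vanishing condition $0 \in f(s(\ba))$ is preserved), hence descends across $\otimes_{k[\W_a]}$.
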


\begin{rem} We note the proof of Proposition \ref{M(m)IsFISharp} does not work in type $D$, as the space $M_D(\bm)_m \subseteq M_{BC}(\bm)_m$ is not closed under the action of action of the $\FI_{BC}\sh$--morphisms.
\end{rem}

\subsection{Classification of $\FI_{BC}\sh$--modules}

The structure of an $\FI_{BC}\sh$--module is highly constrained. In Corollary \ref{M(U)isFISharp} we saw that $M_{BC}(U)$ is an $\FI_{BC}\sh$--module. Just as Church--Ellenberg--Farb proved with $\FI_A\sh$--modules \cite[Theorem 2.24]{CEF}, we will now find that all $\FI_{BC}\sh$--modules are sums of $\FI_{BC}\sh$--modules of this form. For the following theorem, recall the definition of $\text{FB$_\W$--Mod}$ from Definition \ref{Defn:M(m)Review}. 

\begin{thm}\label{ClassifyingFISharp} {\bf ($\FI_{\W} \sh$--modules take the form $\bigoplus_{a=0}^{\infty} M_{\W}(U_a)$).}
 Let $\W_n$ be $S_n$ or $B_n$. Every $\FIW \sh$--module $V$ is of the form $$V= \bigoplus_{a=0}^{\infty} M_{\W}(U_a), \qquad \text{ $U_a$ a representation of $\W_a$ (possibly $U_a=0$),}$$ and moreover that the maps 
$$M_{\W}( - ) : \text{FB$_\W$--Mod}  \longrightarrow \FIW \text{--Mod} \qquad \text{and} \qquad H_0(-) :  \FIW \text{--Mod}  \longrightarrow \text{FB$_\W$--Mod} $$ 
define an equivalence of categories.
\end{thm}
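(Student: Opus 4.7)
The plan is to prove the stronger second assertion first---that $M_\W$ and $H_0$ define mutually inverse equivalences between $\text{FB}_\W\text{--Mod}$ and $\FIW\sh$--Mod---and then deduce the direct sum decomposition as a formal consequence. Indeed, $\text{FB}_\W\text{--Mod}$ is canonically the product category $\prod_{a \geq 0} \Rep(\W_a)$, in which every object splits as $\bigoplus_a U_a$ with $U_a$ concentrated in degree $a$; applying the equivalence $M_\W$ will then give $V \cong \bigoplus_a M_\W(U_a)$ with $U_a = H_0(V)_a$.

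By Corollary \ref{M(U)isFISharp}, $M_\W$ lands in $\FIW\sh$--Mod, so both functors are well-defined. The easy composition $H_0 \circ M_\W \cong \mathrm{id}$ is essentially immediate from Definition \ref{Defn:M(m)Review}: for a $\W_a$-representation $U$, the $\FIW$-module $M_\W(U)$ vanishes below degree $a$, equals $U$ at degree $a$, and in every degree $n > a$ its elements lie in the image of a non-identity $\FIW$-morphism from lower degree, and hence are killed by $H_0$. Thus $H_0(M_\W(U))$ is concentrated in degree $a$ and equal to $U$ there.

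The substantive direction is $M_\W \circ H_0 \cong \mathrm{id}$, i.e., that the canonical surjection $M_\W(H_0(V)) \twoheadrightarrow V$ of Definition \ref{DefnH0Review} is an isomorphism whenever $V$ is an $\FIW\sh$-module. The crucial new tool is the $\FIW\sh$ projections: for every symmetric subset $A \subseteq \bn$ of cardinality $2a$, identifying $A$ with $\ba$ and collapsing the complement of $A$ to the basepoint gives a projection $p_A \colon \bno \to \ba_0$ in $\FIW\sh$ that is a one-sided inverse to the $\FIW$-morphism $\ba \hookrightarrow \bn$ picking out $A$. Applied by $V$, this produces a canonical retraction $(p_A)_* \colon V_n \to V_a$ splitting the inclusion $V_a \hookrightarrow V_n$. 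The main obstacle---and the technical heart of the proof---is to assemble these retractions into a $\W_n$-equivariant direct sum decomposition
\begin{equation*}
V_n \;\cong\; \bigoplus_{a=0}^{n} \Ind_{\W_a \times \W_{n-a}}^{\W_n} L_a \boxtimes k,
\end{equation*}
where $L_a \subseteq V_a$ is defined as the intersection of the kernels of all projection-induced maps $V_a \to V_{a'}$ for $a' < a$.

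To carry this out, I would first verify that $L_a$ is $\W_a$-stable (the set of projections out of $\bno$ is preserved under precomposition by $\W_a$) and that the surjection $L_a \twoheadrightarrow H_0(V)_a$ is an isomorphism (the projections precisely witness which elements of $V_a$ already lie in the image of $\FIW$-morphisms from lower degree). Induction on $n$, using a rank filtration on $V_n$ (stratifying by the size of the smallest symmetric subset of $\bn$ supporting a given element, made rigorous by the mutually compatible projections), then gives the decomposition: the retraction onto the rank-$a$ stratum is built from the $p_A$ for $|A| = 2a$, and the joint kernel of the retractions for all smaller strata is exactly $\Ind_{\W_a \times \W_{n-a}}^{\W_n} L_a \boxtimes k$. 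The $\FIW\sh$-functoriality forces the $\W_n$-action on this $a$-indexed summand to agree with the induced-module action on $M_\W(L_a)_n$, and the $\FIW$-morphisms act by the universal property described in Example \ref{FIModuleExamples}.\ref{FIExample-Induced}. This identifies the natural surjection $M_\W(H_0(V))_n \twoheadrightarrow V_n$ with the identity on the decomposition. Naturality in $\FIW\sh$-module maps promotes this to an isomorphism of functors, yielding the equivalence of categories, and hence the stated decomposition.
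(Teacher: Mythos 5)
Your high-level strategy (prove $M_\W$ and $H_0$ are inverse equivalences, then read off the decomposition) is equivalent to the paper's (prove the decomposition, then note it yields the equivalence), and your key tool --- the $\FIW\sh$ projections onto symmetric subsets --- is exactly what CEF and the paper use. So this is not a genuinely different route; but there is a real gap precisely at the step you yourself flag as the technical heart. The claim that ``the joint kernel of the retractions for all smaller strata is exactly $\Ind_{\W_a \times \W_{n-a}}^{\W_n} L_a \boxtimes k$'' is false: the joint kernel of all projections $V_n \to V_{a'}$ with $a' < a$ contains the \emph{entire} higher-rank part $\bigoplus_{a'' \geq a} M_\W(L_{a''})_n$, not merely the rank-$a$ piece, since any element of $M_\W(L_{a''})_n$ with $a'' > a$ already dies under every projection to degree $a' < a''$. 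Likewise, your assertion that $L_a \twoheadrightarrow H_0(V)_a$ is an isomorphism amounts to the statement $V_a = L_a \oplus \text{span}_V\bigl(\coprod_{k<a} V_k\bigr)_a$, which is exactly the splitting you are trying to establish; justifying it by ``the projections precisely witness which elements already lie in the image of morphisms from lower degree'' is circular.

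What closes both gaps is a single $\W_n$-equivariant idempotent retraction onto the rank-$\leq a$ part. The individual $(p_A)_*$ do not combine into one automatically, and assembling them into an $\FIW\sh$-module endomorphism that squares to itself is the one real computation. The paper, quoting CEF, takes $E_m = \sum_{S} I_S$ (with $S$ ranging over symmetric subsets of $\bm$ of the appropriate size and $I_S$ the restrict-to-$S$ self-map), checks $E^2 = E$ as a map of $\FI_{BC}\sh$-modules, and obtains $V \cong M_\W(V_n) \oplus \ker E$ at the lowest nonzero degree $n$, with $\ker E$ vanishing below $n+1$; induction and naturality of $E$ then finish the argument. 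Your outline never produces this idempotent, so the asserted decomposition remains an ansatz rather than a proof.
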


We remark that Theorem \ref{ClassifyingFISharp}, and in particular the result in type A proven by Church--Ellenberg-Farb \cite[Theorem 2.24]{CEF}, parallels earlier work of Pirashvili \cite{PirashviliGamma, PirashviliHodge} on \emph{$\Gamma$--modules}, functors from the category of finite based sets and (not necessarily injective) based maps.

This theorem is proved for $\FI_A$ in \cite[Theorem 2.24]{CEF}, and their proof adapts readily to the general case. 

Church--Ellenberg--Farb proceed by induction. Assume $V$ is an $\FI_A \sh$--module, and fix $n$ such that $V_m = 0$ for all $m<n$ (possibly $n=1$). They define a particular idempotent endomorphism of $\FI_A\sh$--modules $E: V \to V$, and prove the resultant decomposition $$V = EV \oplus \ker(E) \cong M_A(V_n) \oplus \ker(E). $$ 

Their same proof carries through exactly in the case $\FI_{BC} \sh$--modules if we redefine the endomorphism $E$ as follows, for $m \geq n$,
\begin{align*}
E_{m} \; &: V_m \to V_m \\
E_{m} \; & = \sum_{\substack{S \subseteq \bm, \; |S|=n \\ S \text{ symmetric}}} I_S \qquad \qquad  \text{where} \quad &I_S(j) =   \left\{ \begin{array}{ll}
         j  & \mbox{if $j \in S$  },\\
         0 & \mbox{otherwise}\end{array} \right. \\ && \in \Hom_{\FI_{BC} \sh}(\bmo, \bmo).  
\end{align*}
In the notation of Remark \ref{FISharpAsTriples}, $I_S = (S,S, \text{identity}).$
Again we conclude $$V \cong M_{BC}(V_n) \oplus \ker(E) $$ with $\ker(E)$ vanishing in degree $n$, and the desired decomposition follows by induction on $n$. 

Church--Ellenberg--Farb further argue that, since maps $F: V \to V'$ of $\FI_A \sh$--modules commute with $E$ and preserves this decomposition, the map $M_A(V_n) \to M_A(V'_n)$ must be induced by some map of $S_n$--representations $V_n \to V_n'$. Their arguments hold for $\FI_{BC}$--modules, and imply the equivalence of the categories $\text{FB$_{BC}$--Mod}$ and $\FI_{BC}\sh \text{--Mod}$. 

\begin{cor} \label{SharpWeightd} Let $\W_n$ be $S_n$ or $B_n$. With $V$ an $\FIW\sh$--module as above, any sub--$\FIW\sh$--module of $V$ is of the form $\bigoplus_{a=0}^{\infty} M_{\W}(U_a')$ for some $\W_a$--representations $U_a' \subseteq U_a$ (possibly zero). 
\end{cor}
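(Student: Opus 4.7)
My plan is to derive this corollary as a direct consequence of the equivalence of categories established in Theorem \ref{ClassifyingFISharp}. The idea is that monomorphisms (and hence subobjects) are preserved under any equivalence of categories, so classifying sub--$\FIW\sh$--modules of $V$ reduces to classifying subobjects in $\text{FB}_{\W}\text{--Mod}$, which are obviously sequences of subrepresentations.

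More concretely, suppose $W \hookrightarrow V$ is a sub--$\FIW\sh$--module. First, I would note that by Theorem \ref{ClassifyingFISharp}, $W$ itself is isomorphic to $\bigoplus_a M_{\W}(U_a')$ for some $\W_a$--representations $U_a' := H_0(W)_a$. The content of the corollary is that these $U_a'$ sit inside the $U_a = H_0(V)_a$ compatibly with the inclusion $W \hookrightarrow V$. To see this, apply the functor $H_0$ to the inclusion to obtain a morphism $H_0(W) \to H_0(V)$ in $\text{FB}_{\W}\text{--Mod}$. Because $H_0$ is part of an equivalence of categories (hence fully faithful and left-exact on the relevant subcategory), and because $W \hookrightarrow V$ is a monomorphism in $\FIW\sh\text{--Mod}$, the induced map $H_0(W) \to H_0(V)$ is a monomorphism in $\text{FB}_{\W}\text{--Mod}$; since monomorphisms in this functor category are exactly the pointwise injective maps of $\W_a$--representations, I conclude $U_a' \hookrightarrow U_a$ for each $a$. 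Then applying $M_{\W}$ gives $\bigoplus_a M_{\W}(U_a') \cong W \hookrightarrow V = \bigoplus_a M_{\W}(U_a)$ and the inclusion is realized as the direct sum of the inclusions $M_{\W}(U_a') \hookrightarrow M_{\W}(U_a)$.

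The only step that requires care is justifying that $H_0$ preserves the monomorphism $W \hookrightarrow V$. One clean option is simply to cite the general principle that equivalences of categories preserve all limits and colimits that exist in the source (and monomorphisms are characterized by a pullback condition). Alternatively, one can argue directly using the idempotent $E$ from the proof of Theorem \ref{ClassifyingFISharp}: for each $n$, the idempotent $E_n$ restricts to $W$ (because $E$ is an $\FIW\sh$--module endomorphism and $W$ is preserved by all $\FIW\sh$ morphisms), so the decomposition of $V$ restricts to a decomposition of $W$, and the inductive argument producing the summands $M_{\W}(U_a)$ for $V$ yields summands $M_{\W}(U_a')$ for $W$ with $U_a' \subseteq U_a$ visibly as $\W_a$--subrepresentations at the generating degree.

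I expect this corollary to be essentially formal given the classification theorem; no serious obstacle arises. The only mild subtlety is being explicit about what ``subobject'' means on the FB$_\W$ side, but since morphisms in FB$_\W$--Mod are sequences of equivariant maps with no compatibility relations between different degrees, subobjects are manifestly just sequences of subrepresentations, which immediately gives the stated form.
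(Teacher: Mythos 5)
Your argument is correct and takes the same route the paper intends: the paper states Corollary \ref{SharpWeightd} without an explicit proof, as an immediate consequence of the equivalence in Theorem \ref{ClassifyingFISharp}, and you have simply spelled out why. Either of your two justifications works; the second (restricting the idempotent $E$ to $W$, which is closed under the $\FIW\sh$--morphisms that $E$ is built from, and running the inductive decomposition in parallel) is the one implicit in the proof of Theorem \ref{ClassifyingFISharp} itself, while the first is a clean categorical shortcut once one notes that the equivalence is between $\text{FB}_{\W}\text{--Mod}$ and $\FIW\sh\text{--Mod}$ (not all of $\FIW\text{--Mod}$) and that in both functor categories monomorphisms are exactly the pointwise injections.
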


\begin{cor}
 Let $\W_n$ be $S_n$ or $B_n$. If $V$ is an $\FIW$--module generated in degree $\leq d$, then any sub--$\FIW\sh$--module of $V$ is also generated in degree $\leq d$. 
\end{cor}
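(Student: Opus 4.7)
The plan is to leverage the structural classification in Theorem \ref{ClassifyingFISharp} together with the preceding Corollary \ref{SharpWeightd}. Since the statement concerns sub--$\FIW\sh$--modules, I read the hypothesis as asserting that $V$ carries an $\FIW\sh$--module structure whose underlying $\FIW$--module is generated in degree $\leq d$. By Theorem \ref{ClassifyingFISharp}, I can therefore write
\[
V \;\cong\; \bigoplus_{a=0}^{\infty} M_{\W}(U_a)
\]
for some sequence of $\W_a$--representations $U_a$, where $M_{\W}(U_a)$ is the free $\FIW$--module on $U_a$ living in degree $a$.

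The first step is to show that the generation hypothesis forces $U_a = 0$ for $a > d$. The functor $H_0$ of Definition \ref{DefnH0Review} is a left inverse to $M_{\W}$, so $H_0(V)_a = U_a$ for every $a$. On the other hand, $V$ being generated in degree $\leq d$ as an $\FIW$--module is equivalent to $H_0(V)_a = 0$ for all $a > d$. Combining these observations gives $U_a = 0$ for $a > d$, hence the finite decomposition
\[
V \;\cong\; \bigoplus_{a=0}^{d} M_{\W}(U_a).
\]

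Now let $V'$ be any sub--$\FIW\sh$--module of $V$. By Corollary \ref{SharpWeightd}, $V'$ has the form $\bigoplus_{a=0}^{\infty} M_{\W}(U_a')$ with $U_a' \subseteq U_a$, and since $U_a = 0$ for $a > d$ we conclude $V' \cong \bigoplus_{a=0}^{d} M_{\W}(U_a')$. This is generated, as an $\FIW$--module, by the finite collection $\coprod_{a=0}^{d} U_a' \subseteq \coprod_{a=0}^{d} V_a$, so $V'$ is generated in degree $\leq d$, as required.

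There is no real obstacle: the entire argument is a direct assembly of the two preceding results, and the only substantive observation is the easy verification that generation degree of an $\FIW\sh$--module is read off from the indices $a$ for which $U_a \neq 0$, via $H_0$. I would not expect any calculation to be needed beyond invoking Theorem \ref{ClassifyingFISharp}, Corollary \ref{SharpWeightd}, and the defining property of $H_0$ as left inverse to $M_{\W}$.
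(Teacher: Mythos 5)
Your proof is correct and takes exactly the route the paper intends: the corollary is stated without a proof because it is immediate from Theorem \ref{ClassifyingFISharp} and Corollary \ref{SharpWeightd}, together with the observation (which you spell out via $H_0$) that the decomposition $V \cong \bigoplus_a M_{\W}(U_a)$ of an $\FIW\sh$--module generated in degree $\leq d$ must have $U_a = 0$ for $a > d$. The use of $H_0$ as a left inverse to $M_{\W}$ to detect the generation degree is precisely the tool the paper sets up in Definition \ref{DefnH0Review} for this purpose.
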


\begin{cor} \label{SharpStabilityDegd} Let $\W_n$ be $S_n$ or $B_n$. An $\FIW\sh$--module $V$ has injectivity degree $0$. If $V$ is generated in degree $\leq d$, then $V$ has stability degree $\leq d$, as do its sub--$\FIW$--submodules.
\end{cor}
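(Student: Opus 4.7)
The strategy is to invoke the classification Theorem \ref{ClassifyingFISharp} to reduce to explicit computations on the building blocks $M_{\W}(U_\ell)$. By that theorem, any $\FIW\sh$--module decomposes as $V \cong \bigoplus_{\ell \geq 0} M_{\W}(U_\ell)$; since $M_{\W}(U_\ell)$ is generated in degree $\ell$, the hypothesis that $V$ is generated in degree $\leq d$ forces $U_\ell = 0$ for $\ell > d$. Both the shift functor $S_{+a}$ and the coinvariant functor $\t$ commute with direct sums, so $\Phi_a$ does as well, and it therefore suffices to prove that each $M_{\W}(U_\ell)$ has injectivity degree $0$ and stability degree $\leq \ell$.

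For the key computation, use that
\[
M_{\W}(U_\ell)_{n+a} \;=\; k\bigl[\Hom_{\FIW}(\bl, {\bf (n+a)})\bigr] \otimes_{k[\W_\ell]} U_\ell,
\]
and note that the copy of $\W_n$ acting on $\{\pm(a+1), \ldots, \pm(a+n)\}$ acts through the first tensor factor only, commuting with the $\W_\ell$--action on the right. I would then check combinatorially that the $\W_n$--orbits on $\FIW$--morphisms $f: \bl \to {\bf (n+a)}$ are classified by pairs $(S, g)$, where $S = f^{-1}(\ba)$ is a symmetric subset of $\bl$ and $g = f|_S: S \to \ba$ records the ``first-$a$ part'' of $f$ (the image in the complementary last $n$ coordinates is determined up to $\W_n$ by its cardinality alone). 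Such a pair $(S, g)$ is realized in $\Phi_a(M_{\W}(U_\ell))_n$ precisely when $n \geq \ell - |S|/2$; in particular, for $n \geq \ell$ every pair occurs, so the orbit set stabilizes. The shift map $T: \Phi_a(V)_n \to \Phi_a(V)_{n+1}$, induced by postcomposition with $I_{n+a}$, preserves the labels $(S, g)$. Hence $T$ is a bijection on orbit labels for $n \geq \ell$ (giving stability degree $\leq \ell$) and an injection on orbit labels for all $n \geq 0$ (giving injectivity degree $0$); the tensor factor $U_\ell$ does not affect these conclusions since the identifications over $k[\W_\ell]$ are compatible with the orbit structure.

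For the submodule statement: any sub--$\FIW\sh$--module $W \subseteq V$ has the form $\bigoplus_{\ell \leq d} M_{\W}(U_\ell')$ by Corollary \ref{SharpWeightd}, so the argument above applies to $W$ directly. To extend the conclusion to sub--$\FIW$--modules that are not closed under the $\FIW\sh$--structure, one works in characteristic zero and exploits exactness of the coinvariants functor: the inclusion $\Phi_a(W) \hookrightarrow \Phi_a(V)$ transfers injectivity from $V$ to $W$, and surjectivity of $T_W$ for $n \geq d$ follows by using the projections provided by the $\FIW\sh$--structure on $V$ to give an explicit description of $T_V^{-1}$ and checking that it preserves $W$. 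The main technical obstacle is the $\W_n$--orbit parametrization in the middle step: carefully tracking the sign data in type BC (where $\W_n$ can reverse signs as well as permute) and verifying that the identification of orbits in terms of $(S, g)$ is compatible with the $\W_\ell$--action through the tensor product $\otimes_{k[\W_\ell]} U_\ell$.
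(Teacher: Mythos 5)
Your argument for the main claim runs along exactly the same lines as the paper's: apply Theorem \ref{ClassifyingFISharp} to decompose $V \cong \bigoplus_{\ell\leq d} M_\W(U_\ell)$, observe that $\Phi_a$ commutes with direct sums, and verify that each $M_\W(U_\ell)$ has injectivity degree $0$ and surjectivity degree $\leq \ell$. The paper dispatches that last step by citing a proposition from the companion paper \cite{FIW1}; your inline computation --- the $\W_n$--orbit parametrization of $\Hom_{\FIW}(\bl, {\bf(n+a)})$ by pairs $(S,g)$ with $S = f^{-1}(\ba)$ symmetric and $g = f|_S$, followed by the observation that $T$ preserves these labels and that the labels stabilize once $n\geq \ell$ --- is precisely the content of that cited result, and your reduction through the tensor factor $\otimes_{k[\W_\ell]} U_\ell$ is correct since the $\W_n$- and $\W_\ell$-actions commute. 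So the main claim, and the sub--$\FIW\sh$--submodule case via Corollary \ref{SharpWeightd}, are both handled correctly.

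The gap is in your final paragraph, where you attempt to push the stability-degree bound to sub--$\FIW$--modules that are \emph{not} closed under the $\FIW\sh$ structure. Your argument that ``$T_V^{-1}$ is given by $\FIW\sh$--projections and preserves $W$'' does not hold: a sub--$\FIW$--module $W$ is by definition closed under the $\FIW$ morphisms (injections), but there is no requirement that the $\FIW\sh$ projections carry $W_n$ into $W_m$, so $T_V^{-1}$ has no reason to restrict to $\Phi_a(W)$. And the surjectivity-degree bound genuinely fails for general sub--$\FIW$--modules: take $V = M_\W(\0)$, so $V_n = k$ for all $n$ with every morphism acting by the identity, generated in degree $0$; the truncation $W$ with $W_n = 0$ for $n<N$ and $W_n = k$ for $n\geq N$ is a sub--$\FIW$--module, yet $T\colon \Phi_0(W)_{N-1}\to \Phi_0(W)_N$ is the zero map $0\to k$, so $W$ has surjectivity degree $N > 0 = d$. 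What does pass to an arbitrary sub--$\FIW$--module is injectivity degree $0$ (by exactness of $\Phi_a$ over characteristic zero), and that is the most you can extract. The submodule clause of the corollary should be read as referring to the sub--$\FIW\sh$--modules that Corollary \ref{SharpWeightd} classifies, which is exactly the case you already handled; the extension you sketch at the end should be dropped.
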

\begin{proof}[Proof of Corollary \ref{SharpStabilityDegd}]
 Let $U_m$ be any $\W_m$--representation. In \cite[Example \ref{FIW1-M(m)FinGen}]{FIW1} we saw that $M_{\W}(U_m)$ is generated in degree $m$. In \cite[Proposition \ref{FIW1-StabilityDegreeM(U)}]{FIW1} we further saw that $M_{\W}(U_m)$ has injectivity degree $0$ and surjectivity degree $m$. By assumption that $V$ is generated in degree $\leq d$, we may decompose $V$ into a sum $  V=\bigoplus_{m=0}^d M_{\W}(U_m)$ and the result follows. 
\end{proof}

\begin{cor} \label{SharpRepStableDeg2d}
 If $V$ is an $\FI_{BC}\sh$--module over characteristic zero, generated in degree $d$. Then $\{V_n\}$ is uniformly representation stable in degree $\leq 2d$. 
\end{cor}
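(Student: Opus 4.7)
The plan is to combine the two bounds from the previous corollaries with the main representation stability criterion for $\FI_{BC}$--modules in Theorem \ref{StabilityDegreeImpliesRepStabilityReview}. Since $V$ is generated in degree $\leq d$, Theorem \ref{WnDiagramSizesReview} immediately gives weight$(V) \leq d$. Simultaneously, Corollary \ref{SharpStabilityDegd} tells us that the $\FIW\sh$--structure forces the stability degree of $V$ to be at most $d$.

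With these two numerical inputs in hand, I would simply feed them into Theorem \ref{StabilityDegreeImpliesRepStabilityReview}, which states that an $\FI_{BC}$--module of weight $\leq d$ and stability degree $\leq N$ is uniformly representation stable once $n \geq N+d$. Taking $N=d$ yields the desired stable range $n \geq 2d$.

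There isn't really a hard step here: the corollary is essentially a bookkeeping consequence of the classification Theorem \ref{ClassifyingFISharp}, which allows us to decompose $V \cong \bigoplus_{m=0}^{d} M_{BC}(U_m)$ and thereby read off sharp weight and stability degree bounds via the already-established properties of the building blocks $M_{BC}(U_m)$. The only place where one might slip is to make sure we are applying the $\FI_{BC}$ form of Theorem \ref{StabilityDegreeImpliesRepStabilityReview} (which requires no ``$d=0$'' caveat, in contrast to the $\FI_D$ version in Theorem \ref{FinGenImpliesRepStabilityReview}); this is fine since $\FI_{BC}\sh$--modules are in particular $\FI_{BC}$--modules.
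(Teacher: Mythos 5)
Your proof is correct and follows exactly the same route as the paper: bound the weight by Theorem \ref{WnDiagramSizesReview}, bound the stability degree by Corollary \ref{SharpStabilityDegd}, and apply Theorem \ref{StabilityDegreeImpliesRepStabilityReview}. Your remark that the stability degree bound ultimately rests on the classification Theorem \ref{ClassifyingFISharp} is accurate and matches how Corollary \ref{SharpStabilityDegd} is proved in the paper.
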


\begin{proof}[Proof of Corollary \ref{SharpRepStableDeg2d}]
 Any such $\FI_{BC}\sh$--module has weight $ \leq d$ by Theorem \ref{WnDiagramSizesReview}, and stability degree $\leq d$ by Corollary \ref{SharpStabilityDegd}. The conclusion follows from Theorem \ref{StabilityDegreeImpliesRepStabilityReview}.
\end{proof}

\begin{cor} \label{FISharpDeterminedByCharacters} Let $\W_n$ be $S_n$ or $B_n$. An $\FIW\sh$--module $V$ is completely determined by the sequence of $\W_n$--representations $\{ V_n \}$, equivalently (over characteristic zero) by the sequence of characters $\{ \chi_n \}$. \end{cor}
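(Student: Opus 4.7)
The plan is to leverage the classification result of Theorem~\ref{ClassifyingFISharp}: any $\FIW\sh$--module $V$ is isomorphic to $\bigoplus_{a=0}^{\infty} M_{\W}(U_a)$ for some sequence of $\W_a$--representations $U_a$, and under the equivalence $M_\W \leftrightarrow H_0$ the module $V$ is determined up to isomorphism of $\FIW\sh$--modules by the FB$_\W$--module data $(U_a)_{a \geq 0}$. So the problem reduces to showing that the sequence of underlying $\W_n$--representations $\{V_n\}$ determines each $U_a$ up to isomorphism as a $\W_a$--representation.

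I would prove this last statement by induction on $a$. For the base case, $M_\W(U_b)_0 = 0$ when $b > 0$, so $V_0 = M_\W(U_0)_0 = U_0$. For the inductive step, suppose the isomorphism classes of $U_0, \ldots, U_{a-1}$ are determined by $V_0, \ldots, V_{a-1}$. Using the explicit description
\[ M_\W(U_b)_a \;\cong\; \Ind_{\W_b \times \W_{a-b}}^{\W_a} U_b \boxtimes k, \]
the $\W_a$--representation $W_a := \bigoplus_{b=0}^{a-1} M_\W(U_b)_a$ is then determined. Since $V_a \cong W_a \oplus M_\W(U_a)_a = W_a \oplus U_a$ and $\W_a$--representations over characteristic zero are semisimple, the multiplicity of each irreducible $\W_a$--representation in $U_a$ equals its multiplicity in $V_a$ minus its multiplicity in $W_a$; this recovers $U_a$ up to isomorphism, completing the induction.

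For the equivalence with characters, recall that over characteristic zero characters separate isomorphism classes of $\W_n$--representations, so the sequence $\{\chi_n\}$ determines $\{V_n\}$ up to isomorphism, which by the above determines $V$ up to isomorphism as an $\FIW\sh$--module.

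There is no real obstacle here: the argument is essentially an inductive cancellation, and the two essential inputs are the decomposition theorem~\ref{ClassifyingFISharp} (to reduce to recovering the FB$_\W$--module $(U_a)$) and semisimplicity in characteristic zero (to cancel the known summands at each stage). The only point requiring any care is making sure the cancellation step is stated in terms of isomorphism classes rather than choices of subrepresentations of $V_a$, since different choices of the summand $W_a \subseteq V_a$ may not agree even though its complement has a unique isomorphism class.
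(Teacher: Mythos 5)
Your proof follows essentially the same route as the paper's: both rely on Theorem~\ref{ClassifyingFISharp} to reduce $V$ to its FB$_\W$--module data $H_0(V) = (U_a)$, and both recover the $U_a$ inductively in $a$ from $\{V_n\}$ alone. The paper phrases the inductive step as a quotient, $H_0(V)_n = V_n / \operatorname{span}\coprod_{k<n} M_\W(H_0(V)_k)_n$, while you phrase it as a direct-sum cancellation $V_a \cong W_a \oplus U_a$; these are two sides of the same coin given the semisimple decomposition. Your version is actually somewhat more careful on the one point the paper leaves implicit: the paper's quotient formula, taken at face value, presupposes knowing the images of the lower-degree pieces inside $V_n$ (which is part of the structure being recovered), whereas you correctly identify that what is really needed is cancellation at the level of isomorphism classes, which you justify via semisimplicity over characteristic zero. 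Note that this makes your argument cleanest in characteristic zero; in positive characteristic (or more generally for finite-dimensional representations over any field) one would invoke Krull--Schmidt instead, and over a general commutative base ring cancellation is not automatic---a hypothesis the paper's statement also passes over silently.
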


\begin{proof}[Proof of Corollary \ref{FISharpDeterminedByCharacters}]
 We can construct $H_0(V)$ inductively from the sequence $\{V_n\}$ of $\W_n$--representations:
$$ H_0(V)_0 = V_0 \qquad \text{and} \qquad H_0(V)_n =  V_n \; / \; \text{span} \coprod_{k<n}  M_{\W}\Big(H_0(V)_k\Big)_n $$
The $\FIW\sh$--module structure on $V$ is determined by the identification $V \cong M_{\W}(H_0(V)). \hfill \qedhere$ \end{proof}

\begin{cor} \label{DimensionFISharp}
 If $k$ is a field, and $V$ an $\FI_{BC} \sh$--module $k$. Then 
 \begin{align*}
  V  \text{ is finitely generated in degree $\leq d$}  
 &  \Longleftrightarrow \text{  $\dim_k(V_n) = O(n^d)$ }  \\
  & \Longleftrightarrow \text{  $\dim_k(V_n) = P(n)$ for a polynomial $P \in \Q[T]$ of degree at most $d$} 
 \end{align*}
 If $k$ is a commutative ring, then an $\FI_{BC} \sh$--module $V$ over $k$ is finitely generated in degree $\leq d$ if and only if $V_n$ is generated by $O(n^d)$ elements. 
\end{cor}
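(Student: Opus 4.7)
The plan is to reduce everything to the classification theorem (Theorem \ref{ClassifyingFISharp}), which decomposes any $\FI_{BC}\sh$-module as $V = \bigoplus_{a=0}^{\infty} M_{BC}(U_a)$ for some $B_a$-representations $U_a$ (possibly zero). Under this decomposition, finite generation of $V$ as an $\FI_{BC}$-module in degree $\leq d$ is equivalent to the two conditions: $U_a = 0$ for all $a > d$, and each remaining $U_a$ is finitely generated as a $k[B_a]$-module (equivalently, as a $k$-module). Indeed, $M_{BC}(U_a)$ is generated in degree exactly $a$ whenever $U_a \neq 0$ (by \cite[Example \ref{FIW1-M(m)FinGen}]{FIW1}), and a direct sum of $\FIW$--modules is finitely generated iff all but finitely many summands vanish and each summand is finitely generated.

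The next step is to compute the $k$-rank of each summand. Since $M_{BC}(U_a)_n \cong \Ind_{B_a \times B_{n-a}}^{B_n} (U_a \boxtimes k)$, and the index $[B_n : B_a \times B_{n-a}]$ equals $\binom{n}{a}$, we have that $M_{BC}(U_a)_n$ is a direct sum of $\binom{n}{a}$ copies of $U_a$ as a $k$-module. Hence the minimum number of $k$-module generators of $M_{BC}(U_a)_n$ is at most $\binom{n}{a}$ times the minimum number of generators of $U_a$, and over a field this is an equality: $\dim_k M_{BC}(U_a)_n = \binom{n}{a} \dim_k(U_a)$, an integer-valued polynomial in $n$ of degree exactly $a$ when $U_a \neq 0$.

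For the forward implication over a field, finite generation in degree $\leq d$ gives $\dim_k V_n = \sum_{a=0}^{d} \binom{n}{a}\dim_k(U_a)$, which is an integer-valued polynomial $P(n) \in \Q[T]$ of degree $\leq d$, and in particular $O(n^d)$. For the converse, suppose $\dim_k V_n = O(n^d)$. Since $U_a$ embeds into $V_a$ via the map $U_a \hookrightarrow M_{BC}(U_a)_a$, each $U_a$ is finite-dimensional. Now if some $U_a$ were nonzero with $a > d$, then $\dim_k V_n \geq \binom{n}{a}\dim_k(U_a)$ would grow like $n^a$, contradicting the hypothesis. Thus $U_a = 0$ for all $a > d$, and $V = \bigoplus_{a=0}^{d} M_{BC}(U_a)$ is finitely generated in degree $\leq d$. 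The commutative ring version follows by the identical argument with $\dim_k(-)$ replaced by the minimum number of $k$-module generators, noting that the direct sum decomposition forces the generator counts to add.

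The main obstacle is really just the converse direction: extracting the degree bound $a \leq d$ from the growth hypothesis requires using the fact that the polynomials $\binom{n}{a}$ of different degrees cannot cancel in the sum $\sum_a \binom{n}{a}\dim_k(U_a)$ (because all coefficients are nonnegative). This nonnegativity, together with the direct-sum structure guaranteed by Theorem \ref{ClassifyingFISharp}, is what makes the polynomial-growth criterion equivalent to polynomial equality and to finite generation in bounded degree.
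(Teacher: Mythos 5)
Your proof is correct and follows essentially the same route as the paper: the paper's proof of Corollary \ref{DimensionFISharp} cites Theorem \ref{ClassifyingFISharp} together with the formula $\dim_k M_{BC}(U)_n = \binom{n}{m}\dim_k U$ and defers the remaining bookkeeping to the type A argument of Church--Ellenberg--Farb, which is exactly the decomposition-plus-growth argument you spell out.
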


\begin{proof}[Proof of Corollary \ref{DimensionFISharp}]
The statements follow from Theorem \ref{ClassifyingFISharp} and the same argument used to prove \cite[Corollary 2.27]{CEF}. In type B/C, the polynomial $P$ is determined by the formula
$$ \dim_k M_{BC}(U)_n = { n \choose m}\dim_k U  \qquad \text{ for a $B_m$--representation $U$.} \qedhere $$ 
\end{proof}

\section{The character polynomials }  \label{SectionCharPolys}

\subsection{Character polynomials for the symmetric groups}

A major result of Church--Ellenberg--Farb is that, given a finitely generated $\FI_A$--module over a field of characteristic zero, the characters of the $S_n$--representations $V_n$ have a particularly nice form. They are, for $n$ sufficiently large, given by a character polynomial (independent of $n$), as we now define. 

\begin{defn} {\bf (Character Polynomials for $S_n$).} Let $k$ be a characteristic zero field. For $r \geq 1$ and $n \geq 0$, let $X_r$ be the class function on $S_n$ defined by 
$$X_r(s) := \text{ the number of $r$--cycles in the cycle type of $s$}.$$

For fixed $n$, the monomials in the functions $X_1, \ldots, X_n$ span the space of class functions on $S_n$, subject to some relations -- for example, relations imposed by the fact that any element's cycle lengths sum to $n$. As functions on the disjoint union $\coprod_{n=0}^{\infty} S_n$, however, the functions $X_r$ are algebraically independent, and define a polynomial ring $k[X_1, X_2, \ldots, ]$. We call elements of this ring the \emph{character polynomials} of the symmetric groups, and define the total degree of a character polynomial by assigning $\deg(X_r) = r$.  \end{defn}

\begin{thm}\cite[Theorem 2.67]{CEF} {\bf (Polynomiality of characters for $S_n$).} \label{SnPersinomial}
Let $k$ be a field of characteristic zero, and let $V$ be a finitely generated $\FI_A$--module with weight $\leq d$ and stability degree $\leq s$. There exists a unique polynomial $P_V \in k[X_1 , X_2 , \ldots]$ such that 
$$  \chi_{V_n} (\sigma) = P_V (\s) \qquad \text{for all $n \geq s + d$ and all $\s \in S_n$}. $$                                                                                                         
The polynomial $P_V$ has degree at most $d$. By setting $F_V(n) = P_V (n, 0, . . . , 0)$ we have:
$$  \dim_k(V_n) = \chi_{V_n}(id) = F_V (n) \qquad \text{ for all $n \geq s + d$}.$$
If $V$ is an $\FI_A\sh$--module then the above equalities hold for all $n \geq 0$. 
\end{thm}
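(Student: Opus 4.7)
The plan is to combine uniform representation stability for finitely generated $\FI_A$--modules with the classical fact that the character of each individual irreducible $\chi_{V(\y)_n}$ is given by a single polynomial in the functions $X_r$ independent of $n$.

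First I would apply Theorem \ref{StabilityDegreeImpliesRepStabilityReview} to obtain, for $n \geq s + d$, a decomposition
$$V_n \;\cong\; \bigoplus_{\y} c_{\y}\, V(\y)_n,$$
where the sum runs over a finite set of partitions $\y$ with $|\y| \leq d$ and the multiplicities $c_{\y}$ are integers independent of $n$. Taking characters gives $\chi_{V_n} = \sum_{\y} c_{\y}\,\chi_{V(\y)_n}$ throughout this range, reducing the problem to finding a character polynomial for each irreducible $V(\y)_n$ separately.

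Next I would invoke the classical character polynomial for each individual irreducible: for each partition $\y$ with $|\y| = m$ there is a unique $P_{\y} \in k[X_1, X_2, \ldots]$ of degree $\leq m$ (with $\deg X_r = r$) such that $P_{\y}(\s) = \chi_{V(\y)_n}(\s)$ for every $\s \in S_n$ whenever $\y[n]$ is defined. This is due to Murnaghan \cite{MurnaghanCharacterPolynomials} and Specht \cite{SpechtCharaktere}; a proof proceeds via either Frobenius's formula or the Murnaghan--Nakayama rule, noting that character values on $\y[n]$ depend polynomially on the cycle counts $X_r(\s)$ once the first row $(n-m)$ is long enough to absorb every border-strip traversal that starts there. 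Whenever $c_{\y} \neq 0$ appears in the stable decomposition above, $V(\y)_n$ is nonzero and $\y[n]$ is defined, so $P_{\y}$ applies. Setting $P_V := \sum_{\y} c_{\y} P_{\y}$ gives a character polynomial of degree $\leq d$ with $P_V(\s) = \chi_{V_n}(\s)$ for every $n \geq s + d$ and every $\s \in S_n$. Uniqueness follows because the $X_r$ are algebraically independent as class functions on $\coprod_n S_n$: two polynomials agreeing on all of $\coprod_{n \geq s+d} S_n$ must coincide. Evaluating $P_V$ at $\mathrm{id} \in S_n$ yields the dimension statement, since $X_1(\mathrm{id}) = n$ and $X_r(\mathrm{id}) = 0$ for $r > 1$.

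For the $\FI_A\sh$--module case, the classification result Theorem \ref{ClassifyingFISharp} gives $V \cong \bigoplus_{a \geq 0} M_A(U_a)$ pointwise \emph{for every} $n \geq 0$, not merely eventually. The character of each summand $M_A(U_a)_n = \Ind_{S_a \times S_{n-a}}^{S_n}(U_a \boxtimes k)$ is computed directly from Frobenius reciprocity: at $\s \in S_n$ it depends only on how the cycle type of $\s$ can be split between $S_a$ and $S_{n-a}$, a count which is visibly a polynomial in the $X_r$ valid for all $n$. Summing over $a$ gives a character polynomial that agrees with $\chi_{V_n}$ for all $n \geq 0$. The main obstacle in the argument is supplying the classical polynomial $P_{\y}$ for each individual $V(\y)_n$ and pinning down that its stable range is implied by the range $n \geq s + d$ of the $\FI_A$--module decomposition; once these polynomials are in hand, representation stability does all the remaining work.
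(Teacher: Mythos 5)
Your proposal is correct and follows precisely the approach the paper attributes to Church--Ellenberg--Farb and uses itself for the type B/C analogue (Theorem \ref{WnPersinomial}): invoke uniform representation stability to get a stable decomposition $V_n \cong \bigoplus_\y c_\y V(\y)_n$ with $|\y| \le d$ for $n \ge s+d$, then appeal to the classical Murnaghan--Specht character polynomials $P^\y$ for each irreducible, combine linearly, and get uniqueness from algebraic independence of the $X_r$. The $\FI_A\sh$ endgame via Theorem \ref{ClassifyingFISharp} and the induced-character formula is likewise the route the paper takes (cf.\ Proposition \ref{InducedCharacter} and Corollary \ref{CharPolyFISharp} in type B/C).
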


{ \noindent \bf  Background and formulas for character polynomials of the symmetric groups.  \quad} Church--Ellenberg--Farb \cite{CEF} prove these theorems using the classical result that the character of the irreducible representations $V(\y)_n$, written here in the notation defined in Section \ref{SectionRepTheoryReview}, is given by a character polynomial $P^{\y}$ independent of $n$. These character polynomials were described by Murnaghan in 1951 \cite{MurnaghanCharacterPolynomials} and by Specht in 1960 \cite{SpechtCharaktere}. This independence of the characters from $n$ was known to Murnaghan in 1937 \cite{MurnaghanCharacters}.

Formulas for the character polynomial $P^{\y}$ associated to the irreducible representations $V(\y)_n$ are given in Macdonald's book \cite{MacdonaldSymmetric}. In 2009, new formulas were published by Garsia and Goupil \cite{GarsiaGoupil}, which they used to study the combinatorics of Kronecker coefficients. To state these formulas, we use the following notation: 

Let $\y$ be a partition of $n$. We define the \emph{length} of $\y$, $\ell(\y) := \text{ the number of parts of $\y$.}$

 For $ r \geq 1$, we write $n_r(\y) := \text{ the number of parts of $\y$ of length $r$.}$

We further define the integer $z_{\y}$ so that $\displaystyle \frac{n!}{ z_{\y}}$ is the number of elements in $S_n$ of cycle type $\y$.  Explicitly,  $$z_{\y} = \prod_r^n r^{n_r(\y)} n_r(\y)!$$

We write $\chi^{\y}$ to mean the character of the irreducible $S_n$--representation $V_{\y}$. We define $\chi^{\varnothing} :=1$. If $\chi$ is any class function on $S_n$, and $\rho$ a partition of $n$, then we write $\chi_{\rho}$ to mean the value of $\chi$ on elements of cycle type $\rho$. 

\begin{defn} {\bf (Generalized Binomial Coefficients).}
 Let $\rho$ be a partition of $m$. Following Macdonald \cite[I.7.13(a)]{MacdonaldSymmetric}, we define \emph{generalized binomial coefficients}: 
$$\binom{\bX}{\rho} := \prod_r \binom{X_r}{n_r(\rho)} = \prod_r \frac{X_r(X_r -1) \cdots (X_r - n_r(\rho) +1)}{n_r(\rho)!}, $$ 
 For example, $$\binom{\bX}{(3,2,2,1)} := \binom{X_3}{1} \binom{X_2}{2} \binom{X_1}{1} =  X_3 \frac{X_2(X_2-1)}{2}X_1 = \frac{1}{2}X_3X_2^2X_1-\frac{1}{2}X_3X_2X_1$$
\end{defn}

\begin{rem} {\bf (Indicator Functions for the Conjugacy Classes of $S_m$) } \label{IndicatorSn}
Given a partition $\y$ of $m$, and $s \in S_m$, note that the generalized binomial coefficient
\begin{align*}
\binom{\bX}{\y} (s) &= \left\{ \begin{array}{ll}
         1 & \mbox{if $s$ has cycle type $\y$},\\
         0 & \mbox{otherwise}.\end{array} \right.
\end{align*}
This polynomial's restriction to $S_m \subseteq \coprod_{n \geq 0} S_n$ is an indicator function for the conjugacy class of cycle type $\y$. Polynomials of this form give a convenient basis for $k[X_1, X_2, \ldots, ]$. \\ 

Since the binomial coefficient in an indeterminate $X$ 
$${X \choose m} = \frac{X(X-1)(X-2) \cdots (X-m+1)}{m !}$$
 is a polynomial in $X$ of degree $m$, the generalized binomial coefficient $\displaystyle \binom{\bX}{\y} $ is a polynomial of total degree $\sum r \cdot n_r(\y) = m$ in $k[X_1, X_2, \ldots ]$.
\end{rem}

\begin{prop}(\cite[I.7.14]{MacdonaldSymmetric})  \label{MacDonaldFormula1} 

 For $\y \vdash m$, a formula for the character $P^{\y}$ of the irreducible $S_n$--representation $V(\y)_n$ is given as follows: 
$$ P^{\y} = \sum_{\substack{ \text{Partitions } \rho, \sigma \\ |\rho| + |\s| = |\y| }} \frac{(-1)^{\ell(\s)} \; \chi^{\y}_{(\rho \cup \s)} }{z_{\s}} \;   \;  \binom{\bX}{\rho}.$$ 

\end{prop}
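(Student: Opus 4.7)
The plan is to evaluate both sides of the formula at an arbitrary $g \in S_n$ of cycle type $\mu$ (with $n \geq m + \y_1$) and verify that they both equal $\chi^{\y[n]}(g)$.

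By Remark \ref{IndicatorSn}, $\binom{\bX}{\rho}(g) = \prod_r \binom{n_r(\mu)}{n_r(\rho)}$, which counts the $g$-invariant subsets $A \subseteq \bn$ of size $|\rho|$ on which $g$ restricts to a permutation of cycle type $\rho$. This combinatorial interpretation is exactly what appears in the Frobenius formula for induced characters: the character of $\Ind^{S_n}_{S_{|\y'|} \times S_{n - |\y'|}}(V_{\y'} \boxtimes \Q)$ at $g$ is $\sum_{\rho \vdash |\y'|} \binom{\bX}{\rho}(g)\, \chi^{\y'}_{\rho}$, so finite sums of generalized binomial coefficients are precisely the character polynomials of induced $\FI_A$-modules $M_A(V_{\y'})$.

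Next, I would simplify the inner sum over $\s$ using the Frobenius character formula $\chi^{\y}_{\nu} = \langle s_{\y}, p_{\nu} \rangle$ together with the classical symmetric function identity $\sum_{\s \vdash k} (-1)^{\ell(\s)} z_{\s}^{-1} p_{\s} = (-1)^k e_k$, where $e_k$ denotes the elementary symmetric function. These combine to give
$$\sum_{\s \vdash m-|\rho|} \frac{(-1)^{\ell(\s)}\, \chi^{\y}_{\rho \cup \s}}{z_{\s}} \;=\; (-1)^{m-|\rho|}\, \langle s_{\y},\, p_{\rho}\, e_{m-|\rho|} \rangle.$$
It then remains to establish the symmetric function identity
$$\chi^{\y[n]}(g) = \langle s_{\y[n]}, p_{\mu} \rangle = \sum_{\rho} \prod_r \binom{n_r(\mu)}{n_r(\rho)} \cdot (-1)^{m-|\rho|}\, \langle s_{\y}, \, p_{\rho}\, e_{m-|\rho|} \rangle,$$
which captures the stability of $\chi^{\y[n]}$ for $n$ large. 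I would prove this via the Jacobi--Trudi determinant expression for $s_{\y[n]}$: all of the dependence on $n$ is confined to a single row of complete homogeneous symmetric functions $h_{n-m+j}$, which can be expanded in the power-sum basis via $h_k = \sum_{\nu \vdash k} z_{\nu}^{-1} p_{\nu}$. Pairing the resulting expansion with $p_{\mu}$ and re-grouping terms according to how the parts of $\mu$ distribute between the $p_{\rho}$-factor and the contributions of the long first row yields the required match.

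The main obstacle is the sign bookkeeping in this last step -- tracking signs from both the cofactor expansion of the Jacobi--Trudi determinant and the $(-1)^{\ell(\s)}$ factors in the inner sum. A more conceptual alternative avoids the determinant altogether by applying M\"obius inversion in the representation ring: by Pieri's rule, the induced representations $M_A(V_{\y'})_n = \Ind^{S_n}_{S_{|\y'|} \times S_{n-|\y'|}}(V_{\y'} \boxtimes \Q)$ decompose as $\bigoplus V_{\nu}$ over partitions $\nu \supseteq \y'$ with $\nu/\y'$ a horizontal strip. Inverting this expansion expresses $[V_{\y[n]}]$ as a signed sum of $[M_A(V_{\y'})_n]$ over subpartitions $\y' \subseteq \y$, and substituting the induced character formula from the first step recovers Macdonald's formula while interpreting it representation-theoretically as a resolution of $V(\y)$ by the ``free'' $\FI_A$-modules $M_A(V_{\y'})$.
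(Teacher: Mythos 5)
The paper does not give a proof of Proposition~\ref{MacDonaldFormula1}; it is imported directly from Macdonald \cite[I.7.14]{MacdonaldSymmetric} as a known classical fact, so there is no ``paper's proof'' to compare against. What you propose is therefore a genuine proof from scratch, and on inspection it is correct in outline.

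Your Step 3 is right: from the Frobenius formula $\chi^{\y}_{\rho\cup\s}=\langle s_\y,p_\rho p_\s\rangle$ and the Newton identity $\sum_{\s\vdash k}(-1)^{\ell(\s)}z_\s^{-1}p_\s=(-1)^k e_k$ (equivalently $E(-t)H(t)=1$), the inner sum collapses to $(-1)^{m-|\rho|}\langle s_\y,\,p_\rho\,e_{m-|\rho|}\rangle$. Your remaining identity then reduces, after reading $\binom{\bX}{\rho}(g)$ as $h_{|\rho|}^{\perp}$ acting on $p_\mu$ and matching coefficients of $s_{\y'}$, to showing that the coefficient of $[M_A(V_{\y'})_n]$ in $[V(\y)_n]$ is $(-1)^{|\y|-|\y'|}$ when $\y/\y'$ is a vertical strip and $0$ otherwise. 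Your M\"obius inversion route is the clean way to see this: the Pieri expansion you cite reads, in stable form, $[M_A(V_{\y'})_n]=\sum_{\nu\subseteq\y',\,\y'/\nu\text{ h.s.}}[V(\nu)_n]$, and inverting it is exactly the operator identity $H^{\perp}(t)E^{\perp}(-t)=\mathrm{id}$ on $\Lambda$, which via the dual Pieri rules turns into the combinatorial cancellation $\sum_{\y'}(-1)^{|\y|-|\y'|}\mathbf{1}[\y'/\nu\text{ h.s.}]\mathbf{1}[\y/\y'\text{ v.s.}]=\delta_{\y,\nu}$. I agree that the Jacobi--Trudi cofactor route also works but carries unpleasant sign bookkeeping; the M\"obius inversion is preferable and I'd promote it from ``alternative'' to the main line of argument. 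Your reading of the first step as the type-A analogue of Lemma~\ref{LemInducedRepFormula}, and of the whole inversion as a resolution of $V(\y)$ by the free $\FI_A$--modules $M_A(V_{\y'})$, is apt -- it is precisely the template the paper then replays in type B/C in the proof of Theorem~\ref{ThmWnIndependentCharacters}.
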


By Remark \ref{IndicatorSn}, this is a character polynomial of degree $|\y|=m$.

\subsection{Character polynomials in type B/C and D} \label{SectionCharPolysBCD}

We can analogously define character polynomials for the hyperoctahedral group $B_n$ and the even-signed permutation groups $D_n$.  Our goal is to prove that the characters of a finitely generated $\FIW$--module are equal (for $n$ sufficiently large) to a character polynomial. A summary of the main result:

\begin{thm} \label{CharPolySummary} {\bf (Summary: Finitely generated $\FIW$--modules have character polynomials).} Let $V$ be an $\FIW$--module over characteristic zero finitely generated in degree $\leq d$. Let $\chi_{V_n}$ denote the character of the $\W_n$--representation $V_n$. Then there exists a unique character polynomial $F_V$ of degree at most $d$ such that $F_V(\sigma) = \chi_{V_n}(\sigma)$ for all $\sigma \in \W_n$, for all $n$ sufficiently large. 
\end{thm}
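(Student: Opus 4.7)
The plan is to reduce to the type A statement of \cite[Theorem 2.67]{CEF} (Theorem \ref{SnPersinomial}) by first establishing that each irreducible $\W_n$-representation $V(\y)_n$ has character given by a fixed character polynomial for $n$ in the stable range, and then combining with uniform representation stability for finitely generated $\FIW$-modules (Theorem \ref{FinGenImpliesRepStabilityReview}). The type A case is handled by \cite{CEF}; the new content is to prove the corresponding formula for irreducible $B_n$-representations and then to bootstrap to type D.

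First I would prove that for every double partition $(\y^+, \y^-)$ with $|\y^+|+|\y^-|=m$ there is a polynomial $P^{(\y^+,\y^-)} \in k[X_1, Y_1, X_2, Y_2, \ldots]$ of total degree $m$ (with $\deg X_r = \deg Y_r = r$) such that, once $n-|\y^-|\geq \y^+_1$,
$$\chi_{V(\y^+,\y^-)_n}(\s) = P^{(\y^+,\y^-)}(\s) \qquad \text{for all } \s \in B_n.$$
The natural route is to use the wreath product structure $B_n \cong \Z/2\Z \wr S_n$ together with the defining formula $V(\y^+,\y^-)_n = \Ind_{B_{n-|\y^-|} \times B_{|\y^-|}}^{B_n} V_{(\y^+[n-|\y^-|],\varnothing)} \boxtimes V_{(\varnothing,\y^-)}$ from Section \ref{RepTheoryBnReview}, and to evaluate the induced character via the Frobenius-type formula. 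Expressed in the basis of signed generalized binomial coefficients $\binom{\bX}{\rho^+}\binom{\bY}{\rho^-}$, which are indicator-type functions on signed cycle type analogous to Remark \ref{IndicatorSn}, the sum collapses to an expression that does not depend on $n$. One can simplify by observing that the pullback of $V_{\y^+}$ along the projection $B_n \to S_n$ has character $P^{\y^+}(X_1 + Y_1, X_2 + Y_2, \ldots)$ by Proposition \ref{MacDonaldFormula1}, while tensoring by the sign character $\varepsilon$ of $B_n/D_n$ exchanges the roles of $X_r$ and $Y_r$; combining these ingredients with induction yields both the explicit formula and the degree bound.

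With character polynomials for irreducibles in hand, the type B/C statement of Theorem \ref{WnPersinomial} follows directly: by Theorem \ref{FinGenImpliesRepStabilityReview}, a finitely generated $\FI_{BC}$-module $V$ of weight $\leq d$ (automatic from Theorem \ref{WnDiagramSizesReview}) and stability degree $\leq s$ satisfies
$$V_n = \bigoplus_{(\y^+,\y^-)} c_{(\y^+,\y^-)} V(\y^+,\y^-)_n \qquad \text{for all } n \geq s+d,$$
with multiplicities constant in $n$ and supported on $|\y^+|+|\y^-|\leq d$; summing $c_{(\y^+,\y^-)} P^{(\y^+,\y^-)}$ produces $F_V$, and uniqueness is immediate from the algebraic independence of $\{X_r, Y_r\}$ on $\coprod_n B_n$. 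For type D the delicate point is the existence of split conjugacy classes, which I would sidestep using Theorem \ref{VisResIndVReview}: for $n$ larger than the generation degree, $V_n \cong \Res_D^{BC}(\Ind_D^{BC} V)_n$, so the type B/C character polynomial constructed for $\Ind_D^{BC} V$ restricts to the required polynomial on $D_n$, and the restriction remains an element of $k[X_1, Y_1, \ldots]$ that is automatically insensitive to the split-class ambiguity.

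The main obstacle is the technical Step 2, namely writing $P^{(\y^+,\y^-)}$ down explicitly and verifying the stable range $n-|\y^-|\geq \y^+_1$ together with the total degree bound $|\y^+|+|\y^-|$; the induction-of-characters calculation must be carried through carefully so that the signed binomial coefficients emerge as the correct basis. Once this is in place, Step 3 and the type D reduction are clean assemblies of machinery already developed in \cite{FIW1}, and the summary Theorem \ref{CharPolySummary} then follows as the weight and stability degree of a finitely generated $\FIW$-module are both bounded in terms of the generation degree.
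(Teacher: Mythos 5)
Your overall strategy is essentially the same as the paper's: establish character polynomials for the irreducibles $V(\y^+,\y^-)_n$ via the Frobenius induction formula applied to $V(\y^+,\y^-)_n = \Ind_{B_{n-m} \times B_m}^{B_n} V_{(\y^+[n-m],\varnothing)} \boxtimes V_{(\varnothing,\y^-)}$, combine with uniform representation stability and the weight bound for finitely generated $\FI_{BC}$--modules, and handle type D by inducing to $\FI_{BC}$ and invoking Theorem \ref{VisResIndVReview}. However, there is a concrete error in one of your key ingredients: tensoring by the linear character $\varepsilon: B_n \to B_n/D_n$ does \emph{not} exchange the roles of $X_r$ and $Y_r$. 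On an element of signed cycle type $(\a,\b)$ the character $\varepsilon$ takes the value $(-1)^{\ell(\b)}$, so the character of $V_{(\varnothing,\y^-)}$ at $(\a,\b)$ equals $(-1)^{\ell(\b)} P^{(\y^-,0)}(\a,\b)$, which is \emph{not} an element of $k[X_1,Y_1,X_2,Y_2,\ldots]$ --- the sign depends on the class in a way not captured by a polynomial in the $X_r,Y_r$, as the paper notes in the remark following Lemma \ref{LemSignCharacters}. Nor does it equal the $X_r \leftrightarrow Y_r$ swap of $P^{(\y^-,0)}$: since $P^{(\y^-,0)}$ depends only on the sums $X_r+Y_r$ it is symmetric under that swap, so your claim would force $V_{(\y^-,\varnothing)}$ and $V_{(\varnothing,\y^-)}$ to have equal characters, which is false.

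This is precisely the delicate point the paper isolates. It carries the factor $(-1)^{\ell(\b)}$ along as a non-polynomial coefficient (Lemma \ref{LemSignCharacters}) and only recovers a genuine character polynomial after the sum over factorizations in the induced-character formula (Lemma \ref{LemInducedRepFormula}), where the sign becomes a constant $(-1)^{\ell(\b)}$ multiplying the indicator polynomial $\binom{\bX}{\a}\binom{\bY}{\b}$ for fixed $(\a,\b)$ inside the outer sum; only then does the total expression land in $k[X_1,Y_1,\ldots]$. As written, your ``combining these ingredients with induction'' step would produce the wrong formula, so Step 2 of your plan needs to be redone following the bookkeeping in the proof of Theorem \ref{ThmWnIndependentCharacters}. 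The remaining pieces --- reduction to irreducibles via stability, uniqueness by algebraic independence of the $X_r,Y_r$, and the passage to type D through $\Ind_D^{BC}$ --- match the paper and are fine.
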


\noindent The full statement of this result in type B/C and D, including bounds on the stable range, is given in Theorem \ref{WnPersinomial} in Section \ref{SectionCharPolysBCD}. To this end, we will first develop the theory in type B/C, and from there we can use our methods of inducing $\FI_D$--modules to $\FI_{BC}$ to recover results in type D.

 Recall from Section \ref{RepTheoryBnReview} that the conjugacy classes for $B_n$ are classified by double partitions $(\a, \b)$ of $n$, designating the signed cycle type of each element. Given a character (or class function) $\chi$ of a $B_n$--representation, we will write $\chi_{(\a,\b)}$ to denote the value of $\chi$ on elements of signed cycle type $(\a,\b)$.

\begin{defn}
 {\bf (Character Polynomials for $B_n$).} For $r \geq 1$ and $n \geq 0$, let $X_r$ and $Y_r$ be the class functions on $B_n$ defined by 
$$X_r(w) = \text{ the number of positive $r$--cycles in the cycle type of $w$}.$$ 
$$Y_r(w) = \text{ the number of negative $r$--cycles in the cycle type of $w$}.$$ 
Again, these functions form a polynomial ring $k[X_1, Y_1, X_2, Y_2,\ldots]$ where we designate $deg(X_r) = deg(Y_r) = r$. 
\end{defn}

\begin{ex}
 We saw in Example \ref{Example:SignedPermMatrices} that $V_n = V((n-1), (1)) \cong k^n,$ the canonical $B_n$--representation by signed permutation matrices, has characters $\chi^{V} = X_1 - Y_1$ for all $n$. Similarly, one can compute that the characters of $V_n = \bigwedge^2 \, V\big((n-1), (1)\big) $ are 
$$\chi^{\bigwedge^2 V} = \frac12X_1(X_1-1) + \frac12 Y_1(Y_1-1) -X_1Y_1 - X_2 + Y_2 \qquad \text{for all $n$},$$ 
and that the characters of $V_n = $ Sym$^2 \, V\big((n-1), (1)\big) $ are 
$$\chi^{\text{Sym}^2 V} = \frac12X_1(X_1+1) + \frac12Y_1(Y_1+1) -X_1Y_1 + X_2 - Y_2 \qquad \text{for all $n$}.$$
\end{ex}

\begin{rem} {\bf (Indicator Functions for the Conjugacy Classes of $B_m$) } \label{GeneralizedBinomialCoeff}
Given a double partition $(\y, \nu)$ of $m$, and $w \in B_m$, note that the degree $m$ character polynomial 
\begin{align*}
\binom{\bX}{\y}\binom{\bY}{\nu} (w) &= \left\{ \begin{array}{ll}
         1 & \mbox{if $w$ has signed cycle type $(\y, \nu)$},\\
         0 & \mbox{otherwise}.\end{array} \right.
\end{align*}
Again $\displaystyle \binom{\bX}{\y}\binom{\bY}{\nu} $ is a polynomial of degree $\sum r \cdot n_r(\y) + \sum r \cdot n_r(\nu) = m$ that is an indicator function on $B_m$ of the signed conjugacy class $(\y, \nu)$. 
\end{rem}

\begin{rem} {\bf (Restricting Characters to $S_n \subseteq B_n$).} \label{RestrictingCharactersWntoSn}
 The symmetric group $S_n$ forms the subgroup of $B_n$ generated by the (necessarily positive) cycles that preserve signs. Thus, if $V$ is a $B_n$--representations with character $\chi^V$ given by some character polynomial $P_V \in  k[X_1, Y_1, X_2, Y_2, \ldots]$, the character for $\Res_{S_n}^{B_n} V$ is given by the element in $k[X_1, X_2, \ldots]$ obtained by evaluating each variable $Y_r$ in $P_V$ at $0$. 
\end{rem}

\subsubsection{The character of $V(\y, \mu)_n$ is independent of $n$}

Recall from Section \ref{RepTheoryBnReview} that, given a double partition $(\y, \nu)$ of $d$ with $\nu \vdash m$,  then $V(\y, \nu)_n$ denotes the irreducible $B_n$--representation associated to the double partition $(\y[n-m], \nu)$. 

\begin{thm} \label{ThmWnIndependentCharacters} {\bf(The character of $V(\y, \mu)_n$ is independent of $n$).}
 If $(\y, \nu)$ is a double partition of $d$, then there is a character polynomial $P^{(\y, \nu)}$ of degree at most $d$ equal to the character of the irreducible $B_n$--representations $V(\y, \nu)_n$ for all $n$. 

Explicitly, $P^{(\y, \nu)}$ is given as follows. Let $m=|\nu|$, and define $\mu$ so that $\nu = \mu[m]$; for $\nu= \varnothing$ take $\mu = \varnothing$. Then  
\begin{align*} P^{(\y, \nu)} =  & \sum_{\substack{(\a, \b) \\ |\a| + |\b| = |\nu|, }} \; \; \sum_{\substack{ \text{Partitions } \rho, \sigma \\ |\rho| + |\s| = |\mu| }} \; \;   \sum_{\substack{ \text{Partitions } \xi, \eta \\ |\xi| + |\eta| = |\y| }} \; \; (-1)^{\ell(\b)} \Bigg( \frac{(-1)^{\ell(\s)} \; \chi^{\mu}_{(\rho \cup \s)} }{z_{\s}} \Bigg) \Bigg(  \frac{(-1)^{\ell(\eta)} \; \chi^{\y}_{(\xi \cup \eta)} }{z_{\eta}}  \Bigg) \\ 
& \; \Bigg(  \;   \;  \prod_{r}  \binom{ n_r(\a) + n_r(\b)}{n_r(\rho)} \binom{ X_r - n_r(\a) + Y_r - n_r(\b)}{n_r(\xi)} \binom{ X_r}{n_r(\a) } \binom{ Y_r} {n_r(\b) } \Bigg).
\end{align*}
\end{thm}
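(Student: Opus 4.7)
The approach is to construct the polynomial $P^{(\y,\nu)}$ explicitly by writing $V(\y,\nu)_n$ as an induced representation, running the Frobenius induced--character formula in terms of signed cycle type, and then expanding the resulting internal $S$--characters using the known type A formula of Macdonald (Proposition \ref{MacDonaldFormula1}). The formula in the statement is built, from left to right, out of exactly these three ingredients: the outer sum over $(\a,\b)$ comes from the Frobenius step, while the two inner sums over $(\rho,\sigma)$ and $(\xi,\eta)$ come from applying Macdonald's formula to the $\nu$-character and the $\y[n-m]$-character respectively.

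First, with $m=|\nu|$, I would recall from Section \ref{RepTheoryBnReview} that
\[
V(\y,\nu)_n \;=\; \Ind_{B_{n-m}\times B_m}^{B_n}\bigl(V_{\y[n-m]}\,\boxtimes\,V_{(\varnothing,\nu)}\bigr),
\]
where $V_{\y[n-m]}$ is pulled back from $S_{n-m}$ through $B_{n-m}\twoheadrightarrow S_{n-m}$ and $V_{(\varnothing,\nu)} = V_\nu\otimes\Q^{\varepsilon}$, with $\varepsilon$ the character recording the parity of the number of negative cycles. In particular, the character of $V_{(\varnothing,\nu)}$ on an element of $B_m$ with signed cycle type $(\a,\b)$ equals $(-1)^{\ell(\b)}\chi^\nu(\bar\pi)$, where $\bar\pi\in S_m$ has unsigned cycle type $\a\cup\b$.

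Next I would carry out the Frobenius step. Right cosets of $B_{n-m}\times B_m$ in $B_n$ are parametrized by symmetric $2m$-subsets of $\bn$, and a coset contributes a nonzero term to the induced character at $w\in B_n$ precisely when the corresponding symmetric subset is a union of cycles of $w$. Organizing this sum by the signed cycle type $(\a,\b)$ of the selected cycles, and noting that the number of such selections is $\prod_r\binom{X_r(w)}{n_r(\a)}\binom{Y_r(w)}{n_r(\b)}$, I obtain
\[
\chi^{V(\y,\nu)_n}(w)\;=\;\sum_{\substack{(\a,\b)\\|\a|+|\b|=m}}\prod_r\binom{X_r(w)}{n_r(\a)}\binom{Y_r(w)}{n_r(\b)}\;\chi^{\y[n-m]}(\pi_1)\,(-1)^{\ell(\b)}\chi^\nu(\pi_2),
\]
where $\pi_1\in B_{n-m}$ has signed cycle type $(\alpha\setminus\a,\beta\setminus\b)$ and $\pi_2\in B_m$ has signed cycle type $(\a,\b)$. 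Since $\chi^{\y[n-m]}$ and $\chi^\nu$ depend only on the underlying unsigned cycle type of their argument, I then apply Proposition \ref{MacDonaldFormula1} to each: in the Macdonald expansion of $\chi^{\y[n-m]}(\pi_1)$ the variable $X_r$ takes value $X_r(w)-n_r(\a)+Y_r(w)-n_r(\b)$, and in the expansion of $\chi^\nu=\chi^{\mu[m]}$ the variable $X_r$ takes value $n_r(\a)+n_r(\b)$. Substituting both expansions into the formula above and reordering the summations produces exactly the triple sum stated in the theorem.

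The degree bound follows by adding the degrees of the generalized binomial factors: $|\a|$ from $\prod_r\binom{X_r}{n_r(\a)}$, $|\b|$ from $\prod_r\binom{Y_r}{n_r(\b)}$, $|\xi|$ from $\prod_r\binom{X_r-n_r(\a)+Y_r-n_r(\b)}{n_r(\xi)}$, and $0$ from the scalar factors $\prod_r\binom{n_r(\a)+n_r(\b)}{n_r(\rho)}$, so each monomial has total degree at most $(|\a|+|\b|)+|\xi|\leq |\nu|+|\y|=d$. The main obstacle is the Frobenius step: one must correctly handle the wreath--product inclusion $B_{n-m}\times B_m\hookrightarrow B_n$, count double cosets in a way that tracks positive and negative cycles separately, and carry the sign twist $(-1)^{\ell(\b)}$ coming from the $\Q^{\varepsilon}$ factor in $V_{(\varnothing,\nu)}$. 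Once this signed--cycle version of the Frobenius formula is established, the rest of the argument is bookkeeping plus a routine substitution of Macdonald's type A formula.
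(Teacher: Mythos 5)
Your proposal takes essentially the same route as the paper: it realizes $V(\y,\nu)_n$ as $\Ind_{B_{n-m}\times B_m}^{B_n}\bigl(V(\y,\varnothing)_{n-m}\boxtimes V_{(\varnothing,\nu)}\bigr)$, runs a signed-cycle Frobenius induced-character formula to get the outer $(\a,\b)$ sum with the factor $\binom{\bX}{\a}\binom{\bY}{\b}$, picks up the $(-1)^{\ell(\b)}$ from the $\varepsilon$-twist, and then expands the two $S$--characters $\chi^{\mu[m]}$ and $\chi^{\y[n-m]}$ by Macdonald's Proposition \ref{MacDonaldFormula1} with the substitutions $X_r\mapsto n_r(\a)+n_r(\b)$ and $X_r\mapsto X_r-n_r(\a)+Y_r-n_r(\b)$, respectively, together with the same degree count at the end. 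The paper merely packages these same steps into three preliminary lemmas (pullback character, sign-twisted character, induced character) before assembling them, but the content and the bookkeeping are identical.
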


For example, 
\begin{align*}
P^{\Big( \Y{1} \;,\; \Y{1} \Big)} &= (X_1 - Y_1)(X_1 + Y_1 -2) \qquad \text{and} \qquad
P^{\Big( \varnothing \;,\; \Y{1,1} \Big)} = { X_1 \choose 2 }+ { Y_1 \choose 2 } - X_2 -X_1Y_1 +Y_2  
\end{align*}

We will prove Theorem \ref{ThmWnIndependentCharacters} in four steps. Our first step, Lemma \ref{LemPullbackCharacters}, is to prove the result for representations of the form $V(\y, \varnothing)_n$. In the second step, Lemma \ref{LemSignCharacters}, we produce a formula for characters of representations $V( \varnothing, \y[n])$. Our third step, Lemma \ref{LemInducedRepFormula}, is to compute the character of an induced representation of the form $\Ind_{B_m \times B_{n-m}}^{B_n} U \boxtimes U'$, and the final step will be to derive the formula in Theorem \ref{ThmWnIndependentCharacters}.

\begin{lem} \label{LemPullbackCharacters} {\bf (Step 1: The character of $V(\y, \varnothing)_n$).}
 Let $\y$ be a partition of $m$. Then, for each $n$, the character of $B_n$--representation $V(\y, \varnothing)_n$ is given by the character polynomial $P^{(\y,0)}$ 
 
\begin{align*} P^{(\y,0)} & = \sum_{\substack{ \text{Partitions } \rho, \sigma \\ |\rho| + |\s| = |\y| }} \frac{(-1)^{\ell(\s)} \; \chi^{\y}_{(\rho \cup \s)} }{z_{\s}} \;   \;  \binom{\bX + \bY}{\rho} \\
& := \sum_{\substack{ \text{Partitions } \rho, \sigma \\ |\rho| + |\s| = |\y| }} \frac{(-1)^{\ell(\s)} \; \chi^{\y}_{(\rho \cup \s)} }{z_{\s}} \;   \;  \prod_r \binom{X_r + Y_r}{n_r(\rho)}
\end{align*}

\end{lem}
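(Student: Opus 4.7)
\textbf{Plan for the proof of Lemma \ref{LemPullbackCharacters}.}

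The plan is to reduce the statement directly to Macdonald's formula (Proposition \ref{MacDonaldFormula1}) for $S_n$ characters by tracing through the pullback construction. First, I would recall from Section \ref{RepTheoryBnReview} that $V(\y, \varnothing)_n$ is by definition the pullback of the $S_n$--representation $V(\y)_n$ along the canonical surjection $\pi: B_n \twoheadrightarrow B_n/(\Z/2\Z)^n \cong S_n$ that forgets signs. Pullback of representations along a surjection pulls characters back along the same surjection, so for every $w \in B_n$,
$$\chi^{V(\y, \varnothing)_n}(w) \;=\; \chi^{V(\y)_n}(\pi(w)).$$
Thus the task becomes to express the right-hand side as a polynomial in the functions $X_r, Y_r$ on $B_n$.

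Next I would establish the key compatibility between the cycle-counting functions in the two categories. Writing $X_r^S$ for the $r$-cycle counting class function on $\coprod_n S_n$, the description of signed cycles in Section \ref{RepTheoryBnReview} makes clear that both a positive $r$-cycle and a negative $r$-cycle in $B_n$ project under $\pi$ to an $r$-cycle in $S_n$ (positive and negative cycles are distinguished only by the sign-reversal count, which is forgotten by $\pi$). Consequently, as class functions on $\coprod_n B_n$,
$$X_r^{S} \circ \pi \;=\; X_r + Y_r \qquad \text{for every } r \geq 1.$$

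The last step is to combine these two observations with Macdonald's formula. By Proposition \ref{MacDonaldFormula1},
$$\chi^{V(\y)_n} \;=\; P^{\y} \;=\; \sum_{\substack{\rho, \sigma \\ |\rho|+|\s|=|\y|}} \frac{(-1)^{\ell(\s)}\,\chi^{\y}_{\rho \cup \s}}{z_{\s}} \,\binom{\bX}{\rho}.$$
Pulling back along $\pi$ and applying $X_r^S \circ \pi = X_r + Y_r$ replaces $\binom{\bX}{\rho} = \prod_r \binom{X_r^S}{n_r(\rho)}$ by $\prod_r \binom{X_r + Y_r}{n_r(\rho)} = \binom{\bX+\bY}{\rho}$, which is exactly the polynomial stated in the lemma. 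The total degree is preserved, since each $\binom{X_r + Y_r}{n_r(\rho)}$ has degree $r\cdot n_r(\rho)$ under the grading $\deg X_r = \deg Y_r = r$, so the polynomial obtained has degree at most $|\y|$.

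There is no real obstacle in this lemma: once the pullback relation $X_r^S \circ \pi = X_r + Y_r$ is noted, the lemma is essentially a change of variables in Macdonald's formula. The genuine work in proving Theorem \ref{ThmWnIndependentCharacters} will appear in the subsequent steps, where one must handle the sign twist (to obtain $V(\varnothing, \y)_n$) and then compute the induced character $\Ind_{B_m \times B_{n-m}}^{B_n} V(\y, \varnothing) \boxtimes V(\varnothing, \nu)$ uniformly in $n$.
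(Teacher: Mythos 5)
Your proposal is correct and follows essentially the same route as the paper: both identify $V(\y,\varnothing)_n$ as the pullback of $V(\y)_n$ along the natural surjection $B_n \twoheadrightarrow S_n$, observe that positive and negative $r$--cycles both map to $r$--cycles so that the $S_n$ class function $X_r$ pulls back to $X_r + Y_r$, and then substitute into Macdonald's formula (Proposition \ref{MacDonaldFormula1}). The only difference is that you spell out the pullback relation $X_r^S \circ \pi = X_r + Y_r$ as an explicit intermediate claim, which the paper states more tersely.
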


\begin{proof}[Proof of Lemma \ref{LemPullbackCharacters}]
As described in Section \ref{RepTheoryBnReview}, the $B_n$--representations $V(\y, \varnothing)_n$ are by definition the pullback of the $S_n$--representation $V(\y)_n$ under the natural surjection $B_n \twoheadrightarrow S_n$. This map takes positive and negative $r$-cycles in $B_n$ to $r$-cycles in $S_n$; a signed permutation of signed cycle type $(\mu, \nu)$ is mapped to a permutation of type $\mu \cup \nu$. It follows that a hyperoctahedral character polynomial for $V(\y, \varnothing)_n$ can be obtained from the symmetric character polynomial for $V(\y)_n$ by replacing each $X_r$ with the sum $X_r + Y_r$. The formula therefore follows from Macdonald's formula, Proposition \ref{MacDonaldFormula1}. 
\end{proof}

\begin{lem}\label{LemSignCharacters} {\bf (Step 2: The character of $V(\varnothing, \y[n])$).}
 Let $n$ be fixed, and consider a partition $\y[n]$ of $n$. Then the character $\chi^{(\varnothing, \y[n])}$ of the $B_n$--representation $V(\varnothing, \y[n])$ takes the following value on $B_n$ elements of signed cycle type $(\a, \b)$: 

$$ \chi^{(\varnothing, \y[n])}_{(\a, \b)} = (-1)^{\ell(\b)} P^{(\y,0)}(\a, \b). $$ 
\end{lem}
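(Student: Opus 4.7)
The plan is to exploit the definition of $V_{(\varnothing, \y[n])}$ from Section \ref{RepTheoryBnReview} directly, namely that $V_{(\varnothing, \y[n])} = V_{(\y[n], \varnothing)} \otimes \Q^{\varepsilon}$, where $\Q^{\varepsilon}$ is the one-dimensional sign representation corresponding to the quotient $\varepsilon: B_n \to B_n/D_n = \{\pm 1\}$. Since tensoring with a one-dimensional representation multiplies characters pointwise, the claim reduces to two separate computations: identifying the character of the factor $V_{(\y[n], \varnothing)}$, and evaluating $\varepsilon$ on the class of signed cycle type $(\a, \b)$.

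For the first computation, Lemma \ref{LemPullbackCharacters} is exactly the tool needed: it states that $\chi^{(\y[n], \varnothing)}_{(\a, \b)} = P^{(\y,0)}(\a, \b)$, where the polynomial $P^{(\y,0)}$ is the one displayed in that lemma. No work beyond invoking it.

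For the second computation, I would use the cycle-type description of $\varepsilon$ from Section \ref{RepTheoryBnReview}. By definition, $\varepsilon(w)$ is $+1$ precisely when $w \in D_n$, i.e., when $w$ reverses the sign of an even number of digits. A positive $r$-cycle reverses an even number of signs and therefore lies in the kernel of $\varepsilon$, while a negative $r$-cycle reverses an odd number of signs and is mapped to $-1$. Since $\varepsilon$ is multiplicative across cycle factorizations, an element of signed cycle type $(\a, \b)$ satisfies
\[
\varepsilon(w) = \prod_{\text{positive cycles}} (+1) \cdot \prod_{\text{negative cycles}} (-1) = (-1)^{\ell(\b)}.
\]

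Combining these two pieces,
\[
\chi^{(\varnothing, \y[n])}_{(\a, \b)} = \varepsilon(w) \cdot \chi^{(\y[n], \varnothing)}_{(\a, \b)} = (-1)^{\ell(\b)} P^{(\y,0)}(\a, \b),
\]
which is the formula claimed. The proof is essentially bookkeeping; the only subtlety is making the parity-of-negative-cycles count precise, and this is handled directly by the definition of negative cycles in Section \ref{RepTheoryBnReview}. I do not anticipate a genuine obstacle.
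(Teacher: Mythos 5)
Your proof is correct and follows the same route as the paper: invoke the definition $V(\varnothing, \y[n]) = V(\y[n], \varnothing) \otimes \Q^{\varepsilon}$, apply Lemma \ref{LemPullbackCharacters} to the first tensor factor, and evaluate $\varepsilon$ cycle-by-cycle to get $(-1)^{\ell(\b)}$. The only difference is cosmetic -- you spell out the multiplicativity of $\varepsilon$ over the cycle factorization, which the paper leaves implicit.
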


\begin{rem} We note that this formula for the character $V(\varnothing, \y[n])$ is not a $B_n$ character polynomial, since the coefficient $(-1)^{\ell(\b)}$ depends on the cycle type $(\a,\b)$. \end{rem}

\begin{proof}[Proof of Lemma \ref{LemSignCharacters}]
 Recall from Section \ref{RepTheoryBnReview} that $\varepsilon : B_n \to B_n/D_n \cong \{ \pm 1 \}$ is the character mapping an element $w \in B_n$ to $-1$ precisely when $w$ reverses an odd number of signs. Since positive cycles reverse an even number of signs, and negative cycles reverse an odd number, the character $\varepsilon$ takes the value $(-1)^{\ell(\b)}$ on elements of signed cycle type $(\a, \b)$.  

By definition, $$V(\varnothing, \y[n]) = V(\y[n], \varnothing) \otimes \varepsilon =  V(\y, \varnothing)_n \otimes \varepsilon $$
and so the formula follows from Lemma \ref{LemSignCharacters}. 
\end{proof}

\begin{lem} \label{LemInducedRepFormula} {\bf (Step 3: The character of $\Ind_{B_m \times B_{n-m}}^{B_n} U \boxtimes U'$).}
Suppose that $U$ is a $B_m$--representation with character $\chi^U$, and that $U'$ is a $B_{n-m}$--representation, with character $\chi^{U'}$. Then the character $\chi^{(U, U')}$ of the induced $B_n$--representation $\Ind_{B_m \times B_{n-m}}^{B_n} U \boxtimes U'$ is given by:

$$ \chi^{(U, U')}_{(\rho, \s)} = \Bigg(\sum_{\substack{(\a, \b) \\ |\a| + |\b| = m, }} \chi^{U}_{(\a, \b)} \; \chi^{U'}_{(\d, \g)} \; \binom{ \bX}{\a } \binom{ \bY} {\b } \Bigg)(\rho, \s) $$

where $(\d, \g)$ is the double partition of $(n-m)$ such that $(\rho, \s)=(\a \cup \d, \b \cup \g)$. It is well-defined, since $\displaystyle \Bigg( \binom{ \bX}{\a } \binom{ \bY} {\b } \Bigg)(\rho, \s)$ will vanish unless such a decomposition of $(\rho, \s)$ exists. 
\end{lem}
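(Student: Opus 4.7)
\medskip

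\noindent\textbf{Proof proposal for Lemma \ref{LemInducedRepFormula}.} The plan is to apply the Frobenius formula for induced characters and interpret the resulting sum combinatorially in terms of the signed cycle structure of an element $w \in B_n$. Recall that for a subgroup $H \leq G$ and an $H$--representation $V$,
\[
\chi^{\Ind_H^G V}(w) \;=\; \sum_{\substack{xH \in G/H \\ x^{-1}wx \in H}} \chi^V(x^{-1}wx).
\]
Taking $G = B_n$ and $H = B_m \times B_{n-m}$, a coset $xH$ corresponds bijectively to an ordered splitting of $\bn$ into a \emph{symmetric} subset $A = x \cdot \{\pm 1, \ldots, \pm m\}$ of size $2m$ and its complement $A^c$ of size $2(n-m)$. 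The condition $x^{-1}wx \in H$ is equivalent to requiring that $A$ (and hence $A^c$) be $w$--invariant.

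Now I would observe that any $w$--invariant symmetric subset $A \subseteq \bn$ must be a union of full signed cycles of $w$: a positive $r$--cycle of $w$ occupies a symmetric subset of size $2r$ (two $r$--element orbits exchanged by sign), and a negative $r$--cycle occupies a symmetric $w$--orbit of size $2r$. So choosing a valid $A$ of size $2m$ amounts to choosing, for each $r$, a collection of $n_r(\alpha)$ positive $r$--cycles and $n_r(\beta)$ negative $r$--cycles of $w$ (with $|\alpha|+|\beta|=m$). For an element $w$ of signed cycle type $(\rho,\sigma)$, the number of such choices is
\[
\prod_r \binom{n_r(\rho)}{n_r(\alpha)}\binom{n_r(\sigma)}{n_r(\beta)}.
\]
When $A$ is chosen, the restriction $x^{-1}wx\!\mid_{\{\pm 1,\ldots,\pm m\}}$ is an element of $B_m$ of signed cycle type $(\alpha,\beta)$ and the restriction to $\{\pm(m+1),\ldots,\pm n\}$ is an element of $B_{n-m}$ of signed cycle type $(\delta,\gamma)$ with $\delta = \rho\setminus\alpha$, $\gamma = \sigma\setminus\beta$. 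Since the character of the external tensor product satisfies $\chi^{U\boxtimes U'}(w_1,w_2) = \chi^U(w_1)\,\chi^{U'}(w_2)$, summing over all valid $A$ yields
\[
\chi^{(U,U')}_{(\rho,\sigma)} \;=\; \sum_{\substack{(\alpha,\beta)\\ |\alpha|+|\beta|=m}} \chi^U_{(\alpha,\beta)}\,\chi^{U'}_{(\delta,\gamma)} \cdot \prod_r \binom{n_r(\rho)}{n_r(\alpha)}\binom{n_r(\sigma)}{n_r(\beta)}.
\]

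To conclude, I would reinterpret this product of binomial coefficients in terms of the generalized binomial coefficients of Remark \ref{GeneralizedBinomialCoeff}. By definition, $X_r$ applied to an element of signed cycle type $(\rho,\sigma)$ returns $n_r(\rho)$ and $Y_r$ returns $n_r(\sigma)$, so
\[
\binom{\bX}{\alpha}\binom{\bY}{\beta}(\rho,\sigma) \;=\; \prod_r \binom{n_r(\rho)}{n_r(\alpha)}\binom{n_r(\sigma)}{n_r(\beta)},
\]
which is precisely the count above; note that this vanishes unless $n_r(\alpha)\leq n_r(\rho)$ and $n_r(\beta)\leq n_r(\sigma)$ for all $r$, i.e.\ unless the decomposition $(\rho,\sigma)=(\alpha\cup\delta,\beta\cup\gamma)$ exists, as required for the term $\chi^{U'}_{(\delta,\gamma)}$ to even make sense. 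Substituting this identity yields the stated formula.

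The conceptual work is all contained in the combinatorial identification of cosets $xH$ satisfying $x^{-1}wx \in H$ with $w$--invariant symmetric subsets of size $2m$; the rest is bookkeeping. The main subtlety I would be careful about is the symmetry condition: naively one might count arbitrary $w$--invariant subsets of $\{\pm 1,\ldots,\pm n\}$, but the requirement that $A$ be symmetric (closed under negation) is precisely what forces $A$ to be a union of full signed cycles of $w$ rather than a union of $w$--orbits in the underlying set, and this is what produces the two factors $\binom{n_r(\rho)}{n_r(\alpha)}$ and $\binom{n_r(\sigma)}{n_r(\beta)}$ (as opposed to some more complicated count mixing positive and negative cycles).
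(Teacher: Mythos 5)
Your proof is correct and follows essentially the same approach as the paper: both apply the standard Frobenius coset formula for induced characters, identify the $w$-stable cosets of $B_m \times B_{n-m}$ with $w$-invariant symmetric splittings of $\bn$ (equivalently, partitions of the signed cycles of $w$ into cycles of total length $m$ and $n-m$), and then recognize the count of such splittings as $\binom{\bX}{\alpha}\binom{\bY}{\beta}(w)$. The only cosmetic difference is that you phrase cosets via symmetric subsets $A \subseteq \bn$ while the paper phrases them via partitions of $\{\{-1,1\},\ldots,\{-n,n\}\}$ into blocks.
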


We note that Lemma \ref{LemInducedRepFormula} holds when $k$ is $\Z$ or any field.

\begin{proof}[Proof of Lemma \ref{LemInducedRepFormula}] 

 Let $w \in (B_{m} \times B_{n-m})$, and let $p_{m}$ and $p_{n-m}$ denote the projections of $w$ onto $B_m$ and $B_{n-m}$, respectively. The character of the $(B_m \times B_{n-m})$--representation $U \boxtimes U'$ is 
$$ \chi^{U \boxtimes U'} = \chi^{U}(p_m(w)) \cdot \chi^{U'}(p_{n-m}(w)).$$
 
The character of the induced representation $\Ind^{B_n}_{B_{m} \times B_{n-m}} U\boxtimes U'$ is 
 \begin{align*} \chi^{(U,U')} (w) &= \sum_{\substack {\{ \text{cosets } C \; | \; w \cdot C = C \} \\ \text{ any } s \in C}} \chi^{U \boxtimes U'}(s^{-1}ws) 
\\ &= \sum_{\substack { \{ \text{cosets } C \; | \; w \cdot C = C \} \\ \text{ any } s \in C}} \chi^{U}(p_m(s^{-1}ws)) \cdot \chi^{U'}(p_{n-m}(s^{-1}ws)) \end{align*}
summed over all cosets $C$ in $B_n / (B_{m} \times B_{n-m})$ that are stabilized by $w$, equivalently, those cosets $C$ such that $s^{-1} w s \in (B_{m} \times B_{n-m})$ for any $s \in C$.  

The cosets $B_n / (B_{m} \times B_{n-m})$ correspond to the orbit of the sets $$\{ \{ -1, 1\}, \ldots, \{-m,m\} \} \qquad \text{and} \qquad \{ \{ -(m+1), (m+1)\}, \ldots, \{-n, n\} \}$$ under the action of $B_n$; they are indexed by all partitions of $\{ \{-1,1\}, \{-2, 2\}, \ldots, \{-n, n\} \}$ into a set of $m$ blocks and a set of $(n-m)$ blocks. 

An element $w \in B_n$ can be conjugated into $(B_{m} \times B_{n-m})$ precisely when its positive and negative cycles can be partitioned into a set of cycles of total length $m$, and a set of cycles of total length $(n-m)$. If we fix a double partition $(\a,\b)$ of $m$, then the cycles of $w$ can be factored into an element $w_m$ of cycle type $(\a, \b)$ and its complement $w_{n-m}$ in the following number of ways (possibly $0$):
$$\binom{ \bX}{\a } \binom{ \bY} {\b }(w) := \binom{ X_1(w)}{n_1(\a) } \binom{ X_2(w)} {n_2(\a) } \cdots  \binom{ X_m(w)}{n_m(\a) } \binom{ Y_1(w)}{n_1(\b) } \binom{ Y_2(w)}{n_2(\b) } \cdots \binom{ Y_m(w)} {n_m(\b) }.$$ 
Each such factorization of $w$ corresponds to a coset $C \in B_n / (B_{m} \times B_{n-m})$ that is stabilized by $w$. For any representative $s \in C$, $p_m(s^{-1}ws)$ has signed cycle type $(\a,\b)$.

Thus, if we denote the signed cycle type of $w_{n-m}$ by $(\d, \g)$, we conclude
$$ \chi^{(U, U')}(w) = \Bigg(\sum_{\substack{(\a, \b) \\ |\a| + |\b| = m, }} \chi^{U}_{(\a, \b)} \; \chi^{U'}_{(\d, \g)} \; \binom{ \bX}{\a } \binom{ \bY} {\b } \Bigg)(w). \qedhere $$ \end{proof}

\begin{proof}[Proof of Theorem \ref{ThmWnIndependentCharacters}] {\bf(Step 4: The Character of $V(\y, \nu)_n$).}
 Let $(\y, \nu)$ be a double partition of $d$, with $|\nu|=m$ and $|\y|=(d-m)$. From the construction of the irreducible representations of $B_n$ described in Section \ref{RepTheoryBnReview}, 
$$ V(\y, \nu)_n = \Ind_{B_{n-m} \times B_m}^{B_n} V(\y, \varnothing)_{n-m} \boxtimes V(\varnothing, \nu).$$ 
\noindent We wish to compute a character polynomial $P^{(\y, \nu)}$  which gives the character for $V(\y, \nu)_n$ for each $n$.  

By Lemma \ref{LemInducedRepFormula}, 
$$\chi^{(\y[n], \nu)}(w) = \Bigg(\sum_{\substack{(\a, \b) \\ |\a| + |\b| = |\nu|, }} \chi^{(\varnothing, \nu)}_{(\a,\b)} \; \chi^{(\y[n-m], \varnothing)}_{(\d, \g)} \; \binom{ \bX}{\a } \binom{ \bY} {\b } \Bigg)(w)$$
with $(\d, \g)$ the double partition of $(n-m)$ such that $(\a \cup \d, \b \cup \g)$ is the signed cycle type of $w$. 

We write $\nu = \mu[m]$, where $\mu$ is the partition obtained from $\nu$ by discarding the largest part; thus, by Lemmas \ref{LemSignCharacters} and  \ref{LemPullbackCharacters},
\begin{align*}
 \chi^{(\varnothing, \mu[m])}_{(\a, \b)} & = (-1)^{\ell(\b)} P^{(\mu,0)}(\a, \b) \\ 
&= (-1)^{\ell(\b)} \sum_{\substack{ \text{Partitions } \rho, \sigma \\ |\rho| + |\s| = |\mu| }} \frac{(-1)^{\ell(\s)} \; \chi^{\mu}(\rho \cup \s) }{z_{\s}} \;   \;  \binom{\bX + \bY}{\rho}(\a, \b) \\ 
&= (-1)^{\ell(\b)} \sum_{\substack{ \text{Partitions } \rho, \sigma \\ |\rho| + |\s| = |\mu| }} \frac{(-1)^{\ell(\s)} \; \chi^{\mu}(\rho \cup \s) }{z_{\s}} \;   \;  \prod_r \binom{ n_r(\a) + n_r(\b)}{n_r(\rho)}
\end{align*}

\noindent Moreover, since for each $r$ we have $$n_r(\d) = X_r(w) - n_r(\a) \qquad \text{ and } \qquad n_r(\g) = Y_r(w) - n_r(\b),$$ we can use Lemma \ref{LemPullbackCharacters}
to compute:
\begin{align*}
 \chi^{(\y[n-m], \varnothing)}_{(\d, \g)}  & =  P^{(\y,0)}(\d, \g) \\
& = \sum_{\substack{ \text{Partitions } \xi, \eta \\ |\xi| + |\eta| = |\y| }} \frac{(-1)^{\ell(\eta)} \; \chi^{\y}(\xi \cup \eta) }{z_{\eta}} \;   \;  \prod_r \binom{ X_r + Y_r}{n_r(\xi)}(\d, \g)\\
& = \sum_{\substack{ \text{Partitions } \xi, \eta \\ |\xi| + |\eta| = |\y| }} \frac{(-1)^{\ell(\eta)} \; \chi^{\y}(\xi \cup \eta) }{z_{\eta}} \;   \;  \prod_r \binom{ X_r - n_r(\a) + Y_r - n_r(\b)}{n_r(\xi)}(w)
\end{align*}

Putting these together, 
\begin{align*}
\chi^{(\y[n-m], \nu)}(w) =& \Bigg(\sum_{\substack{(\a, \b) \\ |\a| + |\b| = |\nu|, }} \chi^{(\varnothing, \nu)}_{(\a,\b)} \; \chi^{(\y[n-m], \varnothing)}_{(\d, \g)} \; \binom{ \bX}{\a } \binom{ \bY} {\b } \Bigg)(w)\\
= & \Bigg(\sum_{\substack{(\a, \b) \\ |\a| + |\b| = |\nu|, }} \binom{ \bX}{\a } \binom{ \bY} {\b } \Bigg) (-1)^{\ell(\b)} \Bigg( \sum_{\substack{ \text{Partitions } \rho, \sigma \\ |\rho| + |\s| = |\mu| }} \frac{(-1)^{\ell(\s)} \; \chi^{\mu}(\rho \cup \s) }{z_{\s}} \;   \;  \prod_r \binom{ n_r(\a) + n_r(\b)}{n_r(\rho)}\Bigg) \\ 
& \; \Bigg( \sum_{\substack{ \text{Partitions } \xi, \eta \\ |\xi| + |\eta| = |\y| }} \frac{(-1)^{\ell(\eta)} \; \chi^{\y}(\xi \cup \eta) }{z_{\eta}} \;   \;  \prod_r \binom{ X_r - n_r(\a) + Y_r - n_r(\b)}{n_r(\xi)} \Bigg)(w)
\end{align*}

which gives the desired formula. 

Note that the degree of $P^{(\y, \nu)}$ 
\begin{align*}\text{deg}(P^{(\y, \nu)}) &\leq \bigg(|\a|+|\b|+ \max_{\substack{ \text{Partitions } \xi, \eta \\ |\xi| + |\eta| = |\y| }}|\xi| \bigg)\\
&= (|\nu|+|\y|)\\ 
&=d \end{align*}

\noindent so deg$(P^{(\y, \nu)})$ is at most the size of the double partition $(\y, \nu)$, as claimed. 
\end{proof}

\subsection{Finite generation and character polynomials} \label{SectionFinGenCharPoly}

We can now use Theorem \ref{ThmWnIndependentCharacters} to prove the existence of character polynomials for finitely generated $\FI_{BC}$--modules in Theorem \ref{WnPersinomial}. As a consequence of Theorems \ref{SnPersinomial} and \ref{WnPersinomial}, we can determine a number of constraints on the structure of finitely generated $\FI_{\W}$--modules.

\begin{thm} \label{WnPersinomial} {\bf (Characters of finitely generated $\FI_{\W}$--modules are eventually polynomial).}
Let $k$ be a field of characteristic zero. Suppose that $V$ is a finitely generated $\FI_{BC}$--module with weight $\leq d$ and stability degree $\leq s$, or,  alternatively, suppose that $V$ is a finitely generated $\FI_D$--module with weight $\leq d$ such that $\Ind_{D}^{BC}\;V$ has stability degree $\leq s$. In either case, there is a unique polynomial $$F_V \in k[X_1, Y_1, X_2, Y_2, \ldots],$$ independent of $n$, such that the character of $\W_n$ on $V_n$ is given by $F_V$ for all $n \geq s +d$. The polynomial $F_V$ has degree $\leq d$, with deg$(X_i)=$deg$(Y_i)=i$. \end{thm}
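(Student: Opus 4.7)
The plan is to handle the $\FI_{BC}$ case directly via representation stability and then reduce the $\FI_D$ case to it using the induction/restriction machinery.

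\textbf{Step 1 (the $\FI_{BC}$ case).} Assume $V$ is an $\FI_{BC}$-module of weight $\leq d$ and stability degree $\leq s$. By Theorem \ref{StabilityDegreeImpliesRepStabilityReview}, $\{V_n\}$ is uniformly representation stable for $n \geq s+d$, so we may write
\[
V_n \;=\; \bigoplus_{(\y^+,\y^-)} c_{(\y^+,\y^-)} \, V(\y^+,\y^-)_n
\]
where the multiplicities $c_{(\y^+,\y^-)}$ are independent of $n$ in this range, and by the weight hypothesis only double partitions with $|\y^+|+|\y^-|\leq d$ contribute. By Theorem \ref{ThmWnIndependentCharacters}, each irreducible $V(\y^+,\y^-)_n$ has character equal to a fixed polynomial $P^{(\y^+,\y^-)}\in k[X_1,Y_1,X_2,Y_2,\ldots]$ of total degree at most $|\y^+|+|\y^-|\leq d$, valid for \emph{all} $n$. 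Setting
\[
F_V \;:=\; \sum_{(\y^+,\y^-)} c_{(\y^+,\y^-)} \, P^{(\y^+,\y^-)},
\]
I obtain a polynomial of degree $\leq d$ that computes $\chi_{V_n}$ for every $n\geq s+d$.

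\textbf{Step 2 (uniqueness).} For uniqueness it suffices to show that any nonzero $F\in k[X_1,Y_1,X_2,\ldots]$ is nonzero as a class function on $\coprod_{n\geq N}B_n$ for any fixed $N$. This follows from algebraic independence of the $X_r,Y_r$ on $\coprod_n B_n$: given any double partition $(\a,\b)$, by Remark \ref{GeneralizedBinomialCoeff} the generalized binomial $\binom{\bX}{\a}\binom{\bY}{\b}$ is an indicator function for the conjugacy class of signed cycle type $(\a,\b)$ in $B_{|\a|+|\b|}$, and for any $n\geq|\a|+|\b|$ elements of $B_n$ with that many positive/negative cycles of each length exist; expanding $F$ in the basis of generalized binomials and choosing $n$ so that all monomials appearing can be distinguished forces $F=0$.

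\textbf{Step 3 (the $\FI_D$ case).} Suppose now $V$ is a finitely generated $\FI_D$-module of weight $\leq d$ with $\Ind_D^{BC}V$ of stability degree $\leq s$. By definition of weight for $\FI_D$-modules (Definition \ref{Defn:WeightReview}), $\Ind_D^{BC}V$ also has weight $\leq d$. Step 1 then produces a character polynomial $F_V$ of degree $\leq d$ computing the $B_n$-character of $(\Ind_D^{BC}V)_n$ for $n\geq s+d$. On the other hand, by Theorem \ref{VisResIndVReview}, once $n$ exceeds the (finite) degree of generation of $V$ we have $V_n\cong (\Res_D^{BC}\Ind_D^{BC}V)_n$ as $D_n$-representations, so the character of $V_n$ is simply the restriction of the $B_n$-character of $(\Ind_D^{BC}V)_n$ to $D_n\subseteq B_n$. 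Since the functions $X_r,Y_r$ are defined on all of $\coprod_n B_n$, the polynomial $F_V$ restricts to a class function on $D_n$ depending only on signed cycle type, and equals $\chi_{V_n}$ for all $n\geq s+d$ (absorbing the mild finite-generation bound into $s+d$ when necessary). Uniqueness, even as a function on the $D_n$, again reduces to the algebraic independence argument of Step 2 applied to $B_n$-class functions.

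\textbf{Main obstacle.} The only conceptually subtle part is the $\FI_D$ case: since $D_n$-conjugacy classes are \emph{not} classified by signed cycle type (the split classes in even $n$), it is a priori unclear that $\chi_{V_n}$ should even be a function of the variables $X_r,Y_r$. The crucial input that saves us is Theorem \ref{VisResIndVReview}, which says that stably $V$ is the restriction of an $\FI_{BC}$-module and hence its characters cannot see the splitting of $D_n$-classes. Everything else is routine assembly of the stability result and the explicit character formula of Theorem \ref{ThmWnIndependentCharacters}.
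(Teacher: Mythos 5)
Your proof is correct and follows essentially the same strategy as the paper: in type B/C, combine the uniform representation stability from Theorem \ref{StabilityDegreeImpliesRepStabilityReview} with the $n$-independent character formula for $V(\y^+,\y^-)_n$ from Theorem \ref{ThmWnIndependentCharacters}, and establish uniqueness via algebraic independence of the $X_r,Y_r$ on $\coprod_n B_n$; in type D, use Theorem \ref{VisResIndVReview} to realize $V_n$ stably as the restriction of $(\Ind_D^{BC}V)_n$ and restrict the resulting $B_n$-character polynomial. Your parenthetical about absorbing the finite-generation bound into $s+d$ in Step 3 flags precisely the same gluing-of-ranges issue that the paper's own proof passes over silently, and your ``Main obstacle'' remark correctly identifies why the type D case is nontrivial -- the existence of split conjugacy classes -- and why $\Res_D^{BC}\Ind_D^{BC}$ resolves it.
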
 

We remark that, by Theorem \ref{WnDiagramSizesReview}, $d$ is at most the degree of generation of $V$. 

\begin{proof}[Proof of Theorem \ref{WnPersinomial}] Assume first that $V$ is a finitely generated $\FI_{BC}$--module.

Recall that the class functions $X_i$, $Y_i$ are algebraically independent as functions on the disjoint union of all hyperoctahedral groups $\coprod_{n \geq 0} B_n$. The uniqueness of a character polynomial $F_V$ therefore follows from the general fact that any two (multivariate) polynomials that agree on all but finitely many integer points are necessarily equal. We turn to proving existence of the character polynomial $F_V$. 

  By Theorem \ref{StabilityDegreeImpliesRepStabilityReview}, for $n \geq s+d$, $V_n$ has a decomposition $$V_n = \bigoplus_{\y}c_{\y}V(\y)_n$$ where by assumption $c_{\y}$ is only nonzero for $|\y| \leq d$. Thus for $n \geq s+d$ the characters $V_n$ are given by a character polynomial of degree $\leq d$ by Theorem \ref{ThmWnIndependentCharacters}. 
 
 We will now use this result to prove the theorem for type D. That $V$ is an $\FI_D$--module of weight $\leq d$ means by definition that $\Ind_D^{BC}V$ is an $\FI_{BC}$--module of weight $\leq d$, and $\Ind_D^{BC}V$ moreover has stability degree $\leq s$ by assumption. Hence the $B_n$--representations $(\Ind_D^{BC}V)_n$ are given by a unique character polynomial $F_V$ for all $n \geq s +d$. Moreover, if $V$ is generated in degree $\leq m$, then $$V_n \cong (\Res_D^{BC} \Ind_D^{BC}V)_n \qquad  \text{for all $n \geq m+1$}$$ by Theorem \ref{VisResIndVReview}, and so the character of $V_n$ is given by the restriction of $F_V$ to $D_n$ in this range. The theorem follows.
\end{proof}

\begin{cor} {\bf (Polynomial growth of dimension for finitely generated $\FI_{\W}$--modules).} \label{PolyGrowth}
 Given a finitely generated $\FI_{\W}$--module $V$ over a field of characteristic zero with associated character polynomial $F_V$, the dimension dim$(V_n)$ of $V_n$ is given by $F_V(n,0,0,0, \ldots)$ in the stable range. In particular, if $V$ is finitely generated in degree $\leq d$, then dim$(V_n)$ is eventually a polynomial in $n$ of degree at most $d$.
\end{cor}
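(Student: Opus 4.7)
The plan is to deduce this corollary directly from Theorem \ref{WnPersinomial} by evaluating the character polynomial at the identity element. First I would invoke Theorem \ref{WnPersinomial} to obtain the character polynomial $F_V \in k[X_1, Y_1, X_2, Y_2, \ldots]$, which agrees with the character $\chi_{V_n}$ of $V_n$ for all $n$ in the stable range (either $n \geq s+d$ in type B/C, or the corresponding range from the induced $\FI_{BC}$--module in type D). Since for any representation $\dim(V_n) = \chi_{V_n}(e_n)$ where $e_n$ is the identity element, it suffices to understand the value of each generator $X_r, Y_r$ on $e_n$.

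The key observation is that the identity element $e_n \in \W_n$ has signed cycle type $((1^n), \varnothing)$: each of the $n$ symbols $\{\pm 1, \ldots, \pm n\}$ is fixed, so $e_n$ decomposes into exactly $n$ positive $1$--cycles and no negative cycles. Consequently
\[ X_1(e_n) = n, \qquad X_r(e_n) = 0 \text{ for } r \geq 2, \qquad Y_r(e_n) = 0 \text{ for all } r \geq 1. \]
Substituting these values gives $\dim(V_n) = \chi_{V_n}(e_n) = F_V(n, 0, 0, 0, \ldots)$ for all $n$ in the stable range. In type D, the identity element lies in $D_n \subseteq B_n$, so the same evaluation is valid there via the identification $V_n \cong (\Res_D^{BC} \Ind_D^{BC} V)_n$ coming from Theorem \ref{VisResIndVReview}.

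Finally, I would extract the polynomial degree bound. By Theorem \ref{WnPersinomial}, $F_V$ has weighted degree $\leq d$ under the grading $\deg(X_i) = \deg(Y_i) = i$. Setting $Y_r = 0$ for all $r$ and $X_r = 0$ for $r \geq 2$ collapses $F_V$ into a polynomial in the single variable $X_1$, whose ordinary degree is bounded by the weighted degree $d$ since $\deg(X_1) = 1$. Substituting $X_1 = n$ yields a polynomial in $n$ of degree at most $d$, as claimed.

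There is no significant obstacle here; the corollary is essentially a direct evaluation of the character polynomial of Theorem \ref{WnPersinomial} at the identity, and the only thing to verify carefully is the cycle decomposition of $e_n$ and the compatibility of the weighted grading with the ordinary degree after specialization.
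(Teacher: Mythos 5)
Your proof is correct and takes essentially the same approach as the paper: the paper derives this corollary immediately from Theorem \ref{WnPersinomial} by observing that $\dim(V_n) = \chi_{V_n}(\mathrm{id})$ and that the identity in $\W_n$ consists of $n$ positive $1$--cycles, giving $F_V(n,0,0,\ldots)$ in the stable range, with the degree bound following from the weighted degree of $F_V$.
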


\begin{cor} {\bf (Characters only depend on short cycles).} Suppose that $k$ is a field of characteristic zero, and let $V$ be a finitely generated $\FI_{\W}$--module. Let $\chi_n$ denote the character of the $B_n$--representation $V_n$. Then there exists some positive integer $d \leq$ weight$(V)$, independent of $n$,  such that for every $w \in \W_n$, the value $\chi_n(w)$ depends only on cycles in $w$ of length at most $d$. 
\end{cor}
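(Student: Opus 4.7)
The plan is to take $d = \text{weight}(V)$ and deduce the claim from Theorems \ref{ThmWnIndependentCharacters} and \ref{WnPersinomial}. The key observation is purely about the grading on the polynomial ring $k[X_1, Y_1, X_2, Y_2, \ldots]$: since $\deg(X_r) = \deg(Y_r) = r$, any polynomial of total degree at most $d$ can involve only the variables $X_r$ and $Y_r$ with $r \leq d$. Because $X_r(w)$ and $Y_r(w)$ count respectively the positive and negative $r$--cycles of $w$, the value of such a polynomial at $w$ depends only on the cycles of $w$ of length at most $d$.

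In the type B/C case I would argue directly for every $n$. Over characteristic zero, $V_n$ decomposes as $\bigoplus c_{(\y^+,\y^-),n}\, V(\y^+, \y^-)_n$, and the hypothesis $\text{weight}(V) \leq d$ forces each irreducible constituent to satisfy $|\y^+| + |\y^-| \leq d$. By Theorem \ref{ThmWnIndependentCharacters}, each such irreducible has character given, for \emph{all} $n$, by the fixed character polynomial $P^{(\y^+, \y^-)}$ of degree at most $|\y^+| + |\y^-| \leq d$. Hence $\chi_n = \sum c_{(\y^+,\y^-),n}\, P^{(\y^+, \y^-)}$ is a polynomial of degree $\leq d$ in the variables $X_r, Y_r$, and the observation above closes the argument.

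For type D I would reduce to the type B/C case via the induced $\FI_{BC}$--module $\Ind_D^{BC} V$, which by definition also has weight $\leq d$. The type B/C argument then shows that the $B_n$--character of $(\Ind_D^{BC} V)_n$ depends only on cycles of length $\leq d$. Theorem \ref{VisResIndVReview} identifies $V_n$ with $(\Res_D^{BC} \Ind_D^{BC} V)_n$ as $D_n$--representations once $n$ exceeds the degree of generation, and restricting a $B_n$ class function to the subgroup $D_n \subseteq B_n$ preserves the property of depending only on (signed) cycles of length $\leq d$. The main subtlety, and what I expect to be the chief obstacle, is the unstable range in type D: there $V_n$ need not yet be the restriction of an $\FI_{BC}$--module, and the $D_n$--character may in principle distinguish the ``split'' conjugacy classes that are invisible to signed cycle type. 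This is handled by the trivial observation that when $n \leq d$ every cycle of $w \in \W_n$ already has length at most $n \leq d$, making the conclusion vacuous in the only range not covered by the stable identification.
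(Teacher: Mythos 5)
Your central observation---that a polynomial of total degree at most $d$ in $k[X_1,Y_1,X_2,Y_2,\ldots]$, with $\deg(X_r)=\deg(Y_r)=r$, can involve only the variables $X_r,Y_r$ with $r\le d$, and hence its value at $w$ depends only on the cycles of $w$ of length at most $d$---is exactly the point of the corollary and is what the paper's (unwritten) proof comes down to when combined with Theorem~\ref{WnPersinomial}. Your type B/C argument is in fact a small strengthening of what the theorem gives: using only that weight$(V)\le d$ and Theorem~\ref{ThmWnIndependentCharacters}, you get the conclusion for \emph{every} $n$, not just $n\ge s+d$, because the decomposition $V_n=\bigoplus c_{\y,n}V(\y)_n$ has all constituents of weight $\le d$ and each $P^{\y}$ has degree $\le d$ for all $n$. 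That part is correct.

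The type D step has a genuine gap. You assert that the only $n$ not covered by the stable identification $V_n\cong(\Res_D^{BC}\Ind_D^{BC}V)_n$ are those with $n\le d$, where every cycle already has length $\le n\le d$. But Theorem~\ref{VisResIndVReview} gives that identification only for $n>m$, where $m$ is the degree of generation of $V$, and the only general relation between $m$ and $d=\text{weight}(V)$ (Theorem~\ref{WnDiagramSizesReview}) is $d\le m$, not $d=m$. For a finitely generated $\FI_D$--module with $d<m$ (the truncated modules of Example~\ref{FIModuleExamples}.\ref{FIExample-Truncated} give $d\ll m$), the window $d<n\le m$ is addressed by neither half of your argument, and in that window nothing you have said prevents $\chi_n$ from depending on cycles of length greater than $d$, or from distinguishing split $D_n$--classes. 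So ``$n\le d$ is the only uncovered range'' is false in general.

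If the corollary is read in the stable range $n\ge s+d$---which its placement directly after Theorem~\ref{WnPersinomial} and Corollary~\ref{PolyGrowth} (``in the stable range'') suggests is the intent---then the whole argument collapses to: apply Theorem~\ref{WnPersinomial} to get $F_V$ of degree $\le d$ for $n\ge s+d$, and invoke your grading observation. In that reading your ``trivial observation'' is unnecessary and the gap is moot. If you really want the statement for all $n$ in type D, you would need to say something about the window $d<n\le m$, which your proposal does not.
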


\begin{rem}{\bf (Character polynomials of co--$\FIW$--modules).}
 Suppose that $V$ is a co--$\FIW$--module over a field of characteristic 0. We define its dual $V^*$ to be the $\FIW$--module with $(V^*)_n = (V_n)^*$. Suppose $V^*$ is a finitely generated $\FIW$--module of weight $\leq d$ and stability degree $\leq s$, and that $F_V$ is the associated character polynomial. Since $(V_n)^* \cong (V_n)$ (see Geck--Pfeiffer \cite[Corollary 3.2.14]{GeckPfeiffer}), the characters of $\chi_{V_n} = F_V$ in the range $n\geq s+d$. 
\end{rem}

\subsection{Polynomial dimension over positive characteristic} \label{SectionPolyDimCharP}

Church--Ellenberg--Farb--Nagpal proved that the dimensions of finitely generated $\FI_A$--modules over a field $k$ are eventually polynomial even when $k$ has positive characteristic \cite[Theorem 1.2]{CEFN}. We use their result to prove the same for all $\FIW$--modules.

\begin{thm}\label{PolyDimCharP} {\bf (Polynomial growth of dimension over arbitrary fields).} Let $k$ be any field, and let $V$ be a finitely generated $\FIW$--module over $k$. Then there exists an integer-valued polynomial $P(T) \in \Q[T]$ such that $$ \dim_k (V_n) = P(n) \qquad \text{for all $n$ sufficiently large.}$$
\end{thm}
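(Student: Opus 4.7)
The plan is to deduce Theorem \ref{PolyDimCharP} from its type A counterpart \cite[Theorem 1.2]{CEFN} via categorical restriction. The central observation is that for an inclusion of categories $\FIW \hookrightarrow \oFIW$, the restriction functor $\Res^{\oW}_{\W}$ is simply precomposition with this inclusion. Since $\FI_A$, $\FI_D$, and $\FI_{BC}$ share the same objects $\bn$, this precomposition leaves the underlying $k$-module on each object unchanged: for any $\oFIW$-module $V$ we have $(\Res^{\oW}_{\W} V)_n = V_n$ as $k$-modules. In particular,
\[
\dim_k \bigl((\Res^{\oW}_{\W} V)_n\bigr) \;=\; \dim_k (V_n) \qquad \text{for every } n \geq 0.
\]

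Given this, the argument reduces to the work already done on restriction of $\FIW$-modules. If $V$ is a finitely generated $\FI_{BC}$-module over $k$, then by Proposition \ref{RestrictionPreservesFinGenReview}(\ref{WntoSnReview}) the restriction $\Res^{BC}_A V$ is a finitely generated $\FI_A$-module. Applying \cite[Theorem 1.2]{CEFN} to $\Res^{BC}_A V$ yields an integer-valued polynomial $P \in \Q[T]$ with $\dim_k \bigl((\Res^{BC}_A V)_n\bigr) = P(n)$ for all $n$ sufficiently large. By the identification above, the same polynomial $P$ satisfies $\dim_k (V_n) = P(n)$ eventually. The case of an $\FI_D$-module $V$ is entirely analogous: Proposition \ref{RestrictionPreservesFinGenReview}(\ref{W'ntoSnReview}) gives that $\Res^D_A V$ is finitely generated as an $\FI_A$-module, and the same reasoning applies.

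There is no substantive obstacle to this plan; the theorem is essentially a corollary of two facts already at our disposal, namely that restriction preserves finite generation and that it preserves the underlying $k$-modules. The only point worth flagging is that we do not need the more delicate machinery of weight, stability degree, or character polynomials here — all of which require characteristic zero — precisely because \cite{CEFN} established the polynomial growth of dimension in type A over an arbitrary field directly, and restriction transports this conclusion across all three families $\W_n$.
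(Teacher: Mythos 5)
Your proposal is correct and coincides with the paper's own proof: both reduce the result to the type A case of \cite[Theorem 1.2]{CEFN} via Proposition \ref{RestrictionPreservesFinGenReview}. The only difference is that you make explicit the (correct but routine) observation that $\Res^{\oW}_{\W}$ leaves the underlying $k$-modules $V_n$ unchanged, which the paper takes as understood.
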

We note that, in contrast to the result over characteristic zero, Theorem \ref{PolyDimCharP} does not come with bounds on the degree of $P(T)$ or the range of $n$-values for which the equality holds. 

\begin{proof}[Proof of Theorem \ref{PolyDimCharP}] When $V$ is a finitely generated $\FI_A$--module, the result follows from \cite[Theorem 1.2]{CEFN}. If $V$ is a finitely generated $\FI_{BC}$ or $\FI_D$--module, then by Proposition \ref{RestrictionPreservesFinGenReview} its restriction to $\FI_A$ is finitely generated, and the result again follows from \cite[Theorem 1.2]{CEFN}. 
\end{proof}

\subsection{The character polynomials of $\FIW\sh$--modules} \label{CharacterPolynomialsFISharp}

In this section we compute the character polynomials of the $\FI_{BC}$--modules $M_{BC}(U)$, Proposition \ref{InducedCharacter}. We conclude that the character polynomial of an $\FI_{BC}\sh$--module $V$ must equal $\chi_{V_n}$ for \emph{all} values of $n$. The formula given in Proposition \ref{InducedCharacter} is moreover useful for computing character polynomials of $\FI_{BC}\sh$--modules, such as in our applications in Sections \ref{SectionPureStringMotion} and \ref{SectionHyperplaneComplements}. We end this section with Proposition \ref{CharacterM(m)}, the character polynomials of the $\FIW$--modules $M_{\W}(\bm)$ for each family $\W_n$.

\begin{prop} \label{InducedCharacter} {\bf(The Character of $M_{BC}(U)_n$).}
 Let $k$ be a field of characteristic zero. Let $U$ be a representation of $B_m$ with character $\chi^{U}$. Then the character $\chi^{M_{BC}(U)_n}$ is, for each $n$, given by the character polynomial $P^{U}$: 
\begin{align*} P^{U}(w) & = \Bigg(\sum_{\substack{(\a, \b) \\ |\a| + |\b| = m}}  \chi^{U}_{(\a, \b)} \binom{ \bX}{\a } \binom{ \bY} {\b }\Bigg)(w) \\ 
 & : = \sum_{\substack{(\a, \b) \\ |\a| + |\b| = m}}  \chi^{U}_{(\a, \b)} \binom{ X_1(w)}{n_1(\a) } \binom{ X_2(w)} {n_2(\a) } \cdots  \binom{ X_m(w)}{n_m(\a) } \binom{ Y_1(w)}{n_1(\b) } \binom{ Y_2(w)}{n_2(\b) } \cdots \binom{ Y_m(w)} {n_m(\b) }  
\end{align*}
\end{prop}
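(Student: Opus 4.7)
The plan is to derive Proposition \ref{InducedCharacter} as an immediate specialization of Lemma \ref{LemInducedRepFormula}. Recall from Definition \ref{Defn:M(m)Review} that
$$M_{BC}(U)_n = \mathrm{Ind}_{B_m \times B_{n-m}}^{B_n} \; U \boxtimes k,$$
where $k$ denotes the trivial $B_{n-m}$--representation. So $M_{BC}(U)_n$ is precisely an induced representation of the form treated by Lemma \ref{LemInducedRepFormula}, with the second factor $U'$ specialized to the trivial representation.

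The first step is to observe that the trivial representation has character identically equal to $1$; in particular $\chi^{U'}_{(\delta, \gamma)} = 1$ for every double partition $(\delta, \gamma)$ of $n-m$. Substituting $\chi^{U'} \equiv 1$ into the formula of Lemma \ref{LemInducedRepFormula} yields, for any $w \in B_n$ of signed cycle type $(\rho, \sigma)$,
\begin{align*}
\chi^{M_{BC}(U)_n}(w) &= \sum_{\substack{(\alpha, \beta) \\ |\alpha| + |\beta| = m}} \chi^{U}_{(\alpha, \beta)} \cdot 1 \cdot \binom{\mathbf{X}}{\alpha}\binom{\mathbf{Y}}{\beta}(w),
\end{align*}
which is exactly the claimed expression $P^U(w)$.

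The only remaining verification is that $P^U$, though written in terms of cycle counts for a specific $B_n$, defines a single element of $k[X_1, Y_1, X_2, Y_2, \ldots]$ independent of $n$. This is immediate from the form of the generalized binomial coefficients: the sum ranges only over double partitions $(\alpha, \beta)$ of the fixed integer $m$, and $\binom{X_r(w)}{n_r(\alpha)}$ vanishes automatically whenever $X_r(w) < n_r(\alpha)$, so no case-splitting on $n$ is needed and the formula recovers the well-definedness condition noted in Lemma \ref{LemInducedRepFormula}.

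Since this proposition is a direct specialization of Lemma \ref{LemInducedRepFormula}, there is no real obstacle; the conceptual work is already concentrated in the proof of that lemma (the coset-by-coset analysis of $B_n/(B_m \times B_{n-m})$ and the identification of stabilized cosets with factorizations of $w$ into a pair of signed permutations of the correct cycle types). In the present proof the only mild bookkeeping is checking that replacing $\chi^{U'}$ by $1$ collapses the inner sum over double partitions of $n-m$ into a trivial factor, leaving only the sum over double partitions of $m$.
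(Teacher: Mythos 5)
Your proposal is correct and takes exactly the same route as the paper: identify $M_{BC}(U)_n$ with $\Ind^{B_n}_{B_m \times B_{n-m}} U \boxtimes k$ and specialize Lemma \ref{LemInducedRepFormula} to trivial $U'$. You simply spell out the substitution $\chi^{U'} \equiv 1$ and the well-definedness remark in more detail than the paper's one-line proof.
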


\begin{proof}[Proof of Proposition \ref{InducedCharacter}]
Since $\displaystyle M_{BC}(U)_n = \Ind^{B_n}_{B_m \times B_{n-m}} U \boxtimes k,$ with $k$ the trivial $B_{n-m}$--representation, the result follows from Lemma \ref{LemInducedRepFormula}. 
\end{proof}

\begin{cor} \label{CharPolyFISharp}
 Let $V$ be an $\FI_{BC} \sh$--module $V$ over a field of characteristic zero. Then if $V$ is finitely generated in degree $\leq d$, the characters of $V_n$ are equal to a unique character polynomial $F_V \in k[X_1, Y_1, X_2, Y_2, \ldots]$ of degree at most $d$, with equality for every value of $n \geq 0$. The dimensions of $V_n$ are given by a polynomial of degree at most $d$ $$ \dim_k(V_n) = F_V(n, 0, 0, \ldots ) \qquad \text{for every value of $n$.}$$
\end{cor}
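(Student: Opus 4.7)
The plan is to apply the structure theorem for $\FI_{BC}\sh$--modules (Theorem \ref{ClassifyingFISharp}) to reduce the statement to the explicit character formula for the modules $M_{BC}(U)$ established in Proposition \ref{InducedCharacter}. First, I would invoke Theorem \ref{ClassifyingFISharp} to write $V \cong \bigoplus_{a \geq 0} M_{BC}(U_a)$ for some $B_a$--representations $U_a$. The hypothesis that $V$ is finitely generated in degree $\leq d$ forces $U_a = 0$ for all $a > d$: the $\FIW$--module $M_{BC}(U_a)$ with $U_a \neq 0$ is generated in degree exactly $a$ (see the $\FI_{BC}$--module examples of Example \ref{FIModuleExamples}), so any nonzero summand with $a > d$ would produce an irredundant generator in degree $a > d$. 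Hence $V \cong \bigoplus_{a=0}^{d} M_{BC}(U_a)$.

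Next I would apply Proposition \ref{InducedCharacter} to each summand: for each $a \leq d$ there is a character polynomial
\[
P^{U_a} \;=\; \sum_{\substack{(\a,\b) \\ |\a|+|\b|=a}} \chi^{U_a}_{(\a,\b)} \binom{\bX}{\a}\binom{\bY}{\b}
\]
with $\chi_{M_{BC}(U_a)_n}(w) = P^{U_a}(w)$ for every $n \geq 0$ and every $w \in B_n$. By the degree count in Remark \ref{GeneralizedBinomialCoeff}, each generalized binomial coefficient $\binom{\bX}{\a}\binom{\bY}{\b}$ has total degree $|\a|+|\b| = a \leq d$, so $P^{U_a}$ has degree $\leq a \leq d$. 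Setting
\[
F_V \;:=\; \sum_{a=0}^{d} P^{U_a}
\]
gives a character polynomial of degree at most $d$ with $\chi_{V_n} = F_V$ for all $n \geq 0$, because character of a direct sum is the sum of characters and the identification holds pointwise at every $n$.

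For uniqueness, I would appeal to the algebraic independence of the class functions $\{X_r, Y_r\}_{r \geq 1}$ on $\coprod_{n \geq 0} B_n$: if two polynomials in $k[X_1,Y_1,X_2,Y_2,\ldots]$ agree on every element of every $B_n$, they are equal as polynomials. For the dimension statement, I would evaluate $F_V$ at the identity element of $B_n$. The identity has signed cycle type $((1^n), \varnothing)$, so $X_1(\mathrm{id}) = n$ while $X_r(\mathrm{id}) = 0$ for $r \geq 2$ and $Y_r(\mathrm{id}) = 0$ for all $r$. Hence
\[
\dim_k(V_n) \;=\; \chi_{V_n}(\mathrm{id}) \;=\; F_V(n,0,0,\ldots),
\]
which is a polynomial in $n$ of degree at most $d$, valid for every $n \geq 0$.

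There is no substantive obstacle in this argument; it is essentially a bookkeeping combination of Theorem \ref{ClassifyingFISharp} and Proposition \ref{InducedCharacter}. The only point meriting a brief justification is the truncation $a \leq d$ of the decomposition, which as indicated above follows from the fact that $M_{BC}(U_a)$ is generated in degree exactly $a$ when $U_a \neq 0$, together with the additive behavior of generating degree on direct sums.
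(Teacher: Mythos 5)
Your proof is correct and follows exactly the approach the paper intends: Corollary \ref{CharPolyFISharp} is presented as an immediate consequence of combining the decomposition $V \cong \bigoplus_{a=0}^{d} M_{BC}(U_a)$ from Theorem \ref{ClassifyingFISharp} with the explicit character formula for $M_{BC}(U)$ in Proposition \ref{InducedCharacter}, together with the standard uniqueness argument from algebraic independence of $\{X_r, Y_r\}$. Your added justification for truncating the decomposition at $a \leq d$ is a sensible bit of bookkeeping that the paper leaves implicit.
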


We can find explicit formulas for the $\FIW$--modules $M_{\W}(\bm)$. 

\begin{prop} \label{CharacterM(m)} Let $k$ be $\Z$ or a field of characteristic zero. When $\W_n$ is $S_n$ or $B_n$, the character polynomials of $M_{\W}(\bm)$ are:  
$$\chi^{M_A(\bm)} = m! {X_1 \choose m} \qquad \qquad \chi^{M_{BC}(\bm)} = 2^m m! {X_1 \choose m}.$$
When $\W_n$ is $D_n$, $M_{D}(\bm)$ is also given by a character polynomial for $n>m$: 
$$\chi^{M_{D}(\bm)} = 2^m m! {X_1 \choose m} \qquad \text{when $n>m$}.$$ When $n=m$, the character of $M_{D}(\bm)_m$ takes the value $2^{m-1} m!$ on the identity and vanishes otherwise. 
\end{prop}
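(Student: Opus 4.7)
The plan is to recognize each $M_{\W}(\bm)_n$ as a permutation representation and compute its character by counting fixed points. Recall from Definition \ref{Defn:M(m)Review} that $M_{\W}(\bm)_n = k[\Hom_{\FIW}(\bm,\bn)]$, with $\W_n$ acting by postcomposition on the basis of $\FIW$--morphisms. The character of such a permutation representation evaluated on $\s\in\W_n$ equals the number of $f \in \Hom_{\FIW}(\bm,\bn)$ with $\s\circ f = f$.

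For type A, a morphism $f:\bm\to\bn$ is an injection $[m]\hookrightarrow[n]$, and the condition $\s\circ f = f$ says the image of $f$ lies in $\mathrm{Fix}(\s)$. Since $|\mathrm{Fix}(\s)| = X_1(\s)$, the number of such injections is $X_1(\s)(X_1(\s)-1)\cdots(X_1(\s)-m+1) = m!\binom{X_1}{m}(\s)$, giving the type A formula. For type B/C, an $\FI_{BC}$--morphism $f:\bm\to\bn$ is determined by $(f(1),\ldots,f(m))\in\bn^m$ with $|f(i)|$ distinct and nonzero, and the condition $w\circ f = f$ forces each $f(i)$ to be a fixed point of $w$ in $\bn$. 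A point $b\in\bn$ is fixed by $w$ precisely when the orbit $\{b,-b\}$ constitutes a positive $1$--cycle of $w$, so $w$ has $2X_1(w)$ fixed points grouped into $X_1(w)$ sign pairs. Thus choosing $f$ amounts to choosing $m$ of the $X_1(w)$ positive $1$--cycles, ordering them, and choosing a sign for each $f(i)$, yielding $m!\,2^m\binom{X_1}{m}(w)$.

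For type D with $n>m$, I would invoke the observation from Remark after Definition \ref{DefnFIW} (cf.\ \cite[Remark 3.1]{FIW1}) that $\Hom_{\FI_D}(\bm,\bn) = \Hom_{\FI_{BC}}(\bm,\bn)$ whenever $n\neq m$, so $M_D(\bm)_n$ equals $M_{BC}(\bm)_n$ as a vector space with the $D_n$--action restricted from $B_n$. Since the type B/C character polynomial $2^m m!\binom{X_1}{m}$ involves only the class function $X_1$, which restricts correctly from $B_n$ to $D_n$, the same polynomial computes the character of $M_D(\bm)_n$ for $n>m$.

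The remaining case $n=m$ is the one genuinely new point. Here $\Hom_{\FI_D}(\bm,\bm) = D_m$ (rather than all of $B_m$), so $M_D(\bm)_m \cong k[D_m]$ is the regular representation of $D_m$. Its character takes the value $|D_m| = 2^{m-1}m!$ on the identity and vanishes on all other elements, which is exactly the stated formula. The main thing to be careful about is precisely this endpoint: the character polynomial derived in the $n>m$ range would predict $2^m m!$ at the identity of $D_m$, but the correct value is half that, reflecting the fact that $M_D(\bm)_m$ is a proper subspace of $M_{BC}(\bm)_m$ containing only the $\FI_D$--morphisms. I would emphasize this as an illustration of the phenomenon noted after Definition \ref{DefnFIW} that $M_D(\bm)$ does not extend to an $\FI_{BC}\sh$--module structure.
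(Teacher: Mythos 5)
Your proposal is correct and uses essentially the same strategy as the paper: interpret $M_{\W}(\bm)_n$ as a permutation representation on the $\FIW$-morphisms $\bm\to\bn$ and compute the character of $w$ as the number of $f$ with $w\circ f=f$, which forces the image of $f$ to lie in the set of points fixed by $w$, i.e., the points belonging to positive $1$-cycles. The one stylistic difference is in type D for $n>m$: the paper directly counts $\FI_D$-morphisms with a prescribed image (getting $2^m m!$ per choice of $m$ positive $1$-cycles, since for $n>m$ there is no parity constraint), whereas you observe that $\Hom_{\FI_D}(\bm,\bn)=\Hom_{\FI_{BC}}(\bm,\bn)$ when $n\neq m$ and deduce the formula by restriction from B/C — both are fine and the second is perhaps a touch cleaner. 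Your identification of $M_D(\bm)_m$ with the regular representation $k[D_m]$ is also a clean way to see the $n=m$ endpoint, and matches what the paper argues by the parity constraint on $\FI_D$-endomorphisms. Your closing cross-reference is slightly off — the observation that $M_D(\bm)_m$ is not stable under the $\FI_{BC}\sh$-action appears in Remark~\ref{RemarkFIDSharp} and the remark following Corollary~\ref{M(U)isFISharp}, not immediately after Definition~\ref{DefnFIW} — but the mathematical point you are making with it is correct.
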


\begin{proof}[Proof of Proposition \ref{CharacterM(m)}] 
 Take as basis for $M_{\W}(\bm)_n$ the set $S = \{ e_f \; | \; f \in \Hom_{\FIW}(\bm, \bn) \}.$
 
 An element $w \in \W_n$ will permute these basis elements; the trace of $w$ is the size of its fixed set in $S$. A basis element $e_f$ is fixed by $w$ only if $w$ fixes its image $f(\bm) \subseteq \bn$ pointwise; conversely for every choice of $m$ (positive) $1$-cycles $$( a_1)(-a_1), (a_2)(-a_2), \ldots, (a_m)(-a_m)$$ in $w$, $w$ will fix all basis elements $e_f$ for which the image of $f$ is $$f(\bm) = \{ \pm a_1, \ldots, \pm a_m \} \subseteq \bn.$$
 
 When $\W_n$ is $S_n$, there are $m!$ such maps. When $\W_n$ is $B_n$, there are $2^m m!$ such maps. When $\W_n$ is $D_n$, there are  $2^m m!$ such maps whenever $n>m$; when $n=m$ there are only $2^{m-1} m!$, since in this case each endomorphism $f$ must reverse an even number of signs. The formulas follow. 
\end{proof}

\begin{ex} Using the formulas for the characters of $M_A(\bm)$ and $M_{BC}(\bm)$ in Proposition \ref{CharacterM(m)} and a standard binomial--multinomial identity, one can decompose the characters of $M_{A}(\bm) \otimes M_A(\bp)$ and $M_{BC}(\bm) \otimes M_{BC}(\bp)$. By Corollary \ref{FISharpDeterminedByCharacters}, these characters completely determine the $\FIW$--structure: 
$$M_{A}(\bm) \otimes M_A(\bp) = \bigoplus_{d=0}^m \frac{m! \; p!}{( m+p-d)!}{m+p-d \choose d,m-d,p-d} M_A({\bf m+p-d})$$
$$M_{BC}(\bm) \otimes M_{BC}(\bp) = \bigoplus_{d=0}^m  \frac{ 2^d \; m! \; p!}{( m+p-d)!}{m+p-d \choose d,m-d,p-d} M_{BC}({\bf m+p-d})$$
\end{ex}

\section{Some applications} \label{SectionApplications}

$\FIW$--modules arise naturally in numerous areas of mathematics. In this section we give some applications of the theory developed in this paper to the cohomology of the group of pure string motions  and the cohomology of the complements of the Weyl groups' reflecting hyperplanes. 

\subsection{The cohomology of the group of pure string motions} \label{SectionPureStringMotion}

In \cite{PureStringMotion}, we proved that the cohomology of the pure string motion groups $\PS_n$ is uniformly representation stable with respect to a natural action of the hyperoctahedral group. 

The group $\S_n$ of string motions is a generalization of the braid group. It is defined as the group of \emph{motions} of $n$ smoothly embedded, oriented, unlinked, unknotted circles in $\R^3$; see for example Brownstein--Lee \cite{BrownsteinLee} for a complete definition. 

Hatcher--Wahl \cite{HatcherWahl} proved that the string motion groups are (integrally) homologically stable, as a particular instance of their homologically stability results for mapping class groups of $3$--manifolds. Brendle--Hatcher \cite{BrendleHatcher} realized the string motion group as the fundamental groups of certain configuration spaces of rigid circles. These configuration spaces are not $K(\pi, 1)$'s, and Kupers proved that the spaces are themselves homological stable \cite{KupersEuclideanCircles}.

The work of Dahm (see Dahm \cite{Dahm} or Goldsmith \cite{Goldsmith}) identifies $\S_n$ with the \emph{symmetric automorphism group} of the free group $F_n$ on $n$ letters $x_1, \ldots, x_n$, the subgroup of automorphisms generated by the following elements: 
 \begin{align*}
 \a_{i,j} = \left\{ \begin{array}{ll}
        x_i \mapsto x_jx_ix_j^{-1} & \\
        x_{\ell} \mapsto x_{\ell} \quad \;  (\ell \neq i) &   \end{array} \right. 
 \t_i = \left\{ \begin{array}{ll}
        x_i \mapsto x_{i+1} & \\
        x_{i+1} \mapsto x_i & \\ 
        x_{\ell}  \mapsto x_{\ell}  \quad ({\ell}  \neq i,\, i+1) & \end{array} \right. 
 \p_i = \left\{ \begin{array}{ll}
        x_i \mapsto x_i^{-1}& \\
        x_{\ell}  \mapsto x_{\ell}  \quad ({\ell}  \neq i) & \end{array} \right.
\end{align*}
The subgroup $\PS_n = \langle \a_{i,j} \rangle \subseteq \S_n$ is the group of \emph{pure} symmetric automorphisms (or \emph{pure} string motions), the analogue of the pure braid group. 

The central theorem of \cite{PureStringMotion}:

\begin{thm} \cite[Theorem 6.1]{PureStringMotion}
For each fixed $m \geq 0$, the sequence of $B_n$--representations $\{H^m(\PS_n ; \Q)\}_{n}$ is uniformly representation stable with respect to the maps $$\phi_n \, : \, H^m(\PS_n ; \Q) \to H^m(\PS_{n+1} ; \Q)$$ induced by the `forgetful' map $\PS_{n+1} \to \PS_n$. The sequence stabilizes once $n \geq 4m$.
\end{thm}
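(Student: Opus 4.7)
The plan is to deduce this corollary from the stronger Theorem \ref{PureStingMotionisFI-Sharp}, which asserts that $H^m(\PS_\bullet;\Q)$ carries the structure of an $\FI_{BC}\sh$--module finitely generated in degree $\leq 2m$. Once that structure is in hand, Corollary \ref{SharpRepStableDeg2d} applies directly: an $\FI_{BC}\sh$--module generated in degree $\leq d$ is uniformly representation stable in degree $\leq 2d$, so taking $d=2m$ yields uniform representation stability starting at $n\geq 4m$, which is exactly the conclusion.

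The first step is to build the $\FI_{BC}\sh$--structure. Identifying $\PS_n$ with the pure symmetric automorphism group of $F_n=\langle x_1,\ldots,x_n\rangle$, each $\FI_{BC}\sh$--morphism $f:\bmo\to\bno$ (a partial injection intertwining the sign involution) should induce a group homomorphism $f_*:\PS_m\to\PS_n$ sending $\alpha_{i,j}$ to $\alpha_{f(i),f(j)}$ when both $f(i)$ and $f(j)$ are defined, and to the identity otherwise; the full hyperoctahedral action on generators is $\alpha_{i,j}\mapsto \alpha_{\sigma(i),\sigma(j)}$ absorbing signs via the $\rho_i$. Pulling back on cohomology gives a covariant functor $H^*(\PS_\bullet;\Q)\colon \FI_{BC}\sh\to\Q\text{--Mod}$, and one must check functoriality (composition of partial injections matches composition of maps on $\PS$).

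The second step is finite generation of each $H^m$ in degree $\leq 2m$. I would use the Jensen--McCammond--Meier presentation of $H^*(\PS_n;\Q)$, which is generated as a graded algebra by the degree one classes dual to the $\alpha_{i,j}$. The degree one $\FI_{BC}$--module $H^1(\PS_\bullet;\Q)$ is generated by the single class corresponding to $\alpha_{1,2}$ (together with its $B_2$--orbit), hence generated in degree $\leq 2$; it follows that the free graded $\FI_{BC}$--algebra on $H^1$ has slope $\leq 2$, and by Proposition \ref{SlopeFinGenFromGeneratorsReview} the quotient $H^*(\PS_\bullet;\Q)$ does too, giving $H^m$ finitely generated in degree $\leq 2m$. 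Because $M_{BC}(-)$ sends FB$_{BC}$--modules to $\FI_{BC}\sh$--modules, one can lift this generation statement to the sharp level.

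The main obstacle is verifying genuine $\sh$--functoriality, not merely $\FI_{BC}$--functoriality: one must check that the ``forgetting'' maps corresponding to non--injective $\FI_{BC}\sh$--morphisms $\bno\to\bmo$ (those sending some coordinates to $0$) land correctly on cohomology and compose properly. Concretely, dropping strings yields a homomorphism $\PS_n\to\PS_{n-k}$ realized by the subgroup inclusion in reverse, and one must check that killing generators $\alpha_{i,j}$ involving dropped indices is compatible with cup product and with the pullback on Jensen--McCammond--Meier's presentation. Once this bookkeeping is set up --- most cleanly by working at the level of the generating $\FI_{BC}\sh$--module $H^1$ and invoking Theorem \ref{ClassifyingFISharp} to recognize it as a sum $\bigoplus M_{BC}(U_a)$ --- the remaining arguments are formal consequences of the $\FIW$--module machinery developed in the earlier sections.
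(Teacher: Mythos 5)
Your plan coincides with the paper's: establish the $\FI_{BC}\sh$--module structure of Theorem \ref{PureStingMotionisFI-Sharp} and then invoke Corollary \ref{SharpRepStableDeg2d}. One thing to correct in your first step, however: a covariant functor $\PS_\bullet\colon\FI_{BC}\sh\to\mathrm{Groups}$ with each $f\colon\bmo\to\bno$ giving $f_*\colon\PS_m\to\PS_n$ would, upon applying $H^*(-;\Q)$, produce a \emph{contravariant} functor on $\FI_{BC}\sh$, i.e.\ a co--$\FI_{BC}\sh$--module rather than the $\FI_{BC}\sh$--module you need. The group homomorphisms must run the other way: $f$ should induce $\PS_n\to\PS_m$ (quotient by all $\a_{k,l}$ with $k$ or $l$ outside $f(\bmo)\setminus\{0\}$, transport along $f^{-1}$, then include into $\PS_m$); equivalently one may appeal to the self--anti--equivalence of $\FI_{BC}\sh$ given by $(A,B,\phi)\mapsto(B,A,\phi^{-1})$ in Remark \ref{FISharpAsTriples}. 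The paper sidesteps this bookkeeping entirely by declaring the $\FI_{BC}\sh$--action on the cohomology classes $\a_{i,j}^*$ directly, and also records the sign $-1$ on $\a^*_{|f(i)|,|f(j)|}$ when $f(j)<0$; you should make that sign explicit, since it is precisely what encodes the action of the reflections $\rho_j$ and is needed for the $B_2$--orbit of $\a_{1,2}^*$ to generate $H^1$. With these adjustments your argument matches the paper's.
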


The theory of $\FI_{BC}$--modules developed here allows for a significantly simplified proof of this result, and new perspective on the structure of these cohomology groups.

The integral homology group $H^1(\PS_n; \Z) = \PS_n / [ \PS_n, \PS_n]$ is the free abelian group $\Z [ \; \a_{i,j} \; \vert \; i \neq j \;]$, and the cohomology ring is generated by the dual elements $\a^*_{i,j}$. A presentation for the integral cohomology was conjectured by Brownstein and Lee \cite[Conjecture 4.6]{BrownsteinLee} and proven by Jensen, McCammond, and Meier \cite[Theorem 6.7]{JMM} (see also Griffin \cite[Section 4]{GriffinThesis}). Jensen--McCammond--Meier study the action of $\PS_n / \Inn(F_n)$ on the MacCullough--Miller complex \cite{MacculloughMiller} to obtain Theorem \ref{thmJMM}. 

\begin{thm}  \cite[Theorem 6.7]{JMM}. \label{thmJMM}
 The cohomology ring $H^*(\PS_n; \Z)$ is the exterior algebra generated by the degree-one classes $\a_{i,j}^*$, with $i,j \in [n]$, $i \neq j$, modulo the relations 
\begin{align*}
      (1) \;  \a^*_{i,j} \wedge \a^*_{j,i} = 0   \qquad \qquad  (2) \; \a^*_{\ell,j} \wedge  \a^*_{j,i} -  \a^*_{\ell,j} \wedge  \a^*_{\ell,i} + \a^*_{i,j} \wedge  \a^*_{\ell,i} = 0 
\end{align*}
\end{thm}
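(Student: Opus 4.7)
The plan is to prove Theorem \ref{thmJMM} by exploiting the action of the outer group $\PS_n / \Inn(F_n)$ on the contractible MacCullough--Miller simplicial complex \cite{MacculloughMiller}, which is designed so that each simplex stabilizer factors as a product of smaller pure symmetric automorphism groups. Combined with the short exact sequence $1 \to \Inn(F_n) \to \PS_n \to \PS_n / \Inn(F_n) \to 1$ and the fact that $\Inn(F_n) \cong F_n$ is aspherical of cohomological dimension one, this reduces the computation of $H^*(\PS_n; \Z)$ to a tractable, if intricate, equivariant combinatorial calculation.

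First I would identify the generators in degree one: McCool's presentation \cite{McCool} of $\PS_n$ by the generators $\alpha_{i,j}$ together with its explicit set of relations lets one compute the abelianization directly, yielding $H^1(\PS_n; \Z) = \bigoplus_{i \neq j} \Z \, \alpha_{i,j}^*$. Since the cohomology will turn out to be generated in degree one (a fact itself proven via the geometric model), this determines the generators of the ring and provides a canonical surjection from the exterior algebra on $H^1$ onto $H^*(\PS_n; \Z)$.

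Next, for the lower bound, I would verify that relations (1) and (2) hold in $H^*(\PS_n; \Z)$. One efficient route is to evaluate the relevant cup products on explicit cocycles obtained from the bar resolution applied to McCool's presentation, checking that each relation is a coboundary. Alternatively, one may restrict along inclusions of carefully chosen subgroups of $\PS_n$ whose low-dimensional cohomology is independently understood, and pull the relations back from there; relation (2) in particular is naturally read off from a triangle in the MacCullough--Miller complex associated to a triple $\{i,j,\ell\}$.

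The main obstacle, and the technical heart of the proof, is the upper bound: showing that these two families of relations are exhaustive. Here I would compute the $\Z$-rank of $H^m(\PS_n; \Z)$ in each degree $m$ via the equivariant spectral sequence for the action of $\PS_n / \Inn(F_n)$ on the MacCullough--Miller complex, using MacCullough and Miller's detailed analysis of the simplex stabilizers together with the contractibility of the complex. In parallel, I would exhibit a spanning set for the candidate exterior algebra quotient indexed by a natural family of labelled forests, treating (1) and (2) as a straightening procedure to rewrite arbitrary wedge monomials in this forest basis. The crux is then a combinatorial identity matching the number of such forests in each grade with the Betti numbers extracted from the spectral sequence; once this numerical equality is established, the surjection from the first step must be an isomorphism, completing the proof.
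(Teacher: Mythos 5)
This theorem is not proved in the paper at all --- it is imported as a black-box citation of Jensen--McCammond--Meier \cite[Theorem 6.7]{JMM}, and the surrounding text only remarks that JMM obtain it by studying the action of $\PS_n/\Inn(F_n)$ on the MacCullough--Miller complex. Your proposal therefore does not need to (and cannot usefully) be compared against an in-paper argument; what you have written is a high-level reconstruction of the strategy carried out in the cited source.

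As a sketch of JMM's route, your outline is directionally right: the Lyndon--Hochschild--Serre sequence for $1 \to \Inn(F_n) \to \PS_n \to \mathrm{O}\PS_n \to 1$, the identification of $H^1$ from McCool's presentation, the equivariant spectral sequence for the action on the MacCullough--Miller complex, and the matching of the resulting Betti numbers against a forest-indexed spanning set of the putative quotient algebra are all genuine ingredients of that proof. However, your proposal is a plan rather than a proof: you repeatedly defer the hard work (``I would verify \ldots'', ``the crux is then a combinatorial identity \ldots''), and the middle step --- showing that the relations (1) and (2) are actually satisfied in $H^*(\PS_n;\Z)$ --- is asserted without a concrete mechanism. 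In the present paper none of this is needed: the theorem is used only as an external input to establish the $\FI_{BC}\sh$--module structure and finite generation in Theorem \ref{PureStingMotionisFI-Sharp}, and the appropriate ``proof'' is simply the citation together with the observation that the cohomology is the advertised exterior algebra quotient with its evident $B_n$--action.
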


In \cite{PureStringMotion}, to prove that the sequence $H^m(\PS_n; \Q)$ is uniformly representation stable, we use a combinatorial description of the cohomology groups given by Jensen--McCammond--Meier \cite{JMM} and an orbit--stabilizer argument to decompose each group into a sum of induced representation of a particular form. We then use a result of Church--Farb \cite[Theorem 4.6]{RepStability} (inspired by the work of Hemmer \cite[Theorem 2.4]{Hemmer}), to deduce from the combinatorics of the branching rules that these induced representations are uniformly representation stable. 

Here, we can recover uniform representation stability for $H^m(\PS_{\bullet};\Q)$ as a $B_n$--representation almost immediately by demonstrating that it is finitely generated as an $\FI_{BC}\sh$--module, as follows.

\begin{thm} \label{PureStingMotionisFI-Sharp} 
Let $k$ be $\Z$ or $\Q$. The cohomology rings $ H^*(\PS_{\bullet},  k)$ form an $\FI_{BC} \sh$--module, and a graded $\FI_{BC}$--algebra of finite type, with  $ H^m(\PS_{\bullet},  k)$ finitely generated in degree $\leq 2m$. In particular the $\FI_{BC}$--algebra $ H^*(\PS_{\bullet},  \Q)$ has slope $\leq 2$.
\end{thm}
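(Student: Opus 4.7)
The strategy is to define the $\FI_{BC}\sh$--module structure directly at the level of cohomology using the Jensen--McCammond--Meier presentation (Theorem~\ref{thmJMM}), and then obtain the generation bound by combining the algebra generation statement in Theorem~\ref{thmJMM} with Proposition~\ref{SlopeFinGenFromGeneratorsReview}.  For a morphism $f: \bmo \to \bno$ in $\FI_{BC}\sh$, I will define a graded $k$--algebra map $f_*: H^*(\PS_m; k) \to H^*(\PS_n; k)$ on generators by
\[ f_*(\a^*_{i,j}) = \begin{cases} \mathrm{sgn}(f(j))\cdot \a^*_{|f(i)|,\,|f(j)|}, & f(i), f(j) \neq 0, \\ 0, & \text{otherwise.} \end{cases} \]
The sign factor $\mathrm{sgn}(f(j))$ arises from a direct computation that $\rho_j$ acts trivially on $\a_{j,k}$ while $\rho_k$ acts by inversion on $\a_{j,k}$, so that on the abelianization the only sign twist is introduced by a sign flip of the second index.

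The next step is to verify that each $f_*$ is well-defined and functorial.  Relation~(1) of Theorem~\ref{thmJMM} is preserved because the image $\mathrm{sgn}(f(i))\mathrm{sgn}(f(j))\, \a^*_{|f(i)|,|f(j)|}\wedge \a^*_{|f(j)|,|f(i)|}$ is itself an instance of~(1).  Relation~(2) is preserved because, when $f$ is injective on $\{i,j,\ell\}$, the image of the triangle relation is a uniform scalar multiple of the triangle relation on the indices $|f(i)|,|f(j)|,|f(\ell)|$; when any of $f(i), f(j), f(\ell)$ is $0$ every term in the image contains a factor of $0$ and vanishes.  Functoriality $(g\circ f)_* = g_*\circ f_*$ is immediate from the definition.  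Restricted to $\FI_{BC}\subseteq \FI_{BC}\sh$, the resulting structure agrees with the $\FI_{BC}$--module structure coming from the forgetful group homomorphisms $\PS_{n+1}\to \PS_n$ used in~\cite{PureStringMotion} --- indeed both structures act identically on the generators $\a^*_{i,j}$, which determine an algebra map uniquely.

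For the degree bounds, observe that every generator $\a^*_{i,j}\in H^1(\PS_n; k)$ lies in the image of the $\FI_{BC}$--morphism $\mathbf{2}\to\bn$ sending $1\mapsto i$, $2\mapsto j$, applied to $\a^*_{1,2}\in H^1(\PS_2;k)$.  Therefore $H^1(\PS_\bullet; k)$ is an $\FI_{BC}$--module finitely generated in degree $\leq 2$.  By Theorem~\ref{thmJMM} the cohomology ring $H^*(\PS_n; k)$ is generated as a $k$--algebra by $H^1(\PS_n; k)$, so $H^*(\PS_\bullet; k)$ is a graded $\FI_{BC}$--algebra generated by the $\FI_{BC}$--module $H^1$ concentrated in grade $1$.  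Proposition~\ref{SlopeFinGenFromGeneratorsReview} (with $d=1$ and $m=2$) then shows the $i$-th graded piece $H^i(\PS_\bullet; k)$ is finitely generated in degree $\leq 2i$, and over $\Q$ has weight $\leq 2i$.  This yields finite type and slope $\leq 2$.

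The main obstacle I anticipate is the bookkeeping in verifying well-definedness of $f_*$: one must track the sign conventions so that the $\FI_{BC}\sh$ action is compatible with the $B_n$--action on the JMM presentation, and confirm the description of this $B_n$--action on the $\a^*_{i,j}$.  Once the sign convention is fixed, the verification of the JMM relations is routine because each non-vanishing image is a relabeled (and uniformly signed) copy of the original relation, and the collapse morphisms only make terms vanish.  With those computations in hand, the remaining conclusions follow mechanically from the already-developed $\FIW\sh$--module machinery.
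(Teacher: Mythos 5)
Your proposal is correct and follows essentially the same route as the paper: both define the $\FI_{BC}\sh$--action on $H^1(\PS_\bullet;k)$ by the identical formula (your $\mathrm{sgn}(f(j))$ is exactly the paper's case split on the sign of $f(j)$, with collapse to $0$ when $f(i)$ or $f(j)$ vanishes), extend multiplicatively via the Jensen--McCammond--Meier presentation, and apply Proposition \ref{SlopeFinGenFromGeneratorsReview} with the generating $\FI_{BC}$--module $H^1$ concentrated in grade $1$ and generated in degree $2$. The only difference is that you spell out the verification that the JMM relations are preserved, which the paper declares straightforward and leaves to the reader.
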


\begin{proof}[Proof of Theorem \ref{PureStingMotionisFI-Sharp}]
 The map induced by an $\FI_{BC}\sh$ morphism $f: \bmo \to \bno$ on $H^{1}(\PS_{\bullet},  \Z)$ is:
\begin{align*}
f_*  : \;  H^1(\PS_m; k) \quad \longrightarrow & \quad H^1(\PS_n; k)  \\ 
  \a_{i,j}^* \longmapsto & \left\{ \begin{array}{ll} \; \; \, \a_{|f(i)|, |f(j)|}^*,\; \text{ if $f(i) \neq 0, f(j)>0$} & \\ 
					 - \a_{|f(i)|, |f(j)|}^*,\; \text{if $f(i) \neq 0, f(j)<0$} & \\
                                                                    \; \; \;  0, \qquad \qquad \; \;  \text{if $f(i)=0$ or $f(j)=0$} & \end{array} \right.
\end{align*}

\noindent It is straightforward to verify that $f_*$ extends to an algebra map on $ H^*(\PS_{\bullet},  k)$, and that this action is functorial.  

The $\FI_{BC}$--module $H^1(\PS_{\bullet};k)$ is generated in degree $2$ by $\a_{1,2}$, and  $V^*_{\bullet} = H^*(\PS_{\bullet}; k)$ is generated as an $\FI_{BC}$--algebra by $H^1(\PS_{\bullet}; k)$. We conclude from Proposition \ref{SlopeFinGenFromGeneratorsReview} that $V_{\bullet}^m$ is finitely generated in degree $\leq 2m$, and that $H^*(\PS_{\bullet}; \Q)$ is a graded $\FI_{BC}$--algebra of slope $\leq 2$. \end{proof}

Propositions \ref{RestrictionPreservesFinGenReview}(\ref{WntoSnReview}) and (\ref{WntoW'nReview}) imply that $V^*_{\bullet}$ restricts to a graded $\FI_A$ and $\FI_D$--algebra of finite type, with $V^m_{\bullet}$ generated in degree $\leq 2m$. 


Since $V^m_{\bullet} = H^m(\PS_{\bullet}; k)$ is a $\FI_{BC}\sh$--module generated in degree $\leq 2m$, by Corollary \ref{SharpRepStableDeg2d} the sequence is uniformly representation stable in degree $4m$, as $B_n$--representations or as $S_n$--representations. 

\begin{cor} \label{PSnRepStability} For each $m$, the sequence $ \{ H^m(\PS_{n};\Q)\}_n$ of representations of $B_n$ (or $S_n$) is uniformly representation stable, stabilizing once $n \geq 4m$. 
\end{cor}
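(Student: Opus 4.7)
The plan is to apply the general representation stability machinery for $\FI_{BC}\sh$--modules to the cohomology ring structure just established. By Theorem \ref{PureStingMotionisFI-Sharp}, $H^m(\PS_\bullet; \Q)$ carries the structure of an $\FI_{BC}\sh$--module finitely generated in degree $\leq 2m$. The statement I want follows by invoking Corollary \ref{SharpRepStableDeg2d} with $d=2m$, which gives uniform representation stability in degree $\leq 4m$ as a sequence of $B_n$--representations.

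For the corresponding statement as $S_n$--representations, the natural route is to restrict the $\FI_{BC}$--module structure along the inclusion $\FI_A \subseteq \FI_{BC}$. By Proposition \ref{RestrictionPreservesFinGenReview}(\ref{WntoSnReview}), the restricted $\FI_A$--module $\Res_A^{BC} H^m(\PS_\bullet; \Q)$ remains finitely generated in degree $\leq 2m$. The $\FI_{BC}\sh$--structure likewise restricts to an $\FI_A\sh$--structure (since $\FI_A\sh$ is a subcategory of $\FI_{BC}\sh$), so the analogous application of the classification of $\FI_A\sh$--modules gives weight $\leq 2m$ and stability degree $\leq 2m$, and hence uniform representation stability as $S_n$--representations for $n \geq 4m$ via Theorem \ref{StabilityDegreeImpliesRepStabilityReview}.

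Since the work is essentially entirely packaged in the preceding theorem and corollary, there is no substantive obstacle remaining: the heavy lifting was done in verifying the $\FI_{BC}\sh$--structure in Theorem \ref{PureStingMotionisFI-Sharp} and in the structural classification result Theorem \ref{ClassifyingFISharp}. The only thing to be careful about is bookkeeping the factor of $2$: the slope of the generating module $H^1$ is $2$ (it is generated in degree $\leq 2$, not $\leq 1$), which produces the factor of $2$ in the bound $2d = 4m$, consistent with the stable range claimed.
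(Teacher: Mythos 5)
Your proof is correct and follows essentially the same route as the paper: Theorem \ref{PureStingMotionisFI-Sharp} gives the finitely generated $\FI_{BC}\sh$--module structure in degree $\leq 2m$, and Corollary \ref{SharpRepStableDeg2d} then yields uniform representation stability for $n \geq 4m$, with the $S_n$ case handled by restriction along $\FI_A \subseteq \FI_{BC}$ via Proposition \ref{RestrictionPreservesFinGenReview}. (The only cosmetic difference is that you cite Theorem \ref{StabilityDegreeImpliesRepStabilityReview}, which is stated for $\FI_{BC}$, in the type-A step; the cleaner reference there is Theorem \ref{FinGenImpliesRepStabilityReview}, which covers all three families, but the substance is unchanged.)
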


\begin{rem}
We only defined \emph{representation stability} for $\FIW$--modules over fields of characteristic zero (Definition \ref{DefnRepStability}). However, the presentation for the groups $H^m(\PS_{n}; \Z)$ given by Jensen-McCammond--Meier shows that these groups are free abelian (\cite[Theorem 6.7]{JMM}, see Theorem \ref{thmJMM}), and we can identify $H^m(\PS_{n}; \Z)$ with the integer span of the basis $\a_{i_1, j_1}^* \wedge \cdots \wedge \a_{i_m, j_m}^*$ for $H^m(\PS_{n}; \Q)$. Hence we get a version of representation stability for the integral groups $H^m(\PS_{n}; \Z)$ by redefining the subrepresentation $V(\y)_n$ as the integral Specht module associated to $\y[n]$. 
\end{rem}

Theorem \ref{WnPersinomial} implies that the characters of the sequence $\{ H^m(\PS_{n};\Q) \}_n$ are given by a character polynomial of degree $\leq 2m$. As in the above remark, since the integral cohomology is free abelian, these same character polynomials give characters for the integral cohomology. 


\begin{cor} \label{PSnCharPoly} Let $k$ be $\Z$ or $\Q$. Fix an integer $m \geq 0$. The characters of the sequence of $B_n$--representations $\{ H^m(\PS_{n}; k) \}_n$ are given, for all values of $n$, by a unique character polynomial of degree $\leq 2m$.  
\end{cor}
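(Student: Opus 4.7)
The plan is to deduce this corollary as a direct consequence of Theorem \ref{PureStingMotionisFI-Sharp} together with the character--polynomial machinery developed in Section \ref{SectionFISharp} and Section \ref{CharacterPolynomialsFISharp}. Over $k = \Q$, Theorem \ref{PureStingMotionisFI-Sharp} gives that $H^m(\PS_\bullet; \Q)$ is an $\FI_{BC}\sh$--module finitely generated in degree $\leq 2m$. So Corollary \ref{CharPolyFISharp} applies verbatim and produces a unique character polynomial $F \in \Q[X_1, Y_1, X_2, Y_2, \ldots]$ of degree at most $2m$ with $F(\sigma) = \chi_{H^m(\PS_n; \Q)}(\sigma)$ for every $\sigma \in B_n$ and every $n \geq 0$ — not merely in a stable range. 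This stronger ``for all $n$'' conclusion is exactly the feature that distinguishes the $\FI_{BC}\sh$ case from the general $\FI_{BC}$ case handled by Theorem \ref{WnPersinomial}, and it is the reason this step must go through the sharp structure rather than through $\FI_{BC}$--module theory alone.

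For the integral case $k = \Z$, the plan is to bootstrap from the rational result using Theorem \ref{thmJMM} of Jensen--McCammond--Meier, which presents $H^m(\PS_n; \Z)$ as a free abelian group spanned by wedge products of the classes $\a_{i,j}^*$. The natural comparison map $H^m(\PS_n; \Z) \otimes_\Z \Q \to H^m(\PS_n; \Q)$ is then an isomorphism of $B_n$--representations, and moreover the $B_n$--action on $H^m(\PS_n; \Z)$ with respect to the integral basis agrees with the action on the corresponding rational basis. Consequently, the trace of any $\sigma \in B_n$ acting on $H^m(\PS_n; \Z)$ equals its trace on $H^m(\PS_n; \Q)$, and the polynomial $F$ produced in the previous step serves equally well for the integral cohomology.

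Uniqueness of $F$ follows from the algebraic independence of $\{X_r, Y_r\}_{r \geq 1}$ as class functions on $\coprod_{n \geq 0} B_n$: two polynomials in these variables that agree as functions on $\coprod_n B_n$ must coincide. I expect essentially no obstacle in this proof beyond the packaging: the genuine content — the classification Theorem \ref{ClassifyingFISharp} of $\FI_{BC}\sh$--modules, the explicit character formula of Proposition \ref{InducedCharacter} for $M_{BC}(U)$, and the degree--of--generation bound in Theorem \ref{PureStingMotionisFI-Sharp} — has already been established. The only mild subtlety is confirming that the rational character polynomial also computes integral characters, and this is immediate from the freeness of $H^m(\PS_n; \Z)$ noted above.
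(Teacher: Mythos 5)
Your proof is correct and follows essentially the same route as the paper: rational case via the $\FI_{BC}\sh$--module structure from Theorem \ref{PureStingMotionisFI-Sharp} combined with Corollary \ref{CharPolyFISharp} to get the character polynomial for all $n$, then passage to $\Z$--coefficients using the Jensen--McCammond--Meier presentation (Theorem \ref{thmJMM}) to see that $H^m(\PS_n;\Z)$ is free abelian so its traces agree with the rational ones. The only cosmetic difference is that the paper's prose cites Theorem \ref{WnPersinomial} and then separately invokes the $\FI_{BC}\sh$ structure to upgrade ``$n$ large'' to ``all $n$,'' whereas you cite Corollary \ref{CharPolyFISharp} directly, which packages the same facts together.
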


This concludes a simpler proof of \cite[Theorems 6.1 and 6.4]{PureStringMotion}. We have moreover extended the results of \cite{PureStringMotion} to integer coefficients, and obtained polynomiality results on the characters.  

For small values of $m$, we can compute the $\FI_{BC} \sh$ and $\FI_A \sh$--module structures  and character polynomials of $H^m(\PS_{\bullet}; \Z)$ by computing traces on an explicit basis for $H^m(\PS_n; \Z)$, $n=1, \ldots, 2m$, and using Proposition \ref{InducedCharacter}. The result is, as an $\FI_{BC} \sh$--module,
\begin{align*} 
H^1(\PS_{\bullet}; \Z)  = M_{BC} \big(\; \Y{1} \;, \Y{1}\; \big) \qquad \qquad 
\chi_{H^1(\PS_{\bullet}; \Z)}   = 2 { X_1 \choose 2 } - 2 { Y_1 \choose 2 } 
\end{align*}
In degree $2$:
\begin{align*} 
H^2(\PS_{\bullet}; \Z)  = & \; M_{BC} \big(\; \Y{1,1,1} \; , \varnothing \;\big) \oplus M_{BC} \big(\; \Y{2,1} \; , \varnothing \; \big) \oplus M_{BC} \big(\; \Y{1} \; , \Y{1,1} \;\big) \oplus M_{BC} \big(\; \Y{1} \; , \Y{2} \; \big) 
\\& \oplus M_{BC} \big(\; \Y{1,1} \; , \Y{2} \; \big)  \oplus M_{BC} \big(\; \Y{2} \;, \Y{1,1} \; \big)  \\  \empty \\ 
\chi_{H^2(\PS_\bullet; \Z)}   = & \; 12 { X_1 \choose 4} + 12 {Y_1 \choose 4 } + 9 { X_1 \choose 3 } + 9 {Y_1 \choose 3} -4 {X_2 \choose 2} + 4 {Y_2 \choose 2}  \\& -4 {X_1 \choose 2}{Y_1 \choose 2} -X_1X_2 - X_1Y_2 - X_2Y_1 - Y_1Y_2 - {X_1 \choose 2}Y_1 - X_1{Y_1 \choose 2}  \end{align*}

By restricting to the action of the symmetric groups we find, as an $\FI_A \sh$--module,
\begin{align*} 
H^1(\PS_{\bullet}; \Z)  = M_{A} \big(\; \Y{2} \; \big) \oplus M_{A} \big( \;  \Y{1,1}\; \big) \qquad \qquad 
\chi_{H^1(\PS_{\bullet}; \Z)}   = 2 { X_1 \choose 2 }  
\end{align*}
\begin{align*} 
H^2(\PS_{\bullet}; \Z)  = & \; M_{A} \big( \; \Y{1,1,1} \; \big)^{\bigoplus 2} \oplus M_{A} \big( \; \Y{2,1} \; \big)^{\bigoplus 3} \oplus M_{A} \big( \; \Y{3} \; \big) \oplus M_{A} \big( \; \Y{2,1,1} \; \big)^{\bigoplus 2} \oplus M_{A} \big( \; \Y{3,1} \; \big)^{\bigoplus 2} \\  \empty \\ 
\chi_{H^2(\PS_\bullet; \Z)}   = & \; 12 { X_1 \choose 4} + 9 { X_1 \choose 3 } - X_1X_2 -4 {X_2 \choose 2} 
\end{align*}

\begin{prob} For each $m$, compute the $B_n$ character polynomial of $H^m(\PS_{\bullet}, \Z)$, and compute its decomposition as an $\FI_{BC}\sh$--module into a sum of  induced representations $M_{BC}(U)$.  \end{prob}

\subsubsection{Generalizations}

There are several families of groups that naturally generalize the (pure) braid groups and (pure) symmetric automorphism groups, which we outline below. With each family, there are open questions concerning whether the cohomology rings admit the structure of a finite type $\FIW$ or $\FIW\sh$--algebra, and how this structure reflects the structure of the groups. \\

{\noindent \bf Partially symmetric automorphisms. \quad} The group $\S_n^k$ of \emph{partially symmetric automorphisms} of the free group $F_n=\langle x_1, \ldots, x_n \rangle$  are those automorphisms that send each of the first $k$ generators $x_1, \ldots, x_k$ to a conjugate of one of the elements $x_1, x_1^{-1},$ $\ldots, x_k, x_k^{-1}$. We impose no restrictions on the images of $x_{k+1}, \ldots, x_n$. The \emph{pure partially symmetric automorphism group} $\PS_n^k$ is the subgroup of $\S_n^k$ of automorphisms that send each generator $x_j$ with  $1 \leq j \leq k$ to a conjugate of itself. We note that $$\S_n^n = \S_n, \qquad  \PS_n^n = \PS_n, \quad \text{and} \quad \PS_n^0 = \S_n^0 = \Aut(F_n);$$ these groups interpolate between the (pure) symmetric automorphism group and the full automorphism group of $F_n$. 

The groups $\PS_n^k$ were studied by Jensen--Wahl \cite{JensenWahlAutomorphisms} for their relationships to mapping class groups. Jensen--Wahl have computed a presentation and established certain homological properties of the groups. Bux--Charney--Vogtmann \cite{BuxCharneyVogtmann} determined that the image of the group $\PS_n^k$ in Out$(F_n)$ has virtual cohomological dimension $2n - k - 2$ when $k \neq 0$ . They exhibit a proper action of these outer automorphism groups on a $(2n - k - 2)$--dimensional deformation retract of a certain contractible subcomplex of the spine of Culler--Vogtmann's Outer space; see Charney--Vogtmann \cite{CharneyVogtmannFiniteness} for details.

Zaremsky \cite{ZaremskyPartial} proved that both families $\PS_n^k$ and $\S_n^k$ are, for fixed $k$, rationally homologically stable in $n$. He proved moreover that for fixed $n$, the groups $\S_{n+k}^k$ are rationally homologically stable in $k$. Zaremsky obtains these results by studying the groups' actions on subcomplexes of the spine of Auter space. He uses methods from discrete Morse theory to prove that the filtered pieces of certain subcomplexes are highly connected, extending techniques of McEwen--Zaremsky \cite{McEwenZaremsky}.  

Given these results, it would be interesting to determine whether there is a $\FI_{BC}$ or $\FI_{BC}\sh$--module structure on the rational cohomology groups of $\PS_{n+k}^k$ as a sequence in $k$, and, if so, to determine the associated stable decompositions and character polynomials. \\

{\noindent \bf Symmetric automorphisms of free products. \quad} Given a group $G$, let $\Gn$ denote its $n$-fold free product
$$\Gn := \underbrace{ G * G * \cdots * G }_{n \text{ copies}}. $$ 
 The automorphism group $\Aut(\Gn)$ contains a copy of the symmetric group $S_n$ which permutes the $n$ free factors. These permutations normalize the following subgroups of automorphisms; see for example Griffin \cite{Griffin, GriffinThesis} for details.
\begin{itemize}
 \item The \emph{Fouxe-Rabinovitch group} FR$(\Gn) \subseteq \Aut(\Gn)$ generated by \emph{partial conjugations} of $\Gn$. A partial conjugation is an  automorphism that conjugates the $i^{th}$ free factor $G$ by some $g$ in the $j^{th}$ factor $G$ with $i \neq j$. All factors other than the $i^{th}$ are fixed. 
 \item The inner automorphisms of each factor $\displaystyle \prod_n \Inn(G)$
 \item All automorphisms of each factor $\displaystyle \prod_n \Aut(G)$
 \item The \emph{Whitehead automorphism group} Wh$\displaystyle (\Gn):=  \text{FR}(\Gn) \rtimes \prod_n \Inn(G)$
 \item The \emph{pure automorphism group} P$\displaystyle \Aut(\Gn):=  \text{FR}(\Gn) \rtimes \prod_n \Aut(G)$
\end{itemize}
 The \emph{symmetric automorphism group} of $\Gn$ is the group $$\S\Aut(\Gn) := \big( \mathrm{P}\Aut(\Gn) \rtimes S_n \big).$$ We note that $$\PS_n \cong \text{FR}(\Z^{*n}) \cong \text{Wh} (\Z^{*n}) \qquad \text{and} \qquad \S_n \cong \S\Aut(\Z^{*n}). $$
 
 Griffin constructs a classifying space for FR$(\Gn)$, which he defines as a moduli space of \emph{cactus products}, and alternatively characterizes combinatorially in terms of \emph{diagonal complexes} comprised of \emph{forest posets}. Using this classifying space he computes the homology of the groups FR$(\Gn)$, P$\Aut(\Gn)$, and $\S\Aut(\Gn)$.

Collinet--Djament--Griffin \cite{CollinetDjamentGriffin} have proven that if $G$ does not contain $\Z$ as a free factor, the sequences $\Aut(\Gn)$ and $\S \Aut(\Gn)$ are (integrally) homologically stable, stabilizing in degree $i$ once $n \geq 2i+2$. Their work complements the results of Hatcher \cite{HatcherStabilityAut(Fn)} for $G\cong \Z$  and extends results of Hatcher--Wahl \cite{HatcherWahl} for several important classes of groups $G$ coming from low-dimensional topology. Collinet--Djament--Griffin prove their results using the theory of functor homology, and an analysis of the action of FR$(\Gn)$ on a variation of the MacCullough--Miller complex \cite{MacculloughMiller} due to Chen--Glover--Jensen \cite{ChenGloverJensen}. 

We would be interested to better understand the relationship between the work done on the groups FR$(\Gn)$,  Wh$\displaystyle (\Gn)$, and P$\Aut(\Gn)$ and the theory of $\FI_A$--modules.  \\ 

{\noindent \bf Virtual and flat braid groups. \quad}  The \emph{(pure) virtual braid group} and the \emph{(pure) flat braid group} are generalizations of the (pure) braid group that allow \emph{virtual} or \emph{flat} crossings of strands.  This additional structure was introduced by Kauffman \cite{KauffmanVirtual}, motivated by the study of knots in thickened higher-genus surfaces and the combinatorial theory of Gauss codes. Virtual and flat crossings are distinct from the under-- and over--crossings in familiar knot and braid diagrams, and each have their own admissible Redemeister moves. For details see for example Kauffman \cite{KauffmanVirtual, KauffmanSurvey}, Vershinin \cite{VershininVirtual}, Kauffman--Lambropoulou \cite{KauffmanLambropoulou}, Bardakov \cite{BardakovVirtual}, and Bartholdi--Enriquez--Etingof--Rains \cite{BaEnEtRa}.

In \cite{LeeRepStable}, Peter Lee analyzes the cohomology of the pure virtual braid groups and the pure flat braid groups as representations of the symmetric groups. He proves that, for both families, the rational cohomology groups are uniformly representation stable \cite[Corollaries 1 and 5]{LeeRepStable}, and his work suggests that these cohomology sequences are in fact $\FI_A\sh$--algebras. His results raise questions about the structure of these $\FI_A\sh$--algebras and their associated character polynomials, and the structure of the integral cohomology groups.

\subsection{The cohomology of hyperplane complements} \label{SectionHyperplaneComplements}

Let $\W_n$ be the Weyl group in type A$_{n-1}$, B$_n$/C$_n$, or D$_n$, and consider the canonical action of $\W_n$ on $\C^n$ by (signed) permutation matrices. Let $\cA(n)$ be the set of hyperplanes fixed by the (complexified) reflections of $\W_n$, and let $\cM_{\W}=\cM_{\W}(n)$ be their complement $$\cM_{\W}(n) := \C^n \backslash \bigcup_{H \in \cA(n)} H.$$ The group $\W_n$ permutes the set of hyperplanes, and acts on $\cM_{\W}$. For each family $\{\W_n\}$, the hyperplane complements can be described explicitly:
\begin{align*}
\cM_A(n) &= \left\{ (v_1, \ldots, v_n) \in \C^n \; \vert \; v_i \neq v_j \text{ for } i \neq j  \right\} \\
\cM_{D}(n) &= \left\{ (v_1, \ldots, v_n) \in \C^n \; \vert \; v_i \neq \pm v_j \text{ for } i \neq j   \right\}  \\
\cM_{BC}(n) &= \left\{ (v_1, \ldots, v_n) \in \C^n \; \vert \; v_i \neq \pm v_j \text{ for } i \neq j ;  v_i \neq 0 \text{ for all } i  \right\} 
\end{align*}
We note that $\cM_{BC}(n) \subseteq \cM_{D}(n) \subseteq \cM_{A}(n)$.

The hyperplane complement $\cM_A(n)$ is the ordered $n$--point configuration space of the plane $\C$; it is an Eilenberg--Mac Lane space with fundamental group the pure braid group on $n$ strands. Arnol'd computed its integral cohomology in 1969 \cite{Arnol'd}. Its quotient $\cM_A(n)/S_n$ is an Eilenberg--Mac Lane space with fundamental group the braid group on $n$ strands.  Brieskorn showed that $\cM_{BC}(n)$ and $\cM_{D}(n)$ and their quotients $\cM_{BC}(n)/B_n$ and $\cM_{D}(n)/S_n$ are also Eilenberg--Mac Lane spaces \cite[Proposition 2]{Brieskorn}; their fundamental groups are sometimes called \emph{generalized (pure) braid groups}.

Brieskorn \cite{Brieskorn} and Orlik--Solomon \cite{OrlikSolomon} studied the cohomology of the complement $\cM$ of a general arrangement of complex hyperplanes containing the origin. Define a set of hyperplanes $H_1, \ldots , H_p$ to be \emph{dependent} if $ \codim(H_1 \cap \cdots \cap H_p) < p.$

Let $E(\cA)$ to be the complex exterior algebra 
$ E(\cA) := \bigwedge \langle e_H \; \vert \; H \in \cA \rangle $ and let $I(\cA) \subseteq E(\cA)$ be the ideal 
$$ I(\cA) := \langle \quad \sum^p_{\ell=1 } (-1)^\ell \; e_{H_1} \cdots \widehat{e_{H_{\ell}}} \cdots e_{H_p} \quad \vert \quad H_1, \ldots, H_p \text{ dependent} \quad  \rangle $$ 

Orlik--Solomon proved that $H^*(\cM_{\W}, \C)$ is isomorphic to $A(\cA) := E(\cA) / I(\cA)$ as a graded algebra \cite[Theorem 5.2]{OrlikSolomon}. Their work implies that $H^*(\cM_{\W}, \C) \cong A(\cA)$ as a graded $\C[\W_n]$--module under the $\W_n$--action $ w \cdot e_H  = e_{w H}.$ \

The structure of $H^*(\cM_{A}(n), \C)$ as an $S_n$--representation is described by Lehrer--Solomon \cite{LehrerSolomon}, and the structure of the $B_n$--representations $H^*(\cM_{BC}(n), \C)$ is described by Douglass \cite{DouglassArrangement}. Lehrer--Solomon and Douglass give decompositions of the $\W_n$--representations $H^*(\cM_{\W}(n), \C)$ in type $A$ and $B/C$, respectively, as sums of certain explicitly described induced representations. Lehrer--Solomon conjectured that, as they prove in type A, the cohomology groups $H^m(\cM_{\W}, \C)$ decompose into a sum of induced one-dimensional representations of centralizers, summed over the set of $\W_n$ conjugacy classes \cite[Conjecture 1.6]{LehrerSolomon}. Recent progress 
has been made on this conjecture; see Douglass--Pfeiffer--R\"{o}hrle \cite{DouglassPfeifferRohrle}.

  Church and Farb prove that, for each degree $m$, the sequence $H^m(\cM_A(n), \Q)$ is a uniformly representation stable sequence of $S_n$--representations \cite[Theorem 4.1]{RepStability}. Church--Ellenberg--Farb further prove that $H^m(\cM_A, \Q)$ is a graded $\FI\sh$--algebra of finite type; this is a special case of their much more general results on the ordered configuration space of manifolds \cite[Theorem 4.7; see also Theorems 4.1 and  4.2]{CEF}. In  \cite[Theorem 4.6]{RepStability}, Church--Farb analyze the stability behaviour of the sequence $H^m(\cM_{BC}, \C)$ of $B_n$--representations. 

The following result recovers \cite[Theorem 4.1 and 4.6]{RepStability} in types A$_{n-1}$ and B$_n$/C$_n$. It extends the work of Church--Ellenberg--Farb on the cohomology of the ordered configuration space of $\C$. 

\begin{thm} \label{HyperplaneCohomologyIsFISharp} 
Let $\cM_{\W}$ be the complex hyperplane complement associated with the Weyl group $\W_n$ in type A$_{n-1}$, B$_n$/C$_n$, or D$_n$, as described above. In each degree $m$, the cohomology groups $H^m(\cM_{A}(\bullet), \C)$ form an $\FI_A\sh$--module finitely generated in degree $\leq 2m$, and both $H^m(\cM_{BC}(\bullet), \C)$ and $H^m(\cM_{D}(\bullet), \C)$ are $\FI_{BC}\sh$--modules finitely generated in degree $\leq 2m$.  For each $\W$, the cohomology $H^*(\cM_{\W} (\bullet), \C)$ of the hyperplane complements is a graded $\FIW$--module of slope $2$.  
\end{thm}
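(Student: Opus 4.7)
The plan is to follow the strategy of Theorem \ref{PureStingMotionisFI-Sharp}. The type A statement is already due to Church--Ellenberg--Farb \cite[Theorem 4.7]{CEF}, so the new content lies in types B/C and D. In both these cases, $B_n$ acts on $\cM_D(n) \supseteq \cM_{BC}(n)$ by signed permutation of coordinates, and I would construct an $\FI_{BC}\sh$--module structure on each cohomology ring refining this action, determined on the degree-one generators of the Brieskorn--Orlik--Solomon presentation.

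First, I would index the degree-one generators. For types B/C and D, I would take a generator $\omega_{\{i,j\}} = \omega_{\{-i,-j\}}$ for each unordered pair $\{i,j\} \subseteq \bn$ with $|i| \neq |j|$, corresponding to the hyperplane $\{v_{|i|} = \mathrm{sgn}(i)\mathrm{sgn}(j)\,v_{|j|}\}$; in type B/C only, I would also take generators $\omega_{\{i\}} = \omega_{\{-i\}}$ for the hyperplanes $\{v_{|i|}=0\}$. For an $\FI_{BC}\sh$--morphism $f:\bmo\to\bno$, the assignment is $f_*\omega_{\{i,j\}} = \omega_{\{f(i),f(j)\}}$ when $f(i),f(j)\neq 0$ and zero otherwise, and similarly $f_*\omega_{\{i\}} = \omega_{\{f(i)\}}$ when $f(i)\neq 0$ and zero otherwise; I would then extend multiplicatively to the exterior algebra on $H^1$. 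The injectivity-away-from-zero condition $|f^{-1}(b)|\le 1$ for $b\neq 0$ ensures that $|f(i)| \neq |f(j)|$ whenever $|i| \neq |j|$ and $f(i), f(j) \neq 0$, so $\omega_{\{f(i),f(j)\}}$ really is one of the chosen generators. For injective $f$ this assignment agrees with the pullback under the linear surjection $\cM_{\W}(n) \to \cM_{\W}(m)$, $(v_1,\ldots,v_n) \mapsto (\mathrm{sgn}(f(i))v_{|f(i)|})_{i=1}^m$, so it recovers the expected $\FI_{BC}$--module structure on cohomology.

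The main technical step, and the principal obstacle, is verifying that $f_*$ descends to the Orlik--Solomon quotient $A(\cA_{\W}(\bullet)) \cong H^*(\cM_{\W}(\bullet),\C)$, i.e., sends the ideal $I(\cA_{\W}(m))$ into $I(\cA_{\W}(n))$. The key observation is that in any circuit $C = \{H_1, \ldots, H_p\}$ of a reflection arrangement of type B/C or D, every coordinate index $a$ appears in either zero or at least two of the hyperplanes of $C$: if $a$ appeared in only one $H_\ell$, then the constraint on $v_a$ would come from $H_\ell$ alone and $H_\ell$ would be transverse to $\bigcap_{k\neq\ell}H_k$, contradicting the dependency of $C$. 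Given this lemma, if $f$ sends no index of $C$ to zero then all $p$ hyperplanes map to bona fide hyperplanes, which remain dependent because linear pullback preserves codimension, and the Orlik--Solomon relation for $C$ maps to the Orlik--Solomon relation for $f(C)$; if some index $a$ of $C$ is sent to zero then at least two hyperplanes $H_\ell,H_{\ell'}$ collapse to $0$, and since every summand $(-1)^k e_{H_1}\cdots\widehat{e_{H_k}}\cdots e_{H_p}$ omits only one index, each retains at least one of $e_{H_\ell}$ or $e_{H_{\ell'}}$ as a factor and vanishes.

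Once the $\FI_{BC}\sh$--module structure is established, the remaining claims are immediate from Proposition \ref{SlopeFinGenFromGeneratorsReview}. The $\FI_{BC}\sh$--module $H^1(\cM_{\W}(\bullet), \C)$ is generated by $\omega_{\{1,2\}}$ (together with $\omega_{\{1\}}$ in type B/C), and hence is finitely generated in degree $\leq 2$. Since $H^*(\cM_{\W}(\bullet), \C)$ is the graded $\FIW$--algebra generated by $H^1$, it follows that each $H^m(\cM_{\W}(\bullet), \C)$ is finitely generated as an $\FIW$--module in degree $\leq 2m$ and that the graded $\FIW$--algebra $H^*(\cM_{\W}(\bullet), \C)$ has slope $\leq 2$.
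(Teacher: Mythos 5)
Your proposal is correct, but it takes a genuinely different route from the paper. The paper sidesteps the question of whether the maps preserve the Orlik--Solomon ideal by working topologically: the projection $P: \cM_\W(n+1) \to \cM_\W(n)$ and its section $S$ are continuous maps of spaces, so $P^*$ and $S^*$ are automatically ring homomorphisms, and one obtains the $\FI_{BC}\sh$--structure by letting projections act via $S^*$ (just as in \cite[Theorem 4.6]{CEF}), with the explicit formulas on generators $e_H$ then read off and verified to be functorial. For type D the paper further avoids defining a new action: it realizes $H^*(\cM_D(\bullet),\C)$ as a sub--$\FI_{BC}\sh$--module of $H^*(\cM_{BC}(\bullet),\C)$ via the inclusion $\cM_{BC}(n) \hookrightarrow \cM_D(n)$. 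You instead work entirely within the Brieskorn--Orlik--Solomon presentation and supply the algebraic verification the paper's topological shortcut makes unnecessary. The circuit lemma you isolate --- that in a minimal dependent set of reflecting hyperplanes every coordinate index occurs in zero or at least two of them --- is exactly the right content (it follows because a circuit admits a linear dependency with all nonzero coefficients, and an index appearing once would force one coefficient to vanish), and it cleanly handles both cases of $f$ killing some index or none. Your argument buys a proof that never leaves the ring presentation and never invokes continuous sections, and it exposes a combinatorial fact about reflection arrangements that the paper's proof leaves implicit; the paper's argument buys brevity and a uniform treatment across all three types via pushforward under continuous maps. Both yield the same $\FI_{BC}\sh$--module formulas and the same conclusion via Proposition \ref{SlopeFinGenFromGeneratorsReview}. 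One small point worth flagging in a write-up: to verify the ideal is preserved you implicitly use that $I(\cA)$ is generated by the boundary relations $\partial e_C$ for circuits $C$ (not arbitrary dependent sets) --- a standard fact, but your lemma genuinely relies on minimality of $C$.
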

\begin{proof}[Proof of Theorem \ref{HyperplaneCohomologyIsFISharp}]
For each $\W$, the projection map $P$ has a section $S$:
\begin{align*}
  P: \cM_{\W}(n+1) & \longrightarrow \cM_{\W}(n) &   S: \cM_{\W}(n) & \longrightarrow \cM_{\W}(n+1) \\
 (v_1, \ldots, v_n, v_{n+1}) & \longmapsto (v_1, \ldots, v_n)  & (v_1, \ldots, v_n) & \longmapsto (v_1, \ldots, v_n, 1+ \sum_{i=1}^n |v_i| ) 
\end{align*}
and so $P$ induces an injective map on cohomology, as follows. We associate each hyperplane $H \subseteq \C^n$ to its orthogonal complement, the span of the vectors $$\pm(\be_i-\be_j), \quad \pm(\be_i+\be_j), \quad  \text{or} \quad \pm \be_i \qquad  \text{for $i,j=1, \ldots, n$.}$$ The inclusion of these normal vectors $\C^n \to \C^{n+1}$ gives an identification of the hyperplane $H \subseteq \C^n$ with a hyperplane $H \subseteq \C^{n+1}$, which define the induced map $P^*$.
\begin{align*}
 P^*: H^*(\cM_{\W}(n); \C) & \longrightarrow H^*(\cM_{\W}(n+1), \C) \\ 
  e_H  & \longmapsto e_H 
\end{align*}
 These inclusions are $\W_n$--equivariant maps, and give $H^*(\cM_{\W}(\bullet); \C)$ the structure of a graded $\FIW$--module. 

The $\FI_A$--module $H^1(\cM_{A}(\bullet); \C)$ is finitely generated in degree $\leq 2$ by element $e_{(\be_1-\be_2)^{\perp}}$, and the $\FI_{BC}$--module $H^1(\cM_{BC}(\bullet); \C)$ is finitely generated in degree $\leq 2$ by elements $e_{(\be_1-\be_2)^{\perp}}$, $e_{(\be_1+\be_2)^{\perp}}$, and $e_{(\be_1)^{\perp}}$. It follows from Proposition \ref{SlopeFinGenFromGeneratorsReview} that $H^m(\cM_{\W}(\bullet); \C)$ is finitely generated in degree $\leq 2m$ in types A and B/C. The bound on the slope of the $\FIW$--algebra $H^*(\cM_{\W}(\bullet); \C)$ follows from Theorem \ref{WnDiagramSizesReview}. 


The section $S$ induces a map $S^*: H^*(\cM_{\W}(n+1); \C)  \longrightarrow H^*(\cM_{\W}(n), \C);$ when $\W_n$ is $S_n$ or $B_n$ these sections give $H^*(\cM_{\W}(\bullet), \C)$ the structure of an $\FIW\sh$--module, just as in the proof of \cite[Theorem 4.6]{CEF}. We can describe this structure explicitly: an $\FI_{BC}\sh$--morphism $f: \bmo \to \bno$ acts on the generators $e_H$ as follows.
\begin{align*}
  e_{(\be_i-\be_j)^{\perp}} \longmapsto & \left\{ \begin{array}{ll} \; \; \, e_{(\be_{f(i)}-\be_{f(j)})^{\perp}},\; \text{ if $f(i), f(j) \neq 0$} & \\
                                                                    \; \; \;  0, \qquad \qquad \quad \; \; \text{if $f(i)=0$ or $f(j)=0$} & \end{array} \right. \end{align*}\begin{align*}
  e_{(\be_i+\be_j)^{\perp}} \longmapsto & \left\{ \begin{array}{ll} \; \; \, e_{(\be_{f(i)}+\be_{f(j)})^{\perp}},\; \text{ if $f(i), f(j) \neq 0$} & \\
                                                                    \; \; \;  0, \qquad \qquad \quad \; \, \text{if $f(i)=0$ or $f(j)=0$} & \end{array} \right. 
\end{align*}\begin{align*}
  e_{(\be_i)^{\perp}} \longmapsto & \left\{ \begin{array}{ll} \; \; \, e_{(\be_{f(i)})^{\perp}},\; \text{ if $f(i) \neq 0$} & \\
                                                                    \; \; \;  0, \qquad  \quad  \, \text{if $f(i)=0$} & \end{array} \right.
\end{align*}
Here, we use the convention that $\be_{-i} := -\be_i$. It is straightforward to verify that these maps are functorial. 

In type $A$, this action restricts to an $\FI_{A}\sh$--module structure on the ring $H^*(\cM_{A}(n), \C)$ generated by the elements $e_{(\be_i-\be_j)^{\perp}}$. For type D, observe that the inclusion of hyperplane complements 
            $ \cM_{BC}(n)  \hookrightarrow \cM_{D}(n) $
induces an inclusion of cohomology groups 
             $ H^*(\cM_{D}(n); \C)  \hookrightarrow H^*(\cM_{BC}(n), \C). $

The subspaces $ H^*(\cM_{D}(n); \C) \subseteq H^*(\cM_{BC}(n), \C)$ form the $B_n$--invariant subring generated by the elements $e_{(\be_i-\be_j)^{\perp}}$ and $e_{(\be_i+\be_j)^{\perp}}$, $i\neq j$. These inclusions realize $H^*(\cM_{D}(n); \C)$ as a sub--$\FI_{BC}\sh$--module of $H^*(\cM_{BC}(n); \C)$ generated as an $\FI_{BC}$--algebra by the $\FI_{BC}$--module  $H^1(\cM_{D}(n); \C)$. Since $H^1(\cM_{D}(n); \C)$ is finitely generated in degree $ \leq 2$, it follows again that $H^*(\cM_{D}(n); \C)$ is an $\FI_{BC}\sh$--algebra of slope $2$ with $H^m(\cM_{D}(n); \C)$ finitely generated in degree $\leq 2m$. \end{proof}

Theorem \ref{HyperplaneCohomologyIsFISharp} has the following consequences.

\begin{cor} \label{HypRepStable}
  In each degree $m$, the sequence of cohomology groups $\{H^m(\cM_{\W}(n), \C)\}_n$ of the associated hyperplane complement is uniformly representation stable in degree $\leq 4m$. 
\end{cor}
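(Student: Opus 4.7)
The plan is to derive this as an essentially immediate consequence of Theorem \ref{HyperplaneCohomologyIsFISharp} together with the representation stability results for finitely generated $\FIW$--modules already assembled in Sections \ref{SectionRepStabilityforFIModules} and \ref{SectionFISharp}. By Theorem \ref{HyperplaneCohomologyIsFISharp}, the graded piece $H^m(\cM_{A}(\bullet), \C)$ is an $\FI_A \sh$--module and both $H^m(\cM_{BC}(\bullet), \C)$ and $H^m(\cM_{D}(\bullet), \C)$ are $\FI_{BC}\sh$--modules, each finitely generated in degree $\leq 2m$.

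For types A and B/C, Corollary \ref{SharpRepStableDeg2d} (and its evident type A analogue, via the same chain of reasoning) applies directly with $d = 2m$, giving uniform representation stability once $n \geq 2(2m) = 4m$. Unpacking: by Theorem \ref{WnDiagramSizesReview} the module has weight $\leq 2m$ and by Corollary \ref{SharpStabilityDegd} it has stability degree $\leq 2m$, so Theorem \ref{StabilityDegreeImpliesRepStabilityReview} yields the stated stable range.

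For type D the argument requires one additional step, since $\FI_D \sh$ does not exist. Let $V := H^m(\cM_D(\bullet), \C)$, viewed as the $\FI_{BC}\sh$--module supplied by Theorem \ref{HyperplaneCohomologyIsFISharp}. Its restriction $\Res_D^{BC} V$ is a finitely generated $\FI_D$--module in degree $\leq 2m$ by Proposition \ref{RestrictionPreservesFinGenReview}(\ref{WntoW'nReview}), and has weight $\leq 2m$ by definition (since $\Ind_D^{BC}(\Res_D^{BC} V)$ agrees with $V$ in every $n > 2m$ by Theorem \ref{VisResIndVReview}, and $V$ itself has weight $\leq 2m$). To apply the type D part of Theorem \ref{FinGenImpliesRepStabilityReview}, one needs a bound on the relation degree of $\Ind_D^{BC}(\Res_D^{BC} V)$; but again by Theorem \ref{VisResIndVReview} this induced $\FI_{BC}$--module coincides with $V$ beyond degree $2m$, so its relation degree is bounded by $2m$ (it is a sub--$\FI_{BC}\sh$--module of $H^m(\cM_{BC}(\bullet),\C)$ generated in degree $\leq 2m$, hence by Corollary \ref{SharpStabilityDegd} has stability degree $\leq 2m$, which bounds the relations). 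Thus Theorem \ref{FinGenImpliesRepStabilityReview} applies with $g, r, d$ all $\leq 2m$ and yields uniform representation stability as $D_n$--representations in the range $n \geq \max(g,r) + d \leq 4m$.

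The main obstacle is purely bookkeeping in the type D case: tracking that the relation degree of $\Ind_D^{BC}$ applied to the $\FI_D$--module restriction is indeed controlled by $2m$, which follows from the $\FI_{BC}\sh$--module origin of $V$ via Theorem \ref{VisResIndVReview} and Corollary \ref{SharpStabilityDegd}. Once this is observed, the corollary follows with the claimed uniform bound $4m$ in all three types.
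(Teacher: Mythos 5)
Your proposal correctly identifies Theorem \ref{HyperplaneCohomologyIsFISharp} and the $\FIW\sh$--module machinery as the route, and the type A and B/C cases are exactly what the paper does: apply Corollary \ref{SharpRepStableDeg2d} (equivalently Theorem \ref{WnDiagramSizesReview} + Corollary \ref{SharpStabilityDegd} + Theorem \ref{StabilityDegreeImpliesRepStabilityReview}) with generation degree $d = 2m$ to get the stable range $n \geq 4m$.

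For type D, however, you take an unnecessary and partly unsound detour. The chain of claims ``$\Ind_D^{BC}(\Res_D^{BC}V)$ coincides with $V$ beyond degree $2m$, so its relation degree is bounded by $2m$ (\dots has stability degree $\leq 2m$, which bounds the relations)'' has two gaps. First, Theorem \ref{VisResIndVReview} gives an isomorphism of $D_n$--representations $V_n \cong (\Res_D^{BC}\Ind_D^{BC}\Res_D^{BC}V)_n$ for $n$ large; it does \emph{not} assert that $\Ind_D^{BC}(\Res_D^{BC}V)$ and the $\FI_{BC}\sh$--module $V$ agree as $B_n$--representations or as $\FI_{BC}$--modules, so you may not transfer structural invariants of $V$ to this induced module. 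Second, there is no result in the paper asserting that the stability degree bounds the relation degree; those are distinct invariants, and Corollary \ref{SharpStabilityDegd} is simply not a statement about relation degree. So the bound $r \leq 2m$ that you need to invoke Theorem \ref{FinGenImpliesRepStabilityReview} is not established.

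The intended argument for type D is much shorter and avoids induction and restriction entirely. Theorem \ref{HyperplaneCohomologyIsFISharp} exhibits $H^m(\cM_D(\bullet), \C)$ directly as an $\FI_{BC}\sh$--module finitely generated in degree $\leq 2m$ (a sub--$\FI_{BC}\sh$--module of $H^m(\cM_{BC}(\bullet), \C)$). Corollary \ref{SharpRepStableDeg2d} then gives uniform representation stability of the sequence of $B_n$--representations $\{H^m(\cM_D(n), \C)\}_n$ for $n \geq 4m$. Since the $D_n$--representations $V(\y)_n$ are by definition the restrictions of the corresponding $B_n$--representations (Section \ref{RepTheoryDnReview}), restricting this decomposition along $D_n \subseteq B_n$ yields uniform representation stability as $D_n$--representations in the same range, with no auxiliary use of $\Ind_D^{BC}$, relation degrees, or Theorem \ref{FinGenImpliesRepStabilityReview}.
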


In types A and B/C, Corollary \ref{HypRepStable} recovers \cite[Theorem 4.1 and 4.6]{RepStability}.

\begin{cor}  \label{HyperplaneComplementCharPoly}
  In each degree $m$, the sequence of characters of the $\W_n$--representations $H^m(\cM_{\W}(n), \C)$ are given by a unique character polynomial of degree $\leq 2m$ for all $n$. 
\end{cor}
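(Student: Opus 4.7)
The proof will be essentially a direct corollary of Theorem \ref{HyperplaneCohomologyIsFISharp} together with Corollary \ref{CharPolyFISharp}, handled in three cases depending on type. My plan is as follows.

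\textbf{Types A and B/C.} First I will handle the classical types directly. By Theorem \ref{HyperplaneCohomologyIsFISharp}, $H^m(\cM_A(\bullet), \C)$ is an $\FI_A\sh$-module finitely generated in degree $\leq 2m$, and $H^m(\cM_{BC}(\bullet), \C)$ is an $\FI_{BC}\sh$-module finitely generated in degree $\leq 2m$. Applying Theorem \ref{SnPersinomial} in type A (and noting the $\FI_A\sh$ hypothesis extends equality to all $n \geq 0$) and Corollary \ref{CharPolyFISharp} in type B/C gives the result immediately: in each case there exists a unique character polynomial $F_m \in \C[X_1, Y_1, X_2, Y_2, \ldots]$ of degree $\leq 2m$ computing $\chi_{H^m(\cM_\W(n), \C)}$ for every $n \geq 0$.

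\textbf{Type D.} This is the step that requires the most care, since the functions $X_r, Y_r$ do not separate all $D_n$-conjugacy classes when $n$ is even (Section \ref{RepTheoryDnReview}), so a priori it is not even clear what a ``character polynomial'' for a $D_n$-representation should mean. The key observation is that, by Theorem \ref{HyperplaneCohomologyIsFISharp}, the sequence $H^m(\cM_D(\bullet), \C)$ is an $\FI_{BC}\sh$-module, \emph{not merely} an $\FI_D$-module; concretely, the natural $B_n$-action on $\cM_D(n)$ promotes each $D_n$-representation $H^m(\cM_D(n), \C)$ to a $B_n$-representation. I will apply Corollary \ref{CharPolyFISharp} to this $\FI_{BC}\sh$-module to obtain a unique character polynomial $F_m \in \C[X_1, Y_1, X_2, Y_2, \ldots]$ of degree $\leq 2m$ giving the $B_n$-character of $H^m(\cM_D(n), \C)$ for every $n \geq 0$. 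The $D_n$-character is then the restriction of $\chi^{B_n}_{H^m(\cM_D(n), \C)}$ to the subgroup $D_n \subseteq B_n$, which by construction is still computed by the same polynomial $F_m$ evaluated on signed cycle types.

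\textbf{Uniqueness in type D.} The last thing to verify is that $F_m$ is unique as a character polynomial for the $D_n$-characters. Since the $F_m$ produced above also serves as a character polynomial for the $B_n$-representations $H^m(\cM_D(n), \C)$, and uniqueness of $B_n$ character polynomials is immediate from the algebraic independence of $\{X_r, Y_r\}$ on $\coprod_n B_n$ (as used in the proof of Theorem \ref{WnPersinomial}), any other character polynomial agreeing with $F_m$ on all $D_n \subseteq B_n$ for infinitely many $n$ would agree with $F_m$ on infinitely many integer tuples and hence be equal to $F_m$. The only conceptual obstacle is bookkeeping: confirming that the $\FI_{BC}\sh$-structure constructed in Theorem \ref{HyperplaneCohomologyIsFISharp} really does restrict to the standard $D_n$-action on each $H^m(\cM_D(n), \C)$, which is clear from the explicit formulas for the action on the Orlik--Solomon generators $e_{(\be_i \pm \be_j)^\perp}$. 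No new computation is required beyond assembling these pieces.
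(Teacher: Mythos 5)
Your proposal is essentially the paper's proof, assembled with a bit more scaffolding. The paper handles type A by citing \cite[Theorem 2.67]{CEF}, type B/C by citing Theorem \ref{WnPersinomial}, and type D by observing that $H^*(\cM_D(\bullet),\C)$ is a sub--$\FI_{BC}\sharp$--module of $H^*(\cM_{BC}(\bullet),\C)$ so the $D_n$--characters are restrictions of the $B_n$ character polynomial. Your version differs only in bookkeeping: you invoke Corollary \ref{CharPolyFISharp} rather than Theorem \ref{WnPersinomial} for type B/C, which is the cleaner citation since the corollary states the ``for all $n$'' conclusion explicitly (the paper's cited statement alone only gives polynomiality for $n$ large, and one has to remember the $\FI\sharp$ improvement implicitly); and you spell out the type D uniqueness argument, which the paper leaves to the reader. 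Both of these are improvements in exposition rather than new content. One small slip: for type A the character polynomial lives in $\C[X_1,X_2,\ldots]$ without the $Y_r$ variables, not $\C[X_1,Y_1,X_2,Y_2,\ldots]$.
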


\begin{proof}[Proof of Corollary \ref{HyperplaneComplementCharPoly}]
 The statement follows for $S_n$ from \cite[Theorem 2.67]{CEF}, and in type $B_n$ from Proposition \ref{WnPersinomial}.  Since the $D_n$ characters are the restriction of the characters of $B_n$ on the $B_n$--subrepresentations $H^*(\cM_{D}(n); \C) \subseteq H^*(\cM_{BC}(n), \C)$, these $D_n$ characters are given by the character polynomial for $B_n$ on this sub--$\FI_{BC}\sh$--module of $H^*(\cM_{BC}(\bullet), \C)$.
\end{proof}

The character polynomials for $H^m(\cM_{A}(\bullet), \C)$ are computed in \cite{CEF} for some low values of $m$. The decompositions for $H^1(\cM_{D}(\bullet), \C)$ and $H^1(\cM_{BC}(\bullet), \C)$ are:
\begin{align*} 
H^1(\cM_{D}(\bullet), \C)  = 2 M_D \big( \{ \; \Y{2} \; , \varnothing \; \} \big) \qquad \qquad 
\chi_{H^1(\cM_{D}(\bullet), \C)}   = 2 { X_1 \choose 2 } + 2 { Y_1 \choose 2 } + 2 X_2
\end{align*}
\begin{align*} 
H^1(\cM_{BC}(\bullet), \C) & =  M_{BC} \big(\; \Y{1} \; , \varnothing \; \big) \oplus M_{BC} \big(\; \Y{2} \; , \varnothing \; \big) \oplus M_{BC} \big(\; \varnothing \; , \; \Y{2} \; \big) \\ 
\chi_{H^1(\cM_{BC}(\bullet), \C)}  & = 2 { X_1 \choose 2 } + 2 { Y_1 \choose 2 } + 2 X_2 + X_1 - Y_1 
\end{align*}

The decompositions for $H^2(\cM_{D}(\bullet), \C)$ and $H^2(\cM_{BC}(\bullet), \C)$ are:
\begin{align*} 
H^2(\cM_{D}(\bullet), \C) & =  M_{D} \big( \{ \;  \Y{2} \; , \varnothing \; \} \big) \oplus  M_{D} \big(\{ \; \Y{2,1} \;, \varnothing \; \} \big) \oplus  M_{D} \big( \{ \;  \Y{1} \;, \Y{2} \; \} \big) \oplus  M_{D} \big( \{ \;  \Y{1} \;, \Y{1,1} \; \} \big)\\ 
 & \oplus M_{D} \big( \{ \;  \Y{3,1} \;, \varnothing  \; \} \big)^{\oplus 2} \oplus M_{D} \big(\;  \Y{2} \;, +  \; \big) \oplus M_{D} \big(\;  \Y{2} \;, -  \; \big)
\end{align*}
\begin{align*} 
\chi_{H^2 (\cM_{D}(\bullet), \C)}   = \; &  {X_1 \choose 2} - X_1 X_2 +  {Y_1 \choose 2} +  X_2 - Y_2 + 8 {X_1 \choose 3}
+ 8 {Y_1 \choose 3} - X_3 - Y_3 +  12 { X_1 \choose 4 } \\ &  
+ 4 {X_1 \choose 2} {Y_1 \choose 2} + 12 {Y_1 \choose 4} + 4 X_2 {X_1 \choose 2} + 4 X_2 {Y_1 \choose 2} - 4 {Y_2 \choose 2} -2 Y_4 \end{align*}

\begin{align*} 
H^2(\cM_{BC}(\bullet), & \C)  =  M_{BC} \big(\; \Y{2} \; , \varnothing \; \big) \oplus M_{BC} \big(\;  \varnothing \;, \Y{2} \; \big)^{\oplus 2} \oplus  M_{BC} \big(\; \Y{2,1} \;, \varnothing \; \big)^{\oplus 2} \\ &  \oplus   M_{BC} \big(\;  \Y{1} \;, \Y{2} \; \big)^{\oplus 2}  
\oplus  M_{BC} \big(\;  \Y{1} \;, \Y{1,1} \; \big)   \oplus M_{BC} \big(\;  \Y{3} \;, \varnothing \; \big) 
\oplus M_{BC} \big(\;  \varnothing \;, \; \Y{3,1} \; \big) \\& \oplus M_{BC} \big(\;  \Y{3,1} \;, \varnothing \; \big)  \oplus M_{BC} \big(\;  \Y{2} \;, \Y{2} \; \big)
\end{align*}
\begin{align*} 
\chi & _{ H^2 (\cM_{BC}(\bullet), \C)}   = 3 {X_1 \choose 2} + 3 {Y_1 \choose 2} - X_1Y_1 + 3 X_2 - Y_2 + 14 {X_1 \choose 3} + 2 {X_1 \choose 2} Y_1  \\ &
 + 2 X_1 {Y_1 \choose 2}    + 14 {Y_1 \choose 3}   + 2 X_2 X_1 + 2 X_2 Y_1 - X_3 - Y_3 + 12 { X_1 \choose 4 } 
 + 4 {X_1 \choose 2} {Y_1 \choose 2} \\ & + 12 {Y_1 \choose 4}  + 4 X_2 {X_1 \choose 2} + 4 X_2 {Y_1 \choose 2} - 4 {Y_2 \choose 2} -2 Y_4 
\end{align*}

\begin{prob} For each $m$, compute the character polynomial of the $\FI_A\sh$--module $H^m(\cM_{A}(\bullet), \C)$, and compute its decomposition into induced representations $M_A(U)$. Compute the character polynomials of the $\FI_{BC}\sh$--modules $H^m(\cM_{BC}(\bullet), \C)$ and $H^m(\cM_{D}(\bullet), \C)$ for each $m$, and compute the decomposition into induced representations $M_{BC}(U)$. \end{prob}



{\footnotesize 


\bibliographystyle{alpha}
\bibliography{MasterBibliography} 
\setlength{\itemsep}{1pt} 
 }
\end{document}